\numberwithin{equation}{section}
\theoremstyle{plain}
\newtheorem{theorem}[equation]{Theorem}
\newtheorem{proposition}[equation]{Proposition}
\newtheorem{corollary}[equation]{Corollary}
\newtheorem{lemma}[equation]{Lemma}
\newtheorem{assumption}[equation]{Assumption}
\newtheorem{sublemma}[equation]{Sublemma}
\theoremstyle{definition}
\newtheorem{definition}[equation]{Definition}
\newtheorem{notation}[equation]{Notation}
\newtheorem{example}[equation]{Example}
\newtheorem{basicfacts}[equation]{Basic~Facts}
\newtheorem{remark}[equation]{Remark}
\DeclareRobustCommand{\qed}{%
  \ifmmode
    \eqno \def\@badmath{$$}%$$
    \let\eqno\relax \let\leqno\relax \let\veqno\relax
    \hbox{\openbox}%
  \else
    \leavevmode\unskip\penalty9999 \hbox{}\nobreak\hfill
    \quad\hbox{\openbox}%
  \fi
}
\newcommand{\DMO}{\DeclareMathOperator}
\newcommand{\beq}{\begin{equation}}
\newcommand{\eeq}{\end{equation}}
\newcommand{\blank}{\mbox{$\underline{\makebox[10pt]{}}$}}
\newcommand{\bbar}[1]{\overline{#1}}
\DeclareMathOperator{\Hom}{{Hom}}
\DeclareMathOperator{\End}{{End}}
\DeclareMathOperator{\Ext}{{Ext}}
\DeclareMathOperator{\Aut}{{Aut}}
\DeclareMathOperator{\Proj}{Proj}
\DeclareMathOperator{\hd}{hd}
\DeclareMathOperator{\GK}{GKdim}
\DeclareMathOperator{\gr}{gr}
\DeclareMathOperator{\Div}{Div}
\DeclareMathOperator{\grk}{grk}
\newcommand{\QVB}{{Q_{V\hskip -1.5pt dB}}}
\newcommand{\mc}{\mathcal}
\newcommand{\kk}{{\Bbbk}}
\newcommand{\timesg}{{\raisebox{1pt}{$\scriptscriptstyle\bullet$}} g}
\newcommand{\dotms}{{\,\raisebox{1pt}{${\scriptscriptstyle\bullet}_{_{M\hskip -1pt S}}$}}}
\newcommand{\ZZ}{{\mathbb Z}}
\newcommand{\PP}{{\mathbb P}}
\newcommand{\mb}{\mathbb}
\newcommand{\wt}{\widetilde}
\newcommand{\sA}{\mc{A}}
\newcommand{\sL}{\mc{L}}
\newcommand{\sM}{\mc{M}}
\newcommand{\sN}{\mc{N}}
\newcommand{\sO}{\mc{O}}
\newcommand{\sP}{\mc{P}}
\newcommand{\sX}{\mc{X}}
\DeclareMathOperator{\coh}{coh}
\DeclareMathOperator{\rgr}{gr-\!}
\DeclareMathOperator{\rGr}{Gr-\!}
\DeclareMathOperator{\rMod}{Mod-\!}
\DeclareMathOperator{\lQgr}{\!-Qgr}
\DeclareMathOperator{\rQgr}{Qgr-\!}
\DeclareMathOperator{\rqgr}{qgr-\!}
\DeclareMathOperator{\GKdim}{GKdim}
\DeclareMathOperator{\SK}{\mb{A}}
\newcommand{\on}{\operatorname}
\newcommand{\dra}{\dashrightarrow}
\newcommand{\hra}{\hookrightarrow}
\newcommand{\ssm}{\smallsetminus}
\newcommand{\uExt}{\underline{\Ext}}
\newcommand{\uHom}{\underline{\Hom}}
\DeclareMathAlphabet{\mathpzc}{OT1}{pzc}{m}{it}
 \newcommand{\hilb}{{\sf hilb}\,}
\DMO{\gldim}{gldim}
\DMO{\pdim}{pdim}
\DMO{\injdim}{injdim}
\DMO{\Tot}{Tot}
\DMO{\cd}{cd}
\DMO{\Bl}{Bl}
\DeclareMathOperator{\Pic}{Pic}
\newcommand{\wh}{\widehat}
\newcommand{\too}{\longrightarrow}
\newcommand{\X}{\mc X}
\title[Ring-theoretic blowing down  II]{Ring-theoretic blowing down  II: Birational transformations}
\author{D. Rogalski,  S. J. Sierra, and J. T. Stafford}
\address{(Rogalski)
Department of Mathematics, UCSD, La Jolla, CA 92093-0112, USA. }
\email{drogalsk@math.ucsd.edu}
 \address{(Sierra) School of Mathematics,
University of Edinburgh, Edinburgh EH9 3JZ, Scotland.}
\email{s.sierra@ed.ac.uk}
\address{(Stafford) School of Mathematics,  The University of Manchester,   Manchester M13 9PL,
England.}
\email{Toby.Stafford@manchester.ac.uk}
\thanks{The first author was partially supported by the NSF grant DMS-1201572 and the NSA grant H98230-15-1-0317,  the second author is  partially   
supported by  EPSRC grants  EP/M008460/1 and EP/T018844/1, 
and the third author is partially supported by a Leverhulme Emeritus Fellowship.}
\date{\today}
\subjclass[2010]{Primary: 14A22,  14H52,  16E65,   16S38, 16W50; Secondary:  14J10,  16P40, 18E15; }
\keywords{Noncommutative projective geometry,  noncommutative surfaces, Sklyanin algebras,
noetherian  graded rings,
noncommutative  blowing~up and blowing down}
\begin{document}
 
  \begin{abstract}
  One of the major open  problems in noncommutative algebraic geometry is the classification of noncommutative projective 
  surfaces (or, slightly more generally, of  noetherian connected graded domains of Gelfand-Kirillov dimension 3). Earlier work of the authors  classified the connected graded noetherian subalgebras of Sklyanin algebras using a noncommutative  analogue of blowing up. 
  In a companion paper the authors also described a noncommutative version   of blowing down and, for example,  
  gave a noncommutative analogue of Castelnuovo's classic theorem that   lines of self-intersection $(-1)$
  on a smooth surface can be contracted.

  In this paper we will use these techniques to construct explicit birational transformations between  various noncommutative surfaces 
  containing an elliptic curve. Notably we show that Van den Bergh's quadrics can be obtained from the Sklyanin algebra by suitably blowing up and down, and we also provide a noncommutative analogue of the classical Cremona transform.

\end{abstract}

 \maketitle
 \tableofcontents
 
 %%%%%%%%%%%%%%%%%%%
  %%%%%%%%%%%%%%%%%%%

   \clearpage

 \section{Introduction}\label{INTRO}

 Throughout the paper, $\kk$ will denote an algebraically closed field, which in the introduction has characteristic zero, and  all rings will be $\kk$-algebras. 
A $\kk$-algebra $R$ is \emph{connected graded}\label{cg-defn-intro} or \emph{cg} if $R=\bigoplus_{n\geq 0}R_n$ is a finitely generated, $\mathbb{N}$-graded algebra with $R_0=\kk$. For such a ring $R$, the category of graded noetherian right $R$-modules will be denoted $\rgr R$, with quotient category $\rqgr R$ obtained by quotienting out the Serre subcategory of finite dimensional modules. 
We define $\rqgr R$ to be \emph{smooth} (which is synonymous with  nonsingular in our usage) if it has finite homological dimension.  An effective intuition    is  to regard  $\rqgr R$ as the category of coherent sheaves on the (nonexistent) space $\Proj(R)$. Thus, for example,  we define $\rqgr R$ to  be \emph{a noncommutative surface} if $R$ has Gelfand-Kirillov dimension $\GKdim R=3$.

 One of the main open problems in noncommutative algebraic geometry is the classification of noncommutative projective surfaces
 (or, slightly more generally, of  noetherian connected graded domains of Gelfand-Kirillov dimension 3). This
 has been solved in many particular cases and those solutions have led to some fundamental advances in the subject; see, for 
 example, \cite{ATV1990,  RSSlong, KRS, SV, VdB-blowups, VdB3} and the references therein. In \cite{Ar}, 
 Artin conjectured that, birationally at least, there is a short list of such surfaces, with the generic case being a Sklyanin algebra. 
 Here, the graded quotient ring $Q_{gr}(R)$ of $R$ is obtained by inverting the non-zero homogeneous elements and  two such  
 domains  $R, S$ are \emph{birational} if  $Q_{gr}(R)_0 \cong Q_{gr}(S)_0$; that is, if they have the same {\em function skewfield} (note that $Q_{gr}(R)_0$ is a division ring). 
 Although no solution of Artin's conjecture is in sight, it does lead naturally to the question of determining the surfaces 
within each birational class. We consider this problem in this paper.

In classical (commutative) geometry, birational projective surfaces can be obtained
 from each other by means of blowing up and down (monoidal transformations). 
 The noncommutative notion of blowing up points on a noncommutative 
 surface has been described in \cite{VdB-blowups} with a rather more ring-theoretic approach given in \cite{R-Sklyanin} and 
 \cite{RSSshort}. Moreover, an  analogue of blowing down (contracting) $(-1)$-curves is defined  and explored in  the companion paper 
 to this one \cite{blowdownI}. The latter paper proves, among other things, that there is a noncommutative analogue of 
 Castelnuouvo's classic theorem:   curves 
of self-intersection $(-1)$ can indeed be contracted in a smooth noncommutative surface. 
 
 The next step in understanding noncommutative surfaces is then to determine when two noncommutative surfaces are
  birational, and we take steps in this direction in the present paper. In particular, there are two known    classes of 
  generic  minimal noncommutative surfaces: Sklyanin algebras themselves, which are thought of as coordinate rings of noncommutative projective planes, 
and the 
  noncommutative quadrics defined by Van den Bergh \cite{VdB3}.  
  (See \cite{RSS-minimal} for discussion of the way in which these are noncommutative minimal models.) 
  One of the main aims of this paper is to prove that,
   given that they have the appropriate invariants, these algebras can indeed be transformed into each other by means of
    blowing up and down; thereby giving  a noncommutative version of the classical birational transform: 
     \beq \label{classical}
\begin{split} \xymatrix{
 & \Bl_{p,p'} \PP^2\,  \cong\,  \Bl_r (\PP^1 \times \PP^1) \ar[ld]_{\pi_{pp'}} \ar[rd]^{\pi_r} & \\
 \PP^2 && \PP^1\times \PP^1.  \ar@{-->}[ll]^{\phi} 
 }
 \end{split}
 \eeq

 In this paper, we apply the  birational geometry of noncommutative projective surfaces as developed in \cite{R-Sklyanin, blowdownI} to construct a noncommutative version of \eqref{classical}.   
 Doing this involves developing substantial new methods in noncommutative geometry; in particular, establishing a noncommutative version of the isomorphism in the top line of \eqref{classical} involves a recognition theorem for two-point blowups of 
 a noncommutative $\PP^2$ which is of independent interest.

 In order to state these results formally we need some  definitions.  
 We first remark that  as this paper  is a continuation of
 \cite{blowdownI} we will keep the same notation as in that paper, and will therefore refer the reader to that paper for 
 more standard notation.  In particular,  given an automorphism $\tau$ of an elliptic curve $E$, we always assume that $|\tau|=\infty$ and, as in  \cite[Section~2]{blowdownI}, write $B=B(E,\mathcal{M},\tau)$ for the \emph{twisted homogeneous coordinate ring or TCR} of $E$ relative to a line bundle $\mathcal{M}$. A domain $R$ is called an \emph{elliptic algebra} if it is a cg
 domain, with a central element $g\in R_1$ such that $R/gR\cong B(E,\mathcal{M},\tau)$  for some choice of $\{E,\mathcal{M}, \tau\} $.  
  We say that $E=E(R)$ and $\tau=\tau(R)$ are respectively the elliptic curve and the automorphism {\em associated to} $R$, and $\deg \sM$ is  the {\em degree} of $R$.   
  If we loosen our definition to allow $\tau$ to be the identity on $E$, then a commutative elliptic algebra is the same as the anticanonical ring of a (possibly singular) del Pezzo surface, and the degree of the algebra is the same as the commutative definition:  the self-intersection of the canonical class.

 Two examples of elliptic algebras are important to us. 
  The first is the 3-Veronese ring $T=S^{(3)}$ of the Sklyanin algebra $S=S(E,\sigma)$, as defined in \cite[(4.2)]{blowdownI}. This is called a \emph{Sklyanin elliptic algebra}, and the third Veronese is needed to ensure that $g\in T_1$.  The second, called a \emph{quadric elliptic algebra}, is the Veronese ring 
 $Q^{(2)}$ of a \emph{Van den Bergh quadric} $Q=\QVB=\QVB(E,\sigma,\Omega)$ for a second parameter $\Omega\in \mathbb{P}^1$. The latter is constructed in    \cite{VdB3}, and described in more detail in Example~\ref{eg:quadric}.

\emph{In this  paper we only consider elliptic algebras $R$ for which  $\rqgr R$ is smooth.}  This is not a restriction for the Sklyanin elliptic algebra $S^{(3)}$, and also holds for any blowup or contraction that we construct. But,  for each $\{E,\sigma\}$ it  does exclude a discrete family of quadrics 
 $\QVB(E,\sigma,\Omega)$.  However these can easily be dealt with separately; see Remark~\ref{non-smooth} for the details.

 The notions of blowing up an elliptic algebra $R$ at a  point $p\in E$ or blowing down a 
 line $L$ of self-intersection  $(-1)$ can be found in \cite{blowdownI}. 
 In brief,  if $R$ is an elliptic algebra of degree $\geq 2$, one can blow up any point $p \in E$ to obtain a subring  $\Bl_p(R)$. (For notational reasons, in the body of the paper we will usually write $R(p)$ in place of  $\Bl_p(R)$.)
   In particular, 
$ \Bl_p(R)/(g) = B(E, \sM(-p), \tau),$
and so blowing up decreases the degree of an elliptic algebra by one.
This is also what happens in the commutative setting:  given a smooth surface $X$, the canonical divisor of $\Bl_p(X)$ 
 is $\pi_p^* K_X + L_p$, where $L_p$ is the exceptional divisor \cite[Proposition~V.3.3]{Ha}. Thus the anticanonical ring of the blowup is a subring of the anticanonical ring of $X$.    In the noncommutative setting, 
it follows from the construction that $R$ and $\Bl_p(R)$ are also  birational; indeed  we even have $Q_{gr}(R) = Q_{gr}(\Bl_p(R))$.

There is a notion of the {\em exceptional line module $L_p$} of the noncommutative blowup $\Bl_p(R)$. Here a  {\em line module}  is a cyclic $R$-module with the Hilbert series $(1-s)^{-2}$ of $\kk[x,y]$. 
Moreover, as in the commutative setting, $L_p$ has self-intersection $(-1)$, where we are now using the noncommutative intersection notion of intersection theory due to Mori and Smith  \cite{MS} and  defined by
\[(L\dotms L'  )=\sum_{n\geq 0} (-1)^{n+1} \dim_\kk \Ext^i_{\rqgr R}(L,L').\]
In   \cite[Theorem~1.4]{blowdownI}, we  gave  a noncommutative version of Castelnuovo's contraction criterion:  Suppose that $R$ is an elliptic algebra with   $\rqgr R$ smooth, such that $R$ has a line module $L$ with $(L \dotms L) = -1$. Then we can contract $L$ in the sense that there exists a second elliptic algebra $R'$,  called the {\em blowdown} or {\em contraction of $R$ at $L$}, such that $\rqgr R'$ is smooth, $R = \Bl_q(R') $  is the blowup of   $R'$ at some point $q \in E$, and $L$ is the exceptional line module of this blowup.

The main result of this paper is the following noncommutative analogue of the classical diagram \eqref{classical}.

 \begin{theorem}\label{ithm:T}  {\rm (}See Theorem~\ref{thm:T}{\rm )}   Let $Q=\QVB(E,\alpha,\Omega)$ be a  
Van den Bergh quadric  such that $\rqgr Q$ is smooth and pick $r\in E$.  Then there exist  $\sigma \in \Aut(E)$ with $\sigma^3 = \alpha^2$ and $p, q \in E$ so that the blowups   $\Bl_r(Q^{(2)} )$  and $\Bl_{p,q}(S(E, \sigma)^{(3)} )$  are isomorphic.
\end{theorem}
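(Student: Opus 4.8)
The plan is to reach the Sklyanin side by \emph{contracting}, starting from the given quadric data. Set $X:=\Bl_r(Q^{(2)})$. The quadric elliptic algebra $Q^{(2)}$ plays the role of the anticanonical ring of a noncommutative $\PP^1\times\PP^1$ (Example~\ref{eg:quadric}): it is an elliptic algebra with $E(Q^{(2)})=E$, $\tau(Q^{(2)})=\alpha^2$ and degree $8$, and $\rqgr Q^{(2)}$ is smooth since $\rqgr Q$ is. By the basic properties of blowups recalled in the introduction and in \cite{blowdownI}, $X$ is then an elliptic algebra with $E(X)=E$, $\tau(X)=\alpha^2$, degree $7$, $\rqgr X$ smooth, and $Q_{gr}(X)=Q_{gr}(Q^{(2)})$. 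On the other side, if $T:=S(E,\sigma)^{(3)}$ is to appear, note that $T$ is an elliptic algebra of degree $9$ with automorphism $\sigma^3$, so $\Bl_{p,q}(T)$ has degree $7$ and automorphism $\sigma^3$; matching the invariants of $X$ thus requires $\sigma^3=\alpha^2$. Since $\alpha$ is a translation (being of infinite order) and the group $E(\kk)$ is divisible, there is a translation $\sigma$, necessarily of infinite order, with $\sigma^3=\alpha^2$; fix such a $\sigma$.

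The geometric skeleton of \eqref{classical} is that $\Bl_r(\PP^1\times\PP^1)$ is carried to $\PP^2$ by contracting the strict transforms of the two ruling fibers through $r$. The crucial step is therefore to realize these two $(-1)$-curves as genuine objects of $\rqgr X$. The two rulings of the noncommutative quadric provide two one-parameter families of ``fiber'' modules over $Q^{(2)}$, and exactly one member $f_i$ of each family passes through $r\in E$; I would form the strict transforms $\wt f_1,\wt f_2$ of $f_1,f_2$ under the blowup $X\to Q^{(2)}$ and prove that each $\wt f_i$ is a line module with $(\wt f_i\dotms\wt f_i)=-1$ and that $(\wt f_1\dotms\wt f_2)=0$, so that $\wt f_1$ and $\wt f_2$ are ``disjoint''. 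Numerically this reflects the commutative facts $f_i^2=0$, $f_1\cdot f_2=1$, with the $f_i$ meeting only at $r$, so that after blowing up $r$ the strict transforms have self-intersection $-1$ and are disjoint; rigorously it should follow from the behaviour of the Mori--Smith pairing under blowing up established in \cite{blowdownI}. This is the technical heart of the argument: one must identify the relevant line and conic modules over $Q^{(2)}$ explicitly, control their pullbacks through the blowup, and verify these intersection numbers.

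Given the disjoint pair $\wt f_1,\wt f_2$, one contracts twice. Castelnuovo contraction \cite[Theorem~1.4]{blowdownI} applied to $X$ at $\wt f_1$ yields a smooth elliptic algebra $Y$ with $\tau(Y)=\alpha^2$, $\deg Y=8$ and $X=\Bl_q(Y)$ for some $q\in E$. By disjointness, the image of $\wt f_2$ in $\rqgr Y$ is again a line module of self-intersection $-1$, so a second application of \cite[Theorem~1.4]{blowdownI} gives a smooth elliptic algebra $Z$ with $\tau(Z)=\alpha^2$, $\deg Z=9$ and $Y=\Bl_p(Z)$ for some $p\in E$; since the two exceptional line modules remain disjoint in $\rqgr X$, the composite $X=\Bl_q(\Bl_p(Z))$ is a genuine two-point blowup, $X\cong\Bl_{p,q}(Z)$. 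Finally, $Z$ is a smooth elliptic algebra of degree $9$ with $E(Z)=E$ and $\tau(Z)=\alpha^2$, so the recognition theorem established in this paper identifies $Z$ as the $3$-Veronese of a Sklyanin algebra, whose defining automorphism cubes to $\tau(Z)=\alpha^2$ and may therefore be taken to be the $\sigma$ fixed above; thus $Z\cong S(E,\sigma)^{(3)}$ and $\Bl_r(Q^{(2)})=X\cong\Bl_{p,q}(S(E,\sigma)^{(3)})$. (In practice it is cleaner to recast the two contractions and the reconstruction as the recognition theorem for two-point blowups of a noncommutative $\PP^2$ advertised in the introduction, verifying its hypotheses for $X$ via the previous paragraph.) I expect the principal obstacle to be this recognition theorem --- reconstructing a Sklyanin algebra, together with all its defining relations, from the abstract invariants of a smooth degree-$9$ elliptic algebra --- with the construction and intersection-theoretic analysis of the strict-transform line modules $\wt f_1,\wt f_2$ (and the bookkeeping linking $p,q$ to $\Omega$ and $r$) a close second.
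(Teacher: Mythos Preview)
Your strategy is essentially the paper's: realise $R:=\Bl_r(Q^{(2)})$ as a degree-$7$ elliptic algebra, exhibit on it the two ``strict-transform'' line modules coming from the rulings through $r$, and then apply the recognition theorem for two-point blowups of a Sklyanin elliptic algebra (Theorem~\ref{ithm:recog}). Your parenthetical comment is the right way to organise the argument, and the paper does exactly that in Section~\ref{QUADRIC}.

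There is, however, a genuine omission in your list of what must be checked. The recognition theorem requires \emph{three} line modules $L,L',L_r$ with the full intersection pattern of Figure~\ref{inttheory}: not only $(L\dotms L)=(L'\dotms L')=-1$ and $(L\dotms L')=0$, but also $(L_r\dotms L_r)=-1$ and $(L_r\dotms L)=(L_r\dotms L')=1$, together with $\Div L\neq\Div L'$. You only propose to verify the conditions on your two strict transforms $\wt f_1,\wt f_2$; the exceptional line $L_r$ of the blowup and its intersections $(L_r\dotms\wt f_i)=1$ do real work and must be established separately (this is Lemma~\ref{lem:c} in the paper, and is not automatic). The distinctness of $\Div\wt f_1$ and $\Div\wt f_2$ also needs an argument; in the paper it follows from the smoothness hypothesis on $\rqgr Q$ via \cite[Theorem~10.2]{SV}.

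Your first, non-parenthetical route---contract twice and then recognise the resulting degree-$9$ algebra $Z$ as $S(E,\sigma)^{(3)}$ from its ``abstract invariants''---does not go through as stated. The paper's degree-$9$ recognition theorem (Theorem~\ref{thm:Sposs}) is \emph{not} that every smooth degree-$9$ elliptic algebra is a Sklyanin elliptic algebra; it requires the existence of specific modules $H$ and $M$ satisfying the conditions of Notation~\ref{notation:recT} together with the divisor identities \eqref{need2} and \eqref{need1}. Producing these modules and identities is precisely what the iterated-blowdown machinery of Section~\ref{ITERATE} and the intersection-theoretic argument of Section~\ref{RECOG3} accomplish, and is why the recognition is packaged at the degree-$7$ level as Theorem~\ref{ithm:recog}. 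So your parenthetical is not merely ``cleaner''---it is the argument that actually works.
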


Identify the rings $ \Bl_r(Q^{(2)})$ and $\Bl_{p,q} (S(E, \sigma)^{(3)})$ in Theorem~\ref{ithm:T}.
As blowups are inclusions in our setting, the effect of that result is to show that there are inclusions
\[ Q^{(2)}\  \supset\   \Bl_r(Q^{(2)}) \ =\  \Bl_{p,q} (S^{(3)})\  \subset \  S^{(3)},\]
for $S=S(E,\sigma)$.  Since we have seen that $R$ and $\Bl_x(R)$ are always birational, as a corollary we obtain:

\begin{corollary}\label{icor:birational}
Any Van den Bergh quadric $Q$ with $\rqgr Q$ smooth is birational to a Sklyanin algebra.\qed 
\end{corollary}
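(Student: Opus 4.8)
The plan is to read the corollary off directly from Theorem~\ref{ithm:T} by concatenating a short chain of birational equivalences. Fix a Van den Bergh quadric $Q=\QVB(E,\alpha,\Omega)$ with $\rqgr Q$ smooth and pick any $r\in E$. Theorem~\ref{ithm:T} produces $\sigma\in\Aut(E)$ (with $\sigma^3=\alpha^2$) and points $p,q\in E$ together with an isomorphism $\Bl_r(Q^{(2)})\cong\Bl_{p,q}(S(E,\sigma)^{(3)})$. Writing $S=S(E,\sigma)$, I want to deduce from this single isomorphism that $Q$ and $S$ have the same function skewfield, i.e.\ that $Q_{gr}(Q)_0\cong Q_{gr}(S)_0$.

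Two standard facts supply everything else. First, passing to a Veronese subring does not change the function skewfield: for a cg (noetherian) domain $A$ and $n\ge 1$ one has $Q_{gr}(A^{(n)})=Q_{gr}(A)^{(n)}$, so in particular $Q_{gr}(A^{(n)})_0=Q_{gr}(A)_0$; hence $Q$ is birational to $Q^{(2)}$ and $S$ is birational to $S^{(3)}$. Second, as recalled in the introduction (and proved in \cite{blowdownI}), blowing up an elliptic algebra $A$ of degree $\ge 2$ at a point of $E$ leaves the graded quotient ring unchanged, $Q_{gr}(\Bl_x(A))=Q_{gr}(A)$; iterating this gives $Q_{gr}(\Bl_r(Q^{(2)}))=Q_{gr}(Q^{(2)})$ and $Q_{gr}(\Bl_{p,q}(S^{(3)}))=Q_{gr}(S^{(3)})$. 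The degree bookkeeping of the introduction (the Sklyanin elliptic algebra has degree $9$, so its two-point blowup has degree $7$; the quadric elliptic algebra $Q^{(2)}$ has degree $8$, so $\Bl_r(Q^{(2)})$ has degree $7$) shows that every intermediate algebra appearing here has degree $\ge 2$, so each of these blowups is legitimate.

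Combining these observations with the isomorphism of Theorem~\ref{ithm:T} yields the chain
\[
Q \ \sim\ Q^{(2)} \ \sim\ \Bl_r(Q^{(2)}) \ \cong\ \Bl_{p,q}(S^{(3)}) \ \sim\ S^{(3)} \ \sim\ S(E,\sigma),
\]
where each $\sim$ denotes equality of graded quotient rings (and hence of degree-zero parts) and the central $\cong$ is the isomorphism furnished by the theorem. Since birationality is transitive, $Q$ is birational to the Sklyanin algebra $S(E,\sigma)$, proving the corollary. There is no genuine obstacle here beyond Theorem~\ref{ithm:T} itself: the only points requiring care are that the blowups in the chain are defined (ensured by the degree hypotheses built into the theorem) and that the smoothness assumption on $\rqgr Q$ is exactly what makes Theorem~\ref{ithm:T} applicable in the first place.
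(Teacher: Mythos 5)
Your proposal is correct and follows essentially the same route as the paper: the paper deduces the corollary from Theorem~\ref{ithm:T} by identifying $\Bl_r(Q^{(2)})$ with $\Bl_{p,q}(S^{(3)})$ inside a common chain of inclusions and using that $Q_{gr}(R)=Q_{gr}(\Bl_x(R))$ for a blowup. Your write-up merely makes explicit two points the paper leaves implicit (that Veronese subrings preserve the function skewfield, and the degree bookkeeping ensuring each blowup is defined), which is fine.
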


 We should emphasise that Theorem~\ref{ithm:T} is not the first proof of the birationality of $S^{(3)}$ and $Q^{(2)}$. Indeed, this was announced by Van den Bergh in \cite[Theorem~13.4.1]{StV} with complete proofs given in   
 \cite{Pr}.  The proof of Presotto and Van den Bergh is more geometric than the algebraic one given here. Our argument  has the advantage of providing a more explicit identity, which should also be useful elsewhere.

 In order to prove Theorem~\ref{ithm:T}, we develop new techniques in noncommutative birational geometry which are of independent interest.
Let $Q, r$ be as in the statement of Theorem~\ref{ithm:T}. 
The key problem in proving the theorem is to {\em recognise} $\Bl_r(Q^{(2)})$ as the two-point blowup of a Sklyanin algebra.
We do this through noncommutative intersection theory.  

In the commutative setting, the surface $ \Bl_r(\PP^1 \times \PP^1)$ contains three lines of self-intersection $(-1)$.
These are  the strict transforms $L, L'$ of the ``horizontal'' and ``vertical'' lines through $r$ on $\PP^1 \times \PP^1$, and  the exceptional divisor $L_r$ of the birational morphism $\pi_r$ from \eqref{classical}.  Their intersection theory is summarised in Figure~\ref{inttheory}: 
\begin{figure} 
\includegraphics[scale=.45]{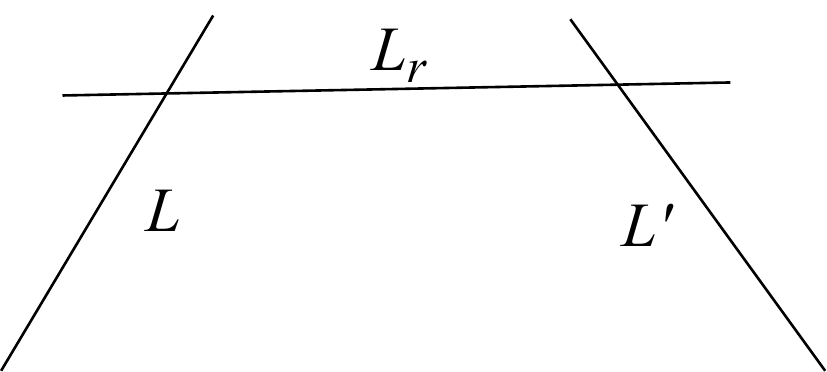}
\caption{The intersection theory of  $\Bl_r(\PP^1 \times \PP^1)$}
\label{inttheory}
\end{figure}
 $L\cdot L' = 0$ while  $L_r \cdot L = L_r \cdot L' = 1$.  
To realise $\Bl_r(\PP^1 \times \PP^1)$ as a two-point blowup of $\PP^2$ as in \eqref{classical}, one observes that $\phi \circ \pi_r$ is defined everywhere on $\Bl_r(\PP^1 \times \PP^1)$ and contracts $L$ and $L'$ to distinct points $p, p' \in \PP^2$.  
Further, $\phi \circ \pi_r = \pi_{p, p'}$ realises $L_r$ as the strict transform of the line through $p$ and $p'$.

 Proving Theorem~\ref{ithm:T} requires different techniques, since in the noncommutative context there is no good  analogue of an ``open set'' on which there is a well-defined ``morphism''.  One way to think about the problem is that we do not have an a priori inclusion of $S^{(3)}$ and $Q^{(2)}$ in the same graded quotient ring.
 Instead, we prove a recognition theorem for two-point blowups of   Sklyanin elliptic algebras, which roughly says that the corresponding    elliptic algebra  is characterised by the intersection theory described by Figure~\ref{inttheory}.  More specifically, we prove:
 
  \begin{theorem}\label{ithm:recog}
   {\rm (}Theorem~\ref{thm:recog}{\rm)}
 Let $R$ be a degree 7 elliptic algebra such that $\rqgr R$ is smooth and set $E=E(R)$ and   $\tau=\tau(R)$.  Suppose   that $R$ has line modules with the following properties.
 \begin{enumerate} 
 \item There are right line modules  $L, L'$ and $ L_r$ satisfying the  intersection theory:
 \begin{enumerate}
 \item[(i)] \ $(L \dotms L) = (L_r \dotms L_r) = (L' \dotms L' ) = -1$;
 \item[(ii)] \ $(L \dotms L')  = 0$ while $ (L_r\dotms L) =  (L_r \dotms L') = 1$.
  \end{enumerate}
\item The points where $E$ intersects $L$ and $L'$ are distinct.
 \end{enumerate}
 Then
   $R \cong \Bl_{p, q}(T)$, where $p \neq q \in E$ and 
 $T=S^{(3)} $ for  a Sklyanin algebra  $S=S(E,\sigma)$, where $\sigma^3 = \tau$.  
\end{theorem}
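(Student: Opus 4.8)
The plan is to exhibit $R$ as the result of two successive blowdowns — first contracting $L$, then contracting the image of $L'$ — and then to recognise the degree $9$ algebra one reaches as the third Veronese of a Sklyanin algebra. Since $R$ is an elliptic algebra of degree $7$ with $\rqgr R$ smooth and $(L\dotms L)=-1$, the noncommutative Castelnuovo criterion \cite[Theorem~1.4]{blowdownI} applies directly: there is an elliptic algebra $R_1$, with $\rqgr R_1$ smooth, $E(R_1)=E$ and $\tau(R_1)=\tau$, of degree $8$, together with a point $q_1\in E$ so that $R=\Bl_{q_1}(R_1)$ and $L$ is the exceptional line module. Granting for the moment that $L'$ and $L_r$ descend appropriately through this contraction (the crux, discussed next), one then has a line module $L'_1$ over $R_1$ with $(L'_1\dotms L'_1)=-1$, so \cite[Theorem~1.4]{blowdownI} applies again to the pair $(R_1,L'_1)$ and produces an elliptic algebra $R_2$ of degree $9$, with $\rqgr R_2$ smooth, $E(R_2)=E$, $\tau(R_2)=\tau$, and a point $q_2\in E$ with $R_1=\Bl_{q_2}(R_2)$.

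The heart of the argument is the descent in the first contraction. Using the functor between $\rqgr R$ and $\rqgr R_1$ associated to the blowup $R=\Bl_{q_1}(R_1)$, I would show that the vanishing $(L\dotms L')=0$ forces $L'$ to be the pullback of a line module $L'_1$ over $R_1$ with $(L'_1\dotms L'_1)=(L'\dotms L')=-1$, while $L_r$ is the strict transform of a line module $L_{r,1}$ over $R_1$ with $(L_{r,1}\dotms L_{r,1})=(L_r\dotms L_r)+1=0$ and $(L_{r,1}\dotms L'_1)=(L_r\dotms L')=1$. The general principle being used is that Mori--Smith intersection numbers of modules disjoint from the exceptional line module are preserved by the blowdown functor, while a module meeting it once has its self-intersection raised by one; the module $L_r$, whose intersection data is exactly what is prescribed in hypothesis~(1), serves both as a probe confirming that the descent of $L'$ is the expected $(-1)$-line module and as bookkeeping for the second contraction (where $L_{r,1}$ plays the role of a self-intersection $0$ ``pencil'' meeting $L'_1$ once, consistently with the picture in Figure~\ref{inttheory}). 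Hypothesis~(2), that $E$ meets $L$ and $L'$ at distinct points, is precisely what guarantees that the center $q_1$ does not lie on the line $L'_1$, which is what legitimises the pullback description; it also tracks through Steps~1 and~2 to ensure that the two blowup centers one eventually records are distinct.

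It then remains to identify $R_2$. A degree $9$ elliptic algebra is of maximal degree — the noncommutative analogue of a ``$\PP^2$ containing $E$'' — so I would invoke the recognition of maximal-degree smooth elliptic algebras as third Veronese rings of Sklyanin algebras to conclude $R_2\cong S(E,\sigma)^{(3)}=T$ for a Sklyanin algebra $S(E,\sigma)$, necessarily with $\sigma^3=\tau(R_2)=\tau$. Finally, using the identification of an iterated one-point blowup at distinct centers with the two-point blowup $\Bl_{p,q}$ of \cite{blowdownI}, we obtain $R=\Bl_{q_1}\!\bigl(\Bl_{q_2}(R_2)\bigr)\cong\Bl_{p,q}(T)$ with $p\neq q\in E$, as claimed.

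The main obstacle is the descent step: with no ``open set'' or ``birational morphism'' available, one must establish purely homologically, inside $\rqgr$, that a $(-1)$-line module disjoint from the exceptional line of a one-point blowdown descends to a $(-1)$-line module on the contracted algebra with its intersection numbers intact, and that $L_r$ descends compatibly, so that the two blowdowns genuinely compose into a single two-point blowup. By contrast, the classification input needed to recognise $R_2$ as $S^{(3)}$ should be available from the results underlying \cite{blowdownI} rather than requiring new work here.
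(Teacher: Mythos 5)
Your skeleton — contract $L$, then contract the descended $L'$, then recognise the resulting degree $9$ algebra — is the same as the paper's, and the descent step you flag as the crux is indeed carried out there (Lemmas~\ref{lem:bbd-ideal}, \ref{lem:iterate} and \ref{lem:iterate2} show the two blowdowns can be iterated, commute, and preserve the relevant intersection data; Proposition~\ref{prop:disjoint} is the general statement that contracting a line does not disturb intersections of lines disjoint from it). But you have misplaced the real difficulty. Your final step invokes ``the recognition of maximal-degree smooth elliptic algebras as third Veronese rings of Sklyanin algebras,'' and no such theorem is available: the paper explicitly says (Remark~\ref{rem:deg9}) that the authors do not know how to prove that an arbitrary degree $9$ elliptic algebra with $\rqgr T$ smooth is of the form $S^{(3)}$. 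That is precisely why Sections~\ref{RECOG1} and \ref{RECOG2} exist. The recognition theorem actually proved (Theorem~\ref{thm:Sposs}) requires the degree $9$ algebra $T$ to come equipped with modules $H$ and $M$ playing the roles of $\sO(-1)$ and $\sO(1)$ (Notation~\ref{notation:recT}), \emph{and} a numerical compatibility between the divisors of $T/H$ and $M/T$ (conditions \eqref{need2} and \eqref{need1}), before the $\mb{Z}$-algebra reconstruction of $S(E,\sigma)$ can be run.

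This is also why $L_r$ cannot be relegated to ``bookkeeping'': it is the essential input to the recognition. Its line ideal $J_r$ blows down through the two contractions (Lemma~\ref{lem:iterate2}, which uses the hypotheses $(L_r\dotms L)=(L_r\dotms L')=1$ in an essential way to land in the case $i=2$ of Lemma~\ref{lem:bbd-ideal}) to produce the right ideal $H=(J_r)_1T$ with $\End_T(H)\equiv T$; the dual construction on left line modules produces $H^\vee$ and hence $M$. Even then one must still verify \eqref{need2} and \eqref{need1}, which is the content of the second half of Section~\ref{RECOG3}: one builds two explicit submodules $Z$ and $Y$ of $R_{(g)}$ (deforming $\sO_X(2L+L_p+L_q)$ and $\sO_X(-L-L_p-L_q)$) and uses an intersection-number computation plus Serre duality to prove $Z\cong Y[1]$, which forces the required identity in $\Pic E$. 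None of this is subsumed by your descent step, and without it the argument does not close.
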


To prove Theorem~\ref{ithm:recog}, we first show that we can iteratively 
blown down the two lines $L$ and $L'$.  In order to show that this gives  a noncommutative $\mb{P}^2$, we show how to construct noncommutative analogues of the twisting sheaves $\mc{O}(1)$ and $\mc{O}(-1)$.  These determine a so-called \emph{principal $\mb{Z}$-algebra}, from which we can recover the Sklyanin algebra $S(E, \sigma)$.
Once we have proved Theorem~\ref{ithm:recog}, it is relatively straightforward to show that  $\Bl_r(Q^{(2)})$ has the necessary intersection theory and hence to  prove Theorem~\ref{ithm:T}.

A  converse to Theorem~\ref{ithm:recog} is provided by:

 \begin{proposition}\label{iprop:converse} { {\rm (}Proposition~\ref{prop:converse}{\rm)}}
Let $T$ be a Sklyanin elliptic algebra with associated elliptic curve $E$, and let $p \neq q \in E$.  Then $R= \Bl_{p,q}T$ satisfies the hypotheses of Theorem~\ref{ithm:recog}.
\end{proposition}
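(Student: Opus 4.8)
The plan is to construct the three required line modules directly on $R=\Bl_{p,q}T$ and then read off their Mori--Smith intersection numbers by tracking how each transforms under the two point blowups, following the commutative picture of Figure~\ref{inttheory}.

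For the set-up, write $T=S(E,\sigma)^{(3)}$, an elliptic algebra of degree $9$ with $\rqgr T$ smooth. Then $\Bl_p T$ has degree $8\ge 2$, and since $p\neq q$ the iterated blowup $R=\Bl_q(\Bl_p T)$ is defined by \cite{R-Sklyanin,blowdownI}; it is a degree $7$ elliptic algebra with $E(R)=E$ and $\tau(R)=\sigma^3$, and $\rqgr R$ is smooth because smoothness is preserved under blowing up. Thus $R$ already satisfies the standing hypotheses of Theorem~\ref{ithm:recog}. Now take $L'$ to be the exceptional line module of $\Bl_p T\rightsquigarrow R$; take $L$ to be the strict transform in $R$ of the exceptional line module of $T\rightsquigarrow \Bl_p T$ (equivalently, by the symmetry of the two-point blowup, the exceptional line module of $\Bl_q T\rightsquigarrow R$); and take $L_r$ to be the strict transform in $R$ of the line module $\Lambda=S/uS$ of the secant line $\ell$ through $p$ and $q$, regarded over $T=S^{(3)}$ and then carried through both blowups. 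I would then check that each is a genuine line module over $R$: for $L'$ this is the defining property of an exceptional line module; for $L$ it follows from the fact that blowing up at $q\neq p$ leaves the exceptional module over $p$ a line module; and for $L_r$ it is a Hilbert-series computation parallel to the commutative identity $-K_{\Bl_{p,q}\PP^2}|_{L_r}=\sO(1)$ — although $\Lambda$ has Hilbert series $(1+2s)(1-s)^{-2}$ once regraded over $T$, each of the two blowups through whose centre $\ell$ passes removes a summand, leaving $(1-s)^{-2}$ over $R$.

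Next I would compute the intersection numbers from the behaviour of the Mori--Smith pairing under a point blowup: the new exceptional line module has self-intersection $-1$; the strict transform $\widetilde M$ of a line module $M$ disjoint from the centre has $(\widetilde M\dotms\widetilde M)=(M\dotms M)$ and $(\widetilde M\dotms L_{\mathrm{exc}})=0$, with intersections against other modules disjoint from the centre unchanged; and a line module meeting the centre once has $(\widetilde M\dotms\widetilde M)=(M\dotms M)-1$ and $(\widetilde M\dotms L_{\mathrm{exc}})=1$. Hence $(L'\dotms L')=-1$; $L$ is the transform of the $(-1)$-line module over $p$, which is disjoint from $q$, so $(L\dotms L)=-1$ and $(L\dotms L')=0$; and $\Lambda$ has self-intersection $1$ in the noncommutative $\PP^2$ $\rqgr S\simeq\rqgr T$ and meets each of $p,q$ once, so blowing up $p$ produces a strict transform of self-intersection $0$ meeting $L$ once, and then blowing up $q$ gives $(L_r\dotms L_r)=-1$, $(L_r\dotms L')=1$, and $(L_r\dotms L)=1$ inherited unchanged from before the second blowup. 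This is exactly hypothesis~(1) of Theorem~\ref{ithm:recog}.

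Finally, hypothesis~(2) holds because the exceptional line module over a blown-up point meets $E$ (up to the usual $\tau$-shift in the identification $\Bl_x R/gR = B(E,\sM(-x),\tau)$) exactly at that point, so $E$ meets $L$ at $p$ and $L'$ at $q$, and $p\neq q$. The main obstacle I anticipate is making the strict transform of a line module under an iterated point blowup rigorous inside $\rqgr$ and verifying the three blowup intersection formulas used above — in particular establishing that $L_r$, which is \emph{not} a line module over $T$, becomes one over $R$ with its self-intersection falling by exactly $1$ at each blowup. Tracking the precise $\tau$-shifts behind the identifications $E\cap L=\{p\}$ and $E\cap L'=\{q\}$ is a further, more routine, technical point.
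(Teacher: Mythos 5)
Your choice of the three line modules agrees with the paper's: the two exceptional line modules of the iterated blowup, and a module built from the secant line $x\in S_1=H^0(E,\sL(-p-q))$ through $p$ and $q$ (the paper defines its line ideal directly as $J_r=xS_2R$ and verifies it is a line ideal via the criterion of Lemma~\ref{lem:lineideal}, rather than via a notion of ``strict transform''). The target intersection numbers you state are also the right ones. However, the verification step has a genuine gap: you derive all six intersection numbers from general transformation rules for the Mori--Smith pairing under a point blowup (exceptional line has self-intersection $-1$; a line disjoint from the centre keeps its self-intersection and meets the exceptional line in $0$; a line through the centre drops by $1$ and meets it in $1$). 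None of these rules is established in this paper or in the cited companion paper, and there is no general definition of the strict transform of a line module under a noncommutative blowup to which they could even be applied. These formulas are not routine bookkeeping: the closest available statement, Proposition~\ref{prop:disjoint}, treats only the ``disjoint'' case, goes in the blowing-\emph{down} direction, and already requires a delicate case analysis on whether $\Div L=\tau^j(q)$. You flag this yourself as ``the main obstacle,'' but it is not a technical afterthought --- it is essentially the whole content of the proposition.

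Concretely, the paper replaces each of your transformation rules by an explicit computation in $Q_{\gr}(T)$: $(L_p\dotms L_q)=0$ is obtained by exhibiting a $7$-dimensional subspace $W(\tau q)W(\sigma q)S_1\subseteq\Hom_R(J_p,J_q)_1$ and invoking the numerical criterion of Lemma~\ref{lem1-examples}; $(L_r\dotms L_y)=1$ is obtained by trapping $\Hom_R(J_p,J_r)$ inside $\Hom_T((J_pT)^{**},(J_rT)^{**})=xS_2T$ via reflexivity over $T$; $(L_r\dotms L_r)=-1$ comes from computing $\End_R(J_r)=xT(\sigma^{-2}p+\sigma^{-2}q)x^{-1}$ and applying \cite[Theorem~7.1]{blowdownI}; and $(L_p\dotms L_p)=(L_q\dotms L_q)=-1$ uses smoothness of $\rqgr R$, which itself is cited from \cite[Proposition~4.5.3]{Simon-thesis} rather than from a general ``blowups preserve smoothness'' principle. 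Your claim that the secant line module has self-intersection $+1$ over $T$ is likewise unproved (it is never needed in the paper's argument). To repair your proof you would either have to prove the three blowup formulas in the required generality --- which would amount to redoing these $\Hom$-space computations --- or carry them out directly as the paper does. Hypothesis~(2) is fine: $\Div L_p=\tau(p)\neq\tau(q)=\Div L_q$.
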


 Theorem~\ref{ithm:recog} is a useful new technique for understanding noncommutative surfaces.  To demonstrate its utility,
we use it to construct a noncommutative version of the Cremona transform
\[
Cr:  \PP^2  \dra \PP^2, \quad
[x:y :z]   \mapsto [yz: xz: xy],
\]
which factorises as 
\beq \label{Cremona}
\begin{split}
 \xymatrix{
 & \Bl_{p,q,r} \PP^2  \ar[ld] \ar[rd]& \\
 \PP^2 \ar@{-->}[rr]_{Cr}  && \PP^2
 }
 \end{split}
 \eeq
where $\{p,q,r \} = \{[1:0:0],[0:1:0], [0:0:1]\}$.  

We prove our version by showing that the blowup of $T=S^{(3)}$ at three general points $p,q,r$ has a hexagon 
\begin{equation}\label{fig2}
\begin{split}
{}_{ \includegraphics[scale=.4]{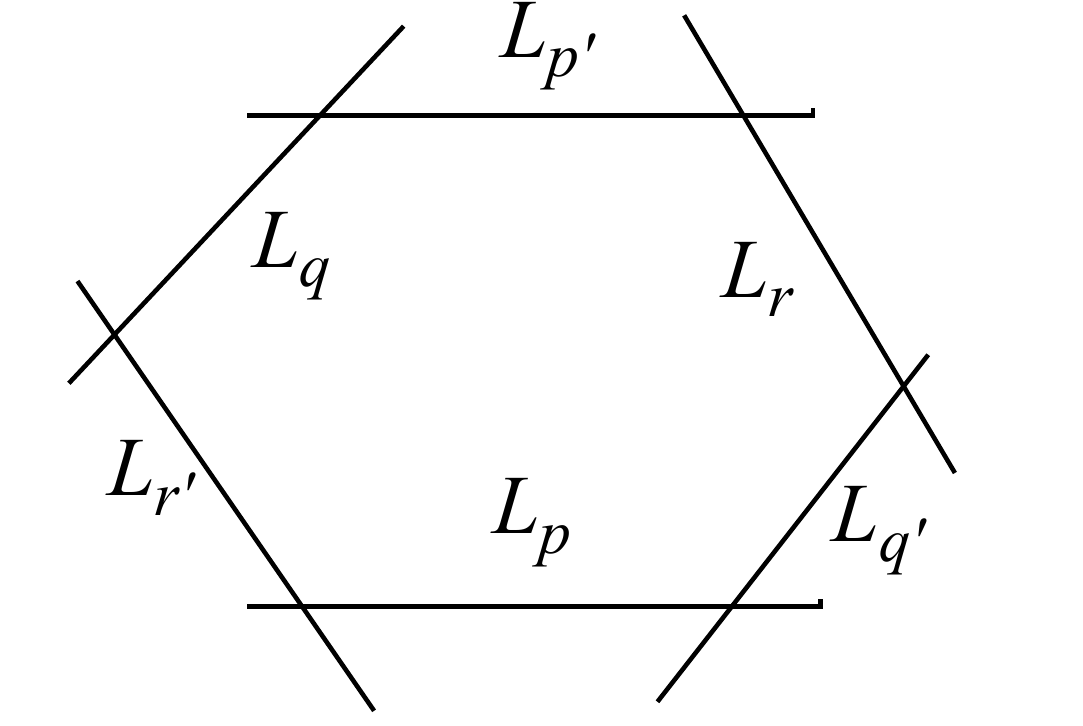}} 
\end{split}
 \end{equation}
 
 \vskip 5pt
 \noindent
 of $(-1)$ lines, just as happens for  the 3-point blowup of $\PP^2$ in classical geometry. Using intersection theory and the noncommutative Castelnuovo criterion \cite[Theorem~1.4]{blowdownI} we  show that each of the $(-1)$ lines  $L_{p'},L_{q'}$ and $L_{r'}$ 
 that are not exceptional for the original blowup can be contracted to give a ring to which Theorem~\ref{ithm:recog} applies. This  leads to  the following theorem:
   
 \begin{theorem}\label{intro-thm:Cremona}   {\rm (}Theorem~\ref{thm:Cremona} and Remark~\ref{rem:Cremona}{\rm )}
 Let $T = S^{(3)}$ be a Sklyanin elliptic algebra with associated curve $E$.
 Pick distinct points $p,q,r \in E$ satisfying minor conditions and let $R = \Bl_{p,q,r}T$ be the corresponding blowup.  Then $\rqgr R$ is smooth, and there is a  second elliptic algebra $T' \cong T $   obtained by blowing down the three lines   that are not exceptional for the original blowup.   Thus  $R = \Bl_{p_1,q_1,r_1,}T'$ for  suitable  points  $p_1, q_1, r_1 \in E$.    
 \end{theorem}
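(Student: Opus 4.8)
The plan is to reproduce the classical factorisation \eqref{Cremona} by exhibiting, inside $\rqgr R$ for $R=\Bl_{p,q,r}T$, the hexagon \eqref{fig2} of six line modules of self-intersection $(-1)$, and then peeling off three of them using the noncommutative Castelnuovo criterion \cite[Theorem~1.4]{blowdownI} together with the recognition theorem, Theorem~\ref{ithm:recog}.

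First I would set up the bookkeeping. Since $T=S^{(3)}$ has degree $9$ (so that $\Bl_{p,q}T$ has degree $7$, as in Theorem~\ref{ithm:recog}) and $\rqgr T$ is smooth, iterating the blowup construction of \cite{blowdownI} shows that $R$ is a degree $6$ elliptic algebra with $\rqgr R$ smooth, $E(R)=E$ and $\tau(R)=\tau(T)=\sigma^{3}$. The hexagon has as its six vertices the exceptional line modules $L_p,L_q,L_r$ of the three blowups, together with the strict transforms $L_{p'},L_{q'},L_{r'}$ of the secant line modules $\ell_{qr},\ell_{pr},\ell_{pq}$ on the noncommutative $\PP^2$ associated to $T$, i.e.\ the line modules ``through'' the indicated pairs of points; the minor conditions on $p,q,r$ --- noncollinearity, so that each secant passes through exactly two of the three points and the three secants are distinct, together with distinctness of the residual third points where the secants meet $E$ --- guarantee that these are six distinct line modules with the expected incidences. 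The heart of the proof is then to compute the intersection theory \eqref{fig2}: each vertex has self-intersection $-1$; an exceptional $L_p$ meets $L_{q'}$ and $L_{r'}$ once each and is disjoint from $L_{p'},L_q,L_r$; and $L_{p'},L_{q'},L_{r'}$ are pairwise disjoint. As in classical geometry these numbers are forced by the blowup intersection calculus of \cite{R-Sklyanin,blowdownI}: $(L_p\dotms L_p)=-1$, $(L_p\dotms L_q)=0$ for blowups at distinct points, $(\pi^{*}M\dotms\pi^{*}M')=(M\dotms M')$ and $(\pi^{*}M\dotms L_p)=0$ for pullbacks, and the strict transform of a line module $M$ passing through some of the blown-up points equals $\pi^{*}M$ minus the corresponding exceptional line modules; feeding in that a line module on a noncommutative $\PP^2$ has self-intersection $1$ and that two distinct such line modules meet in $1$ then yields \eqref{fig2}.

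With the hexagon in hand I would contract $L_{p'}$: since $(L_{p'}\dotms L_{p'})=-1$ and $\rqgr R$ is smooth, \cite[Theorem~1.4]{blowdownI} produces a degree $7$ elliptic algebra $R_1$ with $\rqgr R_1$ smooth and $R=\Bl_{p_1}(R_1)$ for some $p_1\in E$. Because $L_{p'}$ is disjoint from $L_p,L_{q'},L_{r'}$, the images of these three modules in $\rqgr R_1$ are again line modules with unchanged self- and mutual intersections (the stability of intersection numbers under contraction of a disjoint $(-1)$-line being part of the contraction package of \cite{blowdownI}); thus in $R_1$ the modules $L_{q'},L_{r'}$ are disjoint $(-1)$-lines and $L_p$ is a $(-1)$-line meeting each once. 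Together with the distinctness inherited from the minor conditions, this is exactly the input of Theorem~\ref{ithm:recog}, applied with $(L,L',L_r)=(L_{q'},L_{r'},L_p)$ and $\tau(R_1)=\sigma^{3}$; hence $R_1\cong\Bl_{q_1,r_1}(T')$ where $T'=S(E,\sigma')^{(3)}$ and $(\sigma')^{3}=\sigma^{3}$. Moreover $L_{q'}$ and $L_{r'}$ are the exceptional line modules of this two-point blowup --- they are the two disjoint $(-1)$-lines of $R_1$, with $L_p$ the third --- so contracting them recovers $T'$. Therefore $T'$ is precisely the result of blowing down $R$ along $L_{p'},L_{q'},L_{r'}$, and $R=\Bl_{p_1}(R_1)=\Bl_{p_1,q_1,r_1}(T')$.

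It remains to identify $T'$ with $T$. The recognition theorem delivers only $(\sigma')^{3}=\sigma^{3}$ a priori, while $R_1$, and hence $T'$, is birational to $T$ by construction; pinning down the Sklyanin parameter so that $S(E,\sigma')^{(3)}\cong S(E,\sigma)^{(3)}$ is then the content of Remark~\ref{rem:Cremona}. I expect the main obstacle to be the hexagon computation of the second paragraph. In the noncommutative setting there is no birational morphism $\pi$ and no literal strict transform, so each number $(L_i\dotms L_j)$ must be read off from $\Ext$-groups in $\rqgr R$, and one must verify with care that the secant line modules of $T$ genuinely become $(-1)$-line modules after the three blowups --- in particular that no spurious $\Ext^{1}$ survives. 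Once that is established, the two applications of the Castelnuovo criterion, the single application of the recognition theorem, and the final parameter identification are comparatively formal.
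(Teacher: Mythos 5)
Your overall strategy coincides with the paper's: build the hexagon of six $(-1)$-lines in $\rqgr R$, contract one of the non-exceptional lines to get a degree $7$ algebra, check that the surviving lines still satisfy the intersection pattern of Theorem~\ref{thm:recog}, and invoke that recognition theorem. However, the two steps you defer are precisely where essentially all of the work lies, and in both cases the tool you appeal to does not exist in the form you assume. First, there is no ``blowup intersection calculus'' with a pullback $\pi^{*}$ and strict transforms in this setting --- the paper stresses in the introduction that there is no morphism $\pi$ to pull back along, and nothing in \cite{R-Sklyanin} or \cite{blowdownI} gives you $(\pi^{*}M\dotms\pi^{*}M')=(M\dotms M')$ or ``strict transform $=$ pullback minus exceptional lines.'' Even producing the three secant line modules is nontrivial: the paper must exhibit explicit line ideals such as $J_{p'}=xW(\sigma p)S_1R$ (with $x\in S_1$ the section through $q,r$) and prove they are line ideals via the criterion of Lemma~\ref{lem:lineideal}; the six intersection numbers are then computed one by one from $\dim\Hom_R(J_i,J_j)_1$ using Lemma~\ref{lem1-examples} and delicate products of spaces $W(a)$, $V(a+b)$ in the Sklyanin algebra (Lemmata~\ref{lem:pprime}--\ref{lem:J}). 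Acknowledging that this is ``the main obstacle'' does not discharge it.

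Second, the ``stability of intersection numbers under contraction of a disjoint $(-1)$-line'' is not part of the contraction package of \cite{blowdownI}; it is Proposition~\ref{prop:disjoint} of the present paper, whose proof is a genuinely new, two-page argument requiring a case division according to whether $\Div L=\tau^{j}(\Div L_q)$ for some $j>0$ (in which case one must dualise and work with left line modules). Worse, this proposition is already needed \emph{inside} the hexagon computation itself (Lemma~\ref{lem:J} proves $(L_{a'}\dotms L_b)=1$ by blowing down $L_a$ and reducing to Proposition~\ref{prop:converse}), so it cannot be postponed until after the hexagon is established. Finally, a smaller point: the identification $T'\cong T$ is not ``the content of Remark~\ref{rem:Cremona}'' --- that remark only records that $T'$ is obtained by contracting the three non-exceptional lines; the isomorphism itself comes out of the recognition theorem's conclusion. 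In short, the skeleton is right but the proposal leaves unproved exactly the results (the explicit construction of $L_{p'},L_{q'},L_{r'}$, the six-by-six intersection table, and Proposition~\ref{prop:disjoint}) that constitute the theorem's proof.
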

 
Once again,  the first noncommutative analogue of the Cremona transform was announced in \cite{StV} and proved in detail in  \cite{PV}.

 \begin{remark}\label{non-smooth}   We end the introduction by commenting on the smoothness hypothesis.
 As noted in \cite{SV} there do exist   quadrics $Q=\QVB$  for which $\rqgr Q$ is not smooth. 
 One can presumably extend the results of this paper to cover these examples, although this will require a more awkward notion of self-intersection, see \cite[Section~6]{blowdownI} and \cite[Definition~6.11]{blowdownI} in particular. However,  for most questions, in particular for those concerned with birationality,  this is unnecessary. Indeed if $Q$ is a  quadric  for which $\rqgr Q$ is not smooth, then  $Q$ has a Morita context to  a second quadric $\widetilde{Q}$ such that  $\rqgr \widetilde{Q}$ is smooth. 
 More precisely,  there exists a $(Q^{(2)},\widetilde{Q}^{(2)})$-bimodule $M$, finitely generated and of Goldie rank one on each side, such that $Q^{(2)}=\End_{\widetilde{Q}^{(2)}}(M)$ and $\widetilde{Q}^{(2)}=\End_{Q^{(2)}}(M)$. This is proved  by combining 
  \cite[Lemma~6.6 and Corollary~6.1]{RSS-minimal}  following results from  \cite{VdB1996} and  \cite{SV}. 
 Clearly $Q$ and $\widetilde{Q}$ are birational and so one can use $\widetilde{Q}$ to test birationality questions for $Q$.
 Thus, for example, Corollary~\ref{icor:birational} immediately extends to the non-smooth case.
\end{remark}

\begin{corollary}\label{icor:birational2}
Any Van den Bergh quadric $Q$  is birational to a Sklyanin algebra.\qed
\end{corollary}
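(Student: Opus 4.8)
The plan is to deduce this from the smooth case, Corollary~\ref{icor:birational}, together with the reduction described in Remark~\ref{non-smooth}. Recall that two cg domains are birational precisely when their function skewfields --- the degree-zero parts of their graded quotient rings --- are isomorphic, that this relation is transitive, and that it is unaffected by passing to a Veronese subring (since $Q_{gr}(R^{(n)})_0 \cong Q_{gr}(R)_0$ for any cg domain $R$ and any $n \geq 1$). In particular a Van den Bergh quadric $Q$ is birational to $Q^{(2)}$, and likewise $\widetilde{Q}$ is birational to $\widetilde{Q}^{(2)}$ for any other quadric $\widetilde{Q}$.

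Now fix a Van den Bergh quadric $Q$. If $\rqgr Q$ is smooth, we are done by Corollary~\ref{icor:birational}, so assume it is not. By Remark~\ref{non-smooth} there is a Van den Bergh quadric $\widetilde{Q}$ with $\rqgr \widetilde{Q}$ smooth, together with a $(Q^{(2)},\widetilde{Q}^{(2)})$-bimodule $M$ that is finitely generated and of Goldie rank one on each side and satisfies $Q^{(2)} = \End_{\widetilde{Q}^{(2)}}(M)$ and $\widetilde{Q}^{(2)} = \End_{Q^{(2)}}(M)$. The next step is to observe that inverting the nonzero homogeneous elements turns $M$ into a rank-one bimodule between the ($\ZZ$-graded) quotient rings $Q_{gr}(Q^{(2)})$ and $Q_{gr}(\widetilde{Q}^{(2)})$, and that the two displayed endomorphism identifications localise compatibly; this produces a graded isomorphism $Q_{gr}(Q^{(2)}) \cong Q_{gr}(\widetilde{Q}^{(2)})$ and hence $Q_{gr}(Q^{(2)})_0 \cong Q_{gr}(\widetilde{Q}^{(2)})_0$. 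Combined with the Veronese remark of the previous paragraph, $Q$ is birational to $\widetilde{Q}$. Applying Corollary~\ref{icor:birational} to $\widetilde{Q}$ and invoking transitivity of birationality then shows that $Q$ is birational to a Sklyanin algebra, as required.

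The only genuine obstacle is making the localisation step precise: one must check that forming the endomorphism ring of the finitely generated torsion-free bimodule $M$ commutes with the relevant Ore localisation, so that the identities $Q^{(2)} = \End_{\widetilde{Q}^{(2)}}(M)$ and $\widetilde{Q}^{(2)} = \End_{Q^{(2)}}(M)$ descend to the graded quotient rings. This is exactly the bookkeeping already carried out in \cite[Lemma~6.6 and Corollary~6.1]{RSS-minimal} (building on \cite{VdB1996} and \cite{SV}), so in the write-up I would simply cite those results rather than reprove them; in fact Remark~\ref{non-smooth} already asserts that $Q$ and $\widetilde{Q}$ are birational, so the content of the corollary is no more than the formal combination of that remark with Corollary~\ref{icor:birational}.
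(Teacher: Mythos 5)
Your proposal is correct and is essentially the paper's own argument: the paper proves this corollary simply by combining Remark~\ref{non-smooth} (which already asserts that a non-smooth quadric $Q$ is birational to a smooth quadric $\widetilde{Q}$ via the Morita context from \cite{RSS-minimal}) with Corollary~\ref{icor:birational}, exactly as you do. Your additional bookkeeping --- invariance of the function skewfield under Veronese and the localisation of the rank-one bimodule --- just spells out why the paper's ``clearly $Q$ and $\widetilde{Q}$ are birational'' holds, and is fine.
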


\noindent {\bf Acknowledgements: }
We would like to thank Colin Ingalls, Shinnosuke Okawa, Dennis Presotto and Michel Van den Bergh for useful discussions and comments. 

This material originated in  work supported by the National Science Foundation under Grant No. 0932078 000, while the authors were in residence at the Mathematical Sciences Research Institute (MSRI) in Berkeley, California, during the spring semester of 2013. 
  We would like to thank both institutions for their support.

 %%%%%%%%%%%%%%%%
 %%%%%%%%%%%%%%%%

\section{The noncommutative geometry of elliptic algebras}\label{GEOMETRY}

In this section we consider properties of the category $\rqgr R$, where $R$ is an elliptic algebra.   
 We start however with some basic results and notation. As mentioned in the introduction, this paper is a continuation of \cite{blowdownI}, and so we refer the reader to that paper for basic notation.
 We fix an algebraically closed ground field $\kk$. 
  All algebras and schemes will be defined over $\kk$, and all maps will be $\kk$-linear unless otherwise specified.
  We begin with a few comments about the elliptic algebras defined in the introduction. We emphasise that \emph{throughout the paper we only consider elliptic algebras $R$ for which $\rqgr R$ is smooth.}
 
  \begin{notation}\label{elliptic-defn}  Let $R=\bigoplus_{n\geq 0}R_n$ be such an algebra, with central element $0\not=g\in R_1$ 
 such that $R/gR=B=B(E, \mc{M}, \tau)$  is a   TCR over the elliptic curve $E$. 
 Except when stated to the contrary, we always assume 
 that $|\tau|=\infty$ and that  $\deg R\geq3$. 
If $M$ is a graded $R$-module, the {\em $g$-torsion} submodule of $M$ consists of elements annihilated by some power of $g$, and {\em $g$-torsionfree} has the obvious meaning.   
Finally, we note that,  by \cite[Proposition 2.4]{RSSshort},  an elliptic algebra $R$ is always   Auslander-Gorenstein and   Cohen-Macaulay (CM)  in the sense of  \cite[Definition~2.1]{blowdownI}.  \end{notation}

Fix an elliptic algebra $R$ as above. Since $\GKdim(R) = 3$, recall that a graded module $M$ is \emph{Cohen-Macaulay (CM)} if $\Ext^i_R(M, R) = 0$ for all $i \neq 3-\GKdim(M)$, while  $M$ is 
\emph{maximal Cohen Macaulay (MCM)} if in addition $\GKdim(M) = 3$.

For an elliptic algebra $R$, let $\rGr R$ be the category of all graded right $R$-modules, with quotient category $\rQgr R$  obtained by quotienting out direct limits of finite-dimensional modules. Let $\pi:  \rGr R \to \rQgr R$ be the natural map, although we will typically abuse notation and  simply denote $\pi M$   by $M$ for any $M\in \rGr R$.
By the adjoint functor theorem, $\pi$ has a right adjoint $\omega:  \rQgr R \to \rGr R$.  
A graded $R$-module $M$ is  called \emph{saturated} if it is in the image of the section functor $\omega$.  By \cite[(2.2.3)]{AZ}, this is equivalent to $\Ext^1_R(\kk, M) = 0$.   The subcategory of finitely generated (hence noetherian) modules in $\rGr R$ is denoted $\rgr R$; similarly, $\rqgr R = \pi(\rgr R)$ is the subcategory of noetherian objects in $\rQgr R$.  Below 
we will write 
\[\X = \rqgr R.\]
  Note that  $\rqgr B \simeq \coh E$ by \cite{AV}.

Certain standard localizations of $R$ will be important.
If $M\in \rgr R$, let $M^\circ = (M \otimes_R R[g^{-1}])_0$.
By \cite[Lemma~6.8]{blowdownI}, the hypothesis that $\rqgr R$ be  smooth holds  if and only if $R^\circ$ has finite global dimension.  
The image of an element or set under the canonical surjection $R \to B$  is written  $x \mapsto \overline{x}$.
Let $R_{(g)}$ be the Ore localisation $R \mc C^{-1}$, where $\mc C$ is the set of  homogeneous elements of $R \ssm gR$.  Then 
$R_{(g)}/R_{(g)}g \cong Q_{gr}(B)$.  A right $R$-submodule $M$ of $R_{(g)}$ is {\em $g$-divisible} if $M \cap g R_{(g)} = Mg$, and hence $\overline{M} \cong M/Mg$. This is equivalent to $R_{(g)}/M$ being $g$-torsionfree.
If $M, M' \subseteq R_{(g)}$ are $g$-divisible, then so is $\Hom_R(M, M')$, by \cite[Lemma~2.12]{RSSlong}.

Let $M=\bigoplus_n M_n \in \rGr R$.  We say $M$ is \emph{left (right) bounded} if $M_n = 0$ for $n \ll 0$ (respectively $n \gg 0$).  If $M \in \rgr R$ then $M$ is left bounded.  It is clear from calculating with a resolution by finite rank graded free modules that 
$\Ext^i_R(M,N)$ is left bounded for all $i$ when $M, N \in \rgr R$.
Given $k\in \mathbb{N}$,  the shifted module 
  $M[k]$ is defined by  $M[k]_n = M_{n+k}$.
Set $\uHom_\X(M,N) = \bigoplus_n \Hom_\X(M, N[n])$ and  define $\uExt^i_\X(M, N)$  similarly.  
We let $H^i(\X, M) = \Ext^i_\X(R, M)$ denote the $i^{\rm th}$ {\em (sheaf) cohomology} of $M$.
By \cite[Theorem~5.3]{RSSshort}, $R$   satisfies the Artin-Zhang  $\chi$ condition which, 
 by \cite[Corollary~4.6]{AZ},   implies that   $\Ext^i_\X(M,N)$ is finite-dimensional for all $i$.  

Suppose that $M \in \rgr R$ is $g$-torsionfree and $N \in \rgr B$.  
We often use the facts that  
\begin{equation}
\label{equ:RtoB}
\Ext_R^i(M, N) \cong \Ext_B^i(M/Mg, N)\ \quad \text{and}\ \quad
\Ext_{\mc X}^i(M,N) \cong \Ext_{\rqgr B}^i(M/Mg, N).
\end{equation}
See \cite[Lemma 4.7]{blowdownI}.

With this background in hand, we now prove some needed preliminary results.
First, we show that maximal Cohen-Macaulay $g$-divisible submodules of $R_{(g)}$ have a nice characterization.

\begin{proposition}
\label{prop:MCMchar}
  Let $M \subseteq R_{(g)}$ be a nonzero $g$-divisible finitely generated module over an elliptic algebra $R$.   Then $M$ is MCM if and only if 
$\overline{M}$ is a saturated $B$-module.
\end{proposition}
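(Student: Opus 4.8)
The plan is to exploit the reduction formula \eqref{equ:RtoB} together with the characterizations of MCM and saturation in terms of Ext-vanishing. Since $M \subseteq R_{(g)}$ is $g$-divisible, it is $g$-torsionfree, so $\overline{M} = M/Mg$ is a $B$-module and \eqref{equ:RtoB} gives $\Ext^i_R(M, N) \cong \Ext^i_B(\overline M, N)$ for all $B$-modules $N$. The key will be to pass between $\Ext$-groups over $R$ computed against $R$ and $\Ext$-groups over $B$ computed against $B$, using the central exact sequence $0 \to R[-1] \xrightarrow{\,\cdot g\,} R \to B \to 0$.

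First I would recall that, since $R$ is Auslander-Gorenstein and CM of dimension $3$ (Notation~\ref{elliptic-defn}), $M$ being MCM means $\Ext^i_R(M,R) = 0$ for all $i \neq 0$; and since $M$ is $g$-divisible of Goldie rank one, $\GKdim M = 3$ automatically, so this is the full MCM condition. On the $B$-side, $B = B(E,\sM,\tau)$ has injective dimension $2$ and is itself CM of dimension $2$, and by the theory of $\chi$ and cohomology over $B$ (equivalently, over $\coh E$ via $\rqgr B \simeq \coh E$), a finitely generated $g$-torsionfree... rather, a finitely generated $B$-module $\overline M$ is saturated precisely when $\Ext^1_B(\kk, \overline M) = 0$, which by local duality / the Auslander condition over $B$ translates to a vanishing of the appropriate top $\Ext$-group $\Ext^2_B(\overline M, B)$ being "small" in an appropriate sense — more precisely, saturation of $\overline M$ is equivalent to $\overline M$ having no nonzero finite-dimensional submodule and $\Ext^1_B(\overline M, B)$ being finite-dimensional (equivalently, $\Ext^2_B(\overline M,B)=0$ after accounting for the curve being a divisor). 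I would make this precise using \cite[Lemma~4.7]{blowdownI} and the CM property of $B$.

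The main bridge is the long exact sequence obtained by applying $\Hom_R(M,-)$ to $0 \to R[-1] \to R \to B \to 0$:
\[
\cdots \to \Ext^i_R(M,R)[-1] \to \Ext^i_R(M,R) \to \Ext^i_R(M,B) \to \Ext^{i+1}_R(M,R)[-1] \to \cdots,
\]
combined with $\Ext^i_R(M,B) \cong \Ext^i_B(\overline M, B)$. Using that each $\Ext^i_R(M,R)$ is left bounded and finitely generated, a degree/Nakayama argument shows $\Ext^i_R(M,R)=0$ if and only if $\Ext^i_R(M,B)=0$ for the relevant range of $i$, i.e. if and only if $\Ext^i_B(\overline M, B)=0$. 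So $M$ is MCM over $R$ $\iff$ $\Ext^i_B(\overline M, B) = 0$ for $i \geq 1$... but $B$ has dimension $2$, so the honest statement is that $\Ext^1_B(\overline M,B)$ and $\Ext^2_B(\overline M,B)$ control things; the precise claim to extract is that the MCM condition forces $\Ext^1_B(\overline M, B) = 0$ and $\overline M$ torsionfree, which by the CM property of $B$ (or directly, via $\rqgr B \simeq \coh E$ and Serre duality on $E$) is exactly the condition that $\overline M$ be a saturated $B$-module. Conversely, if $\overline M$ is saturated, run the same long exact sequence backwards: the vanishing of $\Ext^{\geq 1}_B(\overline M, B)$ plus boundedness gives $\Ext^{\geq 1}_R(M,R) = 0$, hence $M$ is MCM.

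The step I expect to be the main obstacle is the careful matching of the $B$-side Ext-vanishing with the notion of \emph{saturation}: I need the equivalence "$\overline M$ saturated $\iff$ $\Ext^1_B(\kk,\overline M)=0 \iff$ $\overline M$ is torsionfree and $\Ext^2_B(\overline M, B)$ vanishes (equivalently $\Ext^1_B(\overline M,B)$ is the only possibly-nonzero higher Ext and it must vanish too for MCM)". Unwinding this requires local duality over the two-dimensional CM ring $B$, i.e. $\Ext^i_B(\kk, \overline M)^* \cong \Ext^{2-i}_B(\overline M, B^\vee)$-type statements, and keeping track of the twist by $\sM$ and the shift by $g$; some care is needed because $B$ is not commutative, so I would lean on the Auslander-Gorenstein/CM machinery of \cite[Definition~2.1]{blowdownI} and \cite{AZ} rather than classical local duality. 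Everything else — the long exact sequences, the boundedness arguments, the application of \eqref{equ:RtoB} — is routine.
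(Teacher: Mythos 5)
Your first step --- reducing the question over $R$ to one over $B$ via the long exact sequence obtained from $0 \to R[-1] \xrightarrow{\cdot g} R \to B \to 0$, the identification $\Ext^i_R(M,B)\cong \Ext^i_B(\overline M,B)$, and a left-boundedness argument to kill $\Ext^{\geq 1}_R(M,R)$ --- is exactly the paper's argument and is fine (modulo the small point that the "if and only if" at each fixed $i$ really only works for the whole range $i\geq 1$ at once, since $\Ext^i_B(\overline M,B)$ is pinched between $\Ext^i_R(M,R)$ and $\Ext^{i+1}_R(M,R)[-1]$).

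The genuine gap is in the second step, the equivalence ``$\overline M$ saturated $\iff$ $\Ext^{\geq 1}_B(\overline M,B)=0$'' for a finitely generated torsionfree $B$-submodule of $Q_{gr}(B)$. You correctly identify this as the main obstacle, but you do not prove it: you gesture at a local-duality statement $\Ext^i_B(\kk,\overline M)^*\cong \Ext^{2-i}_B(\overline M,B^\vee)$ that is not set up (saturation is a condition on the limit of $\Ext^1_B(B/B_{\geq n},\cdot)$, i.e.\ on local cohomology, not directly on $\Ext^1_B(\kk,\cdot)$ against the dualizing module), and the intermediate reformulations you offer are wrong as stated: for instance, ``$\Ext^1_B(\overline M,B)$ finite-dimensional'' does not characterize saturation --- for a non-saturated $N$ one has $\Ext^1_B(N,B)\cong \Ext^2_B(N'/N,B)$, which is finite-dimensional but nonzero. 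The paper's route is more elementary and avoids duality entirely: after clearing denominators to get $N\subseteq B$, if $N$ is saturated then $B/N$ is pure of GK-dimension $1$, hence filtered by shifted point modules, which are CM; so $B/N$ is CM of dimension $1$ and $\Ext^{\geq 1}_B(N,B)\cong \Ext^{\geq 2}_B(B/N,B)=0$. Conversely, if $N$ is not saturated, its saturation $N'$ has $N'/N$ finite-dimensional and nonzero, $N'$ is MCM by the first direction, and the AS-Gorenstein property of $B$ gives $\Ext^1_B(N,B)\cong \Ext^2_B(N'/N,B)\neq 0$. A duality argument could probably be made to work, but as written your proposal leaves the decisive equivalence unestablished and mis-states it along the way.
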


\begin{proof}
We first claim that $M$ is MCM over $R$ if and only if $\overline{M}$ is MCM over $B$.   
Apply $\Hom_R(-, R)$ to the exact 
sequence $0 \to R \overset{\timesg}{\to} R \to B \to 0$ and consider the corresponding long exact sequence  
\begin{equation}
\label{eq:reduceRtoB}
\dots \too \Ext^i_R(M, R)[-1] \overset{\timesg}{\too} \Ext^i_R(M, R) \too \Ext^i_{B}(\overline{M}, B)  \too \Ext^{i+1}_R(M, R)[-1] \too \dots
\end{equation}
where we have used \eqref{equ:RtoB}.  Note that $\GKdim(M) = 3$ and $\GKdim(\overline{M}) = 2$.  If $M$ is MCM over $R$ then it follows from 
\eqref{eq:reduceRtoB} that $\overline{M}$ is MCM over $B$.  Conversely, if 
$\overline{M}$ is MCM over $B$, then for all $i \geq 1$ multiplication by $g$ gives a surjection of right $R$-modules $\Ext^i_R(M, R)[-1] \to \Ext^i_R(M, R)$.  
Since $\Ext^i_R(M,R)$ is left bounded, this forces $\Ext^i_R(M,R) = 0$.  
So $M$ is MCM over $R$, and the claim is proved.

To finish the proof we show that for any finitely generated graded $B$-submodule $N \subseteq Q_{gr}(B)$, $N$ is MCM over $B$ if and only if 
$N$ is saturated.   By multiplying by a homogeneous element of $B$ to clear denominators we can assume that $N \subseteq B$.  If $N$ is saturated, 
then $B/N$ is pure of GK dimension 1.  The critical $B$-modules 
of dimension $1$ are the shifted point modules \cite[Lemma 2.8]{RSSshort} and so  
 $B/N$ has a finite filtration with factors that are shifted point modules.  Since a point module is CM by 
\cite[Lemma 3.3]{blowdownI}, $B/N$ is CM of GK dimension 1.  Now $\Ext^i_B(N, B) \cong \Ext^{i+1}_B(B/N, B) = 0$ for $i \geq 1$, so $N$ is MCM of GK dimension 2.  Conversely, if $N$ is not saturated, then there is a graded module $N'$ with $N \subsetneqq N' \subseteq B$, where $N'/N$ is finite dimensional  and $N' = \omega \pi(N)$ is saturated.  
Since $N'$ is MCM by the argument above, $\Ext^i_B(N', B) = 0$ for $i \geq 1$.  Hence $\Ext^1_B(N,B) \cong \Ext^2_B(N'/N, B)$.  But using that $B$ is AS Gorenstein of dimension 2, we get $\Ext^2_B(N'/N, B) \neq 0$.  Thus $\Ext^1_B(N, B) \neq 0$ and  $N$ is not CM.
\end{proof}

Note that  a MCM $R$-module $M$ is automatically reflexive:
that is, $\Hom_R(\Hom_R(M, R), R) = M$.  This follows from the Gorenstein spectral sequence \cite[Theorem 2.2]{Lev1992}.

Next, we show that Ext groups in the categories $\rgr R$ and $\X=\rqgr R$ are 
equal in  certain  circumstances.
\begin{proposition}
\label{prop:equalhom}
Let $M, N \in \rgr R$. 
\begin{enumerate}
\item If $N$ is saturated and has zero socle, then $\Hom_R(M, N) =  \uHom_\X(M, N)$.
\item If $N \subseteq R_{(g)}$ is $g$-divisible and MCM, then $\Ext^1_R(M, N) =  \uExt^1_\X(M, N)$.
\end{enumerate}
\end{proposition}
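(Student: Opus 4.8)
The plan is to deduce both statements from the general fact that, for $M, N \in \rgr R$, there is a long exact sequence relating $\Ext$-groups in $\rgr R$ to those in $\X = \rqgr R$ via the cohomology of $M$ with values in the ``obstruction'' modules $\underline{H}^0$ and $\underline{H}^1$ of $N$ (the left-bounded local cohomology pieces). Concretely, writing $N_{\mathrm{sat}} = \omega\pi(N)$ for the saturation of $N$, one has exact sequences of graded modules
\[
0 \too \Gamma(N) \too N \too N_{\mathrm{sat}} \too \underline{H}^1_{\mathfrak m}(N) \too 0,
\]
where $\Gamma(N)$ is the largest finite-dimensional submodule (so $\Gamma(N)$ is essentially the socle-torsion part) and $\underline{H}^1_{\mathfrak m}(N)$ is finite-dimensional because $R$ satisfies $\chi$. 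Applying $\uHom_R(M, -)$ (equivalently $\uExt^\bullet$) and using $\uHom_R(M, P) = \uExt^i_R(M,P)$ vanish appropriately when $P$ is finite-dimensional and $M$ has no finite-dimensional summands — but more robustly, using that for $M \in \rgr R$ and finite-dimensional $P$ the groups $\uExt^i_R(M,P)$ are finite-dimensional and agree with $\uExt^i_\X$ being zero — one gets the comparison.

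For part (1): since $N$ is saturated, $N = \omega\pi N$, so $N = N_{\mathrm{sat}}$, and since $N$ has zero socle it has no nonzero finite-dimensional submodule, i.e. $\Gamma(N) = 0$. The standard comparison (see \cite[Section 3]{AZ} or the analogous statement quoted from \cite{blowdownI}) gives an exact sequence
\[
0 \too \uHom_R(M,\Gamma(N)) \too \uHom_R(M,N) \too \uHom_\X(M,N) \too \uExt^1_R(M,\Gamma(N)),
\]
and more precisely $\uHom_\X(M,N) = \uHom_R(M, N_{\mathrm{sat}})$ when $M \in \rgr R$ — this is exactly the defining adjunction between $\pi$ and $\omega$ together with the fact that $\Hom_\X(\pi M, \pi N) = \Hom_R(M, \omega\pi N)$ for $M$ noetherian. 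Since $N = \omega\pi N$ this already gives $\uHom_R(M,N) = \uHom_\X(M,N)$; the socle hypothesis is what guarantees $N$ is saturated with no correction term needed on the submodule side, i.e. it is there to make ``$N$ saturated'' genuinely mean $N = \omega\pi N$ with no ambiguity about $\Gamma(N)$. First I would state the adjunction $\Hom_\X(\pi M, \pi N) \cong \Hom_R(M, \omega\pi N)$, then observe $\omega\pi N = N$.

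For part (2): this is the genuinely substantive one. Here $N \subseteq R_{(g)}$ is $g$-divisible and MCM. By Proposition~\ref{prop:MCMchar}, $\overline N = N/Ng$ is a saturated $B$-module; saturated $B$-modules inside $Q_{gr}(B)$ are MCM, hence (as shown in the proof of \ref{prop:MCMchar}) torsionfree of GK dimension $2$, so in particular $\overline N$ has zero socle, whence $N$ itself has zero socle and $N$ is saturated as an $R$-module (saturation of $N$ follows from saturation of $\overline N$ together with $g$-divisibility, via the sequence $0 \to N \overset{\cdot g}{\to} N \to \overline N \to 0$ and $\Ext^1_R(\kk, N)$: the long exact sequence gives $\Ext^1_R(\kk,N) \overset{\cdot g}{\to} \Ext^1_R(\kk,N) \to \Ext^1_B(\kk,\overline N)=0$, and since $\Ext^1_R(\kk,N)$ is left bounded, multiplication by $g$ being surjective forces it to vanish). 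Thus part (1) already gives $\uHom_R(M,N) = \uHom_\X(M,N)$. To upgrade to $\uExt^1$, I would use the standard five-term exact sequence comparing $\rgr R$-Ext and $\X$-Ext: for $M, N \in \rgr R$ there is an exact sequence
\[
0 \too \uExt^1_R(M, N) \too \uExt^1_\X(M,N) \too \uHom_R(M, \underline{H}^1_{\mathfrak m} N) \too \uExt^2_R(M,N) \too \cdots
\]
combined with the fact that $\underline{H}^0_{\mathfrak m}(N) = \Gamma(N) = 0$ (so the $\uExt^1_R \to \uExt^1_\X$ map is injective, using $\Gamma(N)=0$) — hence it suffices to show $\uHom_R(M, \underline{H}^1_{\mathfrak m} N) = 0$, i.e. $\Hom_R(M, P) = 0$ where $P = \underline{H}^1_{\mathfrak m}(N)$ is finite-dimensional. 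But $N$ saturated means $\underline{H}^1_{\mathfrak m}(N) = 0$ as well, so the obstruction term vanishes outright. The cleanest route: $N$ saturated $\iff$ $\underline{H}^0_{\mathfrak m}(N) = \underline{H}^1_{\mathfrak m}(N) = 0$ $\iff$ $\underline{\mathbb R\Gamma}_{\mathfrak m}(N)$ is concentrated in degrees $\ge 2$; then the spectral sequence (or exact triangle) $\underline{\mathbb R\Gamma}_{\mathfrak m}(N) \to N \to \mathbb R\omega\pi(N)$ degenerates to give $\uExt^i_R(M,N) \cong \uExt^i_\X(M,N)$ for $i \le 1$ and all $M \in \rgr R$ (the ``defect'' lives in $\uExt^{\ge 0}_R(M, \underline{\mathbb R\Gamma}_{\mathfrak m}N)$, whose contribution to degree $\le 1$ vanishes since $\underline{\mathbb R\Gamma}_{\mathfrak m}N$ starts in degree $2$).

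The main obstacle is establishing that an MCM $g$-divisible $N \subseteq R_{(g)}$ is saturated as an $R$-module — i.e. that both local cohomology pieces $\underline{H}^0_{\mathfrak m}(N)$ and $\underline{H}^1_{\mathfrak m}(N)$ vanish. The vanishing of $\underline{H}^0_{\mathfrak m}$ (zero socle) I expect to get from Proposition~\ref{prop:MCMchar} plus the structure of saturated $B$-modules as in its proof. The vanishing of $\underline{H}^1_{\mathfrak m}(N)$ — which is precisely saturation — is the real point: I would obtain it by the $\cdot g$ argument reducing to $\Ext^1_R(\kk, N)$, using $\overline N$ saturated over $B$ (so $\Ext^1_B(\kk,\overline N)=0$) and left-boundedness of $\Ext^1_R(\kk,N)$ to conclude it vanishes. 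Once $N$ is known to be saturated with zero socle, both (1) and (2) follow from the standard $\pi$–$\omega$ comparison machinery of \cite{AZ}, and the only care needed is checking that the comparison extends one degree up in (2), which is exactly where ``MCM'' (rather than just ``saturated'') is doing its work by forcing the relevant higher local cohomology to vanish in the right range.
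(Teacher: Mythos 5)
Part (1) of your proposal is fine: invoking the adjunction $\Hom_\X(\pi M,\pi N)\cong\Hom_R(M,\omega\pi N)$ and observing $\omega\pi N=N$ is equivalent to (and a little slicker than) the paper's route via $\Hom_R(\kk,N)=\Ext^1_R(\kk,N)=0$.

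Part (2) has a genuine gap, located exactly at the step you describe as ``the only care needed.'' Your central claim is that once $N$ is saturated with zero socle the comparison map $\uExt^1_R(M,N)\to\uExt^1_\X(M,N)$ is an isomorphism because the local cohomology of $N$ ``starts in degree $2$.'' The bookkeeping here is off by one. Writing $\uExt^1_\X(M,N)=\varinjlim_n\uExt^1_R(M_{\geq n},N)$ and using the long exact sequence for $0\to M_{\geq n}\to M\to M/M_{\geq n}\to 0$, the vanishing of $\Ext^1_R(\kk,N)$ (saturation) gives \emph{injectivity} of the comparison map, but its cokernel injects into $\varinjlim_n\uExt^2_R(M/M_{\geq n},N)$, which is controlled by $\Ext^2_R(\kk,N)$ — in your language, by $\uExt^2_R(M,\mathbb{R}\Gamma_{\mathfrak m}N)$, which contains a $\uHom_R(M,H^2_{\mathfrak m}N)$ term even when $\mathbb{R}\Gamma_{\mathfrak m}N$ is concentrated in degrees $\geq 2$. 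The case $M=R$ already makes the point: $\uExt^1_R(R,N)=0$ always, while $\uExt^1_\X(R,N)=\bigoplus_n H^1(\X,N[n])$ is exactly the ``second local cohomology'' of $N$ and is not killed by saturation alone. So your argument, as written, proves only injectivity.

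The missing content — which is the actual substance of the paper's proof — is the vanishing $\Ext^2_R(\kk,N)=0$, and this is where MCM is genuinely used. The paper obtains it from the sequence $\Ext^1_R(\kk,\overline N)\too\Ext^2_R(\kk,N)[-1]\overset{\timesg}{\too}\Ext^2_R(\kk,N)$: one first shows $\Ext^1_R(\kk,\overline N)=0$ (any $R$-module extension of $\kk$ by $\overline N$ is forced to be a $B$-module because $\overline N\subseteq Q_{gr}(B)$ has no socle, and then $\Ext^1_B(\kk,\overline N)=0$ because $\overline N$ is saturated over $B$ by Proposition~\ref{prop:MCMchar}), after which surjectivity of $\timesg$ together with finite-dimensionality of $\Ext^2_R(\kk,N)$ (from the $\chi$ condition) forces the vanishing. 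Note that the same $\Ext^1_R$-versus-$\Ext^1_B$ issue appears, unaddressed, in your own argument that $N$ is saturated: the long exact sequence produces $\Ext^1_R(\kk,\overline N)$, not $\Ext^1_B(\kk,\overline N)$, and identifying them is precisely the socle argument above. You have all the right ingredients in hand (zero socle of $\overline N$, saturation of $\overline N$ over $B$, $\chi$-finiteness), but the proof of (2) is incomplete until they are assembled into a proof that $\Ext^2_R(\kk,N)=0$.
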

\begin{proof}
(2) We claim that $\Ext^2_R(\kk, N) = 0$. Consider the exact sequence
$0 \to N[-1] \overset{\timesg}{\to} N \to \overline{N} \to 0$ and the corresponding exact sequence
\begin{equation}
\label{equ:monday}
\Ext^1_R(\kk, \overline{N}) \too \Ext^2_R(\kk, N)[-1] \too \Ext^2_R(\kk, N).
\end{equation}
Suppose that $\Ext^1_R(\kk, \overline{N}) \neq 0$.  Let $\overline{N} \subseteq P$ represent a nonzero class $\alpha$ in this extension group, where $P/\overline{N} \cong \kk$.  Necessarily $\overline{N}$ is essential in $P$.
If $P$ is not a $B$-module, then $Pg \neq 0$.  Since $\overline{N}g = 0$, in this case $Pg \cong \kk$.  By essentiality, $Pg \cap \overline{N} \neq 0$. This contradicts the fact  that $\overline{N} \subseteq Q_{gr}(B)$, so that $\overline{N}$ has no socle.  Hence 
$P \in \rgr B$.  This shows that $\alpha \in \Ext^1_B(\kk, \overline{N}) = 0$, because $\overline{N}$ is saturated by Proposition~\ref{prop:MCMchar}.  This contradiction proves that 
$\Ext^1_R(\kk, \overline{N}) = 0$.

Now \eqref{equ:monday} implies that there is a graded injective vector space map $\Ext^2_R(\kk, N)[-1] \hookrightarrow
 \Ext^2_R(\kk, N)$.  However, 
$\Ext^2_R(\kk, N)$ is finite dimensional  by the $\chi$ condition.  This forces $\Ext^2_R(\kk, N)= 0$ as claimed.

Next note that   $\Ext^1_R(\kk, N) = 0$, since   $g$-divisibility implies that $N$ is a saturated $R$-module.  Thus 
$\Ext^i_R(M/M_{\geq n}, N) = 0$ for $i = 1, 2$ and for all $n \in \ZZ$.  Given that 
$\uExt^1_\X(M, N) = \lim_{n \to \infty} \Ext^1_R(M_{\geq n}, N)$, the result follows from the standard long exact sequence. 

(1) Again $\Ext^1_R(\kk, N) = 0$ since $N$ is saturated, and $\Hom_R(\kk, N) = 0$ since $N$ has no socle.
Now an   argument  analogous to the proof of part (2) gives the result.  
\end{proof}

  \begin{notation}\label{line-notation}
 Since line modules play a vital role in noncommutative monoidal transformations, we recall some of their properties
 from \cite[Section~5]{blowdownI}.  If $L$ is a right (left) line module,
 then $L^\vee = \Ext^1_R(L, R)[1]$ is a left (right) line module, referred to as the {\em dual line module}.  Moreover, $L \cong L^{\vee \vee}$.
 There is a unique right (left) ideal $J$ of $R$ such that $L \cong R/J$, and, since $\Ext^1_R(L, R)_1 = \kk$, a unique module $M \subseteq Q_{\gr}(R)$ 
  such that $R \subseteq M$ with $M/R \cong L[-1]$.  We refer to $J$ as the {\em line ideal} of $L$ and $M$ as the {\em line extension} of $L$.  
 Note that line ideals and line extensions are MCM, and thus reflexive by
\cite[Lemma 5.6(2)]{blowdownI}.
 Further, the line ideal of $L^\vee$ is the reflexive dual of the line extension of $L$ \cite[Lemma 5.6(3)]{blowdownI}.   \end{notation}

For $M, N \in \X$, following Mori and Smith one defines the \emph{intersection number}
\[(M\dotms N  )=\sum_{n\geq 0} (-1)^{n+1} \dim_\kk \Ext^i_{\rqgr R}(M,N)\] (see \cite[Definition~8.4]{MS} or \cite[Definition~6.1]{blowdownI}).   The following result follows easily from this definition.

\begin{lemma}\label{lem6} 
Let $0 \to M' \to M \to M'' \to 0 $ be a short exact sequence in $\X$ and let $N \in \X$.
Then 
\[ (M \dotms N) = (M' \dotms N) + (M'' \dotms N)\quad \text{and}\quad   (N \dotms M ) = (N \dotms M') + (N \dotms M''). \qed  \] 
\end{lemma}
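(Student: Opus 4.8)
The plan is to obtain both additivity statements from the long exact sequences of $\Ext$-groups in the abelian category $\X=\rqgr R$, exactly as one proves additivity of an Euler characteristic.

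First I would record the two finiteness inputs that make the sum defining $(M\dotms N)=\sum_{i\ge 0}(-1)^{i+1}\dim_\kk\Ext^i_\X(M,N)$ a finite sum of finite numbers. Each $\Ext^i_\X(-,-)$ of noetherian objects is finite-dimensional because $R$ satisfies the Artin--Zhang $\chi$ condition (by \cite[Corollary~4.6]{AZ}, as recalled above), and $\Ext^i_\X(M,N)=0$ for $i\gg 0$ because $\X$ has finite homological dimension — this is precisely the standing smoothness hypothesis on $R$. So the Euler form $\chi(M,N):=\sum_i(-1)^i\dim_\kk\Ext^i_\X(M,N)$ is a well-defined integer and $(M\dotms N)=-\chi(M,N)$.

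Next, given $0 \to M' \to M \to M'' \to 0$ in $\X$ and $N \in \X$, I would apply the contravariant functor $\Ext^\bullet_\X(-,N)$ to obtain the long exact sequence
\[\cdots\to\Ext^i_\X(M'',N)\to\Ext^i_\X(M,N)\to\Ext^i_\X(M',N)\to\Ext^{i+1}_\X(M'',N)\to\cdots,\]
which is a bounded exact complex of finite-dimensional $\kk$-vector spaces; hence the alternating sum of the dimensions of its terms vanishes, which reads $\chi(M,N)=\chi(M',N)+\chi(M'',N)$ and therefore $(M\dotms N)=(M'\dotms N)+(M''\dotms N)$. Applying instead the covariant functor $\Ext^\bullet_\X(N,-)$ to the same short exact sequence yields the long exact sequence in the second variable, and the identical bookkeeping gives $(N\dotms M)=(N\dotms M')+(N\dotms M'')$.

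I do not expect any genuine obstacle here: the argument uses only the existence of the $\Ext$ long exact sequences in $\X$ together with the boundedness and finite-dimensionality facts recalled in the first step, so the lemma is essentially formal once those are in place — which is why it is stated in the text without a detailed proof. The only point deserving a moment's care is confirming that the long exact sequence genuinely consists of finite-dimensional terms and terminates, both of which were dispatched in the first step.
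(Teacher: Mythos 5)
Your proof is correct and is exactly the argument the paper has in mind: the lemma is stated with no proof beyond the remark that it ``follows easily from the definition,'' and the intended justification is precisely the additivity of the Euler form coming from the two $\Ext$ long exact sequences, with finite-dimensionality supplied by the $\chi$ condition and eventual vanishing by the standing smoothness (finite homological dimension) hypothesis on $\rqgr R$. Your explicit attention to those two finiteness points is if anything more careful than the paper itself.
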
 

The intersections between lines are especially important.
By \cite[Lemma~6.8 and Corollary~6.6]{blowdownI} 
and the fact that $\X=\rqgr R$ is smooth, we get the following identities. If  $L, L'$ are $R$-line modules,  then
\beq\label{peppermint}
(L \dotms L') = \grk \Ext^1_R(L, L') - \grk \Hom_R(L, L') = \grk \uExt^1_{\X}(L, L') - \grk \uHom_{\X}(L, L'),
\eeq
where $\grk V$ denotes  the torsion-free rank of $V$ as a $\kk[g]$-module.

Next, we give several versions of Serre duality for $\X$.  In the next two results, for a vector space $V$, the 
notation $V^*$ means the usual vector space dual $\Hom_\kk(V, \kk)$.

\begin{lemma}\label{lem1}
 The space $\sX$ has cohomological dimension  
  $\cd \X = 2$ in the sense that  $\Ext^i_{\X}( R, \blank) = 0$ for $i \geq 3$.  
Further, $R[-1]$ is the dualizing sheaf for $\X$ in the sense of \cite[(4-4)]{YZ}.
Finally,    
\beq \label{CM}
\Ext^i_\X( \blank, R[-1]) \cong  \Ext^{2-i}_\X(R, \blank)^* \quad \mbox{ for all $i$.}
\eeq
\end{lemma}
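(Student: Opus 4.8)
The plan is to establish all three assertions by transporting the corresponding facts about $\coh E \simeq \rqgr B$ through the exact sequence $0 \to R[-1] \overset{\timesg}{\to} R \to B \to 0$, using the fact that $R$ is Auslander-Gorenstein and CM (Notation~\ref{elliptic-defn}). First I would prove the cohomological dimension bound $\cd \X = 2$. Since $R$ is CM of $\GKdim 3$ and Auslander-Gorenstein, the dualizing-complex formalism (or directly, $\Ext^i_R(\kk, R) = 0$ for $i \neq 3$) together with the $\chi$ condition and \cite[Theorem~1.5]{blowdownI}-style results shows that for any $M \in \rgr R$ one has $\Ext^i_\X(R, M) = \uExt^i_\X(R, M)$ vanishing for $i \geq 3$; concretely, $H^i(\X, M) = \varinjlim_n \Ext^i_R(R_{\geq n}, M)$, and a free resolution of $R_{\geq n}$ of length $3$ (available since $R^\circ$ has finite global dimension by smoothness, and $\gldim R = 4$) forces the higher Ext groups to vanish once we account for the local cohomology at the irrelevant ideal, which is concentrated in degrees $\leq 3$ by the CM property. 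This gives $\Ext^i_\X(R, -) = 0$ for $i \geq 3$.

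Next I would identify the dualizing sheaf. The point is that $B = B(E, \sM, \tau)$ is AS-Gorenstein of dimension $2$ with $\omega_B$ corresponding under $\rqgr B \simeq \coh E$ to a line bundle of degree $0$; more precisely, by \cite{AV} the canonical sheaf of $\coh E$ in the sense of \cite[(4-4)]{YZ} is $\struct_E$ (up to the autoequivalence induced by $\tau$), and the shift is arranged so that $B[-1]$ (equivalently $\overline{R}[-1]$) serves as the balanced dualizing object for $\rqgr B$. I would then invoke a standard ``lifting'' argument: if $\mathcal{R}$ is a dualizing complex for $R$ restricting to the dualizing complex $\mathcal{R}_B$ of $B$ via $- \otimes^{\mathbf{L}}_R B$, and $\mathcal{R}_B$ is represented by $B[-1]$ in $\rqgr B$, then $\mathcal{R}$ is represented by $R[-1]$ in $\X$; this is exactly the kind of reduction encoded in \eqref{equ:RtoB} and \eqref{eq:reduceRtoB}. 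Since $R$ is Auslander-Gorenstein, CM, of injective dimension $4$ with $R[-4]$ (roughly) as dualizing complex over $\rgr R$, passing to $\X$ drops two cohomological degrees because $\cd \X = 2$, yielding $R[-1]$ as the dualizing sheaf of $\X$ in the sense of \cite[(4-4)]{YZ}; the two-dimensionality and the fact that $R$ is a domain of $\GKdim 3$ with $\deg R \geq 3$ pin down the shift. (One can cross-check the shift on the single object $R$ itself: $\Ext^2_\X(R, R[-1]) \cong \Hom_\X(R, R)^* = \kk$, which is forced by $H^2(\X, \struct) = \kk$, the latter coming from $H^1(E, \struct_E) = \kk$ via the long exact cohomology sequence of $0 \to R[-1] \to R \to B \to 0$.)

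Finally, \eqref{CM} is the statement that $R[-1]$ is the dualizing sheaf, unwound: by definition of a dualizing sheaf in \cite[(4-4)]{YZ} applied to $\X$ with $\cd \X = 2$, there are natural isomorphisms
\[
\Ext^i_\X(\blank, R[-1]) \cong \Ext^{2-i}_\X(R, \blank)^* = H^{2-i}(\X, \blank)^*
\]
for all $i$ and all noetherian objects; so once the dualizing sheaf has been identified this is essentially immediate. I expect the main obstacle to be the identification of the dualizing sheaf and, within that, getting the shift exactly right: one must carefully track how the balanced dualizing complex of $R$ (of the form $R[\ell]$-twisted-by-an-automorphism, with $\ell$ related to $\injdim R = 4$) descends to $\X$, and verify that the autoequivalence that would otherwise appear is trivial here because, after passing to $\rqgr$, the relevant twist by $\tau$ becomes inner/trivial on the structure sheaf — this uses that $R/gR$ is a \emph{twisted} homogeneous coordinate ring and that we only need the dualizing object up to the ambiguity allowed in \cite[(4-4)]{YZ}. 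The cohomological dimension bound and the final duality formula are then routine consequences.
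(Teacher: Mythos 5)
Your overall strategy is the paper's: use the AS--Gorenstein property of $R$ to invoke Yekutieli--Zhang Serre duality, and pin down the shift by reducing modulo $g$ to $\coh E$. The paper does this in three lines: by \cite[Proposition~4.3]{blowdownI} there is $e$ with $\Ext^i_R(\kk,R)=\delta_{i,3}\kk[e]$; then \cite[Corollary~4.3]{YZ} gives \emph{all three} assertions at once with $R[-e]$ in place of $R[-1]$; and $e=1$ is forced by comparing the long exact sequence of $0\to R[-1]\to R\to B\to 0$ with the fact that $\sO_E$ is dualizing on $E$. However, your execution has several concrete errors. First, your argument for $\cd\X=2$ would fail: you cannot resolve $R_{\geq n}$ by a length-$3$ free complex, because an elliptic algebra need not have finite global dimension (a Veronese such as $S^{(3)}$ does not, even though $S$ does; smoothness of $\rqgr R$ only gives $\gldim R^\circ<\infty$), and the assertion ``$\gldim R=4$'' is unjustified and false in general. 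The correct input is $\injdim R=3$ — not $4$ as you write later — coming from the AS--Gorenstein condition, and the vanishing of $\Ext^{\geq 3}_\X(R,-)$ is then part of what \cite[Corollary~4.3]{YZ} delivers; it is not a separate free-resolution computation.

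Second, your normalization cross-check is wrong as stated: $H^2(\X,R)=0$, not $\kk$ (just as $H^2$ of the structure sheaf of a del Pezzo surface vanishes; with $e=1$ one has $H^2(\X,R[n])\cong (R_{-n-1})^*$). The correct check is $H^2(\X,R[-1])\cong\Hom_\X(R,R)^*=\kk$, which one reads off from the segment $H^1(E,\sO_E)\to H^2(\X,R[-1])\to H^2(\X,R)$ of the long exact sequence; this is exactly how the paper determines $e$. Finally, your worry about an ambient autoequivalence is legitimate but your resolution of it (``the twist by $\tau$ becomes inner'') is off target: the twist appearing in the balanced dualizing complex is by the Nakayama automorphism of $R$, not by $\tau$, and it disappears in $\X$ simply because twisting $R$ by a graded algebra automorphism yields a module isomorphic to $R$ as a one-sided module. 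With these repairs your argument collapses to the paper's proof.
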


\begin{proof}  By \cite[Proposition~4.3]{blowdownI},   $R$ is AS-Gorenstein,  in the sense of  \cite[Definition~2.1]{blowdownI}.
 Thus there exists  $e\in \ZZ$ so that $\Ext^i_R(\kk, R) = \delta_{i,3} \kk[e]$ in $\rgr R$.
Then by \cite[Corollary 4.3]{YZ}, $R[-e]$ is the dualizing sheaf for $\X$, and $\Ext^i_\X( \blank, R[-e]) \cong \Ext^{2-i}_\X(R, \blank)^*$.
Using \eqref{equ:RtoB} we have a long exact sequence
\[ \cdots \to \Ext^i_\X(R, R[-1]) \to \Ext^i_\X(R, R) \to \Ext^i_{\rqgr B}(B, B) \to \cdots\]
Together with the fact that $B$ is the dualizing sheaf for $\rqgr B$, 
 (equivalently, $\sO_E$ is the dualizing sheaf for $\coh E$) this implies  that $e = 1$.
\end{proof}
 
For $g$-torsionfree modules we have the following stronger version of this result.

\begin{proposition}\label{prop2} 
Assume that  $N\in \gr R$ is   $g$-torsionfree.
Then $\Ext_\X^i(N, \blank) = 0$ for $i \geq 3$. For $0 \leq i \leq 2$, \
$ \Ext^i_\X(N, \blank)^* $ and $ \Ext^{2-i}_\X(\blank, N[-1])$
  are naturally isomorphic functors  from $\X  \to \rMod \kk$.
\end{proposition}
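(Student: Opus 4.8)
The plan is to bootstrap from the Serre duality statement for $R$ itself (Lemma~\ref{lem1}) using the fact that any $g$-torsionfree $N \in \gr R$ sits in a short exact sequence that relates it to $R$ and to a module over $B$, where duality is understood via $\coh E$. First I would establish the vanishing $\Ext^i_\X(N,\blank)=0$ for $i\ge 3$. Since $N$ is $g$-torsionfree, multiplication by $g$ gives $0 \to N[-1] \overset{\timesg}{\to} N \to \overline N \to 0$ with $\overline N \in \gr B$. The long exact sequence in $\Ext_\X(\blank, M)$ reduces the claim for $N$ to the claim for $N[-1]$ together with the fact that $\Ext^i_{\rqgr B}(\overline N, \blank) = 0$ for $i \ge 2$, using \eqref{equ:RtoB} to identify $\Ext^i_\X(\overline N, M) \cong \Ext^i_{\rqgr B}(\overline N, M)$ (valid since $\overline N$ is $g$-torsion, so any map factors through $M/Mg$... more carefully, one uses that $\overline N$ is a $B$-module and the change of rings for $\Ext$ from the flat-dimension-one extension $R \to B$). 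Since $\coh E$ has cohomological dimension $1$, the $B$-side vanishing holds, and a shift-and-induct argument on the $g$-adic filtration — combined with the fact that $\Ext^i_\X(N,M)$ is left bounded (hence eventually zero in each fixed internal degree under repeated shifts by $g$) and finite-dimensional by the $\chi$ condition — forces $\Ext^i_\X(N,M) = 0$ for $i \ge 3$.

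For the duality isomorphism in the range $0 \le i \le 2$, I would argue that $N[-1]$ behaves like a dualizing object relative to $N$, i.e. produce a natural transformation and check it is an isomorphism. The cleanest route: since $R[-1]$ is the dualizing sheaf for $\X$ (Lemma~\ref{lem1}), Serre duality gives $\Ext^i_\X(F, R[-1]) \cong \Ext^{2-i}_\X(R,F)^*$ naturally in $F \in \X$. To upgrade $R$ to $N$ in the first slot, note $N$ is MCM (being $g$-torsionfree of GK dimension $3$ — or one reduces to that case; in any event line ideals/extensions and the modules of interest are MCM, hence reflexive by the remark after Proposition~\ref{prop:MCMchar}), so $N = \Hom_R(\Hom_R(N,R),R)$ and there is a dual module $N^\vee := \Hom_R(N,R)$ which is again $g$-torsionfree. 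One then expects a natural isomorphism of functors $\Ext^i_\X(N, F) \cong \Ext^i_\X(R, F \otimes_{\text{(something)}} N^\vee)$ — but since we are not assuming $\X$ has a tensor structure, it is safer to proceed by comparing the two Serre-duality statements through the $B$-reduction, exactly as in the proof of Lemma~\ref{lem1} where $e=1$ was pinned down.

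Concretely, I would run the following induction. Fix $M \in \X$. The sequence $0 \to N[-1] \to N \to \overline N \to 0$ yields a long exact sequence relating $\Ext^i_\X(N,M)$, $\Ext^i_\X(N[-1],M) = \Ext^i_\X(N,M)[1]$ (after a degree shift), and $\Ext^i_{\rqgr B}(\overline N, M)$. Dually, the same sequence gives a long exact sequence relating $\Ext^{2-i}_\X(M, N[-1])$, $\Ext^{2-i}_\X(M,N)$, and $\Ext^{2-i}_{\rqgr B}(M \text{-reduction}, \overline N[-1])$ — where on the $B$-side Serre duality on $E$ says $\Ext^{j}_{\rqgr B}(\blank, \overline N[-1])^* \cong \Ext^{1-j}_{\rqgr B}(\overline N, \blank)$ because $\sO_E$ (equivalently $B$) is the dualizing sheaf for $\coh E$ and $\omega_E$ is trivial. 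Matching the two long exact sequences term by term via the five lemma, and using that the $B$-level duality already holds, promotes the isomorphism from $N[-1]$ (equivalently, from $R$ via Lemma~\ref{lem1} after clearing $g$) to $N$. Naturality in $M$ is inherited from naturality of the connecting maps and of Serre duality on $\X$ and on $\coh E$.

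The main obstacle I anticipate is the bookkeeping in the inductive/comparison step: making the two long exact sequences genuinely compatible (i.e. that the boundary maps correspond under the duality pairings, not just that the terms match), and handling the internal grading shifts by $[-1]$ correctly on both sides so the five-lemma application is legitimate. A secondary subtlety is justifying that one may reduce to $N$ being MCM — if $N$ is merely $g$-torsionfree but not MCM it could have GK dimension $\le 2$ components, but since $N$ is $g$-torsionfree and $R$ is a domain, $N$ has GK dimension $3$; still, it need not be MCM, so the reflexivity shortcut is unavailable and one really does need the $B$-reduction argument rather than a naive "dualize" argument. I would therefore present the proof entirely through the $g$-adic short exact sequence and the known duality on $\coh E$, never invoking a tensor or internal-Hom structure on $\X$.
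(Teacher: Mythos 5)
There is a genuine gap in both halves. For the vanishing $\Ext^{\geq 3}_\X(N,\blank)=0$, your d\'evissage along $0 \to N[-1] \overset{\timesg}{\to} N \to \overline N \to 0$ only yields isomorphisms $\Ext^i_\X(N,M)\cong \Ext^i_\X(N,M[1])$ for $i\geq 3$ (once the $\overline N$-terms are killed by change of rings for $R\to B$); to conclude that these common values vanish you invoke left-boundedness of $\uExt^i_\X(N,\blank)$, but that is exactly what is not available: $\Ext^i_\X(N,M)$ is a direct limit of the left-bounded spaces $\Ext^i_R(N_{\geq k},M)$ whose bounds may tend to $-\infty$, and the paper only establishes such boundedness in the special cases of Proposition~\ref{prop:equalhom}. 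The tell is that your argument never uses smoothness of $\rqgr R$, whereas the vanishing at $i=3$ fails without it; smoothness must enter somewhere, and in the paper it enters precisely here: one splits $M$ into its $g$-torsion part $G$ (handled by reducing the \emph{first} variable mod $g$ via \eqref{equ:RtoB} and $\cd(\coh E)=1$) and its $g$-torsionfree part $F$, and for $F$ one shows $\Ext^i_\X(N,F)$ is $g$-torsionfree and embeds into $\Ext^i_{R^\circ}(N^\circ,F^\circ)\otimes_\kk \kk[g,g^{-1}]$, which vanishes because $\gldim R^\circ\leq 2$. (Your parenthetical that $g$-torsionfreeness forces $\GKdim N=3$ is also false---a $g$-torsionfree module can have smaller GK dimension---though that remark is not load-bearing.)

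For the duality, your plan is a ladder comparison of two long exact sequences, but you neither construct the natural transformation being compared nor explain how the comparison terminates: reducing the statement for $N$ to the statement for $N[-1]$ plus the $\coh E$-level duality is circular without a boundedness input, which is the same obstruction as above, and the compatibility of the connecting maps with the duality pairings (which you flag) is a substantive issue rather than bookkeeping. The paper instead applies Yekutieli--Zhang directly: it forms $C=\bigoplus_{n\geq 0}\Ext^2_\X(N,R[-n])^*$, identifies it with $\omega\pi N[-1]_{\geq 0}$ using \eqref{CM}, and verifies the hypotheses of \cite[Theorem~2.2]{YZ} using the left-boundedness of $\uHom_\X(N,R)$ and $\uExt^1_\X(N,R)$ supplied by Proposition~\ref{prop:equalhom}. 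If you wish to keep the $B$-reduction strategy, you would at minimum need to prove the boundedness statements you are assuming, at which point you are most of the way to the Yekutieli--Zhang hypotheses anyway.
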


\begin{proof}
We first show that if $G \in \X$ is   $g$-torsion, then 
$\Ext^{\geq 2}_\X(N, G) = 0.$
Since $G$ is noetherian there is some $n$ such that $G g^n = 0$ and,
by induction, it suffices to prove that $\Ext^{\geq 2}_\X(N, G) = 0$ holds  when $Gg=0$. 
Let $i \geq 2$ and let $B = R/Rg$.
By \eqref{equ:RtoB} we have $\Ext^i_\X(N, G) \cong \Ext^i_{\rqgr B}(N/Ng, G)$, which is zero
since $ \rqgr B  \simeq \coh(E)$  for the  elliptic curve $E$.

Now let $F \in \X$ be noetherian and $g$-torsionfree.
Let $i \geq 3$.  As in the proof of \cite[Lemma~6.8]{blowdownI}, 
$\Ext^i_\X(N, F) $ is $g$-torsionfree and hence by \cite[Lemma~6.2]{blowdownI}  the natural map
\[\Ext^i_\X(N, F) \to \Ext^i_\X(N, F) \otimes_{\kk[g]} \kk[g, g^{-1}] \cong \Ext^i_{R^\circ}(N^\circ, F^\circ) \otimes_\kk \kk[g, g^{-1}] \] 
is injective.
As $R^\circ$ is CM,  the proof of \cite[Lemma~6.8]{blowdownI} implies that $R^\circ$ has   $\gldim R^\circ  \leq 2$.  
Thus $\Ext^i_\X(N, F) = 0$.

Let $M \in \X$ be arbitrary; then there is an exact sequence $0 \to G \to M \to F \to 0$ in $\X$, where $G$ is $g$-torsion and $F$ is $g$-torsionfree.  From the previous two paragraphs, we see that $\Ext^{\geq 3}_\X(N, M)  =0$.

By \eqref{CM},  $\Ext^2_\X(N, R[-n-1])^* \cong \Hom_\X(R, N[n])$. Thus,   if $N \neq 0$ then $\Ext^2_\X(N, \blank)  \neq 0$.
Let $C = \bigoplus_{n\geq 0} \Ext^2_\X(N, R[-n])^*$, which is a right $R$-module.
Using \eqref{CM}, we compute that
\[ C \cong \bigoplus_{n\geq 0} \Hom_\X(R, N[n-1]) = \omega\pi N[-1]_{\geq 0}.\] 
By Proposition~\ref{prop:equalhom}, $\uHom_\X(N, R)$ and $\uExt^1_\X(N, R)$ are left bounded.
Therefore,  by  \cite[Theorem~2.2]{YZ}, $\Ext^i_\X(\blank, C) \cong \Ext^i_\X(\blank, N[-1])$ is naturally isomorphic to $\Ext^{2-i}_\X(N, \blank)^*$.
\end{proof}

We remark that, as $\X$ is smooth,  it is surely the case that  $\hd \X = 2$   
and so  Proposition~\ref{prop2} will hold for all modules in $\X$; however we have not been able to prove this.

Given a $\mb{Z}$-graded vector space $M = \bigoplus_n M_n$,   write $M^*$ for the \emph{graded dual} 
$M^*= \bigoplus_n \Hom_\kk(M_{-n}, \kk)$.   The previous result has the following immediate extension to graded Ext groups.
\begin{corollary}
\label{cor:underline}
Let $N \in \rgr R$ be $g$-torsionfree.  For all $M \in \rgr R$ and $0 \leq i \leq 2$,
\[
\uExt^i_{\X}(N,M)^* \cong \uExt^{2-i}_{\X}(M, N)[-1]
\]
as graded vector spaces. \qed
\end{corollary}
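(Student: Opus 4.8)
The plan is to deduce Corollary~\ref{cor:underline} from Proposition~\ref{prop2} by passing from ungraded to graded $\Ext$ groups via the standard shift trick $\uExt^i_\X(N,M) = \bigoplus_n \Ext^i_\X(N, M[n])$, together with the observation that Proposition~\ref{prop2} provides a \emph{natural} isomorphism of functors $\Ext^i_\X(N,\blank)^* \cong \Ext^{2-i}_\X(\blank, N[-1])$. The point is that naturality lets us apply the isomorphism with argument $M[n]$ and then reassemble over all $n\in\ZZ$; the shift $[-1]$ and the graded-dual convention will bookkeep the indices.

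Concretely, I would first fix $i$ with $0\le i\le 2$ and $M\in\rgr R$, and note that since $N$ is $g$-torsionfree, Proposition~\ref{prop2} applies with the module $M[n]$ in the blank slot for each $n$, giving
\[
\Ext^i_\X(N, M[n])^* \;\cong\; \Ext^{2-i}_\X(M[n], N[-1]).
\]
Next I would rewrite the right-hand side using the shift identity $\Ext^{2-i}_\X(M[n], N[-1]) \cong \Ext^{2-i}_\X(M, N[-1-n])$, which is immediate since shifting is an autoequivalence of $\X$. Taking the direct sum over $n$ and tracking the grading conventions — the graded dual $(\bigoplus_n V_n)^*= \bigoplus_n (V_{-n})^*$ reverses the grading — yields
\[
\uExt^i_\X(N,M)^* \;=\; \Bigl(\bigoplus_n \Ext^i_\X(N,M[n])\Bigr)^* \;\cong\; \bigoplus_n \Ext^{2-i}_\X(M, N[-1-n]) \;=\; \uExt^{2-i}_\X(M, N)[-1],
\]
where the last equality is just the definition of $\uExt$ together with a reindexing of the shift. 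One should check that the isomorphisms are compatible across different $n$, which is exactly what naturality of the Proposition~\ref{prop2} isomorphism in the blank argument guarantees (applied to the shift maps $M[n]\to M[n']$ — or rather, applied with the family of objects $M[n]$, using that both sides are functors of that argument).

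The main obstacle, such as it is, is purely bookkeeping: getting the direction of the grading reversal correct in the graded dual and confirming that the single shift $[-1]$ lands in the right place after summing. There is no real content beyond Proposition~\ref{prop2}; the only thing to be careful about is that $\uExt$ is defined here (see the text preceding Proposition~\ref{prop:MCMchar}) as $\bigoplus_n \Ext_\X(M,N[n])$, so the shift inside $N[-1-n]$ must be split as $N[-1]$ shifted by $-n$, matching $\uExt^{2-i}_\X(M,N)[-1]$. Since everything in sight is finite-dimensional in each degree (by the $\chi$ condition and \cite[Corollary~4.6]{AZ}), there are no convergence issues with double duals or infinite sums, so the argument is complete once the indices are aligned.
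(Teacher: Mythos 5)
Your proposal is correct and is precisely the argument the paper intends: the corollary is stated as an ``immediate extension'' of Proposition~\ref{prop2}, obtained by applying that proposition to each shift $M[n]$ and reassembling with the graded-dual convention, exactly as you do. (The only superfluous point is your worry about compatibility across different $n$: since the claim is only an isomorphism of graded vector spaces, a degreewise isomorphism suffices and naturality is not needed.)
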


We also have the following corollary for intersection numbers of lines. 

\begin{corollary}\label{cor:symm}
If $L, L'$ are $R$-line modules, then
\begin{enumerate}
\item  
$  (L \dotms L'[k] ) = ( L \dotms L') $
 for all $k \in \ZZ$. 
 \item Moreover  $ (L \dotms L') = (L' \dotms L) = (L^\vee \dotms (L')^\vee) = ((L')^\vee \dotms L^\vee).$
\end{enumerate} 
\end{corollary}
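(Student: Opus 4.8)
\textbf{Proof plan for Corollary~\ref{cor:symm}.}

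The plan is to deduce both statements from the graded Serre duality in Corollary~\ref{cor:underline} together with the characterisation of the intersection number in \eqref{peppermint}. For part (1), the key observation is that shifting $L'$ by $k$ only reindexes the graded pieces of the Ext and Hom modules: since line modules are $g$-torsionfree (their line ideals and line extensions are MCM, hence $g$-torsionfree), \eqref{peppermint} expresses $(L \dotms L')$ as the difference $\grk \uExt^1_{\X}(L, L') - \grk \uHom_{\X}(L, L')$, and $\uExt^i_{\X}(L, L'[k]) \cong \uExt^i_{\X}(L, L')[k]$ as graded $\kk[g]$-modules. Taking torsion-free rank over $\kk[g]$ is insensitive to a grading shift, so the difference is unchanged; this gives $(L \dotms L'[k]) = (L \dotms L')$.

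For part (2), the plan is to prove each equality in turn. The middle equality $(L \dotms L') = (L^\vee \dotms (L')^\vee)$ should follow from applying the duality $(-)^\vee = \Ext^1_R(-,R)[1]$, which by the reflexivity $L \cong L^{\vee\vee}$ and the fact that line ideals/extensions are MCM interchanges the roles of $L$ and $L'$ with those of their duals; concretely one uses that $\uExt^i_R(L,L')$ can be computed via a self-dual (up to shift) procedure once one passes through $\Hom_R(-,R)$, or more cleanly one invokes Corollary~\ref{cor:underline} applied to a line ideal presentation of $L'$ and unwinds $(L')^\vee$. The outer equality $((L')^\vee \dotms L^\vee) = (L^\vee \dotms (L')^\vee)$ is then the symmetry statement applied to the line modules $L^\vee, (L')^\vee$, so it suffices to prove symmetry $(L \dotms L') = (L' \dotms L)$ for arbitrary line modules. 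For symmetry, the plan is to use Corollary~\ref{cor:underline}: since $L, L'$ are $g$-torsionfree, for $0 \le i \le 2$ we have $\uExt^i_{\X}(L, L')^* \cong \uExt^{2-i}_{\X}(L', L)[-1]$ as graded vector spaces, hence $\grk_{\kk[g]}$ of $\uExt^i_{\X}(L,L')$ equals that of $\uExt^{2-i}_{\X}(L',L)$ (graded duality and the shift do not affect $\kk[g]$-rank). Summing the alternating expression $\sum_i (-1)^{i+1}\grk \uExt^i_{\X}(L,L')$ and reindexing $i \mapsto 2-i$ — noting the sign $(-1)^{i+1} = (-1)^{(2-i)+1}$ — yields $(L \dotms L') = (L' \dotms L)$; here one uses $\Ext^{\geq 3}$ vanishing from Lemma~\ref{lem1}/Proposition~\ref{prop2} so the sum is finite.

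I expect the main obstacle to be bookkeeping rather than conceptual: carefully matching the grading shifts in Corollary~\ref{cor:underline} (the $[-1]$ twist) against the shift-invariance established in part (1), and making sure the alternating sum reindexes correctly so that no spurious sign appears. A secondary point to check is that \eqref{peppermint} — stated there for $(L \dotms L')$ — applies symmetrically, i.e. that $(L' \dotms L)$ likewise equals $\grk \uExt^1_{\X}(L',L) - \grk \uHom_{\X}(L',L)$ (it does, since the roles of the two line modules are interchangeable in the cited results from \cite{blowdownI}), so that comparing the two reduces exactly to the rank identities coming from duality. Once these are in place the corollary is immediate, with part (1) used to absorb the $[-1]$ shifts appearing in part (2).
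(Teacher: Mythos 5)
Your overall strategy for part (2) --- Serre duality plus part (1) to absorb shifts for the two symmetry equalities, and the line-module duality $(-)^\vee$ together with \eqref{peppermint} for the middle one --- is the same as the paper's, and your part (1) is fine in outline (the paper simply cites \cite[Proposition~6.4(1)]{blowdownI}, while your version implicitly extends \eqref{peppermint} to shifted line modules, which is harmless). However, the symmetry step as written contains a genuine error: graded vector-space duality does \emph{not} preserve $\kk[g]$-torsion-free rank. The graded dual of $\kk[g]$ is concentrated in degrees $\leq 0$ and every element of it is killed by a power of $g$, so it has rank $0$. Concretely, $\uHom_\X(L,L)$ contains $\kk[g]\cdot \mathrm{id}_L$ (as $L$ is $g$-torsionfree), hence has rank at least $1$, while its Serre dual $\uExt^2_\X(L,L)[-1]$ is a torsion $\kk[g]$-module of rank $0$. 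So the asserted identity $\grk \uExt^i_\X(L,L') = \grk \uExt^{2-i}_\X(L',L)$ fails already for $i=0$, and the alternating sum of $\grk$'s cannot be reindexed as you propose. (Note also that the intersection number is by definition an alternating sum of the \emph{dimensions} of the finite-dimensional spaces $\Ext^i_\X(L,L')$, not of ranks of the $\uExt$'s; \eqref{peppermint} is a two-term consequence of that, valid because $\uExt^2$ between line modules is torsion.)

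The repair is to run the duality degree by degree on the finite-dimensional $\Ext$ spaces, which is exactly what Proposition~\ref{prop2} provides: $\dim_\kk \Ext^i_\X(L,L'[n]) = \dim_\kk \Ext^{2-i}_\X(L'[n],L[-1]) = \dim_\kk \Ext^{2-i}_\X(L',L[-1-n])$, whence $(L \dotms L'[n]) = (L' \dotms L[-1-n])$ straight from the definition of the intersection product, and part (1) then removes the shifts. This is how the paper proves the first and third equalities. For the middle equality your sketch is too vague to assess (``should follow'', ``or more cleanly one invokes''); the paper derives it from \cite[Lemma~5.6(4)]{blowdownI}, which relates the $\Hom$ and $\Ext^1$ groups between $L, L'$ to those between $(L')^\vee, L^\vee$, combined with \eqref{peppermint}. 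You would need either to cite that lemma or to prove the corresponding isomorphisms of $\Ext$ groups explicitly; reflexivity of line ideals alone does not immediately yield them.
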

\begin{proof}  (1)   This  is  \cite[Proposition~6.4(1)]{blowdownI}. 

(2) The first and third equalities   follow from  Proposition~\ref{prop2}, together with part~(1).
The second equality  follows from \cite[Lemma~5.6(4)]{blowdownI} and \eqref{peppermint}.  
\end{proof}

Finally, we compute the cohomology of a line module.

\begin{lemma}\label{lem8} 
Let $L_R$ be a line module.  For all $n \in \ZZ$, we have
\[ \dim H^0(\X, L[n]) = \max(n+1, 0), \quad   \dim H^1(\X, L[n]) = \max(-n-1, 0),
  \quad \mbox{and} \quad   H^2(\X, L[n]) = 0.\]
Thus
$ (R \dotms L[n]) = -n-1 \  $ and $ \ (L[n] \dotms R) = -n.$
\end{lemma}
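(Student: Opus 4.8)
The plan is to compute the three cohomology groups $H^i(\X, L[n])$ in turn, combining the Serre duality \eqref{CM} with a reduction modulo $g$, and then to read the two intersection numbers off as Euler characteristics. Recall from \cite[Section~5]{blowdownI} that a line module $L$ is $g$-torsionfree; hence $\overline{L} := L/Lg$ is a cyclic $B$-module of Hilbert series $(1-s)^{-1}$, i.e.\ a point module over $B$, which by \cite[Lemma~3.3]{blowdownI} is Cohen--Macaulay of GK-dimension one. Under the equivalence $\rqgr B \simeq \coh E$ such a module is therefore a length-one skyscraper $\sO_p$ for some $p \in E$, so that $\dim H^0(\X, \overline{L}[n]) = 1$ and $H^i(\X, \overline{L}[n]) = 0$ for $i \geq 1$ and \emph{every} $n \in \ZZ$, where we have transported cohomology from $\rqgr B$ to $\X$ via \eqref{equ:RtoB}.

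First I would dispose of $H^2$ and $H^1$ by Serre duality. Applying \eqref{CM} with $i = 0$ gives $H^2(\X, L[n])^* \cong \Hom_\X(L[n], R[-1])$; since $R$ is an AS-Gorenstein domain it is saturated with zero socle, so Proposition~\ref{prop:equalhom}(1) identifies this with a graded component of $\Hom_R(L, R)$, which vanishes because $L$ is a torsion module while $R$ is a torsionfree domain. Hence $H^2(\X, L[n]) = 0$. Applying \eqref{CM} with $i = 1$, together with Proposition~\ref{prop:equalhom}(2) (valid as $R[-1]$ is $g$-divisible and MCM), gives $H^1(\X, L[n])^* \cong \Ext^1_\X(L[n], R[-1])$, which is the degree $-1-n$ part of $\Ext^1_R(L, R)$. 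By Notation~\ref{line-notation}, $\Ext^1_R(L, R) = L^\vee[-1]$ for the dual line module $L^\vee$, which has Hilbert series $(1-s)^{-2}$; therefore $\dim H^1(\X, L[n]) = \dim L^\vee_{-n-2} = \max(-n-1, 0)$.

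It remains to handle $H^0$. Reduction modulo $g$ yields a short exact sequence $0 \to L[n-1] \to L[n] \to \overline{L}[n] \to 0$ in $\X$, with first map multiplication by $g$; feeding in the cohomology of $\overline{L}[n]$ computed above, together with the vanishing of $H^2(\X, L[\bullet])$, collapses its long exact sequence to
\[ 0 \to H^0(\X, L[n-1]) \to H^0(\X, L[n]) \to \kk \to H^1(\X, L[n-1]) \to H^1(\X, L[n]) \to 0. \]
Substituting the formula for $H^1$ from the previous paragraph, a short bookkeeping argument shows that $\dim H^0(\X, L[n])$ is constant for $n \leq -1$ and grows by exactly $1$ with each unit increase of $n$ once $n \geq 0$. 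Finally, $\omega\pi L$ is finitely generated, because $R$ satisfies $\chi$ by \cite[Theorem~5.3]{RSSshort}, hence left bounded, so $H^0(\X, L[n]) = (\omega\pi L)_n = 0$ for $n \ll 0$; this forces the constant value in low degrees to be $0$, giving $\dim H^0(\X, L[n]) = \max(n+1, 0)$. The intersection numbers now follow: since $\dim H^0 - \dim H^1 + \dim H^2 = \max(n+1,0) - \max(-n-1,0) = n+1$, we get $(R \dotms L[n]) = -(n+1)$; while Proposition~\ref{prop2}, applied to the $g$-torsionfree module $L[n]$, gives $\dim \Ext^i_\X(L[n], R) = \dim H^{2-i}(\X, L[n-1])$, so that $(L[n] \dotms R)$ is minus the Euler characteristic of $L[n-1]$, namely $-n$.

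I expect the last step — the computation of $H^0$ — to be the main obstacle. Serre duality only pins down $H^1$ and $H^2$, so the recursion coming from reduction mod $g$ determines the function $n \mapsto \dim H^0(\X, L[n])$ only up to an additive constant; fixing that constant to be zero requires the finiteness input that $\omega\pi L$ is noetherian, equivalently the $\chi$-condition for $R$. (It is worth noting that $B$ itself fails $\chi$ — the saturation of a point module over $B$ is not finitely generated — so this step genuinely uses that one works over the elliptic algebra $R$ rather than over $B$.)
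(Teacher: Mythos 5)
Your computations of $H^2$ and $H^1$ follow the paper's route exactly: Serre duality \eqref{CM} plus Proposition~\ref{prop:equalhom} and the identification of $\Ext^1_R(L,R)$ with a shifted dual line module, and your derivation of the two intersection numbers (via Proposition~\ref{prop2} for $(L[n]\dotms R)$) is also the intended one. The genuine divergence is in the $H^0$ step, and that is where there is a gap.

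The problem is the assertion that ``$\omega\pi L$ is finitely generated because $R$ satisfies $\chi$, hence left bounded.'' The $\chi$ condition does not imply that saturations of noetherian modules are finitely generated or left bounded: the standard counterexample is a point module, whose saturation is nonzero in every degree $n\in\ZZ$, and this already happens over $\kk[x,y]$, which certainly satisfies $\chi$. (For the same reason your parenthetical claim that $B$ fails $\chi$ is not right --- $B=B(E,\sN,\tau)$ satisfies $\chi$ by Artin--Zhang, and the unboundedness of $\omega\pi M_p$ is compatible with that: what $\chi_1$ controls is the truncations $(\omega\pi M)_{\geq n}$, not $\omega\pi M$ itself. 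Equivalently, \cite[Theorem~4.5]{AZ} gives right-boundedness of $\underline{H}^i$ for $i\geq 1$ only; left-boundedness of $\underline{H}^0$ is not part of the $\chi$ package.) So the normalization of your recursion is unjustified as written. The fact you need is true, but for a different reason: a line module is \emph{saturated} with zero socle (\cite[Lemmata~5.2 and~5.6(5)]{blowdownI}, quoted in Notation~\ref{line-notation} territory and used in the paper's proof), so $\omega\pi L = L$ is left bounded. Once you invoke saturatedness, however, Proposition~\ref{prop:equalhom}(1) gives $H^0(\X,L[n])=\Hom_R(R,L[n])_0=L_n$ in one line, with dimension $\max(n+1,0)$; this is what the paper does, and it makes the reduction-mod-$g$ recursion for $H^0$ unnecessary. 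I would replace that whole paragraph with this direct computation.
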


\begin{proof}
First, $\uHom_\X(L, R) = \Hom_R(L, R) = 0$ by Proposition~\ref{prop:equalhom}.
As $L$ is Goldie torsion,  $H^2(\X, L[n]) = 0$ follows from  \eqref{CM}.  Thus, by
again using Proposition~\ref{prop:equalhom} and \cite[Lemma~5.6(1)]{blowdownI}, 
\[\dim_\kk \Ext^1_\X(L[n], R) = \dim_\kk \Ext^1_{\rgr R}(L[n], R) =  \dim_\kk \Ext^1_R(L, R)_{-n} = \begin{cases} -n & n \leq 0 \\ 0 & \mbox{else.} \end{cases}.\]
By \eqref{CM}, we have $\dim \Ext^2_\X(L[n], R) = \dim \Hom_\X(R, L[n-1])$.
Note that a line module $L$ is saturated and $g$-torsionfree \cite[Lemmata~5.2 and~5.6(5)]{blowdownI}.  In particular, $L$ has 
zero socle.  By Proposition~\ref{prop:equalhom}, 
\[ \dim_{\kk} \Hom_\X(R, L[n-1]) = \dim_{\kk} \Hom_{\rgr R}(R, L[n-1] )
= \begin{cases} n & n \geq 0 \\ 0 & \mbox{else}. \end{cases}\]

Now using \eqref{CM} and the definition of the intersection product 
all of the claims follow. \end{proof}

%%%%%%%%%%%%%%%%%%%
  %%%%%%%%%%%%%%%%%%%

\section{Twisting sheaves for elliptic algebras of degree nine}\label{RECOG1}
  
In this section and the next, we study elliptic algebras $R$ of degree $9$ and develop a theory
 which will allow  us to recognise when  such an algebra   is a  Sklyanin elliptic algebra $T=S^{(3)}$ for $S=S(E,\sigma)$. 
  
If $T$ is such a  Sklyanin elliptic algebra, then  $\rqgr T \simeq \rqgr S$ and as $\rqgr S$ is a noncommutative projective plane
 it has   objects $\{ \mc{O}(k)  \, | \,  k \in \mb{Z} \}$ which play the role of the Serre twisting sheaves on $\PP^2$.
The objects $\mc{O}(k)$ correspond to the right $T$-modules $\bigoplus_{n  \in \ZZ} S_{3n+k}$; in particular, we have the $T$-modules $H = \bigoplus_{n\geq1} S_{3n-1} $ and 
$M = \bigoplus_{n \geq 0} S_{3n+1} $.  
Ignoring degree shifts, it is easy to check that  $ \End_T(H)  = \End_T(M) = T,$ while 
$ H^* = TS_1, $  and $M^* = TS_2.$  Here, we have returned to the standard notation $H^*=\Hom_T(TS_1,T)$, etc.   Similarly, 
$ \Hom_T(H, M) = MH^* = S_1 T S_1 = H$ and 
$ \Hom_T(M,H) = M.$
Consider $H \oplus T \oplus M$ as a column vector and $\End_T(H \oplus T \oplus M) $ 
as a subalgebra of $M_{3\times 3}(S)$.
Taking Hilbert series of the above, and with the correct degree shifts, it follows routinely that:
  
 \beq\label{hilbmat}
\hilb  \End_T(H \oplus T \oplus M) = 
\begin{pmatrix}  
 \frac{  1 + 7s + s^2}{(1-s)^3}  &    \frac{6s+3s^2}{(1-s)^3}&   \frac{3s+6s^2}{(1-s)^3}  \\
  \frac{3+6s}{(1-s)^3}  &   \frac{1 + 7s + s^2}{(1-s)^3} &   \frac{6s+3s^2}{(1-s)^3}  \\
   \frac{6+3s}{(1-s)^3}&   \frac{3+6s}{(1-s)^3}  &   \frac{1 + 7s + s^2}{(1-s)^3}
 \end{pmatrix}. 
\eeq
  
The main result of this section, Proposition~\ref{prop:recT}, gives sufficient conditions for a degree 9 elliptic algebra $T$ to have right modules  $H, M$  for which  \eqref{hilbmat} holds, and hence such that their  images in $\rqgr T$ play the role of $\mc{O}(-1)$ and $\mc{O}(1)$.  

Before stating that result we need a few observations from \cite{AV}. 
Given an elliptic algebra $T$, set $B = T/gT=B(E, \sN, \tau)$; thus $B=\bigoplus B_n$, where 
$B_n=H^0(E, \mc{N}_n)$ and $\mc{N}_n = \mc{N}\otimes\cdots\otimes \mc{N}^{\tau^{n-1}}$, under the notation  $\mc F^\tau = \tau^* \mc F$
  for a sheaf $\mc F\in \coh(E)$.
  The isomorphism classes of $B$-point modules  \label{point-defn} are in one-to-one correspondence 
with the closed points of $E$; explicitly, if $p \in E$ has skyscraper sheaf $\mc{O}_p\cong \mc{O}_E/\mc{I}_p$, 
then $p \in E$ corresponds to the point module $M_p = \bigoplus_{n \geq 0} H^0(E, \mc{O}_p \otimes \mc{N}_n)\cong B/I_p$   
for the  \emph{point ideal}
$I_p = \bigoplus_{n \geq 0} H^0(E, \mc{I}_p \otimes \mc{N}_n) \subseteq B$. 
The  images $\pi(M_p)$ of the point modules $M_p$ are the simple objects in $\rQgr B$.  We will also frequently use the 
fact that $M_p[n]_{\geq n}\cong M_{\tau^n p}$ for any $n$ (see, for example, \cite[(3.1)]{blowdownI}).
Of course simple objects in $B \lQgr$ are also parameterised by closed points of $E$, and we denote these left point modules by 
$M^\ell_p $, where $M^\ell_p := \bigoplus_{n \geq 0} H^0((\sO_p)^{\tau^{n-1}} \otimes \sN_n)$.

If $M$ is  a $g$-torsionfree right $T$-module such that $\pi (M/Mg)$ has finite length with composition factors $\pi(M_{p_1}), \dots, \pi(M_{p_n})$, we say that the {\em divisor of $M$} is $\Div M = p_1 + \dots + p_n$.   In particular, if $L$ is a line module then $\Div L$ is a single point.  
  Finally,   graded vector spaces $V, V'$ are  called {\em numerically equivalent}, written $V \equiv V'$, if $\hilb V = \hilb V'$.
We often write $V\equiv \hilb V.$  

\begin{notation}\label{notation:recT}  Let $T$ be an elliptic algebra of degree $9$, with $B = T/gT = B(E, \mc{N}, \tau)$.
We will be interested in the following three conditions on $T$.
\begin{enumerate}
\item [$(i)$]  $T$ has a $g$-divisible right ideal $H$ with
\[
\overline{H} = \bigoplus_{n \geq 0} H^0(E, \mc{N}_n(-a-b-c))
\]
 for some points $a, b, c \in E$,  such that $\hilb \End_T(H) = \hilb T$.
\item[$(ii)$] $T$ has a $g$-divisible right module $M$ with $T \subseteq M \subseteq T_{(g)}$
such that 
\[
\overline{M} = \bigoplus_{n \geq 0} H^0(E, \mc{N}_n(d+e+f))
\]
 where   $d, e, f \in E$,  such that $\hilb \End_T(M) = \hilb T$.
\item[$(ii)'$] 
$T$ has a $g$-divisible  left ideal $H^{\vee}$  
with 
\[
\overline{H^{\vee}} = \bigoplus_{n \geq 0} H^0(E, \mc{N}_n(-\tau^{-n}d - \tau^{-n}e - \tau^{-n}f))
\]
 where $d, e, f \in E$, and such that $\hilb \End_T(H^{\vee}) = \hilb T$.
\end{enumerate}
 
\end{notation}

\begin{proposition} 
\label{prop:recT}
Let $T$ be an  elliptic algebra of degree $9$,   
set $B = T/gT = B(E, \mc{N}, \tau)$ and
 consider the conditions for Notation~\ref{notation:recT}.  
\begin{enumerate}
\item  Assume that \eqref{notation:recT}$(i,ii)$ hold and set  $N^{(1)} = H$, $N^{(2)} = T$, $N^{(3)} = M$.  Then 
\begin{equation}\label{equ:recT1}
\overline{ \Hom_T (N^{(i)}, N^{(j)}) } = \Hom_B(\overline{N^{(i)}}, \overline{N^{(j)}})
\qquad \text{ for 
all }  i,j \in \{1, 2, 3 \}.
\end{equation}
Moreover \eqref{hilbmat} holds;  that is,
\begin{equation}\label{equ:recT}
\hilb 
\bigg( \Hom_T(N^{(j)}, N^{(i)})  \bigg)_{ij}
= 
\begin{pmatrix}
 \frac{1 + 7s + s^2}{(1-s)^3}&  \frac{6s+3s^2}{(1-s)^3}& \frac{3s+6s^2}{(1-s)^3}  \\
\frac{3+6s}{(1-s)^3}  &\frac{1 + 7s + s^2}{(1-s)^3} & \frac{6s+3s^2}{(1-s)^3}  \\
 \frac{6+3s}{(1-s)^3}& \frac{3+6s}{(1-s)^3}  & \frac{1 + 7s + s^2}{(1-s)^3}
 \end{pmatrix}. 
\end{equation}
\item Conditions \eqref{notation:recT}$(ii)$ and \eqref{notation:recT}$(ii)'$ are equivalent.  
Indeed,  $M$ satisfies \eqref{notation:recT}$(ii)$ if and only if 
 $H^{\vee} = \Hom_T(M, T)$ 
satisfies \eqref{notation:recT}$(ii)'$. Similarly $H^{\vee}$ satisfies \eqref{notation:recT}$(ii)'$
 if and only if $M = \Hom_T(H^{\vee}, T)$ satisfies \eqref{notation:recT}$(ii).$
\end{enumerate}
\end{proposition}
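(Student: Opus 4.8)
The plan is to prove both parts by reducing everything modulo $g$ to elementary computations in $\coh E\simeq\rqgr B$, using \eqref{equ:RtoB} and Proposition~\ref{prop:MCMchar} throughout. Observe first that $\overline H$, $B=\overline T$, $\overline M$ and the left-module reduction in \eqref{notation:recT}$(ii)'$ are all section modules of line bundles on $E$, hence saturated $B$-modules; so by Proposition~\ref{prop:MCMchar} every module occurring below is MCM over $T$, hence reflexive and $g$-divisible with well-behaved $T$-duals. I would dispose of part~(2) first, as it is formal: for reflexive $M$ the natural anti-homomorphism $\End_T(M)\to\End_{T^{\op}}(\Hom_T(M,T))$, $\phi\mapsto(\psi\mapsto\psi\circ\phi)$, is an isomorphism (its inverse is again ``dualise''), and $\Hom_{T^{\op}}(\Hom_T(M,T),T)=M$ by reflexivity, so $M\mapsto\Hom_T(M,T)$ and $H^\vee\mapsto\Hom_{T^{\op}}(H^\vee,T)$ are mutually inverse, interchange left ideals of $T$ with right $T$-submodules of $T_{(g)}$ containing $T$ (using that $T$ is essential in each), and preserve the condition $\hilb\End=\hilb T$. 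It only remains to match the reductions: since $M$ is MCM, $\Ext^1_T(M,T)=0$, so applying $\Hom_T(M,-)$ to $0\to T[-1]\overset{\timesg}{\longrightarrow}T\to B\to 0$ and using \eqref{equ:RtoB} gives $\overline{\Hom_T(M,T)}\cong\Hom_B(\overline M,B)$; evaluating this $B$-dual of a line bundle through the equivalence $\rqgr B\simeq\coh E$ of \cite{AV} \emph{for left modules} — which is precisely where the twist by $\tau^{-n}$ enters, as in the formula $M^\ell_p=\bigoplus_{n\ge0}H^0((\sO_p)^{\tau^{n-1}}\otimes\sN_n)$ — yields $\bigoplus_{n\ge0}H^0(E,\sN_n(-\tau^{-n}d-\tau^{-n}e-\tau^{-n}f))$, i.e.\ condition \eqref{notation:recT}$(ii)'$. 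The reverse passage is symmetric.

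For part~(1), set $N^{(1)}=H$, $N^{(2)}=T$, $N^{(3)}=M$. The basic mechanism is that for $g$-divisible finitely generated $P,P'\subseteq T_{(g)}$ the reduction map $\overline{\Hom_T(P,P')}\to\Hom_B(\overline P,\overline{P'})$ is always injective (a homomorphism with image in $gP'$ factors through $g$, since $P'$ is $g$-divisible), and is an isomorphism whenever $\Ext^1_T(P,P')=0$ — apply $\Hom_T(P,-)$ to $0\to P'[-1]\overset{\timesg}{\longrightarrow}P'\to\overline{P'}\to 0$ and use \eqref{equ:RtoB}. Now $\Ext^1_T(N^{(i)},T)=0$ for all $i$ because $N^{(i)}$ is MCM, which settles \eqref{equ:recT1} for the column $j=2$; the row $i=2$ is the tautology $\Hom_T(T,N^{(j)})=N^{(j)}$; and on the diagonal one compares Hilbert series directly: $\hilb\End_B(\overline H)=\hilb\End_B(\overline M)=\hilb B$ (the $\sN_n$-twisted endomorphisms of a line bundle on $E$), while the hypotheses together with $g$-divisibility give $\hilb\overline{\End_T(H)}=\hilb\overline{\End_T(M)}=(1-s)\hilb T=\hilb B$, so the injections are bijections there too.

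The crux is the two ``corner'' entries $\Hom_T(H,M)$ and $\Hom_T(M,H)$; here the point is to prove $\Ext^1_T(H,M)=0$. Once this is known, Serre duality for $\X$ (Proposition~\ref{prop:equalhom}(2) and Corollary~\ref{cor:underline}, applicable because $H$ and $M$ are $g$-torsionfree and $g$-divisible MCM) gives the second corner for free, via
\[
\Ext^1_T(M,H)^*\ \cong\ \uExt^1_\X(M,H)^*\ \cong\ \uExt^1_\X(H,M)[-1]\ \cong\ \Ext^1_T(H,M)[-1]\ =\ 0 ,
\]
and then \eqref{equ:recT1} holds for both corners. To prove $\Ext^1_T(H,M)=0$ I would first use the short exact sequence $0\to T\to M\to M/T\to 0$ together with $\Ext^1_T(H,T)=\Ext^2_T(H,T)=0$ ($H$ being MCM) to get $\Ext^1_T(H,M)\cong\Ext^1_T(H,M/T)$; then reduce mod $g$ (here $M/T$ is $g$-torsionfree and $\overline{M/T}=\overline M/B$ reduces to a torsion sheaf on $E$), which forces the relevant $\Ext^1_B$-term, and hence the cokernel of multiplication by $g$ on $\Ext^1_T(H,M)$, to be concentrated in negative internal degrees; finally combine this degree restriction with the facts that $\Ext^1_T(H,M)$ is finite dimensional (hence $g$-torsion) and that it is the graded dual of $\Ext^1_T(M,H)$ to conclude that $\Ext^1_T(H,M)=0$. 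This corner vanishing, together with the $g$-divisibility of all the $\Hom_T(N^{(i)},N^{(j)})$, is the one genuinely non-formal input.

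With \eqref{equ:recT1} established, \eqref{equ:recT} is immediate: in $\rqgr B\simeq\coh E$ each $\Hom_B(\overline{N^{(i)}},\overline{N^{(j)}})$ equals $\bigoplus_{n\ge0}H^0(E,\sL_{ij}\otimes\sN_n)$ for an explicit line bundle $\sL_{ij}$ on $E$ of degree $0$, $\pm3$ or $\pm6$ according to the position $(i,j)$, whose Hilbert series is read off from Riemann--Roch on $E$; dividing by $1-s$ then reproduces the stated $3\times3$ matrix. The main obstacle throughout is getting the $\coh E$ computations exactly right — in particular the $\tau$-twists that distinguish the left- and right-module pictures over the TCR $B$ and are responsible for the asymmetric-looking entries of \eqref{notation:recT} — together with the corner vanishing $\Ext^1_T(H,M)=0$ described above.
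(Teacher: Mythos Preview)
Your overall architecture matches the paper's: reduce \eqref{equ:recT1} to $\Ext^1_T(N^{(i)},N^{(j)})=0$ via the long exact sequence \eqref{equ:hi}, handle the diagonal and the $i=2$/$j=2$ entries directly, and use Corollary~\ref{cor:underline} to pair the two corners so that only one needs a direct argument. Part~(2) is also essentially as in the paper, though the paper makes the passage from $\overline{M}$ to $\overline{H^\vee}$ concrete by tracking the point-module filtration of $M/T$ through $\Ext^1_T(-,T)$ via \cite[Lemma~5.4]{blowdownI}, rather than invoking the left-module $\coh E$ equivalence directly.

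The gap is your corner argument, in two places. First, from the fact that $\overline{M}/B$ corresponds to a torsion sheaf on $E$ you conclude that $\Ext^1_B(\overline{H},\overline{M}/B)$ lives in negative degrees. But $\Ext^1_{\coh E}(\text{line bundle},\text{torsion})=0$ only gives $\uExt^1_{\rqgr B}(\overline{H},\overline{M}/B)=0$; comparing this with $\Ext^1_B$ needs control of $\Ext^2_B(\kk,\overline{M}/B)$, and $\overline{M}/B$ is filtered by point modules, which are \emph{not} saturated over $B$, so no such control is available and you give no argument for any degree restriction. Second, even granting your degree claim, the combination ``$\coker(g)$ on $V:=\Ext^1_T(H,M)$ lives in negative degrees, $V$ is finite-dimensional, and $V^*\cong\Ext^1_T(M,H)[-1]$'' only forces the \emph{bottom} nonzero degree of $V$ to be negative; to conclude $V=0$ you would also need a bound on the top degree (or a symmetric constraint on $\Ext^1_T(M,H)$, which does not arise in the same way), and none is supplied.

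The paper attacks the opposite corner, proving $\Ext^1_T(M,H)=0$ by a Hilbert-series squeeze on \eqref{equ:lolz}: upper bounds $\hilb\Hom_T(M,H)\le(3s+6s^2)/(1-s)^3$ and $\hilb\Hom_T(M,T/H)\le 3s/(1-s)^2$, obtained from the injections of the reductions mod $g$ into the corresponding $\Hom_B$'s, sum exactly to the already-established $\hilb\Hom_T(M,T)$, forcing equality throughout and hence $\Ext^1_T(M,H)=0$. The non-formal input is the vanishing $\Hom_B(\overline{M},B/\overline{H})_0=0$, proved by noting that a nonzero degree-$0$ map would surject onto the cyclic module $B/\overline{H}$ and hence onto the point module $M_a$, contradicting the shape of $\overline{M}$.
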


\begin{proof}
(1)
The    $N^{(i)}$ are $g$-divisible by hypothesis, and hence so is each    $\Hom_T(N^{(i)}, N^{(j)})$, by \cite[Lemma~2.12]{RSSlong}.
By hypothesis, we may write 
$\overline{N^{(i)}} = \bigoplus_{n \geq 0} H^0(E, \mc{G}_i \otimes \mc{N}_n)t^n$  where    $\mc{G}_1 = \mc{O}_E(-a-b-c)$, 
$\mc{G}_2 = \mc{O}_E$, and $\mc{G}_3 = \mc{O}_E(d + e + f)$.
Then \cite[Lemma~2.3]{blowdownI} implies that 
\begin{equation}\label{equ:recT00}
\Hom_B(\overline{N^{(j)}}, \overline{N^{(i)}}) \ = \ 
\bigoplus_{n \geq 0} H^0(E, (\mc{G}_j^{\tau^n})^{-1} \mc{G}_i \otimes \mc{N}_n)t^n
\  \subseteq  \  k(E)[t, t^{-1}; \tau]
\end{equation}
for all $i, j $.  A straightforward calculation, using  \cite[Lemma~2.2]{blowdownI}, 
shows that    $\hilb  
\big( \Hom_B(\overline{N^{(j)}}, \overline{N^{(i)}}) \big)_{ij}$
 is equal to $(1-s)$ times the matrix on the right hand side of   \eqref{equ:recT}.   
Since $\Hom_T(N^{(j)}, N^{(i)})$ is $g$-divisible,  
this shows that  equality in the $(i,j)$-entry of  \eqref{equ:recT} is equivalent to having 
$\overline{\Hom_T(N^{(j)}, N^{(i)})} = \Hom_B(\overline{N^{(j)}}, \overline{N^{(i)}})$.  Thus we just need to prove \eqref{equ:recT1}.

Using \eqref{equ:RtoB} there is an exact sequence
\begin{equation}
\label{equ:hi}
0 \too \Hom_T(N^{(j)}, N^{(i)})[-1] \overset{\timesg}{\too} \Hom_T(N^{(j)}, N^{(i)}) \too \Hom_{B}(\overline{N^{(j)}}, \overline{N^{(i)}}) \overset{\alpha}{\too} \Ext^1_T(N^{(j)}, N^{(i)})[-1]  
\end{equation}
and so for each $i,j$ is is sufficient to prove that 
$\Ext^1_T(N^{(j)}, N^{(i)}) = 0$.  Now each
$N^{(i)}$ is $g$-divisible, and it is easy to see that $\overline{N^{(i)}}$ is saturated by Hypotheses \eqref{notation:recT}(i,ii).  So each $N^{(i)}$ is MCM by Proposition~\ref{prop:MCMchar}.  Therefore,
 by Proposition~\ref{prop:equalhom} and Corollary~\ref{cor:underline}, we have 
\[
\Ext^1_T(N^{(j)}, N^{(i)}) = \uExt^1_{\X}(N^{(j)}, N^{(i)}) = 
\uExt^1_{\X}(N^{(i)}, N^{(j)})^*[1] =  \Ext^1_T(N^{(i)}, N^{(j)})^*[1].
\]
Thus for each $(i,j)$ we have $\Ext^1_T(N^{(j)}, N^{(i)}) = 0$ 
if and only if $\Ext^1_T(N^{(i)}, N^{(j)}) = 0$.

By assumption, $T \equiv \End_T(M) \equiv \End_T(H)$ and so 
  \eqref{equ:recT1}  holds  when $i = j$.
Trivially,  $\Ext^1_T(T, N^{(i)}) = 0$, and so 
$\Ext^1_T(N^{(i)}, T) = 0$  also holds for all $i$.  (This also follows 
from the CM property of $N^{(i)}$.)
As  $\Ext^1_T(M,H) = 0$ $\iff$ 
  $\Ext^1_T(H,M) = 0$, in order to prove part~(1) it  just   remains to prove that 
  $\Ext^1_T(M,H) = 0$.  
  
 Since  $\Ext^1_T(M,T)=0$,   we have the 
  exact sequence 
\begin{equation}
\label{equ:lolz}
0 \too \Hom_T(M,H) \too \Hom_T(M, T) \overset{\beta}{\too} \Hom_T(M, T/H) \too \Ext^1_T(M, H) \too   0.
\end{equation}
We would like to understand the  Hilbert series of  $\Hom_T(M, T/H)$. For 
this,  we  consider also the following exact sequence, where  we are applying \eqref{equ:RtoB}, 
\begin{equation}
\label{equ:rofl}
0 \too \Hom_T(M, T/H)[-1] \overset{\timesg}{\too} \Hom_T(M, T/H) \too
 \Hom_B(\overline{M}, \overline{T}/\overline{H}) \too \dots.
\end{equation}

First we 
prove that $\Hom_B(\overline{M}, \overline{T}/\overline{H})_0 = 0$.  
By \eqref{notation:recT}$(i)$,  there is a surjection  $\beta: \overline{T}/\overline{H} \twoheadrightarrow M_a$.  
But any nonzero, degree $0$ map 
$\theta: \overline{M} \to \overline{T}/\overline{H}$ is surjective, since $\overline{T}/\overline{H}$ is cyclic. 
 Thus $\rho=\beta\circ\theta$  gives  a surjection
$\rho: \overline{M} \twoheadrightarrow M_a$.  Since  $\overline{M}$ is saturated, it clearly follows from the equivalence $\rqgr B\simeq \coh E$ that
$\ker \rho = \bigoplus_{n \geq 0} H^0(E, \mc{N}_n(d +e + f -a)).$   Therefore, by  \cite[Lemma~2.2]{blowdownI},  
 $\ker \rho$ is generated in degree $0$. On the other hand, by construction, 
 $(\ker \rho)_0 = (\ker \theta)_0$  and so  $\ker \rho = \ker \theta$.  This is impossible, so no such
 $\theta$ can exist.  Thus $\Hom_B(\overline{M}, \overline{T}/\overline{H})_0 = 0$, as  desired.

Now   $(\overline{T}/\overline{H})_{\geq 1}$ is filtered by the shifted point modules  $(M_{\tau(a)})[-1]$, $(M_{\tau(b)})[-1]$, and  $(M_{\tau(c)})[-1]$.  Since  $\hilb \Hom_B(\overline{M}, M_p) =1/(1-s)$ for any point $p$ such that $\Hom_B(\overline{M}, M_p)\not=0$,  it follows that  
$\hilb \Hom_B(\overline{M}, \overline{T}/\overline{H}) \leq (3s)/(1-s)$.  Using   \eqref{equ:rofl}, we then get  
$\hilb \Hom_T(M, T/H) \leq (3s)/(1-s)^2.$

Now  $\hilb \Hom_B(\overline{M}, \overline{H}) = (3s + 6s^2)/(1-s)^2$, by \eqref{equ:recT00}.  So, considering \eqref{equ:hi} for 
$(j,i)=(3,1)$,  gives the upper bound 
$\hilb \Hom_T(M,H) \leq (3s + 6s^2)/(1-s)^3$.   On the other hand,
\[
(3s)/(1-s)^2 + (3s + 6s^2)/(1-s)^3 = (6s + 3s^2)/(1-s)^3 = \hilb \Hom_T(M, T),
\]
where the last equality follows since  we have already proven that the $ (3,2)$ entry of 
\eqref{equ:recT} is  correct.
By considering the first three terms of \eqref{equ:lolz} 
and the  upper bounds for  $\hilb \Hom_T(M, T/H)$ and $\hilb \Hom_T(M,H)$, this forces both bounds to be equalities.  
In particular,  $\hilb \Hom_T(M,H) = (3s + 6s^2)/(1-s)^3=\hilb \Hom(T)-\hilb (M, T/H)$ and so 
 the first three terms of \eqref{equ:lolz} form a short exact 
sequence. Thus  $\Ext^1_T(M,H) = 0$, and the proof of part~(1) is  complete.

(2) Suppose that Hypotheses \eqref{notation:recT}$(ii)$ holds and 
define $H^{\vee} = M^*=  \Hom_T(M, T)$.  Since $M$ is MCM, it is reflexive, so $M = \Hom_T(H^{\vee}, T)$.  
It follows that
 \[\End_T(M) = \{ x \in Q_{gr}(T)   \, | \,  xM \subseteq M \} = \{ x   \, | \,  H^{\vee} x M \subseteq T \}\]
and, similarly,  $\End_T(H^{\vee}) = \{ y \in Q_{gr}(T)   \, | \,  H^{\vee} y \subseteq H^{\vee} \} =
 \{ y   \, | \,  H^{\vee} y M \subseteq T \}$.
Thus $\End_T(M) = \End_T(H^{\vee})$.  Since $\End_T(M) \equiv T$ by assumption, we also get $\End_T(H^{\vee}) \equiv T$. 

Since $T$ is $g$-divisible,  the module $P = M/T$ is $g$-torsionfree.  
By Hypotheses \eqref{notation:recT}$(ii)$,  $P/Pg = \overline{M}/B \equiv (2+s)/(1-s)$ is filtered by the shifted right point modules $M_{\tau(d)}[-1] \cong (M_d)_{\geq 1}, M_e$, and $M_f$. Therefore,   \cite[Lemma~5.4]{blowdownI} applies to $P$.  
By that lemma, if $N = \Ext^1_T(P, T)$, then $N/Ng$ is filtered by the shifted left point modules $M^{\ell}_{\tau^{-1}d}$, $M^{\ell}_{\tau^{-2}e}[-1]$, and $M^{\ell}_{\tau^{-2}f}[-1]$.
Now consider the exact sequence
\[
0 \to H^{\vee}   \too T \overset{\beta}{\too} N \too \Ext^1_T(M,T) , 
\]
 and note that $ \Ext^1_T(M,T)=0$  was proved within  the proof of  part~(1).  Thus  $N \cong T/H^{\vee}$,    and so  
 $N/Ng \cong B/\overline{H^{\vee}}$, as $H^{\vee}$ is $g$-divisible.  Since we know the point modules in a filtration of $N/Ng$,
  this forces 
$\overline{H^{\vee}} = \bigoplus_{n \geq 0} H^0(E, \mc{N}_n(-\tau^{-n}d - \tau^{-n}e - \tau^{-n}f))$.
Thus $(ii)$ implies $(ii)'$.

The converse $(ii)'$ $\Rightarrow$ $(ii)$ is proved similarly, using the left-sided versions of 
 \cite[Lemmata~4.5 and~5.4]{blowdownI}.  The relevant exact sequence is 
\[
0 \to T \to \Hom_T(H^{\vee}, T) \to \Ext^1_T(T/H^{\vee}, T) \to 0,
\]
where  there is no extra $\Ext^1$ term to worry about.
We leave the details to the reader. 
\end{proof}

%%%%%%%%%%%%%%%%%%%
  %%%%%%%%%%%%%%%%%%%

\section{Recognition I:  Recognising Sklyanin elliptic algebras}\label{RECOG2}
 
We continue to consider when an elliptic algebra $T$ of degree $9$ can be shown to be the 
$3$-Veronese of a Sklyanin algebra. 
Let  $T$ be such an elliptic algebra with $T/gT \cong B(E, \mc{N}, \tau)$, and recall that we are always assuming
 that $\rqgr T$ is smooth. Suppose, further, that  $T$ has right modules $H $ and $M$ satisfying the conditions from
 Notation~\ref{notation:recT}.  The main result of this section is that, under an additional condition on 
 $\Div T/H$ and $\Div M/T$, then $T=S^{(3)}$ is the $3$-Veronese of a Sklyanin algebra $S=S(E,\sigma)$. 
 Note that, as discussed at the beginning of Section~\ref{RECOG1},  this also implies that $T \cong \End_T(H) \cong \End_T(M)$. 
  Neither of these conclusions  is  obvious a priori.

  To explain the condition on the divisors of $T/H$ and $M/T$, suppose for the moment that $T$ is a Sklyanin elliptic algebra; that is, $T = S^{(3)}$ where $S$ is the Sklyanin algebra with $S/gS = B(E, \sL, \sigma)$ and $\sL_3 = \sL \otimes \sL^{\sigma} \otimes \sL^{\sigma^2} \cong \sN$.
  If $0 \neq x \in S_1 = H^0(E, \sL)$ vanishes at $a,b,c \in E$, then $H := xS_2 T$ satisfies $\overline{H} = \bigoplus H^0(E, \sN_n(-a-b-c))$.  Recall that the isomorphism class of an invertible sheaf on $E$ is determined by its degree and its image under  the natural map from $\Pic E \to E$ \cite[Example~IV.1.3.7]{Ha}.   
  Thus $\sL \cong \sO_E(a+b+c)$  and so 
  \beq\label{need2} \sN \cong \sO_E(\tau^{-1}a +b+c)^{\otimes 3}. \eeq
Further, if we set $H^\vee = TS_2 x$ then
$ \overline{H^\vee} = \bigoplus H^0(E, \sN(-\tau^{-n} d  -\tau^{-n} e - \tau^{-n} f))$, for $d = \sigma a$, $e = \sigma b$, and $f=\sigma c$.  
 Thus 
 \beq\label{need1} \sO_E(\tau a+b+c) \cong \sO_E(d+e+f). \eeq

The following theorem, which is the main result of this section, shows that the necessary conditions \eqref{need2}, \eqref{need1} for $T$ to be a Sklyanin elliptic algebra are also sufficient.

\begin{theorem}\label{thm:Sposs}
Let $T$ be an elliptic algebra of degree $9$, such that $\rqgr T$ is smooth,  and with $T/Tg\cong B(E, \sN, \tau)$.   
Suppose that $T$ satisfies the conditions from Notation~\ref{notation:recT}, where the points $a,b,c,d,e,f$ satisfy \eqref{need2} and \eqref{need1}.
Then  $T \cong S^{(3)}$, where  $S \cong S(E, \sigma)$ is a Sklyanin algebra    with $\sigma^3 = \tau$.
\end{theorem}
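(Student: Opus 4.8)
The plan is the one announced in the introduction: the modules $H$ and $M$, together with $T$, produce noncommutative analogues $\mc{O}(k)$, $k\in\ZZ$, of the twisting sheaves on a noncommutative $\PP^2$; these assemble into a $\ZZ$-algebra $A$, and the key is to recognise $A$ as the $\ZZ$-algebra of a Sklyanin algebra $S(E,\sigma)$ and then read off $T$ as its ``$3$-Veronese slice''. \emph{Step 1 (the $\ZZ$-algebra, and reduction of the theorem).} For $n\in\ZZ$ set $\mc{O}(3n)=\pi(T)[n]$, $\mc{O}(3n+1)=\pi(M)[n]$ and $\mc{O}(3n-1)=\pi(H)[n]$ in $\X=\rqgr T$, and let $A=\bigoplus_{i\le j}A_{ij}$ with $A_{ij}=\Hom_\X(\mc{O}(i),\mc{O}(j))$ and composition as multiplication. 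By Proposition~\ref{prop:MCMchar} the modules $H,T,M$ are MCM, so Proposition~\ref{prop:equalhom} and Corollary~\ref{cor:underline} identify each $A_{ij}$ with a graded component of the corresponding graded $\Hom$-module over $T$; inserting the Hilbert matrix \eqref{equ:recT} of Proposition~\ref{prop:recT} and using $\mc{O}(k)[1]=\mc{O}(k+3)$ gives $\dim_\kk A_{i,i+m}=\binom{m+2}{2}=\dim_\kk S_m$ for all $i\in\ZZ$ and $m\ge 0$ (in particular $A_{ii}=\kk$), which is the Hilbert function of a three dimensional Artin--Schelter regular algebra. Each $\mc{O}(k)$ is a $g$-divisible submodule of $T_{(g)}$, so $A$ is a sub-$\ZZ$-algebra of the division ring $Q_{gr}(T)$, hence a domain. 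Finally, $T$ is AS-Gorenstein and satisfies $\chi$, so it is saturated; therefore $\uHom_\X(\mc{O}(0),\mc{O}(0))=\bigoplus_n\Hom_\X(\pi T,\pi T[n])\cong T$ as graded rings, and this ring equals $\bigoplus_n A_{0,3n}$. It thus suffices to prove that $A$ is isomorphic, compatibly with the ``$+3$'' periodicity $\mc{O}(k)\mapsto\mc{O}(k+3)$, to the $\ZZ$-algebra $\mathsf{A}(S)$ of a Sklyanin algebra $S=S(E,\sigma)$ with $\sigma^3=\tau$: then $T\cong\bigoplus_n\mathsf{A}(S)_{0,3n}=\bigoplus_n S_{3n}=S^{(3)}$.

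\emph{Step 2 ($A/gA$ and the automorphism $\sigma$).} Since $H,T,M$ are $g$-divisible, $A$ is $g$-torsionfree and $g$ induces a central homogeneous element of $A$ of degree $3$ with $\overline A:=A/gA=\bigoplus_{i\le j}\Hom_B(\overline{\mc{O}(i)},\overline{\mc{O}(j)})$, the last equality being \eqref{equ:recT1}. Using the descriptions of $\overline H$, $\overline T=B$, $\overline M$ from Notation~\ref{notation:recT} together with the $\Hom$-calculus for twisted homogeneous coordinate modules (\cite[Lemma~2.3]{blowdownI}), $\overline A$ is the $\ZZ$-algebra of an ample sequence of line bundles of degree $3k$ on $E$ twisted by an automorphism $\sigma$. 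Now \eqref{need2} says precisely that this sequence is $(\sL_k)_k$ for a single very ample $\sL\cong\sO_E(a+b+c)$ of degree $3$, so that (after a coordinate change) $\overline A$ is the standard $\ZZ$-algebra of the twisted homogeneous coordinate ring $B(E,\sL,\sigma)$; and \eqref{need1} is exactly the condition on $d+e+f$ making the ``$M$-side'' fit the same $\sL$ and $\sigma$ --- equivalently, by Proposition~\ref{prop:recT}(2) it forces the left-module structure to be governed by the same $\sigma$, which is what makes $\overline A$, and hence $A$, AS-Gorenstein. Taking the third Veronese slice, $\bigoplus_n\overline A_{0,3n}=B(E,\sL_3,\sigma^3)=B(E,\sN,\tau)=T/gT$, which forces $\sigma^3=\tau$; so $\sigma$ is one of the finitely many cube roots of $\tau$, determined by the divisor data.

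\emph{Step 3 (lifting and recognition).} The $\ZZ$-algebra $B(E,\sL,\sigma)$ with $\deg\sL=3$ is generated in degree $1$ with quadratic relations; since $A$ is noetherian ($A/gA$ is and $g$ is central), $g$-torsionfree, and has the Hilbert function of Step~1, a graded Nakayama argument lifts these properties to $A$, so that $A$ is generated in degree $1$ with a relation space of exactly the Sklyanin dimension. Combined with the smoothness of $\X$, which via the Serre duality of Proposition~\ref{prop2} and Corollary~\ref{cor:underline} gives $A$ finite global dimension and the Gorenstein symmetry, this exhibits $A$ as a noetherian quadratic Artin--Schelter regular $\ZZ$-algebra domain of global dimension $3$ whose point scheme is $E$ with automorphism $\sigma$ and polarisation $\sL$. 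By the classification of three dimensional Artin--Schelter regular algebras of type A in its $\ZZ$-algebra form (the counterpart of Artin--Tate--Van den Bergh; equivalently, a direct argument that the flat deformation of $B(E,\sL,\sigma)$ along a central degree-$3$ element is unique, its obstruction lying in $\Ext$-groups that vanish under our hypotheses), any such $A$ is isomorphic, compatibly with the periodicity, to $\mathsf{A}(S(E,\sigma))$. By Step~1, $T\cong S(E,\sigma)^{(3)}$, as required.

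\emph{The main obstacle.} I expect the crux to be the recognition step: upgrading ``$A$ has the Sklyanin Hilbert function and $A/gA$ is the elliptic twisted homogeneous coordinate $\ZZ$-algebra'' to ``$A$ is literally the Sklyanin $\ZZ$-algebra''. The two inputs that must do the work are (a) the precise $\Hom$-dimensions of Proposition~\ref{prop:recT}, leaving no room for extra relations and so forcing $A$ to be quadratic AS-regular of exactly the right type, and (b) the smoothness of $\rqgr T$, which through Proposition~\ref{prop2} controls the cohomology governing the deformation of $B(E,\sL,\sigma)$ to $A$. A secondary but genuinely necessary point is the line-bundle bookkeeping of Step~2: it is precisely \eqref{need2} and \eqref{need1} that make the sequence $\overline{\mc{O}(k)}|_E$ of ``$\sL_k$ type'' for a single pair $(\sL,\sigma)$, and hence $\overline A$ --- and therefore $A$ --- AS-Gorenstein rather than merely numerically correct.
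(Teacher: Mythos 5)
Your Steps 1 and 2 track the paper's own route closely: the paper builds the $\ZZ$-algebra $\mb{S}$ from $\End_T(H\oplus T\oplus M)$ exactly as you do, identifies $\mb{S}/(g)$ with $\wh{B(E,\sL,\sigma)}$ via the ``standard $\ZZ$-algebra'' machinery of Definition~\ref{def:stand} and Lemma~\ref{lem:noaut}, and it is precisely \eqref{need2} and \eqref{need1} that permit the choice of $\sigma$ with $\sigma^3=\tau$ and of $\sL$ with $\sL^{\sigma^{-n}}\cong\mc{D}_n$ for all $n$. Up to that point the proposal is sound and is essentially the paper's argument.

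The gap is in Step 3, and it sits exactly where you predicted the crux would be. First, the ``graded Nakayama argument'' does not show that $A$ is generated in degree $1$: the $\ZZ$-algebra $\wh{B[z]}$, with $z$ central of degree $3$, satisfies every hypothesis you invoke --- it is noetherian, $g$-torsionfree, has $\dim_\kk A_{i,i+m}=\binom{m+2}{2}$ (since $\hilb B[z]=(1+s+s^2)(1-s)^{-2}(1-s^3)^{-1}=(1-s)^{-3}$), and its quotient by $(g)$ is $\wh{B}$, which is generated in degree $1$ --- yet it is not itself generated in degree $1$. Nakayama is of no help because $g$ sits in degree $3$ and the whole question is whether $g_n\in A_{n+3,n+2}A_{n+2,n+1}A_{n+1,n}$; the Hilbert function cannot decide this. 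The paper's Proposition~\ref{prop:Zalg} turns this into an explicit dichotomy ($A\cong\wh{S(E,\sigma)}$ or $A\cong\wh{B[z]}$), proved not by any classification of AS-regular algebras but by an elementary factorization: the surjection from $\wh{F}$ ($F$ the free algebra) onto the degree-one-generated subalgebra $A'$ kills the quadratic Sklyanin relations because $A$ agrees with $\wh{B}$ in degrees $(n+2,n)$, hence factors through $\wh{S(E,\sigma)}$, and one then tracks whether the image of the central cubic element $h$ is a nonzero multiple of $g_n$ or zero. Second, your appeal to ``the classification of three-dimensional AS-regular algebras of type A in $\ZZ$-algebra form'' (or to uniqueness of the flat deformation) carries the entire weight of the recognition and is unsubstantiated; nor do you actually establish that $A$ has finite global dimension as a $\ZZ$-algebra. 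The paper instead excludes the degenerate case at the level of $T$ itself: if $A\cong\wh{B[z]}$ then $T\cong B^{(3)}[z]$ with $z$ of degree $1$, so $T^\circ=T[z^{-1}]_0\cong B(E,\sN,\tau)$ has infinite global dimension, contradicting the smoothness of $\rqgr T$ via \cite[Lemma~6.8]{blowdownI}. Replacing Step 3 by this two-case analysis turns your outline into the paper's proof.
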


We prove this theorem by means of $\mb Z$-algebras.
Recall that a \emph{$\mb{Z}$-algebra} is a $\kk$-algebra $A$ of the form 
$\bigoplus_{(i,j) \in \mb{Z} \times \mb{Z}} A_{i,j}$ 
such that $A_{i,j}A_{k,\ell}  = 0$ if $j \neq k$, and $A_{i,j} A_{j, \ell} \subseteq A_{i, \ell}$.   
The $\mb{Z}$-algebra $A$ 
is \emph{lower triangular} if $A_{ij} = 0$ for all $i < j$, and \emph{connected} if $A_{n,n} = \kk$ for all $n$.

We say that a homomorphism $\phi: A \to A$ of a $\mb{Z}$-algebra $A$ has \emph{degree $d$} if 
$\phi(A_{i,j}) \subseteq A_{i+d, j+d}$ for all $i, j$. The 
$\mb{Z}$-algebra $A$ is called \emph{$d$-periodic} if it has an automorphism of degree $d$.  
A $1$-periodic $\mb{Z}$-algebra $A$ is also called \emph{principal}.
Given any $\mb{Z}$-graded algebra $B = \bigoplus_{n \in \mb{Z}} B_n$, we can form a 
$\mb{Z}$-algebra $A = \wh{B}$ by putting $A_{ij} = B_{j-i}$ for all $i,j$, with the natural 
multiplication induced by the multiplication of $B$.  Clearly $\wh{B}$ is principal.  Conversely,
 if a $\mb{Z}$-algebra $A$ is principal then there exists a $\mb{Z}$-graded algebra $B$ such
  that $A \cong \wh{B}$, by \cite[Proposition~3.1]{S-equiv}; further, the multiplication on $B$ is determined by the given degree 1 automorphism of $A$. 
Given a $\mb{Z}$-algebra $A$, $m \in \mb{Z}$ and $d \geq 2$, the corresponding
 \emph{$d^{\rm th}$-Veronese} of $A$ is the $\mb{Z}$-algebra $A' = A^{(d)}$ where 
$A'_{i,j} = A_{m + di, m + dj}$, with multiplication induced by the multiplication in $A$.  
Clearly $A$ has $d$ distinct $d^{th}$ Veronese algebras, depending on the choice of $m$.  
We call the $\mb{Z}$-algebra $A$ a \emph{quasi-domain} if for all $0 \neq x \in A_{i,j}, 0 \neq y \in A_{j,k}$, 
we have $0 \neq xy \in A_{i,k}$.  If $B$ is a $\mb{Z}$-graded domain, 
the corresponding principal $\mb{Z}$-algebra $\wh{B}$ is a quasi-domain.

Consider  the  Sklyanin algebra $S = S(E, \sigma)$ with its factor  TCR $B = B(E, \mc{L}, \sigma)=S/gS$  for an invertible  sheaf $\mc{L}$ 
of degree $3$ and the central element $g\in S_3$, as described in \cite[(4.2)]{blowdownI}. Then  $S$ has a graded presentation
 $S\cong \kk \{ x_1, x_2, x_3 \}/(r_1, r_2, r_3)$, where $\deg x_i=1$ and $\deg r_j=2$ for each $i,j$.  So
  $B$ has a graded presentation  $B \cong \kk \{ x_1, x_2, x_3 \}/(r_1, r_2, r_3, r_4)$, where $r_4$ corresponds to the central element $g$ and hence  $\deg r_4 = 3$.    We now give a $\mb{Z}$-algebra version of this observation; more specifically,  we study $\mb{Z}$-algebras satisfying the following properties.

  \begin{assumption}\label{ass:Zalg1}  
Let $A$ be a connected lower triangular $\mb{Z}$-algebra which is a quasi-domain.  Assume there is an ideal of $I$ which is generated  by   elements $\{ 0 \neq g_n \in A_{n+3, n}  \, | \,  n \in \mb{Z} \}$, such that 
there is a degree~$0$ isomorphism 
$\rho: A/I \cong \wh{B}$, where $B = B(E, \mc{L}, \sigma)$ for some elliptic curve $E$, invertible 
sheaf $\mc{L}$ of degree $3$, and  translation automorphism  $\sigma\in  \Aut(E) $.  Let 
$\phi$ be the degree $1$ automorphism of $A/I$ corresponding under $\rho$ to the canonical degree $1$ automorphism of $\wh{B}$.  Finally, we can  identify $(A/I)_{n, n-1} = A_{n, n-1}$ for all $n$ and we assume that 
$g_n x = \phi^3(x) g_{n-1}$ for all $x \in A_{n, n-1}$ under this identification. 
\end{assumption}

\begin{proposition}
\label{prop:Zalg}
Let $A$ be a $\mb{Z}$-algebra satisfying Assumption~\ref{ass:Zalg1}.
Then there is a graded ring $S$ and a degree $0$ isomorphism $\gamma:\wh{S} \to A$, such that either 
\begin{enumerate}
\item[(i)] $S = S(E, \sigma)$ is a Sklyanin algebra, or else
\item[(ii)] $S = B[z]$, where $z$ is a degree $3$ indeterminate.
\end{enumerate}
Moreover, the principal automorphism $\phi$ of $A/I$ lifts to a principal  automorphism $\wt{\phi}$ of $A$, which corresponds under $\gamma$ to 
the canonical principal automorphism of $\wh{S}$. 
\end{proposition}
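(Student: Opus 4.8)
The plan is to show that $A$ is a \emph{principal} $\mb{Z}$-algebra, so that $A\cong\wh S$ for a connected graded ring $S$ carrying a central regular element of degree $3$ whose quotient is $B$, and then to observe that such an $S$ must be either $S(E,\sigma)$ or $B[z]$ according to whether or not it is generated in degree $1$. The first step is a Hilbert-series computation. Since $I$ is generated by the $g_n$ with $g_n\in A_{n+3,n}$, we have $I_{i,j}=0$ for $i-j\le 2$, so $\rho$ gives $A_{i,j}\cong(A/I)_{i,j}\cong B_{i-j}$, of dimensions $1,3,6$, in those degrees. For $i-j\ge 3$ I would prove by induction on $i-j$ that $I_{i,j}=A_{i,j+3}\,g_j$: any generator $g_n$ occurring in a product representing an element of $I_{i,j}$ can be moved to the right past the remaining factors, first reducing those factors modulo $I$ to a product of degree-$1$ elements and applying the relation $g_nx=\phi^3(x)g_{n-1}$, then absorbing the leftover $g$-term using the inductive hypothesis. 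As $A$ is a quasi-domain and $g_j\ne0$, right multiplication by $g_j$ is injective, so $\dim I_{i,j}=\dim A_{i,j+3}$ and hence $\dim A_{i,j}=\dim B_{i-j}+\dim A_{i,j+3}$. Solving this recursion with the base cases above gives $\dim A_{i,j}=\binom{(i-j)+2}{2}$ for $i\ge j$; equivalently every ``row'' of $A$ has Hilbert series $(1-t)^{-3}$, the common Hilbert series of $S(E,\sigma)$ and of $B[z]$.

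Next I would analyse the sub-$\mb{Z}$-algebra $A'\subseteq A$ generated by $\bigcup_nA_{n,n-1}$. In degree $2$, $A'_{n,n-2}=A_{n,n-1}A_{n-1,n-2}=A_{n,n-2}$ is, via $\rho$, the image of the multiplication $B_1\otimes B_1\to B_2$; hence the kernel of $A_{n,n-1}\otimes A_{n-1,n-2}\to A_{n,n-2}$ is exactly the quadratic relation space $R$ of $S(E,\sigma)=\kk\{x_1,x_2,x_3\}/(R)$. So there is a surjection of $\mb{Z}$-algebras $\psi\colon\wh{S(E,\sigma)}\twoheadrightarrow A'$ sending the degree-$1$ generators to $\rho^{-1}$ of those of $\wh B$, and by the count above $\psi$ is an isomorphism exactly when $A'=A$ (since $\dim A'_{i,j}\le\dim S(E,\sigma)_{i-j}=\dim A_{i,j}$, with equality iff $\psi$ is injective iff $A'_{i,j}=A_{i,j}$). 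Moreover the composite $\wh{S(E,\sigma)}\xrightarrow{\psi}A'\hookrightarrow A\to A/I\cong\wh B$ is the canonical surjection $\wh{S(E,\sigma)}\to\wh{S(E,\sigma)}/(h)=\wh B$, where $h\in S(E,\sigma)_3$ is the central cubic; so $K:=\ker\psi\subseteq(h)$, and since $h$ is central and regular, $K=\bar Kh$ for a uniquely determined two-sided ideal $\bar K\trianglelefteq\wh{S(E,\sigma)}$, with $\dim A'_{m+3,m}=10-\dim\bar K_{m,m}$; thus $\bar K_{m,m}\in\{0,\kk\}$ and $g_m\in A'$ iff $\bar K_{m,m}=0$.

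If $A'=A$ then $\psi$ is an isomorphism and $A\cong\wh{S(E,\sigma)}$, which is case (i). Otherwise $A'\subsetneq A$, so $K\ne0$ and $\bar K\ne0$, and here the commutation relation is crucial. Any product of generators from $A_{m+3}$ to $A_{m-1}$ has total ``width'' $4$, so $A_{m+3,m-1}$ is spanned by degree-$1$ products together with $A_{m+3,m+2}\,g_{m-1}$, which by the relation equals $g_mA_{m,m-1}$; hence if \emph{either} $g_{m-1}$ or $g_m$ lies in $A'$ then $A_{m+3,m-1}\subseteq A'$, forcing $\dim\bar K_{m,m-1}=0$ and therefore, by the ideal property of $\bar K$, also $\bar K_{m-1,m-1}=\bar K_{m,m}=0$, i.e. $g_{m-1},g_m\in A'$. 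Thus $\{m:g_m\in A'\}$ is $\mb{Z}$ or $\emptyset$; since $\bar K\ne0$ it cannot be all of $\mb{Z}$ (else $A=A'\cong\wh{S(E,\sigma)}/K$, contradicting the Hilbert-series count), so $g_m\notin A'$ for every $m$, whence $\bar K_{m,m}=\wh{S(E,\sigma)}_{m,m}$ for all $m$, so $\bar K=\wh{S(E,\sigma)}$, $K=(h)$, and $A'\cong\wh B$. Then sending the degree-$1$ generators to the $A_{n,n-1}$ and $z$ to $g_n$ defines a homomorphism $\wh{B[z]}\to A$ (well-defined because $A'\cong\wh B$ and the relation $g_nx=\phi^3(x)g_{n-1}$ expresses centrality of $z$), which is surjective and, by the Hilbert-series count, an isomorphism; this is case (ii).

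In both cases this produces a degree-$0$ isomorphism $\gamma\colon\wh S\to A$ with $S=S(E,\sigma)$ or $S=B[z]$, and $\gamma$ can be chosen (using $\rho^{-1}$ on degree-$1$ generators as above) so that it carries the ideal $(h)$, resp. $(z)$, onto $I$ and induces $\rho^{-1}$ on the quotients. Transporting the canonical degree-$1$ automorphism of $\wh S$ through $\gamma$ then yields a principal automorphism $\wt\phi$ of $A$; since the canonical automorphism of $\wh S$ reduces modulo $(h)$, resp. $(z)$, to the canonical automorphism of $\wh B$, and the latter corresponds under $\rho$ to $\phi$ by hypothesis, $\wt\phi$ reduces modulo $I$ to $\phi$, as required. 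I expect the main obstacle to be the third paragraph: showing that the ``degree-$1$ defect'' of the family $\{g_n\}$ is uniform in $n$ — which is precisely where both the exact commutation relation $g_nx=\phi^3(x)g_{n-1}$ and the quasi-domain hypothesis are needed — and then pinning down the non-Sklyanin alternative as the polynomial extension $B[z]$ rather than some other central degree-$3$ extension of $B$.
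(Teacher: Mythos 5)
Your proposal is correct and follows essentially the same route as the paper: compute $\dim A_{i,j}=\binom{i-j+2}{2}$ from the quasi-domain property and the commutation relation, map $\wh{S(E,\sigma)}$ onto the degree-one-generated subalgebra $A'$ using that $I$ starts in width $3$, and then split into the two cases according to whether the images of the central cubic $h_n$ are nonzero for all $n$ or for none, the second case yielding $\wh{B[z]}$. The only cosmetic difference is that you establish the uniformity of this dichotomy by packaging $\ker\psi$ as $\bar K h$ and invoking the two-sided ideal property of $\bar K$, whereas the paper applies $\chi$ directly to the centrality relation $\wh{S}_{n+4,n+3}h_n=h_{n+1}\wh{S}_{n+1,n}$ and the quasi-domain hypothesis; these are equivalent computations.
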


\begin{remark} Although we have a blanket assumption that $\rqgr R$ is smooth for an elliptic algebras $R$, we note for use elsewhere that  Proposition~\ref{prop:Zalg} and its proof hold without any smoothness assumptions.  It is also worth noting that
$\rqgr B[z]$ is not smooth  (see the final step in the proof of Theorem~\ref{thm:Sposs}.)  
\end{remark}

\begin{proof}
First,  since $\hilb B = (1+s+s^2)(1-s)^{-2}$, 
 we have 
$\dim_\kk (A/I)_{n+m, n} = 3m$ for $m \geq 1$, while $\dim_\kk (A/I)_{n,n} = 1$.  
Because of the relations 
$g_n x = \phi^3(x) g_{n-1}$, the ideal $I$ is generated by the $\{ g_n \}$ as a right ideal of $A$.  Then since $A$ is a quasi-domain, we easily get 
$\dim_\kk A_{n+m,n} = \binom{m+2}{2}$ for all $m \geq 0$. 

Let $F = \kk \langle V \rangle$ be the free algebra on a $3$-dimensional $\kk$-space $V$, and fix 
surjections 
\[
F \to S(E, \sigma) = F/(r_1,r_2, r_3) \to B = B(E, \mc{L}, \sigma) = F/(r_1, r_2, r_3, r_4)
\]
as above, where $r_1, r_2, r_3$ are quadratic relations and $r_4$ is cubic.
Then we get induced surjective maps of $\mb{Z}$-algebras $\psi: \wh{F} \to \wh{S(E, \sigma)}$ 
and $\theta: \wh{S(E, \sigma)} \to \wh{B}$.
Let $A'$ be the sub-$\mb{Z}$-algebra of $A$ generated by $\{A_{n+1, n}  \, | \,  n \in \mb{Z} \}$, and let
 $\mu: A' \to A/I$ be given by the 
natural inclusion $A' \to A$ followed 
by the natural surjection $A \to A/I$. 
 The 
isomorphism  $\rho: A/I \to \wh{B}$ gives an isomorphism 
\[
 A'_{n+1, n} = A_{n+1, n} = (A/I)_{n+1, n} \overset{\rho}{\too} \wh{B}_{n+1, n} = \wh{F}_{n+1, n}
\]
 for each $n$.   Then there is a clearly a unique surjection of $\mb{Z}$-algebras $\nu: \wh{F} \to A'$ 
 with $\rho \mu \nu = \theta \psi$.

We claim that $\nu$ factors through $\wh{S(E, \sigma)}$.  Since $I$ is generated in 
degrees $(n+3, n)$, we have isomorphisms  
\[
A'_{n+2, n} = A_{n+2, n} = (A/I)_{n+2, n} \overset{\rho}{\too} \wh{B}_{n+2, n}
\]
for each $n$.  (The first equality follows since $B$ is generated in degree $1$, which forces 
$A_{n+2, n+1} A_{n+1, n} = A_{n+2, n}$
since $A$ agrees with $\wh{B}$ in low degree.)  Thus $\rho \mu$ is an isomorphism
 in degree $(n+2, n)$.  It follows that $\nu$ must 
kill the same degree $(n+2, n)$ relations that $\theta \psi$ does.  This shows that there
 is a map $\chi: \wh{S(E, \sigma)} \to A'$ 
such that $\nu = \chi \psi$; in other words, $\nu$ factors  as claimed.  Now 
$\theta \psi  = \rho \mu \nu = \rho \mu \chi \psi$ and since $\psi$ is surjective,
$\theta = \rho \mu \chi$.

 To distinguish it from the elements $g_n \in A$,  let  $h \in S(E, \sigma)_3$ be the image of the degree 3 relation $r_4\in F$  
   and let $h_n = h \in \wh{S(E, \sigma)}_{n+3, n}$.  Since $h$ is   central in $S(E, \sigma)$,   we have 
\begin{equation}
\label{eq:central}
 \wh{S(E, \sigma)}_{n+4, n+3} h_n = h_{n+1} \wh{S(E, \sigma)}_{n+1,n}, \quad \text{for all}\ n \in \mb{Z}.
\end{equation}
Since the image of $\chi(h_n)$ in $A/I$ is $0$, we have $\chi(h_n) \in I$. As $I$ is generated by the elements $\{ g_n \}$, this  forces 
   $\chi(h_n) = \lambda_n g_n$ for  some scalars $\lambda_n \in \kk$.
Using that $A$ is a quasi-domain, applying $\chi$ to \eqref{eq:central}
shows that $\lambda_n = 0 $ $\iff$ $\lambda_{n+1} = 0$.
By induction, if some $\lambda_i = 0$ then $\lambda_n = 0$ for all $n$.

Consider the case that $\lambda_n \neq 0$ for all $n \in \mb{Z}$.  In this case we have $g_n \in A'$ for all 
$n$.  Since $A/I$ is generated by $\{ (A/I)_{n+1, n}  \, | \,  n \in \mb{Z} \}$,  and $I$ is generated by the 
$g_n$, clearly $A$ is generated as an algebra by $\{ A_{n+1, n}  \, | \,  n \in \mb{Z} \}$ and  
$\{g_n  \, | \,  n \in \mb{Z} \}$.  Thus $A = A'$ in this case and $\chi$ is a surjection $\wh{S(E, \sigma)} \to A$.  
As  $\dim_\kk A_{n+m,n} = \binom{m+2}{2}  = \dim_\kk S(E, \sigma)_m = \dim_\kk \wh{S(E, \sigma)}_{n+m,n}$ for all $n \geq \mb{Z}$ 
 and $m \geq 0$, $\chi$ is a degree 0 isomorphism from $\wh{S(E,  \sigma)}$ to $A$.  So (i) holds with $\gamma = \chi$.  There is a unique 
principal automorphism $\wt{\phi}$ of $A$ corresponding under $\gamma$ to the canonical degree $1$ automorphism of $\wh{S(E, \sigma)}$.  The identity 
$\theta = \rho \mu \chi$ implies that $\wt{\phi}$ lifts the given 
automorphism $\phi$ of $A/I$.

Otherwise, $\lambda_n = 0$ for all $n \in \mb{Z}$.  In this case 
$\chi(h_n) = 0$ for all $n$, and so $\chi$ factors through $\wh{B}$ 
to give a map $\overline{\chi}: \wh{B} \to A'$ such that $\overline{\chi} \theta = \chi$.  Then 
$\theta = \rho \mu \chi = \rho \mu \overline{\chi} \theta$ and since $\theta$ is surjective, $\rho \mu \overline{\chi} = 1_{\wh{B}}$. 
 In particular, the surjection $\overline{\chi}:\wh{B} \to A'$ must be an isomorphism, and then so is 
$\mu: A' \to A/I$.   

Let $z$ be a central indeterminate of degree 3 and let $z_n = z \in \wh{B[z]}_{n+3,n}$.
We claim that we can extend the isomorphism $(\rho \mu)^{-1}:  \wh B \to A'$ to a homomorphism $\gamma:  \wh{B[z]} \to A$ by defining $\gamma(z_n) = g_n$.
For each $n \in \mb{Z}$, define 
a map $V \to \wh{B}_{n+1, n}$ by the identification $V = \wh{F}_{n+1, n} \overset{\theta \psi}{\too} \wh{B}_{n+1, n}$.  
Write the image of $v \in V$ 
under this map as $v_n$. 
To show that $\gamma$ is well-defined, we need to check that $\gamma(z_{n+1} v_{n}) = g_{n+1} (\rho \mu)^{-1}(v_n)$ is equal to $\gamma(v_{n+3} z_n) = (\rho \mu)^{-1}(v_{n+3}) g_n$ for all $n \in \ZZ$ and $v \in V$.
But this follows from the equation $g_{n+1} x = \phi^3 (x) g_n$, since $\rho$ intertwines $\phi$ with the canonical principal automorphism of $\wh{B}$.

Similarly as in the first case, $A$ is generated by $A'$ and the $g_n$, so that $\gamma$ is surjective, and comparing Hilbert series, we see that $\gamma$ is an isomorphism.  Thus case (ii) holds.  Again, there is a unique 
principal automorphism $\wt{\phi}$ of $A$ corresponding under $\gamma$ to the canonical degree $1$ automorphism of $\wh{B[z]}$.  If $\pi:\wh{B[z]} \to \wh{B}$ is the canonical surjection, by the definition of $\gamma$ we have 
$\pi = \rho \mu \gamma$.  From this equation it follows similarly as in the first case that $\wt{\phi}$  lifts $\phi$. 
\end{proof}

In order to apply the proposition above, we need to be able to recognise when a $\mb{Z}$-algebra
 is $1$-periodic and isomorphic to $\wh{B}$ for a TCR $B$.  The next lemma will help us to do this.  
It is useful to consider $\mb{Z}$-algebras of the following form.

\begin{definition}
\label{def:stand}
Let $K = \kk(E)$ for an elliptic curve,  
 let $\tau\in  \Aut_{\kk}(K)$, and consider the skew-Laurent 
ring $Q = K[t, t^{-1}; \tau]$.  We call a $\mb{Z}$-algebra $A$ \emph{standard} if (i) it is connected,
 lower triangular, and generated by $\{ A_{n+1, n}  \, | \,  n \in \mb{Z} \}$; (ii) each piece $A_{i,j}$ has 
 the form $V_{i,j} t^{d_{i,j}} \subseteq Q$ for some finite-dimensional $\kk$-subspace 
 $V_{i,j} \subseteq K$ and $d_{i,j} \in \mb{Z}$; and (iii) for each $i \geq j \geq k$ the multiplication 
 map $A_{i,j} \otimes A_{j,k} \to A_{i,k}$ is given by the multiplication in $Q$.
\end{definition}
\noindent 
It is easy to see that once the ring $Q = K[t, t^{-1}; \tau]$ is fixed, given arbitrary choices of 
$V_n t^{d_n}$ with $V_n \subseteq K$ and $d_n \in \mb{Z}$ for each $n \in \mb{Z}$, there
 is a unique standard $\mb{Z}$-algebra $A$ with $A_{n+1, n} = V_n t^{d_n}$.

We show now that every standard $\mb{Z}$-algebra is isomorphic to one where each graded piece 
is contained in $K$ and no automorphism is involved in the multiplication.

\begin{lemma}
\label{lem:noaut}  Fix $K$ and $\tau$ as in Definition~\ref{def:stand}.
Let $A$ be a standard $\mb{Z}$-algebra associated to this data.  
Then there is a standard algebra $\wt{A}$ with  
$\wt{A}_{n+1,n} \subseteq K$ for all $n$ and a degree $0$ isomorphism 
$A \cong \wt{A}$.
\end{lemma}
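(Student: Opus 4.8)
The plan is to obtain $\wt{A}$ by conjugating $A$ by a suitable sequence of powers of $t$, i.e.\ by performing a ``gauge transformation'' on the $\mb{Z}$-algebra that rescales each graded piece so that no power of $t$ is left on the generators. Write $A_{n+1,n} = V_n t^{d_n}$ with $V_n \subseteq K$ a finite-dimensional $\kk$-subspace and $d_n \in \mb{Z}$, as permitted by Definition~\ref{def:stand}. Define integers $e_n$ by $e_0 = 0$ and $e_{n+1} = e_n - d_n$ (equivalently $e_n = -\sum_{0 \le k < n} d_k$ for $n \ge 0$ and $e_n = \sum_{n \le k < 0} d_k$ for $n < 0$), and set $\alpha_n = t^{e_n} \in Q^{\times}$. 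For $i \ge j$ put $\wt{A}_{i,j} = \alpha_i A_{i,j} \alpha_j^{-1} \subseteq Q$, put $\wt{A}_{i,j} = 0$ for $i < j$, and let $\wt{A} = \bigoplus_{i,j} \wt{A}_{i,j}$ with the multiplication $\wt{A}_{i,j} \otimes \wt{A}_{j,k} \to \wt{A}_{i,k}$ induced by the multiplication of $Q$ (and $\wt{A}_{i,j}\wt{A}_{k,\ell} = 0$ for $j \ne k$). Finally let $\gamma \colon A \to \wt{A}$ be the map which on the piece $A_{i,j}$ is $x \mapsto \alpha_i x \alpha_j^{-1}$.

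The first step is to check that $\gamma$ is a degree-$0$ isomorphism of $\mb{Z}$-algebras. Each $\gamma|_{A_{i,j}}$ is a $\kk$-linear bijection onto $\wt{A}_{i,j}$ with inverse $x \mapsto \alpha_i^{-1} x \alpha_j$, so $\gamma$ is bijective and respects the bigrading. For multiplicativity it is enough, since the multiplications on both sides are supported on the $j=k$ components, to note that for $x \in A_{i,j}$ and $y \in A_{j,k}$, using associativity in $Q$ and the hypothesis (Definition~\ref{def:stand}(iii)) that the multiplication of $A$ is induced by $Q$,
\[
\gamma(x)\gamma(y) = \alpha_i x \alpha_j^{-1} \alpha_j y \alpha_k^{-1} = \alpha_i (xy) \alpha_k^{-1} = \gamma(xy).
\]
In particular $\wt{A}_{i,j}\wt{A}_{j,k} = \alpha_i(A_{i,j}A_{j,k})\alpha_k^{-1} \subseteq \alpha_i A_{i,k}\alpha_k^{-1} = \wt{A}_{i,k}$, so $\wt{A}$ is genuinely a $\mb{Z}$-algebra and $\gamma$ is an isomorphism onto it.

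The second step is to verify that $\wt{A}$ is standard. It is lower triangular by construction and connected since $\wt{A}_{n,n} = \alpha_n \kk \alpha_n^{-1} = \kk$; it is generated by $\{\wt{A}_{n+1,n}\}$ because $A$ is generated by $\{A_{n+1,n}\}$ and $\gamma$ is an algebra isomorphism. Each piece has the right shape: $\wt{A}_{i,j} = t^{e_i}(V_{i,j}t^{d_{i,j}})t^{-e_j} = \tau^{e_i}(V_{i,j})\, t^{\,e_i - e_j + d_{i,j}}$, which is of the form $W_{i,j}t^{f_{i,j}}$ with $W_{i,j} = \tau^{e_i}(V_{i,j}) \subseteq K$ a finite-dimensional $\kk$-subspace (here one uses that $\tau \in \Aut_\kk(K)$); and the multiplication of $\wt{A}$ is induced by that of $Q$ by definition. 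Finally, for the generators the exponent of $t$ in $\wt{A}_{n+1,n}$ is $e_{n+1} - e_n + d_n = 0$ by the choice of the $e_n$, so $\wt{A}_{n+1,n} = \tau^{e_{n+1}}(V_n) \subseteq K$, as required.

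There is no serious obstacle here; this is essentially a bookkeeping lemma. The one point worth a moment's care is the interplay between realizing the pieces inside $Q$ and the formal nature of $\mb{Z}$-algebra multiplication: although the subsets $\alpha_i A_{i,j}\alpha_j^{-1}$ and $\alpha_k A_{k,\ell}\alpha_\ell^{-1}$ of $Q$ have nonzero product in $Q$ when $j \ne k$, the $\mb{Z}$-algebra $\wt{A}$ retains only the $j = k$ products, and it is precisely for those that conjugation is compatible with the multiplication inherited from $A$ --- which is exactly what makes $\gamma$ well-defined and multiplicative.
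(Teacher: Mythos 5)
Your proof is correct and is essentially the paper's argument: you choose the same exponents $e_n$ (up to an overall additive shift, which is harmless) and the same target algebra $\wt{A}$, and your map coincides with the paper's $vt^{d_{i,j}}\mapsto \tau^{e_i}(v)$ once one notes the identity $e_i+d_{i,j}=e_j$. Packaging the isomorphism as conjugation by the units $t^{e_n}$ makes multiplicativity automatic, where the paper instead verifies $e_i+d_{i,j}=e_j$ by hand; this is a tidier presentation of the same route rather than a different one.
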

\begin{proof}
Write $A_{n+1, n} = V_n t^{d_n}$ where $V_n \subseteq K$.
Let $e_1 = 0$,
 for each $n \geq 2$ let 
$e_n = -\sum_{i = 1}^{n-1} d_i$, and for $n \leq 0$ let $e_n =  \sum_{i = n}^{0} d_i$.  
Let $\wt{A}$ be the unique standard $\mb{Z}$-algebra with 
$\wt{A}_{n+1,n} = \tau^{e_{n+1}}(V_n) \subseteq K$ for each $n$.
Write $A_{i,j} = V_{i,j} t^{d_{i,j}}$ and $\wt{A}_{i,j} = W_{i,j}$ for each $i,j$ with $i \geq j$, for some 
$V_{i,j}, W_{i,j} \subseteq K$.   We have $d_{i,i} = 0$ because $A$ is connected.

We claim that the map 
$\phi: A \to \wt{A}$, given on the graded piece $A_{i,j} = V_{i,j} t^{d_{i,j}} \to \wt{A}_{i,j} = W_{i,j}$ by 
the formula $v t^{d_{i,j}} \mapsto \tau^{e_i}(v)$, is an isomorphism of $\mb{Z}$-algebras.   
It is not obvious 
that $\phi$ actually has image in $\wt{A}$, but it is at least a function from $A$ to the 
$\mb{Z}$-algebra which has every 
piece equal to $K$.  Thus we first check $\phi$ is a homomorphism and then it will be clear that it lands in $\wt{A}$.

Given $u \in V_{i,j}$ and $v \in V_{j,k}$, we have $x = ut^{d_{i,j}} \in A_{i,j}$ and 
$y = v t^{d_{j,k}} \in A_{j,k}$.   Then $\phi(x) \phi(y) = \tau^{e_i}(u) \tau^{e_j}(v)$ while 
\[
\phi(xy) = \phi( u \tau^{d_{i,j}}(v) t^{d_{i,j} + d_{j,k}}) = \tau^{e_i}( u \tau^{d_{i,j}}(v)) =
 \tau^{e_i}(u) \tau^{e_i + d_{i,j}}(v).
\]
Thus to see that $\phi$ is a homomorphism, we need $e_i + d_{i,j} = e_j$ for all $i \geq j$.  Note that
 $d_{n+1, n} = d_n$ by definition 
and that $d_{i,j} = d_{i-1}+ \dots + d_j$ for all $i > j$ since $A_{i,j} = A_{i, i-1} \dots A_{j+1, j}$.
By the definition of the $e_n$ we also have $d_{i-1}+ \dots + d_j = -e_i + e_j$.  Thus 
$\phi$ is a homomorphism, so  $\phi(A_{n+1, n}) = \wt{A}_{n+1, n}$ by definition. 
 Since $A$ is generated as an algebra by 
the $\{ A_{n+1, n} \}$ and $\wt{A}$ is generated by the $\wt{A}_{n+1, n}$, the homomorphism $\phi$ does have image in $\wt{A}$ and is even  surjective.  Then, 
 $\phi$
 is an isomorphism since it is injective 
on each graded piece  by definition.
\end{proof}

We are now ready for the proof of Theorem~\ref{thm:Sposs}, for which we need the following notation.

\begin{notation}\label{notation:1periodic}
Let $T$ be an elliptic algebra of degree $9$  with $T/Tg \cong B(E, \mc{N}, \tau)$ that
  satisfies  conditions (i) and (ii) of Notation~\ref{notation:recT}.  
Thus,  we have   right $T$-modules  
$N^{(1)}= H \subseteq N^{(2)} = T \subseteq N^{(3)} = M$. Set
\[\mb{E} = \End_T(N^{(1)} \oplus N^{(2)} \oplus N^{(3)} ) = \bigg( \Hom_T(N^{(j)}, N^{(i)})  \bigg)_{ij}.\]

Define a $\mb{Z}$-algebra $\mb{S}$ by $\mb{S}_{3m+i, 3n+j} = (\mb{E}_{i,j})_{m-n}$, 
for all $m, n \in \mb{Z}$ and $i,j \in \{1,2,3\}$, where the multiplication  is induced 
from that  in $\mb{E}$.
Let $\varphi:  \mb{S} \to \mb{S} $ be the degree 3 automorphism given by the identifications $\mb{S}_{3m+i, 3n+j} = (\mb{E}_{i,j})_{m-n} = \mb{S}_{3m+3+i,3n+3+j}$.
\end{notation}

\begin{proof}[Proof of Theorem~\ref{thm:Sposs}]
We assume Notation~\ref{notation:1periodic}.
Assume in addition that the points $a, b, c, d, e, f$ in   Notation~\ref{notation:recT}(i,ii)
satisfy \eqref{need2} and \eqref{need1}. 

The main step is to show that $\mb{S}$ satisfies the hypotheses of Proposition~\ref{prop:Zalg}.  
For any $n \in \mb{Z}$ we have $\mb{S}_{n+3, n} = \End_T(N^{(i)}, N^{(i)})_1$ (for some $i$), 
which contains the element $g$.  Thus we set 
$g_n = g \in \mb{S}_{n+3, n}$ for each $n$.   Note that the multiplication in $\mb{S}$ is induced by the multiplication in $T_{(g)}$; 
in particular, $\mb{S}$ is a quasi-domain.   Let $I$ be the ideal of $\mb{S}$ generated by 
$\{ g_n  \, | \,  n \in \mb{Z} \}$.  Since all of the $N^{(i)}$ are   $g$-divisible, so is each 
$\Hom_T(N^{(j)}, N^{(i)})$  by  \cite[Lemma~4.4]{blowdownI}.
  Thus given   $i, j \in \{1,2,3\}$ and $m,n \in \mb{Z}$ with  $m-n \geq 3$ we have 
\[
I_{3m+i, 3n+j} \supseteq \mb{S}_{3m+i, 3(n+1)+j} g_{3n+j} = 
\Hom_T(N^{(j)}, N^{(i)})_{m-n-1}g = \Hom_T(N^{(j)}, N^{(i)})_{m-n} \cap gT_{(g)}.
\]  
Conversely, since each $g_n$ is a copy of $g$ and the multiplication of $\mb{S}$ is induced by 
multiplication in $T_{(g)}$, 
clearly $I_{3m+i, 3n + j} \subseteq gT_{(g)}$.  Thus $I_{3m+i, 3n+j} = \Hom_T(N^{(j)}, N^{(i)})_{m-n} \cap gT_{(g)}$. 
We see that $\overline{\mb{S}} : = \mb{S}/I$ is 
the $\mb{Z}$-algebra with $\overline{\mb{S}}_{3m+i, 3n+j} =  (\overline{\mb{E}_{i,j}})_{m-n}$ 
for $i,j \in \{1, 2, 3 \}$   where, by Proposition~\ref{prop:recT},  
\[
\overline{\mb{E}_{i,j}} = \overline{ \Hom_T (N^{(j)}, N^{(i)}) } = \Hom_B(\overline{N^{(j)}}, \overline{N^{(i)}}),
\] 
    Clearly the multiplication in
 $\overline{\mb{S}}$ is induced by the multiplication in $\kk(E)[t, t^{-1}; \tau] = T_{(g)}/gT_{(g)}$.  

As in the proof of Proposition~\ref{prop:recT},
for each $i$ we can write 
$\overline{N^{(i)}} = \bigoplus_{n \geq 0} H^0(E, \mc{G}_i \otimes \mc{N}_n)t^n$, where 
$\mc{G}_1 = \mc{O}_E(-a-b-c)$, $\mc{G}_2 = \mc{O}_E$, and $\mc{G}_3 = \mc{O}_E(d + e + f)$, 
and then
\[
\Hom_B(\overline{N^{(j)}}, \overline{N^{(i)}}) = \bigoplus_{n \geq 0} H^0(E, (\mc{G}_j^{\tau^n})^{-1} \mc{G}_i \otimes \mc{N}_n)t^n \subseteq \kk(E)[t, t^{-1}; \tau]
\]
for all $i, j \in \{1,2,3 \}$.  
In particular,  
\[\overline{\mb{S}}_{3n+i, 3n+ i-1} = \begin{cases}
\,\, H^0(E, \mc{N}(-\tau^{-1}(d)-\tau^{-1}(e) - \tau^{-1}(f) -a-b-c)) t,  &   i = 1 \\ 
\,\, H^0(E, \mc{O}_E(a+b+c)),  &  i = 2 \\ 
\,\, H^0(E, \mc{O}_E(d + e + f)),  &  i = 3.
\end{cases}
\]
It is now easy to see that $\overline{\mb{S}}$ is generated as an algebra by 
$\{ \overline{\mb{S}}_{n+1, n}  \, | \,  n \in \mb{Z} \}$, 
using  \cite[Lemma~2.2]{blowdownI}.  We have now checked that $\overline{\mb{S}}$
 is a standard algebra in the sense 
of Definition~\ref{def:stand}.

We would like to show that now that $\overline{\mb{S}}$ is isomorphic to $\wh{C}$ 
where $C = B(E, \mc{L}, \sigma)$, with  
$\mc{L}$ some sheaf of degree $3$ and $\sigma\in  \Aut(E)$  such that $\sigma^3 = \tau$.
Let $\mc{L}$ be any sheaf of degree $3$ and $\sigma$ any automorphism for the moment, and write 
$C = B(E, \mc{L}, \sigma) = \bigoplus_{n \geq 0} H^0(E, \mc{L}_n) u^n \subseteq \kk(E)[u, u^{-1}; \sigma]$.
We use Lemma~\ref{lem:noaut} to change $\overline{\mb{S}}$ and $\wh{C}$ to isomorphic 
standard algebras involving only 
multiplication in $\kk(E)$.  Writing $\overline{\mb{S}}_{n+1, n} = V_n t^{d_n}$ with 
$V_n \subseteq \kk(E)$, we have $d_n = 1$ when $n$ is a multiple of $3$, $d_n = 0$ 
otherwise.   Thus by the proof of Lemma~\ref{lem:noaut} there is a degree $0$ isomorphism 
$\overline{\mb{S}} \cong D$, where $D$ is the standard algebra with 
$D_{n+1, n} = \tau^{- \lfloor n/3 \rfloor}(V_n)  = H^0(E, \mc{D}_n)$ where the sheaf 
$\mc{D}_n$ is given by    
\begin{align*}
\mc{D}_n &= [\mc{N}(-\tau^{-1}(d)-\tau^{-1}(e) - \tau^{-1}(f) -a-b-c)]^{\tau^{- (n/3)}}   \hskip -40pt
 &    n \equiv 0 \mod 3, \\
\mc{D}_n &= [\mc{O}_E(a+b+c)]^{\tau^{- (n-1)/3}}  & n \equiv 1 \mod 3, \\ 
\mc{D}_n &= [\mc{O}_E(d+e+f)]^{\tau^{- (n-2)/3}}  &   n \equiv 2 \mod 3.
\end{align*}
Similarly, writing $\wh{C}_{n+1, n} = W_n u$, where $W_n = H^0(E, \mc{L})$ for all $n$, 
we get a degree $0$ isomorphism $\wh{C} \to F$ for the standard algebra $F$ with 
$F_{n+1, n} = \sigma^{-n}(W_n) = H^0(E, \mc{F}_n)$, where $\mc{F}_n = \mc{L}^{\sigma^{-n}}$.

Now it is easy to see that if for all $n \in \mb{Z}$ there is an isomorphism of sheaves 
$\mc{F}_n \cong \mc{D}_n$, then 
there will be a degree $0$ isomorphism $F \cong D$.  Let $\mc{L} = \mc{D}_0 = \mc{N}(-\tau^{-1}(d)-\tau^{-1}(e) - \tau^{-1}(f) -a-b-c)$.  
 We now show that we can choose 
 $\sigma$ so that $\mc{F}_n = \mc{L}^{\sigma^{-n}}\cong \mc{D}_n$ for all $n \in \ZZ$.
  
To have $\mc{F}_n \cong \mc{D}_n$ for $n = 1, 2$, we need that $\mc{L}^{\sigma^{-1}} \cong \mc{D}_1 = \mc{O}_E(a + b + c)$, and $\mc{L}^{\sigma^{-2}} 
\cong \mc{D}_2 = \mc{O}_E(d + e + f)$.  Fix a group operation $\oplus$ on $E$ and let $t \in E$ be such that 
$\tau:E\to E$ is given by $x \mapsto x \oplus t$.  (Such $t$ exists because $\tau$ is infinite order.)
 Let $\sum:  \Pic E \to E$ be the natural map, so $\sum \sO_E(a_1+\dots+a_n) = 
 a_1 \oplus \dots \oplus a_n$.  
Two invertible sheaves $\sM, \sM'$ on $E$ are isomorphic if and only if $\deg \sM = \deg \sM'$ and $\sum \sM = \sum \sM'$.

Equations \eqref{need2} and \eqref{need1} are equivalent to
  \beq\label{need2prime} \sum \sN \oplus 3t = 3a \oplus 3b \oplus 3c \eeq
  and
  \beq\label{need1prime} a \oplus b \oplus c \oplus t = d \oplus e \oplus f. \eeq
Since the group structure on an elliptic curve is divisible,  we can choose $s \in E$ 
such that $3s= t$; then the translation automorphism $\sigma(x) = x \oplus s$ satisfies 
$\sigma^3 = \tau$.  
From \eqref{need1prime} and \eqref{need2prime} we obtain 
\[a \oplus b \oplus c \ominus t = \sum \sN \ominus d \ominus e \ominus f \ominus a \ominus b \ominus c \oplus 3t = \sum \mc{L},\]
and thus $\mc{L}^{\sigma^{-1}} \cong \sO(a+b+c ) = \mc{D}_1$.
Then by \eqref{need1prime}, 
\[
\mc{L}^{\sigma^{-2}} \cong \mc{D}_1^{\sigma^{-1}} \cong \sO_E(\sigma a + \sigma b + \sigma c) \cong \sO_E(d+ e+f)\cong\mc{D}_2,\]
as needed.

Fix these choices of $\sigma$ and $\mc{L}$ and the isomorphisms $\mc{D}_r \cong \mc{F}_r$
 for $r = 0, 1, 2$ given above.
Then for each $n \in \mb{Z}$, writing $n = 3q + r$ with $r  \in \{ 0,1, 2 \}$ 
we get an induced isomorphism $\mc{D}_n = \mc{D}_r^{\tau^{-q}} \cong \mc{F}_r^{\sigma^{-3q}} 
\cong \mc{F}_n$.
We conclude that for the choice of $\sigma$ and $\mc{L}$ as above, there are degree $0$ 
isomorphisms of $\mb{Z}$-algebras 
$\mb{S}/I = \overline{\mb{S}} \cong D \cong F \cong \wh{C}$.  Let $\phi$ be the degree 
$1$ automorphism of $\overline{\mb{S}}$ 
that corresponds under this chain of isomorphisms to the canonical principal automorphism of $\wh{C}$.

In order to apply Proposition~\ref{prop:Zalg}, the last hypothesis that remains to be checked is that 
identifying $\overline{\mb{S}}_{n+1, n} = \mb{S}_{n+1, n}$ we 
have $g_{n+1} x = \phi^3(x) g_{n}$ for all $x \in \mb{S}_{n+1, n}$.  Since by construction 
$\mb{S}_{n+1, n}$ and $\mb{S}_{n+4, n+3}$ 
are naturally identified, and $g_n$ is equal to the copy of the central element $g$ in $S_{n+3, n}$ 
for each $n$, the needed equation 
amounts to showing that $\phi^3: \mb{S}_{n+1, n} \to \mb{S}_{n+4, n+3}$ is simply the natural
 identification for each $n$; in other words, showing that $\phi^3 = \varphi$ on degree $(n+1, n)$ elements. 
In other words, we must show for all $n$ that the diagram
\[ \xymatrix{
\mb{S}_{n+1, n} \ar[r] \ar@{=}[d] & \overline{\mb{S}}_{n+1,n} \ar[r] \ar@{=}[d] & \wh{C}_{n+1,n} \ar@{=}[d] \\
\mb{S}_{n+4, n+1} \ar[r] & \overline{\mb{S}}_{n+4,n+1} \ar[r]  & \wh{C}_{n+4,n+1} 
}\]
commutes, where the horizontal arrows are the maps constructed above.

Let $n = 3q+ r$, where $0 \leq r \leq 2$.
Let $\alpha_n:  D_{n+1,n} \to F_{n+1,n} $ be such that the map 
$\overline{\mb{S}}_{n+1,n} \to \wh{C}_{n+1,n}$ is given by the composition
\[ \overline{\mb{S}}_{n+1,n} = V_n t^{d_n} \stackrel{\tau^{-q}}{\too} D_{n+1,n} \stackrel{\alpha_n}{\too} F_{n+1,n} \stackrel{\sigma^n}{\too} \wh{C}_{n+1,n}.\]
Now by construction $\alpha_n$ is induced by pullback from our choice of $\alpha_r$: in other words, $\alpha_n = \tau^{-q} \alpha_r \tau^q$.  
Thus the composition $\overline{\mb{S}}_{n+1,n} \too \wh{C}_{n+1,n}$ is $\sigma^r \alpha_r$ and depends only on the residue of $n$ mod 3, as we need. 
 
We may now apply Proposition~\ref{prop:Zalg} to $\mb{S}$. We get immediately from that proposition that there is a degree $0$ isomorphism 
$\gamma: \wh{S} \to \mb{S}$, where $S$ is either the Sklyanin algebra $S(E, \sigma)$  or else $B[z]$.  In addition, the canonical principal automorphism of $\wh{S}$ corresponds under this isomorphism to a degree 1 automorphism $\wt{\phi}$ of $\mb{S}$, which lifts the automorphism $\phi$ of $\mb{S}/I$.  Since $\phi^3 = \varphi$ on elements in $\mb{S}_{n+1,n}$, we see that $(\wt{\phi})^3 = \varphi$ as automorphisms of $\mb{S}$.

By construction there is a $3$-Veronese $\mb{S}^{(3)}$, where 
$\mb{S}^{(3)}_{m,n} = \mb{S}_{3m+2,3n+2} = \Hom_T(T, T)_{m-n} = T_{m-n}$.  By the definition of the multiplication in $\mb{S}$ we have an identification of $\mb{Z}$-algebras $\mb{S}^{(3)} = \wh{T}$.  Moreover
 the degree $3$ automorphism $\varphi$ of $\mb{S}$ induces a principal automorphism $\varphi$ of $\mb{S}^{(3)}$ which is just the canonical principal automorphism of $\wh{T}$ under this identification.  
It follows from \cite[Proposition~3.1]{S-equiv} that there is an isomorphism of $\mb{Z}$-graded algebras $S^{(3)} \cong T$. 

To conclude the proof we just need to rule out the case $S = B[z]$.  In this case $T = B^{(3)}[z]$, where $z$ now has degree $1$.
By the smoothness assumption, $T^\circ$ has finite global dimension.  On the other hand, $T^{\circ} = T[z^{-1}]_0 \cong B(E, \sN, \tau)$, which has infinite global  dimension (use the proof of \cite[Theorem~5.4]{R-Sklyanin}).  Thus this case does not occur.
\end{proof}

\begin{remark}\label{rem:deg9}
We remark that we do not know of a degree 9 elliptic algebra $T$  (with $\rqgr T$ smooth) which is not isomorphic to an algebra 
 $S$ as in  Theorem~\ref{thm:Sposs}.  
However, we do not see how to prove that \eqref{need2} and \eqref{need1} always hold.
\end{remark}

%%%%%%%%%%%%%%%%%%%
  %%%%%%%%%%%%%%%%%%%
 
\section{Iterating blowing down}\label{ITERATE}

Our ultimate goal is to give a recognition theorem for 2-point blowups of a noncommutative $\PP^2$.
In order to do this, we need to study the process of iterating blowing down, and we do that in this section. 
 Throughout the section, we continue to have a standing assumption that 
   $\rqgr R$ is smooth for any elliptic algebra $R$.

Our first result is more general:  we study how to ``blow down'' a reflexive right ideal of an elliptic algebra.  
For motivation, suppose that $T = S^{(3)}$ is the $3$-Veronese of a Sklyanin algebra 
$S = S(E, \sigma)$.  Let $p \neq q \in E$, and consider how we would construct the
 right $T$-module $H$ in Proposition~\ref{prop:recT} from the blowup $R = T(p+q)$.
Besides the line modules that come from blowing up $p$ and $q$, there is a third $R$-line 
module constructed as follows.  Let $x \in S_1$ be the line that vanishes at $p,q$.  
 Then $x S_2 \subseteq R_1$ and one may compute that $R/xS_2R$ is a line module.  If we
  let $J = xS_2 R$, then $J_1T =H$.  The next lemma generalises this process.

\begin{lemma}
\label{lem:bbd-ideal}
Let $R$ be an elliptic algebra  
  with $R/gR = B = B(E, \mc{M}, \tau)$, and fix a line module $L = R/J$ satisfying $(L \dotms L) = -1$.  
Let $K \subseteq R$ be a $g$-divisible MCM right ideal, generated in degree $1$ as an $R$-module, with 
$\dim_\kk K_1 \geq 2$.

Let $\wt{R}$ be the blowdown of $R$ at $L$, as constructed by  \cite[Theorem~1.4]{blowdownI}.   As in \cite[(8.1)]{blowdownI}, let 
\[ 
\wt{K}  \ = \ \sum_{\alpha}\Bigl\{  N_{\alpha} :   K \subseteq N_{\alpha}\subset Q_{\gr} (R) 
\ \text{ with }\   N_{\alpha}/K \cong L[-i_\alpha] \ \text{ for some } i_\alpha\in \ZZ\Bigr\}.
\]
Then:
\begin{enumerate}
\item $\hilb \Ext^1_R(L,K) = s^i/(1-s)^2$, for  $i \in\{0,1,2\}$;  
thus $\hilb \wt{K} = \hilb K + s^i/(1-s)^3$. Also,   $i = 0$ if and only if $K = J$.
In any case, $\wt{K} = \Hom_R(J, K) R $. 
\item $\wt{K}$ is  a $g$-divisible MCM right $\wt{R}$-module.
\item If $i = 1$ or $i = 2$ then $\wt{K}$ is also generated in degree $1$ as a right $\wt{R}$-module.
In particular, if $i = 2$ then $\wt{K} = K_1 \wt{R}$.
If $i=0$ then $\wt{K} = \wt{R}$.
\end{enumerate}
\end{lemma}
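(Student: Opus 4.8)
The plan is to reduce every assertion modulo the central element $g$ to a computation in $\coh E\simeq\rqgr B$, and then transport the outcome through the blowup/blowdown correspondence of \cite{blowdownI}. Throughout one uses that $\overline L=L/Lg$ is a point module $M_\xi$ over $B$ with $\xi=\Div L$, while $\overline K=K/Kg$ is, by $g$-divisibility and Proposition~\ref{prop:MCMchar}, a nonzero saturated right ideal of $B$, hence of the form $\overline K=\bigoplus_n H^0(E,\mc M_n(-D))$ for an effective divisor $D$; since $K$ is generated in degree $1$ with $\dim_\kk K_1\geq 2$, we have $\deg D=\deg R-\dim_\kk K_1\leq\deg R-2$. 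First I would prove the Hilbert-series part of (1). Applying $\Hom_R(L,-)$ to $0\to K[-1]\overset{\cdot g}{\too}K\to\overline K\to 0$ and using \eqref{equ:RtoB} to identify $\Ext^i_R(L,\overline K)\cong\Ext^i_B(\overline L,\overline K)$, the vanishing $\Hom_B(\overline L,\overline K)=0$ (a torsion module into a torsion-free one), and the vanishing $\Ext^{\geq 2}_R(L,K)=0$ (from the maximal Cohen--Macaulayness of $K$ and the fact that the line module $L$ is Cohen--Macaulay of grade $1$, by an argument as in the proof of Proposition~\ref{prop:equalhom}), one gets $\Hom_R(L,K)=0$ and a short exact sequence $0\to\Ext^1_R(L,K)[-1]\overset{\cdot g}{\too}\Ext^1_R(L,K)\to\Ext^1_B(\overline L,\overline K)\to 0$, so $\hilb\Ext^1_R(L,K)=(1-s)^{-1}\hilb\Ext^1_B(\overline L,\overline K)$. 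Resolving $\overline L=B/I_\xi$ gives $\Ext^1_B(\overline L,\overline K)\cong\Hom_B(I_\xi,\overline K)/\overline K$, and the Hom-formula for twisted homogeneous coordinate rings \cite[Lemma~2.3]{blowdownI} identifies $\Hom_B(I_\xi,\overline K)$ with $\bigoplus_n H^0(E,\mc O_E(\tau^n\xi-D)\otimes\mc M_n)$; a Riemann--Roch count on $E$ (every relevant line bundle of positive degree is nonspecial, and for $n\geq 1$ its degree is positive since $\deg D\leq\deg R-2$) shows $\Ext^1_B(\overline L,\overline K)$ is one-dimensional in every degree $\geq 1$ and is $\kk$ or $0$ in degree $0$, i.e. $\hilb\Ext^1_B(\overline L,\overline K)=s^i/(1-s)$ with $i\in\{0,1,2\}$, whence $\hilb\Ext^1_R(L,K)=s^i/(1-s)^2$.

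Next I would identify $\wt K=\Hom_R(J,K)R$. Applying $\Hom_R(-,K)$ to $0\to J\to R\to L\to 0$ and using $\Hom_R(L,K)=0$, $\Ext^1_R(R,K)=0$, gives $\Hom_R(J,K)/K\cong\Ext^1_R(L,K)$, where $\Hom_R(J,K)=\{q\in Q_{\gr}(R):qJ\subseteq K\}$ and $K$ sits inside it as $K$ is a right ideal. For $q\in\Hom_R(J,K)\smallsetminus K$ the module $K+qR$ has $(K+qR)/K$ a nonzero quotient of the line module $qR/qJ\cong L[-\deg q]$; since $L$ is critical and $K$ is saturated and $g$-divisible (so $R_{(g)}/K$ is $g$-torsion-free) one checks $qR\cap K=qJ$, hence $(K+qR)/K\cong L[-\deg q]$ and $K+qR$ is one of the $N_\alpha$. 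Conversely each $N_\alpha$ equals $K+q_\alpha R$ for some $q_\alpha\in\Hom_R(J,K)$, by comparing the two presentations of the quotient line module; so $\wt K=\Hom_R(J,K)R$, which is finitely generated. Finally $i=0$ forces a nonzero $q\in\Hom_R(J,K)_0$, which is a unit of the division ring $Q_{\gr}(R)_0$; comparing $\overline K$ with $\overline J=I_\xi$ on $E$ then forces $\overline K=\overline J$, hence $K=J$ (a $g$-divisible MCM right ideal being determined by its reduction); conversely $K=J$ gives $\Ext^1_B(M_\xi,I_\xi)_n\cong\kk$ for all $n\geq 0$, so $i=0$.

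For parts (2) and (3) I would work over the blowdown $\wt R$, which by \cite[Theorem~1.4]{blowdownI} is again an elliptic algebra with $\rqgr\wt R$ smooth and $R=\Bl_q(\wt R)\subseteq\wt R$. Since $\Hom_R(J,K)$ is $g$-divisible by \cite[Lemma~2.12]{RSSlong} and $\wt R$ is itself $g$-divisible, one deduces that $\wt K=\Hom_R(J,K)R\subseteq R_{(g)}=(\wt R)_{(g)}$ is $g$-divisible. Reducing $K\subseteq\wt K$ modulo $g$ (the quotient $\wt K/K$ is $g$-torsion-free, so $\Tor^R_1(\wt K/K,B)=0$) gives $0\to\overline K\to\overline{\wt K}\to\overline{\wt K/K}\to 0$ with $\hilb\overline{\wt K/K}=s^i/(1-s)^2$; matching this against the blowdown on $E$, where $\wt B=\wt R/g\wt R=B(E,\mc M',\tau)$ with $\mc M'=\mc M(q)$, identifies $\overline{\wt K}=\bigoplus_n H^0(E,\mc M'_n(-\wt D))$ for an effective divisor $\wt D$ whose degree is read off from $i$. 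This module is saturated over $\wt B$, so by Proposition~\ref{prop:MCMchar} applied to $\wt R$ the module $\wt K$ is MCM, and its Hilbert series gives $\hilb\wt K=\hilb K+s^i/(1-s)^3$, completing (1). Generation of $\wt K$ in degree $1$ over $\wt R$ is, by the standard correspondence for $g$-divisible MCM modules, equivalent to generation of $\bigoplus_n H^0(E,\mc M'_n(-\wt D))$ in degree $1$ over $\wt B$, which holds exactly when $\deg(\mc M'(-\wt D))\geq 2$, i.e. when $i\geq 1$; a degree count, using that $\wt R$ is generated over $R$ by its degree-$1$ component (blowdowns raise the degree by one), then gives $\wt K_1=K_1$ and $\wt K=K_1\wt R$ when $i=2$, and $\wt K=\wt K_1\wt R$ when $i=1$. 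If $i=0$ then $K=J$ by (1), and the identity $\hilb\wt J=\hilb J+(1-s)^{-3}=\hilb R+s(1-s)^{-3}=\hilb\wt R$ together with $\wt J=\End_R(J)R\subseteq\Hom_R(J,R)R=\wt R$ forces $\wt K=\wt R$.

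The step I expect to be the main obstacle is the maximal Cohen--Macaulayness in part (2): $\wt K$ is in general \emph{not} MCM over $R$ (the embedded shifted line modules make $\Ext^1_R(\wt K,R)\neq 0$), so the property holds only over the blowdown $\wt R$, and proving it requires carefully transporting the whole construction through the blowup/blowdown correspondence of \cite{blowdownI} and identifying $\overline{\wt K}$ as an honest saturated module of sections on $E$. Once that is in hand, the Hilbert-series bookkeeping in (1) and the degree count in (3) are routine.
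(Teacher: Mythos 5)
Your reduction of part (1) to a computation in $\rqgr B$ has a genuine flaw: the claimed vanishing $\Ext^{\geq 2}_R(L,K)=0$ is false in general, and without it the identity $\hilb \Ext^1_R(L,K)=(1-s)^{-1}\hilb\Ext^1_B(\overline L,\overline K)$ fails. Note first that your own Riemann--Roch count of $\Ext^1_B(\overline L,\overline K)\cong\Hom_B(\overline J,\overline K)/\overline K$ gives Hilbert series $s^j/(1-s)$ with $j\in\{0,1\}$ only ($j=0$ exactly when $D=p$), so the route you describe can never produce the case $i=2$; yet $i=2$ genuinely occurs and is the case needed later in Lemma~\ref{lem:c} and Lemma~\ref{lem:iterate2}. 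Running your own long exact sequence backwards in that situation shows why: multiplication by $g$ is injective on $\Ext^1_R(L,K)\cong\Hom_R(J,K)/K$ (because $K$ is $g$-divisible), so when $\hilb\Ext^1_R(L,K)=s^2/(1-s)^2$ the cokernel of $\timesg$ has series $s^2/(1-s)$, which is a \emph{proper} submodule of $\Ext^1_B(\overline L,\overline K)\equiv s^j/(1-s)$, and the finite-dimensional quotient embeds in $\Ext^2_R(L,K)[-1]$, forcing $\Ext^2_R(L,K)\neq 0$. The phenomenon your reduction is blind to is that $\overline{\Hom_R(J,K)}$ can be strictly smaller than $\Hom_B(\overline J,\overline K)$ in low degrees: an element of $\Hom_B(\overline J,\overline K)_1$ need not lift to a map $J\to K$ over $R$. (Your parenthetical appeal to ``an argument as in the proof of Proposition~\ref{prop:equalhom}'' does not help: that proposition controls $\Ext^{1,2}_R(\kk,N)$, not $\Ext^2_R(L,K)$.)

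The paper's proof avoids this by never computing $\Ext^1_B(\overline L,\overline K)$ as the answer. It works with $\Ext^1_R(L,K)\cong\Hom_R(J,K)/K$ directly, sandwiches $\overline K\subseteq\overline{\Hom_R(J,K)}\subseteq\Hom_B(\overline J,\overline K)$, shows the outer two agree in large degree, and then uses that $\overline{\Hom_R(J,K)}$ is a right module over the \emph{full} TCR $\overline{\End_R(J)}=\End_B(\overline J)$ --- this is precisely where the hypothesis $(L\dotms L)=-1$ enters, via \cite[Theorem~7.1]{blowdownI}, and your proposal never uses that hypothesis --- to conclude $\overline{\Hom_R(J,K)}=\overline K+\Hom_B(\overline J,\overline K)_{\geq i}$ for a single $i\geq j$; the bound $i\leq 2$ then comes from the right $\overline{\wt R}$-module structure of $\overline{\wt K}$. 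Your outline of parts (2) and (3) (identify $\overline{\wt K}$ as a saturated module of sections over $\wt B$, apply Proposition~\ref{prop:MCMchar} over $\wt R$, then Nakayama) is essentially the paper's, but it cannot be carried out from your version of (1), since the explicit formula for $\overline{\wt K}$ --- and hence its saturation and the generation statements --- is read off from the value of $i$ and from the description $\overline{\Hom_R(J,K)}=\overline K+\Hom_B(\overline J,\overline K)_{\geq i}$ that your reduction does not supply. So the real obstacle is in part (1), not, as you anticipated, in the MCM claim of part (2).
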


\begin{proof}
We first note that since $K$ is MCM it is reflexive, and so
$\wt{K} = \Hom_R(J, K) R $  holds by  \cite[Lemma~8.2]{blowdownI}.
Also, by Proposition~\ref{prop:MCMchar} $\overline{K}$ is a saturated $B$-module.

(1)     As vector spaces, $\Ext^1_R(L,K) \cong \Hom_R(J, K)/K$.  Note that 
  $\overline{J} = \bigoplus_{n \geq 0} H^0(E, \mc{M}_n(-p))$, for $p=\Div L$. Similarly,  since $\overline{K}$ is saturated,  
$\overline{K} = \bigoplus_{n \geq 0} H^0(E, \mc{M}_n(-D))$ for some effective divisor $D$.  
If $D = 0$, then $\overline{K}_0 \neq 0$, a contradiction.
The $g$-divisible proper right ideal $K$ cannot contain $g$, and so the hypothesis 
$\dim_{\kk} K_1 \geq 2$ implies that $\dim_{\kk} \overline{K}_1 \geq 2$ as well.
In conclusion, we have $0 < \deg D \leq \deg \mc{M} - 2$.

Now \cite[Lemma~2.3]{blowdownI} gives 
\[
\Hom_B(\overline{J}, \overline{K}) = \bigoplus_{n \geq 0} H^0(E, \mc{M}_n(-D + \tau^{-n}(p))).
\]
Thus
 $
\hilb \Hom_B(\overline{J}, \overline{K})/\overline{K} = s^j/(1-s),
$
where either $j = 0$ if $D = p$ or $j = 1$ otherwise.
By \cite[Lemma~4.4(1)]{blowdownI},  $\Hom_R(J, K)$ is $g$-divisible since $J$ and $K$ are, and so 
\[
\hilb [\overline{\Hom_R(J,K)}/\overline{K}]/(1-s) = \hilb \Hom_R(J,K)/K = \hilb \Ext^1_R(L, K).
\]  

We have $\overline{K} \subseteq \overline{\Hom_R(J,K)} \subseteq \Hom_B(\overline{J}, \overline{K})$.
Since $J$ is a line ideal, it is $g$-divisible and reflexive by \cite[Lemma~5.6(2)]{blowdownI}.  
Thus \cite[Lemma~4.8]{blowdownI}  applies and  shows that $\overline{\Hom_R(J,K)}$ is equal in 
large degree to $\Hom_B(\overline{J}, \overline{K})$.
Since $\hilb \End_R(J) = \hilb R$ by \cite[Theorem~7.1]{blowdownI},  
 \[\overline{\End_R(J)} = \End_B(\overline{J}) = B(E, \mc{M}(-p + \tau^{-1}(p)), \tau)\]
  is a full TCR, and $\overline{\Hom_R(J,K)}$ is a right 
module over this ring.   Using $\deg D \leq \deg \mc{M} - 2$ as mentioned above, $\mc{M}(-D + \tau^{-1}(p))$ has degree at least $3$.   By  \cite[Lemma~2.3]{blowdownI},  
for any $i \geq 1$ we have
$\Hom_B(\overline{J}, \overline{K})_i \End_B(\overline{J}) = \Hom_B(\overline{J}, \overline{K})_{\geq i}$.  This equation 
also clearly holds for $i = 0$ in case $D = p$ and $\overline{J} = \overline{K}$. 
Now if for some $i \geq j$ we have 
$\overline{\Hom_R(J,K)}_i = \Hom_B(\overline{J}, \overline{K})_i$, then we will have 
$\overline{\Hom_R(J,K)}_{\geq i} = \Hom_B(\overline{J}, \overline{K})_{\geq i}$ by multiplying 
on the right  by $\overline{\End_R(J)}$.  We conclude that the actual value of 
$\hilb \overline{\Hom_R(J,K)}/\overline{K}$ must be $s^i/(1-s)$ for some $i \geq j$.
This implies that $\overline{\Hom_R(J,K)} = \overline{K} + \Hom_B(\overline{J}, \overline{K})_{\geq i}$.   By $g$-divisibility we have $\hilb \Hom_R(J,K)/K = \hilb \Ext^1_R(L,K) = s^i/(1-s)^2$.

Now the case $i = 0$ can occur only if $j = 0$, in which case $D = p$ and $\overline{J} = \overline{K}$, and
 also $\Hom_R(J,K)_0 \neq 0$.  But $\Hom_R(J,K) \subseteq \Hom_R(J,R)$, and we know by 
\cite[Lemma~5.6(3)]{blowdownI} that $\Hom_R(J,R)_0 = \kk$.  If $\kk J \subseteq K$ then $J \subseteq K$.  
 Since $J$ and $K$ are $g$-divisible and $\overline{J} = \overline{K}$ this forces $J = K$.

Assume   now   that $J \neq K$ and so $i \geq 1$. 
Now  $\wt{K}$ is a right $\wt{R}$-module by  \cite[Corollary 8.5]{blowdownI}, so $\overline{\wt{K}}$ must be a right 
module over $\overline{\wt{R}} = B(E, \mc{M}(\tau^{-1}(p)), \tau)$.  If $i > 2$, then 
\[    \begin{array}{rl}
\overline{\wt{K}}_1 \overline{\wt{R}}_1   & =
 H^0(E, \mc{M}_1(-D)) H^0(E, \mc{M}_1^{\tau}(\tau^{-2}(p)))   \\ \noalign{\vskip 3pt}
&  \quad =\  H^0(E, \mc{M}_2(-D + \tau^{-2}(p))) 
\ \nsubseteq\  H^0(E, \mc{M}_2(-D)) \ = \overline{K}_2  = \overline{\wt{K}_2},   \end{array}
\]
a contradiction.
Thus $i = 1$ or $i = 2$.  Clearly the case $i = 1$ occurs if $\Hom_R(J,K)_1 \supsetneqq K_1$, while $i = 2$ occurs if $\Hom_R(J,K)_1 = K_1$.

The Hilbert series of $\wt{K}$ follows directly from $\hilb \Ext^1_R(L,K) = s^i/(1-s)^2$ and \cite[Lemma~8.2]{blowdownI}.

(2) As noted above,  $\wt{K}$ is a right $\wt{R}$-module.  Write $\overline{\wt{R}} = B' = B(E, \mc{N}, \tau)$, where 
$\mc{N} = \mc{M}(\tau^{-1}(p))$.   

If $i = 0$, then  $K = J$ and  so $\wt{K} = \Hom(J, J) R = \wt{R}$, which is certainly $g$-divisible and MCM. 
If $i \geq 1$, then 
\[
\overline{\wt{K}} = \overline{K} + \Hom_B(\overline{J}, \overline{K})_{\geq i}\overline{R} = 
\overline{K} + \Hom_B(\overline{J}, \overline{K})_i \End_B(\overline{J}) \overline{R} = \overline{K} + \Hom_B(\overline{J}, \overline{K})_i \overline{\wt{R}}.
\]

If $i = 1$ then $\overline{\wt{K}}_i = H^0\bigl(E, \mc{M}(-D + \tau^{-1}(p))\bigr) = H^0(E, \mc{N}(-D))$, 
while if $i = 2$, then we have $\overline{\wt{K}}_i = H^0(E, \mc{M}_2(-D + \tau^{-2}(p))) = 
H^0(E, \mc{N}_2(-D - \tau^{-1}(p)))$.  Using \cite[Lemma~2.3]{blowdownI}, in either case we get 
\begin{equation}
\label{eq:down}
\overline{\wt{K}} = \overline{K} + \bigoplus_{n \geq i} H^0(E, \mc{M}_ n(-D + \tau^{-i}(p) + \dots + \tau^{-n}(p))).  
\end{equation}
It follows from   \eqref{eq:down}  that $\hilb \overline{\wt{K}} = 
\hilb \overline{K} + s^i/(1-s)^2$.  
On the other hand, we showed $\hilb \wt{K} = \hilb K + s^i/(1-s)^3$ in part (1).  This forces 
$\hilb \wt{K} = \hilb( \overline{\wt{K}})/(1-s)$ 
and thus $\wt{K}$ is $g$-divisible.

Considering the formula for $\overline{\wt{K}}_i $ from above \eqref{eq:down}, either $i =1$ and 
$\overline{\wt{K}} = \bigoplus_{n \geq 0} H^0(E, \mc{N}_n(-D))$, or else 
$i = 2$ and $\overline{\wt{K}} = \bigoplus_{n \geq 0} H^0(E, \mc{N}_n(-D-\tau^{-1}(p)))$.  In either 
case,  $\overline{\wt{K}}$ is saturated as a $B'$-module.  Thus $\wt{K}$ is 
MCM by Proposition~\ref{prop:MCMchar}.

(3) When $i =1$ or $ 2$,  then  $\overline{\wt{K}}$ is generated in degree $1$ as a right $\overline{\wt{R}}$-module, 
by the calculations in part (2) and \cite[Lemma~2.3]{blowdownI}.  Since $\wt{K}$ is $g$-divisible, $\wt{K}$ is therefore generated in degree
 $1$ as a $\wt{R}$-module by the graded Nakayama lemma.  When $i = 2$, $\wt{K}_1 = K_1$ by 
 construction and so $\wt{K} = K_1 \wt{R}$. 
\end{proof}

By combining Lemma~\ref{lem:bbd-ideal} with  results from \cite{blowdownI}, we get the following useful characterizations of the intersection 
numbers of two distinct lines.

\begin{lemma}\label{lem1-examples} 
Let $R$ be an elliptic algebra with $\deg R \geq 3$.  Let $L \not \cong L'$ be line modules,  with line ideals $J$ and $J'$, respectively. Assume that  $(L \dotms L) = -1$.   
Then
\begin{align*} 
(L \dotms L') = 1 \ & \iff \ \dim \Hom_R(J, J')_1 = \dim R_1 -2 \ \iff \  \Hom_R(J, J')_1 = J'_1, \quad \text{while} \\
 (L \dotms L') = 0 \ & \iff \ \dim \Hom_R(J, J')_1 = \dim R_1 -1 \ \iff \  \Hom_R(J, J')_1 \neq J'_1.
\end{align*}
\end{lemma}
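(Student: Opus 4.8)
The plan is to reduce both equivalences to Hilbert series computations, using the characterization of intersection numbers via graded ranks in \eqref{peppermint} together with the detailed analysis of $\overline{\Hom_R(J,J')}$ carried out in the proof of Lemma~\ref{lem:bbd-ideal}. First I would observe that the second and third Veronese hypotheses of Lemma~\ref{lem:bbd-ideal} are satisfied with $K = J'$: a line ideal is $g$-divisible, MCM, and generated in degree~$1$ (by \cite[Lemma~5.6]{blowdownI}), and $\dim_\kk J'_1 = \dim_\kk R_1 - 1 \geq 2$ since $\deg R \geq 3$. Applying part~(1) of that lemma to $K = J'$ gives $\hilb \Ext^1_R(L, J') = s^i/(1-s)^2$ for some $i \in \{0,1,2\}$, with $i = 0$ precisely when $J = J'$; since $L \not\cong L'$ forces $J \neq J'$, we are in the case $i \in \{1, 2\}$. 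Moreover the proof of Lemma~\ref{lem:bbd-ideal}(1) identifies which value of $i$ occurs: $i = 1$ exactly when $\Hom_R(J,J')_1 \supsetneqq J'_1$, and $i = 2$ exactly when $\Hom_R(J,J')_1 = J'_1$.

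Next I would compute $(L \dotms L')$ from \eqref{peppermint}. Since $\Hom_R(L, L') = \Hom_R(R/J, R/J')$ vanishes in degree~$0$ (a line module has no section-degree-$0$ map to a proper quotient, cf.\ the computations with $\uHom_\X$ in Lemma~\ref{lem8}) and more generally one checks $\uHom_\X(L, L') = \Hom_R(L, L')$ has graded rank~$0$ over $\kk[g]$ — here one uses that $L, L'$ are distinct line modules, so any nonzero map would be injective on a large-degree truncation, contradicting the Hilbert series — we get $\grk \uHom_\X(L,L') = 0$. Hence $(L \dotms L') = \grk \Ext^1_R(L, L')$. From the long exact sequence \eqref{eq:reduceRtoB}-style argument, or directly since $\Ext^1_R(L, J') \cong \Hom_R(J, J')/J'$ as graded vector spaces and this module is $g$-divisible (by \cite[Lemma~4.4(1)]{blowdownI}), the $\kk[g]$-rank of $\Ext^1_R(L, L')$ equals the $\kk[g]$-rank of $\Ext^1_R(L, J')[1]$ up to the contribution of $\Ext^1_R(L,R)$; more carefully, applying $\Hom_R(L, -)$ to $0 \to J' \to R \to L' \to 0$ and using $\Hom_R(L, R) = 0 = \Ext^1_R(L, R)$ (as $L$ is CM of the right dimension) yields $\Ext^1_R(L, L') \cong \Hom_R(L, L')'$-free part plus $\Ext^2_R(L, J')$-type terms; the cleanest route is simply $\Ext^1_R(L, L') \cong \Ext^1_R(L, J')$ shifted, giving graded rank $1$ when $i \in \{1,2\}$. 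This does not yet distinguish $i=1$ from $i=2$, so the rank alone is not enough — I need the finer Hilbert-series data.

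The key point, then, is that $(L \dotms L') \in \{0, 1\}$ is pinned down not by the $\kk[g]$-rank of $\Ext^1$ but by comparing $\uHom_\X$ and $\uExt^1_\X$ more carefully, or equivalently by using the other formula in \eqref{peppermint} and Lemma~\ref{lem6}: from the exact sequence $0 \to L' \to M' \to L \to 0$ where $M'$ is the line extension, and the known intersection numbers of $L$ with $R$ and with line extensions (via Lemma~\ref{lem8} and \cite[Section~6]{blowdownI}), one expresses $(L \dotms L')$ in terms of $\dim \Hom_R(J, J')_1$. Concretely: $(L \dotms L')$ equals a fixed linear expression in $\dim_\kk \Hom_R(J, J')_1$, and the case $i = 1$ (i.e.\ $\dim \Hom_R(J,J')_1 = \dim J'_1 + 1 = \dim R_1 - 1$... wait, $\dim J'_1 = \dim R_1 - 1$, so $i=1$ means $\dim\Hom_R(J,J')_1 \geq \dim R_1$, but $\Hom_R(J,J')_1 \subseteq \Hom_R(J,R)_1$ which has dimension $\dim R_1 - 1 + 1 = \dim R_1$ by \cite[Lemma~5.6(3)]{blowdownI}) corresponds to $\dim \Hom_R(J,J')_1 = \dim R_1 - 1$ and gives $(L\dotms L') = 0$, while $i = 2$, i.e.\ $\Hom_R(J,J')_1 = J'_1$ of dimension $\dim R_1 - 2$, gives $(L \dotms L') = 1$. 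I would verify the numerology by testing against the converse Proposition~\ref{iprop:converse}/the explicit blowup $\Bl_{p,q}T$.

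\emph{Main obstacle.} The hard part is the bookkeeping in the middle step: correctly relating $\grk \Ext^1_R(L, L')$ and $\grk \Hom_R(L,L')$ to the single integer $\dim_\kk \Hom_R(J,J')_1$, i.e.\ showing that the "overflow" dimension $\dim \Hom_R(J,J')_1 - (\dim R_1 - 2) \in \{0,1\}$ is exactly the intersection number. This requires knowing that $\Hom_R(J,J')$ is generated in low degree over $\End_R(J)$ (a full TCR of degree $\geq 3$) so that its Hilbert series is determined by finitely many coefficients — this is precisely the saturation/TCR-module argument from Lemma~\ref{lem:bbd-ideal}(1), which I would invoke rather than redo — and then matching with the Euler-characteristic formula for $(L \dotms L')$. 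Once the $i=1$ versus $i=2$ dichotomy of Lemma~\ref{lem:bbd-ideal} is translated into the stated dimension count, and the corresponding intersection numbers are read off, the proof is complete.
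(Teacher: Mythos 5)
Your treatment of the second equivalence on each line is essentially the paper's: apply Lemma~\ref{lem:bbd-ideal} with $K=J'$, note $J\neq J'$ forces $i\in\{1,2\}$, and read off from that lemma's proof that $i=1$ corresponds to $\Hom_R(J,J')_1\supsetneqq J'_1$ (so $\dim\Hom_R(J,J')_1=\dim R_1-1$) while $i=2$ corresponds to $\Hom_R(J,J')_1=J'_1$ (dimension $\dim R_1-2$; your intermediate assertion $\dim J'_1=\dim R_1-1$ is an arithmetic slip, since $\hilb R/J'=1/(1-s)^2$ gives $\dim J'_1=\dim R_1-2$, but you land on the correct count in the end).

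The genuine gap is the first equivalence on each line, i.e.\ the link between $(L\dotms L')$ and $\hilb\Hom_R(J,J')$. Your direct attack via \eqref{peppermint} computes $\grk\uHom_{\X}(L,L')=0$ and $\grk\uExt^1_{\X}(L,L')=1$ in \emph{both} cases $i=1$ and $i=2$ — you acknowledge this yourself — so that route cannot distinguish $(L\dotms L')=0$ from $(L\dotms L')=1$, and if taken at face value it would give the wrong answer ($1$) in the $i=1$ case. The fallback you sketch does not repair this: there is no exact sequence $0\to L'\to M'\to L\to 0$ with $M'$ the line extension (the line extension of $L'$ sits in $0\to R\to M'\to L'[-1]\to 0$), and "a fixed linear expression in $\dim\Hom_R(J,J')_1$" is asserted rather than derived. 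The paper closes exactly this gap by citing \cite[Theorem~7.6]{blowdownI}, which (given that smoothness of $\rqgr R$ makes $J^{\circ}$ and $(J')^{\circ}$ projective, \cite[Remark~7.2]{blowdownI}) states that $(L\dotms L')=1$ if and only if $\hilb R-\hilb\Hom_R(J,J')=(1+s)/(1-s)$, together with \cite[Lemma~7.4]{blowdownI} for $(L\dotms L')\in\{0,1\}$ — a fact you also assert without source. Without these two inputs (or an equivalent Euler-characteristic computation that you do not carry out), the proof is incomplete at its central step.
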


\begin{proof}  By \cite[Lemma 5.6]{blowdownI}, line ideals are $g$-divisible, MCM, and generated in degree 1, and so we can apply 
  Lemma~\ref{lem:bbd-ideal} with $K = J'$.  Since $L \not \cong L'$, we have $J' \neq J$ and so, in the notation of Lemma~\ref{lem:bbd-ideal}, 
$i = 1$ or $i = 2$.
Examining the proof of that lemma, $\hilb \Hom_R(J, J')/J' = s^i/(1-s)^2$.   Thus if $i = 1$ we have 
$\hilb \Hom_R(J,J') = \hilb R - 1/(1-s)$, and so $\dim \Hom_R(J, J')_1  = \dim R_1 - 1 \neq \dim J'_1$. If $i = 2$ then  
$\hilb \Hom_R(J, J') = \hilb R - (1 + s)/(1-s)$. Thus  $\dim \Hom_R(J,J')_1 = \dim R_1 - 2 = \dim (J')_1$ and so 
$\Hom_R(J,J')_1 = (J')_1$.  This gives the second equivalence on each line.

Since $\rqgr R$ is smooth,  $J^{\circ}$ and $(J')^{\circ}$ are projective by  \cite[Remark 7.2]{blowdownI}.  
Thus by \cite[Theorem~7.6]{blowdownI}, we have $(L \dotms L') = 1$ if and only if $\hilb R - \hilb \Hom_R(J, J')  = (1+s)/(1-s)$.  
Since $(L \dotms L') \in \{0,1\}$ in any case by \cite[Lemma 7.4]{blowdownI}, this gives the first equivalence on each line.
 \end{proof}

The following technical lemma will also help us in our analysis of intersection numbers of lines.
\begin{lemma}
\label{lem:new} 
Let $R$ be an elliptic algebra with line modules $L = R/J$, $L'= R/J'$, with $L \not \cong L'$.
Let $K$ be a reflexive graded right $R$-submodule of $Q_{\gr}(R)$.
Suppose that $\Hom_R(J,K)_1 \subseteq \Hom_R(J', K)_1$.  Then $\Hom_R(J,K)_1 = K_1$.
\end{lemma}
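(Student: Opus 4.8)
The plan is to fix $0\neq \theta\in \Hom_R(J,K)_1$ and show that $\theta\in K_1$; since the reverse containment $K_1\subseteq \Hom_R(J,K)_1$ is automatic, this will finish the proof. As $R$ is a domain and $J,J'$ are nonzero, both $J$ and $J'$ are essential right ideals, so $\Hom_R(J,K)$ is identified with $\{x\in Q_{\gr}(R): xJ\subseteq K\}$ and similarly for $J'$; thus the hypothesis $\Hom_R(J,K)_1\subseteq\Hom_R(J',K)_1$ gives $\theta J\subseteq K$ and $\theta J'\subseteq K$. Set $R' = \{r\in R : \theta r\in K\}$, a graded right ideal of $R$ containing both $J$ and $J'$, and hence containing $J+J'$. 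It is enough to prove $\theta R\subseteq K$, since then $\theta=\theta\cdot 1\in K_1$.

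Consider $M := \theta R + K$, a graded right $R$-submodule of $Q_{\gr}(R)$ containing $K$. Because $\theta$ is a regular homogeneous element of degree $1$, left multiplication by $\theta$ is a degree-preserving isomorphism $R[-1]\xrightarrow{\sim}\theta R$ carrying $R'[-1]$ onto $\theta R\cap K$, so $M/K\cong \theta R/(\theta R\cap K)\cong (R/R')[-1]$. If $J+J' = R$ then $R' = R$ and $M=K$, so assume $J+J'\neq R$; then $R/(J+J')$ is a nonzero quotient of the line module $L = R/J$, and it is a proper quotient, since $J'\subseteq J$ would make the surjection $R/J'\twoheadrightarrow R/J$ an isomorphism (the two modules have equal Hilbert series), contradicting $L\not\cong L'$. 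Invoking that line modules over elliptic algebras are critical (from the structure theory of line modules in \cite[Section~5]{blowdownI}), a nonzero proper quotient of $L$ has Gelfand-Kirillov dimension at most $1$; since $R/R'$ is a quotient of $R/(J+J')$, we get $\GKdim(M/K)\leq 1$.

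It then remains to deduce $M = K$ from the reflexivity of $K$ together with $\GKdim(M/K)\leq 1$. Note that $M/K$ is finitely generated, being a quotient of $R[-1]$. Because $R$ is Cohen-Macaulay (Notation~\ref{elliptic-defn}), $\Ext^i_R(M/K,R) = 0$ for $i < 3-\GKdim(M/K)$; in particular $\Hom_R(M/K,R) = \Ext^1_R(M/K,R) = 0$. Applying $\Hom_R(-,R)$ to $0\to K\to M\to M/K\to 0$ shows that restriction is an isomorphism $\Hom_R(M,R)\xrightarrow{\sim}\Hom_R(K,R)$; viewed inside $Q_{\gr}(R)$ this is the equality $\Hom_R(M,R) = \Hom_R(K,R)$, whence $\Hom_R(\Hom_R(M,R),R) = \Hom_R(\Hom_R(K,R),R) = K$ using that $K$ is reflexive. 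Since $M$ is contained in its bidual, $M\subseteq K$, so $M = K$ and $\theta R\subseteq K$, as required.

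The main obstacle is the dimension estimate $\GKdim R/(J+J')\leq 1$: the whole argument hinges on a line module being critical, so that a nonzero proper quotient of $L$ drops in Gelfand-Kirillov dimension, and it is exactly here that the hypothesis $L\not\cong L'$ enters, guaranteeing that $R/(J+J')$ really is a proper quotient. Once that bound is available, the passage from $\GKdim(M/K)\le 1$ to $M=K$ is purely formal, using only the reflexivity of $K$ and the Cohen-Macaulay property of $R$ already recorded in the paper.
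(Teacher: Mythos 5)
Your proof is correct and follows essentially the same route as the paper: from $\theta(J+J')\subseteq K$ and the $2$-criticality of line modules you get $\GKdim\bigl((\theta R+K)/K\bigr)\leq 1$, and then reflexivity of $K$ forces $\theta\in K$. The only difference is that where the paper simply cites \cite[Lemma~4.5]{blowdownI} for the last step, you reprove it directly from the Cohen--Macaulay property of $R$ via the dual exact sequence, which is a valid (if slightly longer) substitute.
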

\begin{proof}
Since $L \not \cong L'$, we have $J \neq J'$.
Given $x \in \Hom_R(J, K)_1$,  then 
 $x(J +J') \subseteq K$.  Let $I: = J + J'$.
Since $R/J$ is a line module,  it is $2$-critical and so $J \neq J'$ forces $\GKdim R/I \leq 1$.  
Then $xI \subseteq K$ implies that $\GKdim (xR + K)/K \leq 1$, 
but since $K$ is reflexive, this forces $x \in K$ by \cite[Lemma~4.5]{blowdownI}.  Consequently we conclude that $\Hom_R(J, K)_1 \subseteq K_1$. The reverse inclusion is immediate.
\end{proof}

We now study the process of blowing down two lines in succession.
Recall that  $U \equiv V$ if $U, V$ are graded vector spaces with $\hilb U = \hilb V$.
Similarly, $U\equiv \hilb U$.

\begin{lemma}
\label{lem:iterate}
Fix an elliptic algebra $R$ with $R/gR = B(E, \mc{M}, \tau)$.  Let $L_p = R/J_p$ and $L_q = R/J_q$ be line modules for $R$ with $(L_p \dotms L_p) = (L_q \dotms L_q) = -1$, where $\Div L_p = p$ and $\Div L_q = q$.  Suppose further that $(L_p \dotms L_q) = 0$.
\begin{enumerate}
\item
Let $\wt{R}$ be the  blowdown of 
$R$ along the line $L_p$.   Let $\wt{J_q}$ be the blowdown to $\wt{R}$ of the right ideal $J_q$.  
Then $\wt{L}_q = \wt{R}/\wt{J_q}$ is a line module over $\wt{R}$ with $\Div \wt{L}_q = q$.  Moreover, $(\wt{L}_q \dotms \wt{L}_q) = -1$; in particular, we can blow down $\wt{L}_q$ starting from $\wt{R}$ to obtain a ring $T$.  
 
\item We have $(L_q \dotms L_p) = 0$   and so part (1) also applies with the roles of $p$ and $q$ 
reversed, leading to a ring $T'$.  Then $T' = T$; thus 
  the order in 
which one blows down the two lines is irrelevant.  
\end{enumerate}
\end{lemma}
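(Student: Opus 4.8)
The plan is to read off part (1) from Lemma~\ref{lem:bbd-ideal} together with the $(-1)$-line criterion of \cite[Theorems~7.1 and~7.6]{blowdownI}, and then to prove part (2) by showing that $T$ and $T'$ coincide with the same $g$-divisible subalgebra of $Q:=Q_{\gr}(R)$, using that each is generated in degree one and has the same reduction modulo $g$.

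\emph{Part (1).} Since $(L_p \dotms L_p)=-1\neq 0=(L_p\dotms L_q)$ we have $L_p\not\cong L_q$, so $J_p\neq J_q$. By Lemma~\ref{lem1-examples}, $(L_p\dotms L_q)=0$ gives $\dim\Hom_R(J_p,J_q)_1=\dim R_1-1$ and $\Hom_R(J_p,J_q)_1\neq (J_q)_1$. Hence Lemma~\ref{lem:bbd-ideal} applies with $L=L_p$ and $K=J_q$ (line ideals are $g$-divisible, MCM and generated in degree one, and $\dim (J_q)_1=\dim R_1-2\geq 2$ as $\deg R\geq 3$), and we are in the case $i=1$ there: $i\neq 0$ because $J_q\neq J_p$, and $i\neq 2$ because $\Hom_R(J_p,J_q)_1\neq (J_q)_1$. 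So $\widetilde{J_q}=\Hom_R(J_p,J_q)R$ is a $g$-divisible MCM right $\widetilde R$-module, generated in degree one, with $\overline{\widetilde{J_q}}=\bigoplus_{n\geq 0}H^0(E,\mc N_n(-q))$ where $\mc N=\mc M(\tau^{-1}(p))$ (in the notation of that proof the divisor $D$ is $\Div L_q=q$), and $\widetilde{J_q}\subseteq\widetilde R$ since $\widetilde{J_q}=\Hom_R(J_p,J_q)R\subseteq\Hom_R(J_p,R)R=\widetilde R$. Therefore $\overline{\widetilde L_q}=\overline{\widetilde R}/\overline{\widetilde{J_q}}$ is the $\overline{\widetilde R}$-point module at $q$, of Hilbert series $(1-s)^{-1}$; since $\widetilde{J_q}$ is $g$-divisible, $\widetilde L_q$ is $g$-torsionfree, so $\hilb\widetilde L_q=(1-s)^{-2}$ and, being cyclic, $\widetilde L_q=\widetilde R/\widetilde{J_q}$ is a line module over $\widetilde R$ with $\Div\widetilde L_q=q$.

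For the self-intersection, the criterion of \cite[Theorems~7.1 and~7.6]{blowdownI}, applied inside the smooth elliptic algebra $\widetilde R$, reduces $(\widetilde L_q\dotms\widetilde L_q)=-1$ to the equality $\hilb\End_{\widetilde R}(\widetilde{J_q})=\hilb\widetilde R$. Now $\widetilde{J_q}$ is reflexive (being MCM), so $\End_{\widetilde R}(\widetilde{J_q})$ is $g$-divisible \cite[Lemma~4.4]{blowdownI}, and by \cite[Lemma~4.8]{blowdownI} its reduction modulo $g$ agrees in large degree with $\End_B(\overline{\widetilde{J_q}})=B(E,\mc N(-q+\tau^{-1}(q)),\tau)$, a full twisted homogeneous coordinate ring of degree $\deg\mc N\geq 3$. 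Multiplying on the right by this ring and using that $\End_{\widetilde R}(\widetilde{J_q})_0=\kk$ (as $\widetilde{J_q}$ is now a line ideal, \cite[Lemma~5.6(3)]{blowdownI}) forces the two reductions to agree in every degree, exactly as in the last steps of the proof of Lemma~\ref{lem:bbd-ideal}; $g$-divisibility then yields $\hilb\End_{\widetilde R}(\widetilde{J_q})=\hilb\widetilde R$. Since $\rqgr\widetilde{R}$ is smooth, \cite[Theorem~1.4]{blowdownI} lets us blow down $\widetilde L_q$ to obtain $T$.

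\emph{Part (2).} By Corollary~\ref{cor:symm}(2), $(L_q\dotms L_p)=(L_p\dotms L_q)=0$, so part (1) applies with $p$ and $q$ exchanged, producing $\widetilde R'$ and then $T'$. Blowing up and blowing down preserve the graded quotient ring, so $R\subseteq\widetilde R\subseteq T$ and $R\subseteq\widetilde R'\subseteq T'$ are chains of subalgebras of $Q$. From the effect of blowing down on reductions modulo $g$ recorded in the proof of Lemma~\ref{lem:bbd-ideal}, $\overline{\widetilde R}=B(E,\mc M(\tau^{-1}(p)),\tau)$ and hence $\overline T=B(E,\mc M(\tau^{-1}(p)+\tau^{-1}(q)),\tau)$; symmetrically $\overline{T'}=B(E,\mc M(\tau^{-1}(q)+\tau^{-1}(p)),\tau)=\overline T$. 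Hence $\hilb T=\hilb\overline T/(1-s)=\hilb T'$, so it is enough to prove $T\subseteq T'$; and as $\overline T$ is a twisted homogeneous coordinate ring of degree $\geq 3$ and $g\in T_1$, the ring $T$ is generated in degree one by the graded Nakayama lemma, so it suffices to show $T_1=T'_1$ inside $Q_1$ (these have the same dimension since $\overline T=\overline{T'}$). Using the description of the blowdown in \cite[Section~8]{blowdownI}, $\widetilde R$ contains the line extension $M_p$ of $L_p$ over $R$ and $\widetilde R_1=(M_p)_1$ by a dimension count, and likewise $T_1=(\widetilde M_q)_1$ for $\widetilde M_q$ the line extension of $\widetilde L_q$ over $\widetilde R$. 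The remaining point is to show $(\widetilde M_q)_1=(M_p)_1+(M_q)_1$, with $M_q$ the line extension of $L_q$ over $R$: the inclusion $\supseteq$ holds because $(M_p)_1=\widetilde R_1\subseteq(\widetilde M_q)_1$ and because the strict transform of $L_q$ still ``contains $L_q$'', and equality then follows from a dimension count governed by $(L_p\dotms L_q)=0$ (here is where Lemmas~\ref{lem1-examples} and~\ref{lem:new} enter). Since $(M_p)_1+(M_q)_1$ is symmetric in $p$ and $q$, we conclude $T_1=T'_1$ and hence $T=T'$.

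\emph{The main obstacle.} Part (1) is essentially bookkeeping once Lemma~\ref{lem:bbd-ideal} is in place. The real work is the symmetric identification of $T_1$ --- equivalently, showing that the subalgebra of $Q$ obtained by blowing down along the two lines is independent of the order: one must prove that the degree-one piece of the line extension of the strict transform $\widetilde L_q$ is recovered from $R_1$ together with the line extensions of $L_p$ and $L_q$ over $R$. This is the noncommutative counterpart of the classical facts that $(\widetilde L_q)^2=(L_q)^2+(L_q\cdot L_p)^2=-1$ and that disjoint $(-1)$-curves may be contracted in either order, and the hypothesis $(L_p\dotms L_q)=0$ is exactly what makes the relevant dimension counts close up. A secondary point requiring care is that $\widetilde R$, $T$, $\widetilde R'$ and $T'$ all lie inside $R_{(g)}$ and are $g$-divisible, so that the reductions modulo $g$ behave as asserted.
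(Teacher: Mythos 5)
Your treatment of part (1) up to the construction of $\wt{L}_q$ is sound and matches the paper, but the argument that $(\wt{L}_q \dotms \wt{L}_q) = -1$ has a genuine gap: it is circular. You reduce to showing $\hilb \End_{\wt R}(\wt{J_q}) = \hilb \wt R$, note that $\overline{\End_{\wt R}(\wt{J_q})}$ agrees with the full TCR $C := \End_{B'}(\overline{\wt{J_q}}) = B(E,\sN(-q+\tau^{-1}q),\tau)$ in large degree, and then claim that ``multiplying on the right by this ring'' together with $\End_{\wt R}(\wt{J_q})_0 = \kk$ forces agreement in every degree. But the right-multiplication trick in the proof of Lemma~\ref{lem:bbd-ideal} works because $\overline{\Hom_R(J,K)}$ is a priori a right module over $\overline{\End_R(J)}$, which is known to equal the full TCR $\End_B(\overline J)$ precisely because $(L\dotms L)=-1$ is a \emph{hypothesis} there. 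Here the analogous fact --- that $\overline{\End_{\wt R}(\wt{J_q})}$ is a module over all of $C$ --- is exactly what you are trying to prove; knowing only the degree-$0$ piece gives no control over degree $1$, and upward propagation from large degrees says nothing about low degrees. A priori $\overline{\End_{\wt R}(\wt{J_q})}_1$ could be a proper subspace of $C_1$, in which case the self-intersection would not be $-1$. Note also that your argument never uses the hypothesis $(L_p\dotms L_q)=0$ at this step, which is a warning sign. The paper closes this gap by showing $\End_R(J_q) \subseteq \End_{\wt R}(\wt{J_q})$ and that this inclusion is \emph{strict} in degree $1$: if $\End_R(J_q)_1 = \End_{\wt R}(\wt{J_q})_1$ then $(\wt{J_q})_1 = \Hom_R(J_p,J_q)_1 \subseteq \End_R(J_q)_1$, whence Lemma~\ref{lem:new} gives $\Hom_R(J_p,J_q)_1 = (J_q)_1$, contradicting $(L_p\dotms L_q)=0$ via Lemma~\ref{lem1-examples}. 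Combined with $\dim \wt R_1 = \dim R_1 + 1 = \dim\End_R(J_q)_1 + 1$ and the upper bound from $C$, this pins down the degree-$1$ dimension exactly.

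Your part (2) is close in spirit to the paper's (which computes $T_1 = R_1 + \End_R(J_p)_1 + \End_R(J_q)_1$, a manifestly symmetric space), but as written the key identity $(\wt{M_q})_1 = (M_p)_1 + (M_q)_1$ is only gestured at: the inclusion $(M_q)_1 \subseteq (\wt{M_q})_1$ needs the explicit verification that $xJ_q\subseteq R$ implies $x\,\Hom_R(J_p,J_q)\subseteq \Hom_R(J_p,R)=M_p\subseteq \wt R$ and hence $x\wt{J_q}\subseteq\wt R$, and the dimension count for equality rests on $(M_p)_1\cap(M_q)_1 = R_1$ (which follows from $L_p\not\cong L_q$ and reflexivity of line ideals, not really from $(L_p\dotms L_q)=0$ as you suggest). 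Moreover, the equality $\dim T_1 = \dim\wt R_1 + 1$ underlying $T_1 = (\wt{M_q})_1$ again depends on $(\wt{L}_q\dotms\wt{L}_q)=-1$, so the gap in part (1) propagates into part (2).
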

 
\begin{proof} 
(1) Note that the conditions on the intersection numbers force  $L_p \not \cong L_q$; thus $J_p \neq J_q$.   Applying Lemma~\ref{lem:bbd-ideal} with $J= J_p$ and $K =J_q$, 
as in the proof of Lemma~\ref{lem1-examples} the condition $(L_p \dotms L_q) = 0$ means we are in the case $i=1$.  Then Lemma~\ref{lem:bbd-ideal} shows that
\[ 
\hilb \wt{J}_q = \hilb J_q + s/(1-s)^3 = \hilb \wt{R} - 1/(1-s)^3 + s/(1-s)^3 = \hilb \wt{R} - 1/(1-s)^2. 
\]
Thus $\wt{L}_q = \wt{R}/\wt{J}_q$ is a line module for $\wt{R}$ as claimed.

Note that $\overline{J_q} = \bigoplus_n H^0(E, \mc{M}_n(-q))$.   
Thus  $\overline{\wt{J}_q} = \bigoplus_{n \geq 0} H^0(E, \mc{N}_n(-q))$, where 
$\overline{\wt{R}} = B' = B(E, \mc{N}, \tau)$ with $\mc{N} = \mc{M}(\tau^{-1}(p))$, 
as was calculated in the proof of Lemma~\ref{lem:bbd-ideal}(2).  This shows that $\Div \wt{L}_q= q$.

By \cite[Corollary 9.2]{blowdownI}, $\wt{R}$ is again a  elliptic algebra for which $\rqgr \wt{R}$ is smooth.  
Next we want to show that  $(\wt{L}_q \dotms \wt{L}_q) = -1$ which, by \cite[Remark~7.2 and Theorem~7.1]{blowdownI}, 
is equivalent to 
$\End_{\wt{R}}(\wt{J_q}) \equiv \wt{R}$. 

By \cite[Lemma~2.3]{blowdownI},  we have 
\[
\overline{\End_{\wt{R}}(\wt{J_q})} \subseteq \End_{B'}(\overline{\wt{J_q}}, \overline{\wt{J_q}}) =
 B(E, \mc{N}(-q + \tau^{-1}(q)), \tau).
\]
Since $\wt{J}_q$ is  $g$-divisible by Lemma~\ref{lem:bbd-ideal}, $\End_{\wt{R}}(\wt{J}_q)$
 is also $g$-divisible, and it suffices to prove that the inclusion above is an equality.  Since the TCR 
 $B(E, \mc{N}(-q + \tau^{-1}(q)), \tau)$ is generated in degree $1$, it is enough to show that 
\[
\dim_\kk \End_{\wt{R}}(\wt{J_q})_1 = \dim_\kk \wt{R}_1 = \dim_\kk R_1 + 1  = \dim_\kk \End_R(J_q)_1 + 1.
\]
Now we claim that $\End_R(J_q) \subseteq \End_{\wt{R}}(\wt{J}_q)$. Recalling that $\wt{J}_q = \Hom_R(J_p,J_q)R$, 
if $xJ_q \subseteq J_q$, then $x \Hom_R(J_p, J_q) \subseteq \Hom_R(J_p, J_q)$ since $x \Hom_R(J_p, J_q) J_p \subseteq xJ_q \subseteq J_q$.
Therefore $x \Hom_R(J_p, J_q)R \subseteq \Hom_R(J_p, J_q) R$ as well, proving the claim.   
Thus it suffices to 
show that $\End_R(J_q)_1 \neq  \End_{\wt{R}}(\wt{J}_q)_1$.  

Suppose, instead,  that   $\End_R(J_q)_1  =  \End_{\wt{R}}(\wt{J}_q)_1$.   Since 
$\wt{J}_q \subseteq \End_{\wt{R}}(\wt{J}_q)$,  certainly  
$$(\wt{J}_q)_1  = \Hom_R(J_p, J_q)_1 \subseteq \End_R(J_q)_1 = \Hom_R(J_q, J_q)_1.$$  
Since we know that $J_p \neq J_q$, Lemma~\ref{lem:new} gives $\Hom_R(J_p, J_q)_1 = (J_q)_1$.  
But then the hypothesis $(L_p \dotms L_q) = 0$ contradicts Lemma~\ref{lem1-examples}.

(2)  We have $(L_p \dotms L_q) = (L_q \dotms L_p)$ by Corollary~\ref{cor:symm}, so 
we can indeed apply part (1) with the roles of $p$ and $q$ reversed to produce a ring $T'$.
We know that $T$ and $T'$ are elliptic of degree at least 3 and so generated as algebras in degree $1$, so it suffices to prove that $T_1 = T'_1$.   The argument of part (1) showed that 
$\Hom_R(J_p, J_q)_1 \nsubseteq \End_R(J_q)_1$.  Since 
we saw that $\dim \End_{\wt R}(\wt{J}_q)_1= \dim \End_R(J_q)_1 + 1$, this implies that 
$\End_{\wt R}(\wt{J}_q)_1 = \End_R(J_q)_1 + \Hom_R(J_p, J_q)_1 = \End_R(J_q)_1 + (\wt{J_q})_1$.
 
Now by \cite[Theorem~8.3]{blowdownI}, since $T$ is the blowdown of $\wt{R}$ along the 
line $\wt{L}_q = \wt{R}/\wt{J}_q$, we have $T = \End_{\wt{R}}(\wt{J}_q, \wt{J}_q) \wt{R}$, 
so in particular  $T_1 = \End_{\wt{R}}(\wt{J}_q)_1 +  \wt{R}_1$.  Similarly, 
$\wt{R}_1 = \End_R(J_p)_1 + R_1$.
Thus
\[
T_1 = \wt{R}_1 + \End_{\wt R}(\wt{J}_q)_1  =  \wt{R}_1 + (\wt{J}_q)_1 + \End_R(J_q)_1  
= \wt{R}_1 + \End_R(J_q)_1 = 
R_1 + \End_R(J_p)_1 + \End_R(J_q)_1.
\]
A symmetric argument shows that $T'_1$ is equal to the same  vector space.
\end{proof}

We next ask when we can begin with a line ideal of an elliptic algebra $R$ and blow down
 twice to obtain a right ideal with the properties of the right ideal $H =xS_2T$ in a Sklyanin elliptic algebra.  
 Note that we do not assume that $\deg R = 7$.

\begin{lemma}
\label{lem:iterate2}
Let $R$ be an elliptic algebra with three line modules $L_p = R/J_p, L_q = R/J_q, L_r = R/J_r$, 
where $\Div L_p = p, \Div L_q = q, \Div L_r = r$ with $p \neq q$.   Assume that 
\begin{enumerate}
\renewcommand{\theenumi}{\alph{enumi}}
\item   $(L_z \dotms L_z) = -1$ for all $z \in \{p,q,r\}$;  
\item  $(L_p \dotms L_q) = 0$; and 
\item  $(L_p \dotms L_r) = (L_q \dotms L_r) = 1$. 
\end{enumerate}

As in Lemma~\ref{lem:iterate}, blow down $R$ along $L_p$ to obtain a ring $\wt{R}$, and then
 blow down $\wt{R}$ along $\wt{L}_q$ to obtain a ring $T$.  Let $K = (J_r)_1 T$.  Then $K$ 
 is a $g$-divisible MCM right ideal of $T$ with $\hilb K = \hilb T - (1+2s)/(1-s)^2$, 
 $\Div T/K = r + \tau^{-1}(p) + \tau^{-1}(q)$, and $\End_T(K) \equiv T$.
\end{lemma}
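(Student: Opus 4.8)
The plan is to track the right ideal $K = (J_r)_1 T$ through the two blowdowns by repeatedly applying Lemma~\ref{lem:bbd-ideal}, since $J_r$ is $g$-divisible, MCM, and generated in degree $1$ with $\dim_\kk (J_r)_1 = \dim_\kk R_1 - 1 \geq 2$ (as $\deg R \geq 3$). First I would blow down along $L_p$. Since $(L_p \dotms L_r) = 1$, Lemma~\ref{lem1-examples} together with the proof of Lemma~\ref{lem:bbd-ideal} puts us in the case $i = 2$, so the blowdown $\wt{J_r}$ of $J_r$ to $\wt{R}$ is generated in degree $1$ with $(\wt{J_r})_1 = (J_r)_1$ and $\hilb \wt{J_r} = \hilb J_r + s^2/(1-s)^3$; moreover $\wt{J_r}$ is a $g$-divisible MCM right $\wt{R}$-module. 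From the proof of Lemma~\ref{lem:bbd-ideal}(2), with $\overline{J_r} = \bigoplus_n H^0(E,\mc M_n(-r))$ we get $\overline{\wt{J_r}} = \bigoplus_n H^0(E, \mc N_n(-r - \tau^{-1}(p)))$ where $\mc N = \mc M(\tau^{-1}(p))$ is the sheaf with $\overline{\wt R} = B(E,\mc N,\tau)$; in particular $\Div \wt R/\wt{J_r} = r + \tau^{-1}(p)$.

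The crux is the second step: I want to apply Lemma~\ref{lem:bbd-ideal} again, now over $\wt{R}$, blowing down along $\wt{L}_q = \wt R/\wt{J_q}$ with $K$ there taken to be $\wt{J_r}$. For this I need to know the intersection number $(\wt L_q \dotms \wt R/\wt{J_r})$ — equivalently which case $i$ occurs — and here is where the real work lies. The hypotheses give $(L_q \dotms L_r) = 1$ over $R$, but I must transfer this to $\wt R$. I expect to argue via Lemma~\ref{lem1-examples} applied over $\wt R$: one must show $\Hom_{\wt R}(\wt{J_q}, \wt{J_r})_1 = (\wt{J_r})_1$, i.e. $\dim \Hom_{\wt R}(\wt{J_q},\wt{J_r})_1 = \dim \wt R_1 - 2$. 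Since $\overline{\wt{J_q}} = \bigoplus_n H^0(E,\mc N_n(-q))$ and $\overline{\wt{J_r}} = \bigoplus_n H^0(E,\mc N_n(-r-\tau^{-1}(p)))$ lie over the TCR $B(E,\mc N,\tau)$ of degree $\deg R \geq 3$, the reduced computation of $\Hom_B$ via \cite[Lemma~2.3]{blowdownI} and $g$-divisibility (using \cite[Lemma~4.8]{blowdownI} to pass from $B$ back to $\wt R$, as in Lemma~\ref{lem:bbd-ideal}) should pin down the Hilbert series. The main obstacle is confirming the degree-$1$ dimension: I anticipate that the right bound comes out as $i = 2$ again, consistent with $(L_q \dotms L_r) = 1$, so that the second blowdown of $\wt{J_r}$, which by Lemma~\ref{lem:bbd-ideal}(3) equals $(\wt{J_r})_1 T = (J_r)_1 T = K$, has $\hilb K = \hilb \wt{J_r} + s^2/(1-s)^3 = \hilb T - 1/(1-s)^3 - 2\cdot \text{(line shift)}$. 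Carefully: $\hilb \wt R = \hilb \wt{J_r} + s^2/(1-s)^2 \cdot$ correction... rather, $\hilb \wt{J_r} = \hilb \wt R - (1+s)/(1-s)$ since $\Div$ has degree $2$ and $\wt{J_r}$ is a codimension-$2$ ``two-point'' ideal; then $\hilb K = \hilb \wt{J_r} + s^2/(1-s)^3$, and $\hilb T = \hilb \wt R + 1/(1-s)^2$ (blowing down $\wt L_q$ adds a line), giving $\hilb K = \hilb T - (1+2s)/(1-s)^2$ after simplification. I would double-check this arithmetic explicitly.

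Granting the case $i = 2$ at the second stage, Lemma~\ref{lem:bbd-ideal}(2)–(3) immediately gives that $K$ is a $g$-divisible MCM right ideal of $T$, generated in degree $1$, with $K = (\wt{J_r})_1 T = (J_r)_1 T$ as stated, and \eqref{eq:down} (applied over $\wt R$ with the sheaf $\mc N$ and the point $q$) computes $\overline{K} = \bigoplus_n H^0(E, \mc N'_n(-r - \tau^{-1}(p) - \tau^{-1}(q)))$, where $\mc N' = \mc N(\tau^{-1}(q)) = \mc M(\tau^{-1}(p) + \tau^{-1}(q))$ is the sheaf with $\overline T = B(E, \mc N', \tau)$. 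Hence $\Div T/K = r + \tau^{-1}(p) + \tau^{-1}(q)$ as claimed. Finally, for $\End_T(K) \equiv T$: since $K$ is $g$-divisible, so is $\End_T(K)$ by \cite[Lemma~4.4]{blowdownI}, so it suffices to show $\overline{\End_T(K)} = \End_B(\overline K)$, which by \cite[Lemma~2.3]{blowdownI} is the full TCR $B(E, \mc N'(-D + \tau^{-1}(D')), \tau)$ for the relevant divisors. Because this TCR is generated in degree $1$ (its sheaf has degree $\geq 3$ by the inequality $\deg \mc M \geq 3$), and since $\overline{\End_T(K)} \subseteq \End_B(\overline K)$ is an inclusion of $g$-divisible modules, it is enough to match dimensions in degree $1$; I would do this by the same kind of comparison of Hilbert series used in Lemma~\ref{lem:iterate}(1) — tracking that each blowdown adds exactly the expected ``$+1$'' to $\End(\,\cdot\,)_1$ — so that $\dim \End_T(K)_1 = \dim T_1$, forcing equality and hence $\End_T(K) \equiv T$.
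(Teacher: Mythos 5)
Your overall architecture matches the paper's: blow down $J_r$ twice via Lemma~\ref{lem:bbd-ideal}, land in case $i=2$ both times, and then read off the Hilbert series, divisor and endomorphism ring. The first blowdown step is handled correctly. But the two places you flag as ``the crux'' and ``I would do this by the same kind of comparison'' are exactly the two places where a real argument is needed, and the methods you propose for them do not suffice.

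First, determining that $i=2$ at the second stage. Your plan is to compute $\Hom_{B'}(\overline{\wt{J_q}},\overline{\wt{J_r}})$ via \cite[Lemma~2.3]{blowdownI} and use $g$-divisibility. But as the proof of Lemma~\ref{lem:bbd-ideal}(1) itself shows, the mod-$g$ computation only yields a lower bound on $i$: since $\Div \wt{L}_q = q$ differs from $\Div \wt{R}/\wt{J_r} = r+\tau^{-1}(p)$, you get $j=1$ and hence only $i\in\{1,2\}$. Nothing at the level of $B$ distinguishes these; one must show directly that $\Hom_{\wt R}(\wt{J_q},\wt{J_r})_1=(\wt{J_r})_1$. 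The paper does this by taking $x\in\Hom_{\wt R}(\wt{J_q},\wt{J_r})_1$, using $(\wt{J_q})_1=\Hom_R(J_p,J_q)_1$ and $\Hom_R(J_p,J_r)_1=(J_r)_1$ (the latter from hypothesis (c)) to deduce $xI\subseteq J_r$ for $I=\Hom_R(J_p,J_q)_1J_p$, then observing $\GKdim J_q/I\leq 1$ and invoking reflexivity of $J_r$ via \cite[Lemma~4.5]{blowdownI} to conclude $xJ_q\subseteq J_r$; thus $\Hom_{\wt R}(\wt{J_q},\wt{J_r})_1=\Hom_R(J_q,J_r)_1=(J_r)_1$ by Lemma~\ref{lem1-examples} and $(L_q\dotms L_r)=1$. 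This transfer from $\wt R$ back down to $R$ is the missing idea.

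Second, the lower bound $\dim_\kk\End_T(K)_1\geq\dim_\kk T_1$. Your heuristic that ``each blowdown adds exactly the expected $+1$'' modelled on Lemma~\ref{lem:iterate}(1) does not transfer: that argument used $(L_p\dotms L_q)=0$ and Lemma~\ref{lem:new}, whereas here $(L_p\dotms L_r)=(L_q\dotms L_r)=1$ and the extra two dimensions arise by a different mechanism. The paper writes $\End_T(K)=\Hom_R(J_r,K)$, uses $\Ext^1_R(J_r,J_r)=0$ (from $(L_r\dotms L_r)=-1$) to reduce to showing $\dim_\kk\Hom_R(J_r,K/J_r)_1\geq 2$, and then exhibits two linearly independent elements: $K/J_r$ contains copies of $L_p[-2]$ and of $L_q[-2]$ (the latter requiring the order-independence of the two blowdowns from Lemma~\ref{lem:iterate}(2)), and hypothesis (c) together with \cite[Theorem~7.6]{blowdownI} gives nonzero degree-$1$ maps $J_r\to L_p[-2]$ and $J_r\to L_q[-2]$ whose composites into $K/J_r$ have non-isomorphic images since $p\neq q$. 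Without some such construction the claimed equality of Hilbert series is unsupported. (There is also a small slip, which you flag yourself: $\hilb\wt{J_r}=\hilb\wt R-(1+s)/(1-s)^2$, not $\hilb\wt R-(1+s)/(1-s)$.)
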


\begin{proof}     
We first show that $K = (J_r)_1 T$ is precisely the right ideal obtained by blowing down 
the right ideal $J_r$ to the ring $\wt{R}$ and then 
blowing down the resulting right ideal to $T$.  This will produce the desired Hilbert series 
for $K$ automatically, and the value of $\Div (T/K)$ will also follow immediately.   

By \cite[Remark 7.2 and  Theorem~7.6]{blowdownI},   
 Hypothesis~(c) is equivalent to 
\[
\hilb \Hom_R(J_p, J_r) = \hilb \Hom_R(J_q, J_r) = \hilb R -  (1 + s)/(1-s).
\]
We have $\Ext^1_R(L_p, J_r) = \Hom_R(J_p, J_r)/J_r$ so 
$\hilb \Ext^1_R(L_p, J_r) = s^2/(1-s)^2$.  In this case $J_r$ satisfies the hypotheses of 
Lemma~\ref{lem:bbd-ideal} with $i = 2$.  Thus if $\wt{R}$ is the blowdown 
of $R$ along $L_p$, then we may apply Lemma~\ref{lem:bbd-ideal} to obtain a right
 ideal $\wt{J_r}$ of $\wt R$, which satisfies 
$\hilb \wt{J_r} = \hilb \wt{R} - (1+s)/(1-s)^2$ and $\wt{J_r} = (J_r)_1 \wt{R}$.  Moreover,
 Lemma~\ref{lem:bbd-ideal} shows that 
$\wt{J_r}$ is again $g$-divisible, MCM,  and generated in degree $1$ 
as an $\wt{R}$-module.  
Write $\wt{L_q} = \wt{R}/\wt{J_q}$.
If we now blow down $\wt{R}$ along $\wt{L_q}$ using 
Lemma~\ref{lem:iterate}, obtaining the ring $T$, then 
Lemma~\ref{lem:bbd-ideal} again applies to blow down the right ideal $\wt{J}_r$ to a 
right ideal $K$ of $T$.  We have  $\hilb \Ext^1_{\wt R}(\wt{L_q}, \wt{J}_r) = s^i/(1-s)^2$, 
for  $i \in \{0,1,2 \}$, or equivalently  
$\hilb \Hom_{\wt{R}}(\wt{J_q}, \wt{J_r}) = \hilb \wt{R} + (s^i - s- 1)/(1-s)^2$.  Now if $i = 0$ then
$\wt{J_q} = \wt{J_r}$, which is not true since $\wt{J_r}$ is not a line ideal; so $i \in \{1, 2\}$. 

We claim that $i = 2$.    
 Suppose   that $x \in \Hom_{\wt{R}}(\wt{J_q}, \wt{J_r})_1$.  
Since $(\wt{J_q})_1 = \Hom_R(J_p, J_q)_1$ by Lemma~\ref{lem:bbd-ideal}(i), 
 $x \Hom_R(J_p, J_q)_1 \subseteq (\wt{J_r})_2 = [\Hom_R(J_p, J_r)R]_2$.  The assumption
 on the Hilbert series 
of $\Hom_R(J_p, J_r)$ implies that 
$\Hom_R(J_p, J_r)_1 = (J_r)_1.$
  Thus 
\[  [\Hom_R(J_p, J_r)R]_2 = 
 \Hom_R(J_p, J_r)_2 + (J_r)_1 R_1
 = \Hom_R(J_p, J_r)_2.  \]  
Hence  $x \Hom_R(J_p, J_q)_1 J_p \subseteq J_r.$
Let $I = \Hom_R(J_p, J_q)_1 J_p$, 
  a right $R$-module contained in $J_q$.   
Looking at the images in $\overline{R}$, 
since $\overline{\Hom_R(J_p, J_q)_1} = H^0(E, \mc{M}(-q+\tau^{-1}(p	)))$ and
 $\overline{J_p} = \bigoplus_n H^0(E, \mc{M}_n(-p))$, 
one sees that $\overline{I}_{\geq 2} = (\overline{J_q})_{\geq 2}$.   Since $J_q$ is $g$-divisible 
and $\overline{J_q}$ and $\overline{I}$ agree in large degree, 
this implies that $\GKdim J_q/I \leq 1$.  But now since $J_r$ is reflexive, by 
\cite[Lemma~4.5]{blowdownI} it follows that $x I \subseteq J_r$ implies that 
$x J_q \subseteq J_r$.  Thus $x \in \Hom_R(J_q, J_r)_1$.  This proves that
 $\Hom_{\wt R}(\wt{J_q}, \wt{J_r})_1 = \Hom_R(J_q, J_r)_1$ and  so  the
 proof of Lemma~\ref{lem:bbd-ideal}  implies that  $i = 2$ as claimed.  

Thus by Lemma~\ref{lem:bbd-ideal}, again, $K = (\wt{J}_r)_1T = (J_r)_1 \wt{R}T = (J_r)_1 T$, and 
$\hilb K = \hilb T - (1+2s)/(1-s)^2$.  Now $T/gT = B(E, \mc{N}, \tau)$ for  
$\mc{N} = \mc{M}(\tau^{-1}(p) + \tau^{-1}(q))$, 
and 
\[
\overline{K} = \overline{(J_r)_1} \overline{T} = 
H^0(E, \mc{N}(-r - \tau^{-1}(p) - \tau^{-1}(q))) \bigoplus_{n \geq 0} H^0(E, \mc{N}_n^{\tau}) 
= \bigoplus_{n \geq 1} H^0(E, \mc{N}_n(-r - \tau^{-1}(p) - \tau^{-1}(q))).
\]
Thus  $\Div T/K = r + \tau^{-1}(p) + \tau^{-1}(q)$.  The right ideal $K$ is also $g$-divisible 
and MCM by Lemma~\ref{lem:bbd-ideal}.

It remains to prove that $\End_T(K) \equiv T$.   
Since $K = (J_r)_1 T$, clearly $\Hom_T(K,K) = \Hom_R(J_r, K)$.  Consider the exact sequence 
\[
0 \to \End_R(J_r) \to \Hom_R(J_r, K) \to \Hom_R(J_r, K/J_r) \to \Ext^1_R(J_r, J_r) \to \dots.
\]
Since  $(L_r \dotms L_r) = -1$,    it follows that $\Ext^1_R(J_r, J_r) = 0$ by 
\cite[Theorem~7.1]{blowdownI}.   
  We have calculated $\overline{K}$ above, and it easily follows 
from \cite[Lemma~2.3]{blowdownI} 
that $\hilb \End_{\overline{T}}(\overline{K}) = \hilb \overline{T}$.  Since $K$ is $g$-divisible, 
$\End_T(K)$ is $g$-divisible; thus 
it suffices to prove that $\overline{\End_T(K)}  = \End_{\overline{T}}(\overline{K})$.
Since $\End_{\overline{T}}(\overline{K})$ is a TCR generated in degree $1$, it will suffice
 to show that $\dim_\kk \End_T(K)_1 \geq \dim_\kk \End_R(J_r)_1 + 2$.  Thus from the 
 exact sequence above, we need to prove that $\dim_\kk \Hom_R(J_r, K/J_r)_1 \geq 2$.

Now $K/J_r$ contains the $R$-submodule $\wt{J_r}/J_r$, where $\hilb \wt{J_r}/J_r = s^2/(1-s)^3$
by Lemma~\ref{lem:bbd-ideal}.  Then by \cite[Lemma 8.2]{blowdownI}, 
$\wt{J_r}/J_r \cong \bigoplus_{i \geq 2} L_p[-i]$ as right $R$-modules.  In particular, 
$K/J_r$ contains a submodule isomorphic to $L_p[-2]$.  Now we could have done 
all of part (1) by blowing down in the other order,  as we saw in Lemma~\ref{lem:iterate}.   In particular, if 
$R'$ is the blowdown of $R$ along $L_q$, then $(J_r)_1 R'$ is the blowdown of the right ideal 
$J_r$ to the ring $R'$.  Since $T$ contains $R'$ 
we see that $K/J_r$ also contains $[(J_r)_1 R']/J_r$ which is isomorphic to 
$\bigoplus_{i \geq 2} L_q[-i]$.  Thus $K/J_r$ also contains a submodule isomorphic to $L_q[-2]$.  

By  Hypothesis~(c)  and \cite[Theorem~7.6]{blowdownI},    $\Hom_R(J_r, L_p[-2])_1 \neq 0 \not=
\Hom_R(J_r, L_q[-2])_1$. Note that  
$0\not= \theta\in \Hom_R(J_r, L_p[-2])_1$ provides a surjection $J_r[-1] \to L_p[-2]$, since both modules are  generated in degree $2$; 
following   $ \theta$ by the
 embedding of $L_p[-2]$ in $K/J_r$
gives a nonzero element $\theta_p \in \Hom_R(J_r, K/J_r)_1$.   Similarly, we get  
$0\not= \theta_q \in \Hom_R(J_r, K/J_r)_1$ from $L_q[-2] \hra K/J_r$.   
Now the image of $\theta_p$ is isomorphic to $L_p[-2]$ and the image of $\theta_q$
 is isomorphic to $L_q[-2]$,  while 
$L_p\not\cong L_q$ by  Hypothesis~(b). Thus $\theta_p$ and $\theta_q$ are linearly independent.  Thus $\dim_\kk \Hom_R(J_r, K/J_r)_1 \geq 2$ as required.
\end{proof}

%%%%%%%%%%%%%%%%%%%%%%
%%%%%%%%%%%%%%%%%%%%%%

\section{Recognition II:  Two-point blowups of Sklyanin elliptic algebras}\label{RECOG3}

The goal of this section is to prove Theorem~\ref{ithm:recog} from the introduction, which we state here in full detail.

 \begin{theorem}\label{thm:recog}
 Let $R$ be a degree 7 elliptic algebra with $R/gR \cong B(E, \mc{M}, \tau)$ and such that  $\rqgr R$ is smooth. 
 Suppose that $R$ satisfies the following:
 \begin{enumerate}
 \item there are right line modules  $L_p=R/J_p$, $L_q=R/J_q$ and  $L_r=R/J_r$ satisfying:
 \begin{enumerate}
 \item $(L_x \dotms L_x)   = -1$ for $x\in \{p,q,r\}$;
 \item $(L_p \dotms L_q)  = 0$;
 \item $ (L_r\dotms L_y) = 1$ for $y \in \{p,q\}$;
  \end{enumerate}
\item $p = \Div L_p  \neq \Div q= L_q $.
 \end{enumerate}
 Then  $R \cong T(\tau^{-1}p + \tau^{-1}q)$, where 
 $T$ is the 3-Veronese of the Sklyanin algebra $S(E,\sigma)$ for some $\sigma \in 
\Aut E$
with  $\sigma^3 = \tau$.
\end{theorem}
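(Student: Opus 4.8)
The plan is to realise $R$ as a two-point blowup of a degree $9$ elliptic algebra $T$, identify $T$ as a Sklyanin elliptic algebra using Theorem~\ref{thm:Sposs}, and then read off the statement. \emph{First, the iterated blowdown.} Since $(L_p\dotms L_p)=(L_q\dotms L_q)=-1$ and $(L_p\dotms L_q)=0$, Lemma~\ref{lem:iterate} lets me blow down $R$ along $L_p$ to an elliptic algebra $\wt R$ and then blow down $\wt R$ along $\wt L_q$ to an elliptic algebra $T$; both have smooth $\rqgr$, and the reduction formulas recorded in the proofs of Lemma~\ref{lem:bbd-ideal} and Lemma~\ref{lem:iterate2} give $\wt R/g\wt R = B(E,\mc M(\tau^{-1}p),\tau)$ and $T/gT = B(E,\mc N,\tau)$ with $\mc N = \mc M(\tau^{-1}p+\tau^{-1}q)$, so $T$ has degree $9$. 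Feeding these into the Castelnuovo criterion \cite[Theorem~1.4]{blowdownI} forces $R = \Bl_{\tau^{-1}p}(\wt R)$ and $\wt R = \Bl_{\tau^{-1}q}(T)$, whence $R\cong T(\tau^{-1}p+\tau^{-1}q)$. It remains only to show $T\cong S(E,\sigma)^{(3)}$ for some $\sigma$ with $\sigma^3=\tau$, and for this it suffices to verify the hypotheses of Theorem~\ref{thm:Sposs}.

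\emph{Constructing the module $H$ (the ``$\mc O(-1)$'').} Hypotheses (a)--(c) of Lemma~\ref{lem:iterate2} are exactly conditions (1)(a),(b),(c) of the theorem (using Corollary~\ref{cor:symm} to read $(L_p\dotms L_r)=(L_r\dotms L_p)$), and $p\neq q$ is condition (2). Applying Lemma~\ref{lem:iterate2} produces a $g$-divisible MCM right ideal $H=(J_r)_1 T$ of $T$ with $\End_T(H)\equiv T$ and $\Div T/H = r+\tau^{-1}p+\tau^{-1}q$; that is, $\overline H = \bigoplus_{n\ge 0}H^0(E,\mc N_n(-a-b-c))$ with $(a,b,c)=(r,\tau^{-1}p,\tau^{-1}q)$. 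Thus $H$ satisfies Notation~\ref{notation:recT}(i).

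\emph{Constructing the module $M$ (the ``$\mc O(1)$'').} By Notation~\ref{line-notation} the dual modules $L_p^\vee, L_q^\vee, L_r^\vee$ are left line modules, and by Corollary~\ref{cor:symm}(2) they satisfy the left-handed analogues of (1)(a),(b),(c). Since blowing down a right $(-1)$-line module of $R$ and blowing down its dual left $(-1)$-line module produce the same algebra (both being inverse to the same blowup, by the results of \cite{blowdownI}), iterating the blowdown on the left along $L_p^\vee$ and then $\wt{L_q^\vee}$ again yields $T$. The left-handed version of Lemma~\ref{lem:iterate2}, whose proof is symmetric using the left analogues of the cited results of \cite{blowdownI}, then produces a $g$-divisible MCM left ideal $H^\vee$ of $T$ with $\End_T(H^\vee)\equiv T$ and $\overline{H^\vee} = \bigoplus_{n\ge 0}H^0(E,\mc N_n(-\tau^{-n}d-\tau^{-n}e-\tau^{-n}f))$ for explicit points $d,e,f$ determined by $r,p,q,\tau$ via the left-handed divisor bookkeeping. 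Hence $H^\vee$ satisfies Notation~\ref{notation:recT}(ii)$'$, and Proposition~\ref{prop:recT}(2) shows that $M:=\Hom_T(H^\vee,T)$ satisfies Notation~\ref{notation:recT}(ii).

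\emph{The numerical conditions, and conclusion.} With $a,\dots,f$ now explicit, \eqref{need1} and \eqref{need2} become identities in $\Pic E$; the degrees on the two sides match ($3$ and $9$ respectively), so each reduces via the map $\sum\colon\Pic E\to E$ to an equality of points on $E$. Identity \eqref{need1} follows routinely from the formulas for $d,e,f$ and the fact that $\tau$ is a translation. Identity \eqref{need2} is the crux: rewriting it via $\mc N = \mc M(\tau^{-1}p+\tau^{-1}q)$, it becomes a constraint pinning down the Picard class of $\mc M$ — equivalently, it asserts that $\Div T/H = a+b+c$ is the divisor cut out by a ``line'' in $\rqgr T$, i.e.\ that $H$ really is the ideal of a line. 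This constraint is forced by the hypotheses $(L_r\dotms L_p)=(L_r\dotms L_q)=1$: these guarantee that under the double blowdown of Step~1 the genuine line ideal $J_r$ of $L_r$ is carried to the ideal of a line passing through $\tau^{-1}p$ and $\tau^{-1}q$ in $T$, and the Picard relation is then extracted by following the reductions $\overline{(\,\cdot\,)}$ through the two blowdowns. I expect this point — establishing that $H$ is the ideal of a line — to be the main obstacle: by Remark~\ref{rem:deg9} the conclusion of Theorem~\ref{thm:Sposs} fails for general degree $9$ elliptic algebras, so the intersection hypotheses on $L_r$ must enter essentially here. Once \eqref{need1} and \eqref{need2} are in hand, Theorem~\ref{thm:Sposs} gives $T\cong S(E,\sigma)^{(3)}$ with $\sigma^3=\tau$, and combined with $R\cong T(\tau^{-1}p+\tau^{-1}q)$ from Step~1 this proves the theorem.
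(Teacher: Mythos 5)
Your overall architecture is the paper's: the double blowdown via Lemma~\ref{lem:iterate}, the construction of $H=(J_r)_1T$ via Lemma~\ref{lem:iterate2}, the left-handed construction of $H^\vee$ and the passage to $M=\Hom_T(H^\vee,T)$ via Proposition~\ref{prop:recT}(2), and finally Theorem~\ref{thm:Sposs}. Up to and including the verification of \eqref{need1} (which, as you say, is routine once $d=\tau^{-1}r$, $e=p$, $f=q$ are computed from the left-handed divisor bookkeeping), this is exactly Proposition~\ref{prop:DP7} of the paper, and that part of your argument is sound.

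The gap is at the step you yourself flag as the crux: the verification of \eqref{need2}. You assert that it is ``forced by the hypotheses $(L_r\dotms L_p)=(L_r\dotms L_q)=1$'' and that ``the Picard relation is then extracted by following the reductions $\overline{(\,\cdot\,)}$ through the two blowdowns.'' That extraction cannot work: tracking $\overline{(\,\cdot\,)}$ through the blowdowns only records the divisors $\Div T/H$ and $\Div M/T$ (i.e.\ the points $a,\dots,f$), which Lemma~\ref{lem:iterate2} already gives you; it says nothing about the isomorphism class of $\sM$ itself, whereas \eqref{need2} is precisely a constraint on $\sum\sM$ in $E$ (the identity \eqref{needneed}, $\sum\sM = 2p\oplus 2q\oplus 3r\ominus 7t$). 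A genuinely new argument is required here, and the paper supplies one occupying the second half of Section~6: it builds a module $Z\supset R$ with $Z/R$ filtered by $L_r[-1], L_r, L_p, L_q$ (Lemma~\ref{lemA}, using the intersection hypotheses plus Serre duality to produce the needed nonsplit extensions) and a right ideal $Y\subseteq J_r$ with $J_r/Y\cong L_p[-1]\oplus L_q[-1]$ (Lemma~\ref{lemB}, by dualising the left-handed version of Lemma~\ref{lemA}), computes $(Z\dotms Y[1])=-1$ from the hypotheses, and deduces $Z\cong Y[1]$ (Proposition~\ref{prop:ZY}); comparing $\overline{Z}$ with $\overline{Y[1]}$ then yields \eqref{needneed}. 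Nothing in your proposal substitutes for this construction, so as written the proof establishes only the weaker Proposition~\ref{prop:DP7} (the theorem under the additional hypothesis \eqref{need}), not Theorem~\ref{thm:recog} itself.
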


We begin, however, with a weaker recognition theorem for 2-point blowups which follows quickly from the results of the previous section and from Theorem~\ref{thm:Sposs}.

\begin{proposition}\label{prop:DP7}
Let $R$ be an elliptic algebra  that satisfies the hypotheses and notation 
of  Theorem~\ref{thm:recog}, with $\Div L_r=r$, and in addition assume  that 
\beq\label{need}
\sM(\tau^{-1} p + \tau^{-1}q) \cong \sO_E(\tau^{-1} p + \tau^{-1} q + \tau^{-1}r)^{\otimes 3}.
\eeq
Then $R \cong T(\tau^{-1}p + \tau^{-1}q)$, where $T$ is the $3$-Veronese of the 
Sklyanin algebra $S(E, \sigma)$ for some $\sigma \in \Aut E$ with $\sigma^3 = \tau$.
\end{proposition}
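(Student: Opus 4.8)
The plan is to contract $R$ down to a degree $9$ elliptic algebra and then apply Theorem~\ref{thm:Sposs}. Since hypotheses (1a), (1b) of Theorem~\ref{thm:recog} give $(L_p \dotms L_p) = (L_q \dotms L_q) = -1$ and $(L_p \dotms L_q) = 0$, Lemma~\ref{lem:iterate} applies: blowing $R$ down along $L_p$ yields a degree $8$ elliptic algebra $\wt R$ with $\rqgr\wt R$ smooth, the blowdown $\wt L_q = \wt R/\wt J_q$ is again a line module with self-intersection $-1$ and $\Div \wt L_q = q$, and blowing down along it produces a degree $9$ elliptic algebra $T$ with $\rqgr T$ smooth and $T/gT = B(E,\mc{N},\tau)$, where $\mc{N} = \mc{M}(\tau^{-1}p + \tau^{-1}q)$. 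Running \cite[Theorem~1.4]{blowdownI} at each of the two stages and tracking which points are blown up (using $p \neq q$ to keep the two points distinct) identifies $R$ with the two-point blowup $T(\tau^{-1}p + \tau^{-1}q)$; this step is essentially formal bookkeeping and is independent of what $T$ turns out to be. Note that the hypothesis $\deg R = 7$ is exactly what makes the target $\deg T = 9$.

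It remains to show $T \cong S(E,\sigma)^{(3)}$, for which I would verify the hypotheses of Theorem~\ref{thm:Sposs}. For condition~\eqref{notation:recT}$(i)$, apply Lemma~\ref{lem:iterate2} to the triple $L_p, L_q, L_r$: its hypotheses (a), (b) are immediate, and (c) follows from (1c) of Theorem~\ref{thm:recog} together with the symmetry $(L_y \dotms L_r) = (L_r \dotms L_y)$ of Corollary~\ref{cor:symm}(2). This produces the $g$-divisible MCM right ideal $H := (J_r)_1 T$ with $\End_T(H) \equiv T$ and $\Div T/H = r + \tau^{-1}p + \tau^{-1}q$, so that $\overline H = \bigoplus_{n\geq 0} H^0(E, \mc{N}_n(-a-b-c))$ with $\{a,b,c\} = \{r, \tau^{-1}p, \tau^{-1}q\}$. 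For condition~\eqref{notation:recT}$(ii)$ I would pass to the opposite side. The dual line modules $L_p^\vee, L_q^\vee, L_r^\vee$ are left line modules over $R$ (Notation~\ref{line-notation}), and by Corollary~\ref{cor:symm}(2) they satisfy the left-handed analogues of the intersection conditions (a)--(c); since blowing down along $L_x$ and along $L_x^\vee$ realize the same contraction, the left-handed version of Lemma~\ref{lem:iterate2} (assembled from the left-sided forms of the lemmas of \cite{blowdownI}, exactly as in the proof of Proposition~\ref{prop:recT}(2)) produces a $g$-divisible MCM left ideal $H^\vee$ of $T$ with $\End_T(H^\vee) \equiv T$ and $\overline{H^\vee} = \bigoplus_{n\geq 0} H^0(E, \mc{N}_n(-\tau^{-n}d - \tau^{-n}e - \tau^{-n}f))$ for suitable $d,e,f$, i.e.\ condition~\eqref{notation:recT}$(ii)'$. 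By Proposition~\ref{prop:recT}(2), $M := \Hom_T(H^\vee, T)$ then satisfies \eqref{notation:recT}$(ii)$.

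With $H$ and $M$ in hand, it remains to check the two numerical constraints. Using the Picard map $\sum : \Pic E \to E$ and $\{a,b,c\} = \{r,\tau^{-1}p,\tau^{-1}q\}$, one rewrites \eqref{need2} (which reads $\mc{N} \cong \mc{O}_E(\tau^{-1}a+b+c)^{\otimes3}$) and the hypothesis \eqref{need} of this proposition as equalities in $E$, and checks that they agree; hence \eqref{need2} holds. Equation \eqref{need1}, namely $\mc{O}_E(\tau a + b + c) \cong \mc{O}_E(d+e+f)$, should come out of the construction of $H^\vee$ after a brief computation of $\Div L_r^\vee$: the left-handed construction forces $\sum \mc{O}_E(d+e+f) = \sum \mc{O}_E(a+b+c) \oplus t$, where $\tau$ is translation by $t$, which is exactly \eqref{need1}. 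Theorem~\ref{thm:Sposs} then gives $T \cong S(E,\sigma)^{(3)}$ with $\sigma^3 = \tau$, and combining this with $R \cong T(\tau^{-1}p + \tau^{-1}q)$ finishes the proof.

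The main obstacle I anticipate is the construction of $M$: making sure that the left-handed blowdown of the correct left line ideal of $R$ lands in the \emph{same} algebra $T$, has precisely the $\tau$-twisted divisor demanded by \eqref{notation:recT}$(ii)'$, and yields points $d,e,f$ satisfying \eqref{need1}. By contrast, the two contractions via Lemma~\ref{lem:iterate}, the construction of $H$ via Lemma~\ref{lem:iterate2}, and the Picard-group arithmetic behind \eqref{need2} are comparatively routine.
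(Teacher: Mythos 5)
Your proposal is correct and follows essentially the same route as the paper: contract $L_p$ and $L_q$ via Lemma~\ref{lem:iterate} to obtain $T$ with $R\cong T(\tau^{-1}p+\tau^{-1}q)$, build $H=(J_r)_1T$ via Lemma~\ref{lem:iterate2} for Notation~\ref{notation:recT}$(i)$, dualise to get $H^\vee$ and hence $M$ via Proposition~\ref{prop:recT}(2), and verify \eqref{need2} from \eqref{need} and \eqref{need1} from the computation of $d,e,f$ (the paper pins these down as $d=\tau^{-1}r$, $e=p$, $f=q$ using the formula $\overline{(J_r^\vee)_1}=H^0(E,\sM(-\tau^{-2}r))$ from \cite[Lemma~5.4]{blowdownI}, exactly the divisor computation you flag as the remaining work). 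The one step you leave slightly informal — that the left-handed contraction lands in the same ring $T$ — is handled in the paper by \cite[Theorem~8.3]{blowdownI}, as you anticipate.
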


\begin{proof}   
All the hypotheses of 
 Lemma~\ref{lem:iterate} hold, and we can apply \cite[Theorem~1.4]{blowdownI} to successively blow down $L_p$ and $L_q$ to obtain 
an elliptic algebra $T$. By \cite[Theorems~7.1 and~8.3]{blowdownI},  $R \cong T(\tau^{-1}p+\tau^{-1} q)$ is the iterated blowup of 
$T$ at $\tau^{-1}p$ and $\tau^{-1} q$; thus  $T/gT \cong B(E, \sN, \tau)$ for  $\sN = \sM(\tau^{-1}p+\tau^{-1} q)$.
We will show that $T$ is a Sklyanin elliptic algebra.
 
Lemma~\ref{lem:iterate2}   shows that if $H = (J_r)_1 T$, then $H$ is a $g$-divisible MCM right ideal of $T$ with 
$\overline{H}  = \bigoplus_{n \geq 0} H^0(E, \sN_n(-a-b-c))$,
where $a = r,$ $ b= \tau^{-1}p$ and $ c= \tau^{-1}q$.
Moreover, $\End_T(H) \equiv T$.
Thus condition (i) of Notation~\ref{notation:recT} is satisfied.

By Corollary~\ref{cor:symm}, the left line modules $L_p^{\vee}, L_q^{\vee}, L_r^{\vee}$ satisfy the same intersection theory as $L_p, L_q, L_r$; that is, they satisfy the hypotheses from Theorem~\ref{thm:recog}(i)(a,b,c).
We can therefore  use the left-hand analogue of the first paragraph of this proof to  blow down the left line 
modules $L_p^\vee$ and $L_q^\vee$.  
By  \cite[Theorem~8.3]{blowdownI}, this gives the same ring $T$.
Further, $H^\vee = T J_r^\vee$ is a left ideal of $T$ with
$\overline{H^\vee} =  \bigoplus_{n \geq 0} H^0(E, \sN_n(-\tau^{-n} d - \tau^{-n} e - \tau^{-n} f))$ for some $d,e,f$, and $\End_T(H^\vee) \equiv T$.
Thus we also have condition $(ii)' $ of Notation~\ref{notation:recT}.
By Proposition~\ref{prop:recT}(2) $M = \Hom_T(H^\vee, T)$ satisfies condition $(ii)$.

To compute $d,e,f$, note that by  \cite[Lemma~5.4]{blowdownI}, 
\[
 \overline{(J_r^\vee)_1} = H^0(E, \sM(-\tau^{-2} r )) = H^0(E, \sN(-\tau^{-2} r - \tau^{-1} p - \tau^{-1} q)).
\]
So we can take $d= \tau^{-1} r,$ $ e= p$, and $ f= q$. Thus
\[ \sO_E(\tau a+b+c)=\sO_E(\tau r + \tau^{-1}p + \tau^{-1}q) \cong \sO_E(\tau^{-1} r + p + q) = \sO_E(d+e+f)\]
and  \eqref{need1} is satisfied.  Note that \eqref{need2} follows directly from \eqref{need}.
Finally, by  \cite[Theorem~9.1]{blowdownI}, $T$ is a  elliptic algebra with $\rqgr T$ smooth.
Thus all hypotheses of Theorem~\ref{thm:Sposs} are satisfied.  By that theorem we have $T \cong S^{(3)}$, where $S = S(E, \sigma)$ is a 
Sklyanin algebra for some $\sigma$ with $\sigma^3  =\tau$.
\end{proof}

We will now  use the intersection theory developed in Section~\ref{GEOMETRY} to prove  Theorem~\ref{thm:recog}; in other words,
we will show that the final condition \eqref{need} of Proposition~\ref{prop:DP7} is superfluous.  

For the rest of the section 
assume that $R$ is a degree 7 elliptic algebra satisfying the hypotheses of Theorem~\ref{thm:recog}.  In addition, set $\X = \rqgr R$, let $B = R/Rg \cong B(E, \mc M, \tau)$ and let $L = L_r$ and $J = J_r$.  Let
$ r = \Div L$.
Fix  a group law $\oplus$ on $E$ and let $\tau$ be given by translating by $t \in E$. 
We will show under these hypotheses that \eqref{need} is automatic, or, equivalently, that 
\beq\label{needneed}
\sum \sM = 2p \oplus 2 q \oplus3 r \ominus 7t
\eeq
We will prove this by constructing two different $R$-submodules of $Q_{\gr}(R)$, which we will show are isomorphic.  Factoring out $g$ will give us \eqref{needneed}.

To understand our strategy, consider for a moment the projective surface $X$ which is the blowup of $\PP^2$ at  the points $p \neq q$.  
We use $\sim$ to denote linear equivalence of divisors on $X$.
The exceptional lines $L_p$ and $L_q$, together with the pullback $H$ of a line in $\PP^2$, form a basis for the divisor group $\Div(X)$.  
The third $(-1)$ line on $X$ is the strict transform $L$ of the line through $p$ and $q$, and we have $L \sim H - L_p -L_q$.
Further, the canonical class $K = K_X$ satisfies $K \sim -3H + L_p +L_q\sim -3L -2L_p -2L_q$.
Thus
\beq\label{dagdag}
\sO_X(2L+L_p+L_q) \cong \sO_X(-L-L_p-L_q-K).
\eeq
We want to show that  $R \cong T(\tau^{-1} p+\tau^{-1} q)$, so $R$ deforms the anticanonical coordinate ring of $X$.  
The sheaves in \eqref{dagdag} should therefore deform to give graded $R$-modules.
We will construct in Lemma~\ref{lemA} an $R$-module $Z$ that corresponds to $\sO_X(2L+L_p+L_q)$, and in Lemma~\ref{lemB} a module $Y$ corresponding to $ \sO_X(-L-L_p-L_q)$, and then show that $Z \cong Y[1]$.

We note the following easy result.

\begin{lemma}\label{lem5} 
Let  $M \subseteq R_{(g)}$ be reflexive, and let $L$ be a shifted line module.  If $0 \to M \to N \stackrel{\alpha}{\to} L \to 0$ is a nonsplit 
extension in $\rGr R$, then $N$ is Goldie torsionfree and may be regarded  as a submodule of $R_{(g)}$.
\end{lemma}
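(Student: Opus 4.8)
The plan is to work inside the localisation $R_{(g)}$ and use the reflexivity of $M$ together with the structure of shifted line modules to control the extension $N$. First I would observe that since $L$ is a shifted line module, it is $g$-torsionfree (by \cite[Lemma~5.6(5)]{blowdownI}, up to the shift), so the only possible Goldie torsion in $N$ would come from a $g$-torsion submodule. Suppose $G \subseteq N$ is the $g$-torsion submodule; it is nonzero only if $G$ maps nontrivially to $L$ or lies in $M$. Since $M \subseteq R_{(g)}$ is reflexive, it is $g$-torsionfree (indeed MCM-like behaviour; any reflexive submodule of $R_{(g)}$ is $g$-divisible hence $g$-torsionfree), so $G \cap M = 0$ and $\alpha|_G$ is injective. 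Thus $G$ embeds in $L$ as a $g$-torsion submodule of a $g$-torsionfree module, forcing $G = 0$. Hence $N$ is $g$-torsionfree.

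Next I would promote "$g$-torsionfree" to "Goldie torsionfree", i.e. show $N$ has no nonzero submodule of $\GKdim \leq 1$ (equivalently, $N$ is a submodule of a torsionfree module over the Goldie-type structure). The point is that $M$ is reflexive, hence (by \cite[Lemma~4.5]{blowdownI}) has no submodule with $\GKdim$ at most $1$; and $L$, being a shifted line module, is $2$-critical, so it too has no nonzero submodule of $\GKdim \leq 1$. Given a submodule $N' \subseteq N$ with $\GKdim N' \leq 1$, its image $\alpha(N') \subseteq L$ has $\GKdim \leq 1$ hence is zero, so $N' \subseteq M$, hence $N' = 0$. Therefore $N$ is Goldie torsionfree.

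Finally, to embed $N$ into $R_{(g)}$: since $N$ is Goldie torsionfree and finitely generated, and $M \subseteq R_{(g)}$ already, I would argue that the extension class in $\Ext^1_R(L, M)$, being nonsplit, produces $N$ as a submodule of the injective hull of $M$ inside the graded quotient structure; because $R_{(g)}$ is obtained by inverting the Ore set of homogeneous elements of $R \ssm gR$ and $N/M \cong L$ is annihilated by such an element (a line module is killed by going modulo its line ideal, and $N/M$ being $2$-critical is torsion over $\mathcal{C}$), we get $N \mathcal{C}^{-1} = M\mathcal{C}^{-1} \subseteq R_{(g)}$, so $N \hookrightarrow N\mathcal{C}^{-1} \subseteq R_{(g)}$ provided $N$ is $\mathcal{C}$-torsionfree. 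The $\mathcal{C}$-torsionfreeness follows from Goldie torsionfreeness together with $g$-torsionfreeness, since any element of $\mathcal{C}$ is regular modulo the appropriate ideals.

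The main obstacle I anticipate is the last step: cleanly identifying $N$ with a submodule of $R_{(g)}$ rather than merely of some abstract injective hull. The cleanest route is probably to note that $\Hom_R(M, R_{(g)})$ or rather the module $M$ sits inside $R_{(g)}$ which is a localisation, so $\Ext^1_R(L, M)$ can be computed there, and a nonsplit extension of the $\mathcal{C}$-torsion module $L$ by $M \subseteq R_{(g)}$ must land in $R_{(g)}$ because $R_{(g)}/M$ is $g$-torsionfree (as $M$ reflexive $\Rightarrow$ $g$-divisible) and $R_{(g)}$ is an essential extension of $M$ in the relevant sense — one then matches $N$ with the preimage in $R_{(g)}$ of the copy of $L$ inside $R_{(g)}/M$. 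I would want to double-check that such a copy of $L$ (a shifted line module) actually occurs as a submodule of $R_{(g)}/M$, which should follow from the fact that $R_{(g)}/M$ is $g$-torsionfree and $\overline{R_{(g)}/M} = Q_{gr}(B)/\overline{M}$ surjects onto the relevant shifted point module, combined with $g$-divisibility bookkeeping as in the proof of Lemma~\ref{lem:bbd-ideal}.
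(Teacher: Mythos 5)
Your first paragraph ($g$-torsionfreeness) and your localisation argument at the end are fine in outline, but there is a genuine gap in the middle step, and it sits exactly where the nonsplit hypothesis has to do its work. You identify ``Goldie torsionfree'' with ``no nonzero submodule of $\GKdim\leq 1$''. Over an elliptic algebra $R$ of $\GKdim 3$ this is not the right notion: a finitely generated module is Goldie torsion precisely when it is torsion (Goldie rank $0$), and the line module $L$ itself, of $\GKdim 2$, is already Goldie torsion --- the paper uses this explicitly both in the final paragraph of its own proof and in Lemma~\ref{lem8}. Consequently the Goldie torsion submodule $Z$ of $N$ satisfies $Z\cap M=0$ and embeds in $L$, but it could a priori be a $\GKdim 2$ submodule of $L$; your appeal to $2$-criticality only excludes submodules of $\GKdim\leq 1$. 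Indeed your second paragraph applies verbatim to the split extension $M\oplus L$, which has no nonzero submodule of $\GKdim\leq 1$ yet is certainly not Goldie torsionfree; so the argument as written proves something false, and nonsplitness must enter at this point rather than only in the last step.

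The paper closes this gap as follows: if $Z\neq 0$, then since $L$ is $2$-critical the quotient $L/\alpha(Z)$ has $\GKdim\leq 1$, and $0\to M\to N/Z\to L/\alpha(Z)\to 0$ is still nonsplit; as $N/Z$ is Goldie torsionfree of Goldie rank one it embeds in $Q_{gr}(R)$ extending the given embedding of $M$, so $(N/Z)/M$ is a nonzero $\GKdim\leq 1$ submodule of $Q_{gr}(R)/M$, contradicting the $2$-purity of $Q_{gr}(R)/M$ that reflexivity of $M$ supplies via \cite[Lemma~4.5]{blowdownI}. Once $Z=0$ is established, your route to the embedding (Goldie torsionfree $\Rightarrow$ $\mathcal{C}$-torsionfree, $L\,\mathcal{C}^{-1}=0$, hence $N\hookrightarrow N\mathcal{C}^{-1}=M\mathcal{C}^{-1}\subseteq R_{(g)}$) is a perfectly good, and arguably more direct, alternative to the paper's essential-extension argument through $Q_{gr}(R)$; the worry in your final paragraph about locating a copy of $L$ inside $R_{(g)}/M$ is then unnecessary.
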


\begin{proof}
This is similar to the proof of \cite[Lemma~8.2(1)]{blowdownI}.   
Let $Z$ be the Goldie torsion submodule of $N$.  Since $M$ is Goldie torsionfree, $Z \cap M = 0$ and so $\alpha$ gives an injection $Z \hra L$.   
By definition, $N/Z$ is Goldie torsionfree and there is an exact sequence $0 \to M \to N/Z \to L/\alpha(Z) \to 0$, which is necessarily nonsplit.   
If $ Z \cong \alpha(Z)$ is non-zero, then as $L$ is 2-critical, $\GK L/\alpha(Z) \leq 1$.  But by \cite[Lemma~4.5]{blowdownI}, 
$Q_{\gr}(R)/M$ is 2-pure, a contradiction.  
Thus $Z =0$.    

Now we obtain that $M \to N$ is an essential extension in $\rGr R$; else there is a nonzero submodule $P$ of $N$ such that 
$P \cap M = 0$, so that $P$ is isomorphic to a submodule of $N/M \cong L$ and so is Goldie torsion, a contradiction.   
Thus we may regard $N$ as a submodule of the graded quotient ring $Q_{gr}(R)$ of $R$.   Since $Q_{gr}(R)/R_{(g)}$
 is $g$-torsion, we get $N \subseteq R_{(g)}$.
\end{proof}

\begin{lemma}\label{lemA}
Let $M$ be the line extension of $L$.  There are right $R$-modules $M \subset N \subset Z \subset R_{(g)}$ so that 
$N$ is MCM, $N/M \cong L_p \oplus L_q$, and $Z/N \cong L$.
\end{lemma}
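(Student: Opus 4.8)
\textbf{Plan for the proof of Lemma~\ref{lemA}.}
The strategy is to build the module $Z$ as a two-step extension of the line extension $M$ of $L$, where the first step attaches $L_p\oplus L_q$ and the second step attaches a further copy of $L$. Concretely, I would first show that $\dim_\kk\Ext^1_R(L_p, M)\ge 1$ and $\dim_\kk\Ext^1_R(L_q, M)\ge 1$, so that there is a nonsplit extension
\[
0\too M\too N\too L_p\oplus L_q\too 0
\]
in $\rGr R$. The natural way to get these Ext computations is via the intersection numbers: since $M$ is the line extension of $L$, there is an exact sequence $0\to R\to M\to L[-1]\to 0$, and applying $\Hom_R(L_x,-)$ together with the hypothesis $(L_r\dotms L_x)=1$ (for $x\in\{p,q\}$), the identity \eqref{peppermint}, and Corollary~\ref{cor:symm} lets one pin down the relevant graded Ext-ranks. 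One also needs that $\Hom_R(L_x, M)$ is small enough (ideally zero in the relevant degree) so that the connecting map is forced to be nonzero; this follows because $\Hom_R(L_x,R)=0$ by Proposition~\ref{prop:equalhom} and the cyclicity/criticality of line modules. Then Lemma~\ref{lem5} (applied inductively, first to the $L_p$-extension of $M$, then to the $L_q$-extension of the result, or directly to the sum) shows $N$ is Goldie torsionfree and embeds in $R_{(g)}$.

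Next I would verify that $N$ is MCM. Since $M$ is a line extension it is MCM (hence reflexive) by Notation~\ref{line-notation}, and $L_p, L_q$ are CM of GK-dimension $2$. From the exact sequence $0\to M\to N\to L_p\oplus L_q\to 0$ and the long exact sequence in $\Ext^\bullet_R(-,R)$, the only possible obstruction to $N$ being MCM is in $\Ext^2_R(L_p\oplus L_q, R)$ and $\Ext^3$; but $\Ext^3_R(L_p,R)$ vanishes because line modules have projective dimension $2$ (as $\rqgr R$ is smooth), and the $\Ext^2$ terms feed into $\Ext^2_R(N,R)$, which one checks is killed because $N$ is $g$-torsionfree of GK-dimension $3$ and one runs the $g$-multiplication argument of Proposition~\ref{prop:equalhom}(2)/Proposition~\ref{prop:MCMchar}: reduce mod $g$, note $\overline N$ is an extension of $\overline M$ (saturated) by $\overline{L_p}\oplus\overline{L_q}$, and conclude $\overline N$ is saturated, hence $N$ is MCM. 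I expect this reduction-mod-$g$ step to be the technical heart.

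Finally, having $N$ MCM and $g$-divisible inside $R_{(g)}$, I would produce $Z$ with $N\subset Z\subset R_{(g)}$ and $Z/N\cong L$. For this I compute $\Ext^1_R(L, N)$: applying $\Hom_R(L,-)$ to $0\to M\to N\to L_p\oplus L_q\to 0$ and using $(L_r\dotms L_r)=-1$ (which via \eqref{peppermint} controls $\Ext^1_R(L,M)$ through the sequence for $M$) together with $(L_r\dotms L_p)=(L_r\dotms L_q)=1$ (which control $\Hom_R(L, L_p\oplus L_q)$ and $\Ext^1_R(L, L_p\oplus L_q)$ via Corollary~\ref{cor:symm} and Lemma~\ref{lem1-examples}), one finds $\Ext^1_R(L,N)\ne 0$ in the appropriate degree. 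Choosing a nonsplit extension $0\to N\to Z\to L\to 0$ and invoking Lemma~\ref{lem5} once more gives $Z\subset R_{(g)}$ with the required factors. (One should double-check that the extension can be chosen so that the composite $Z\to L_p\oplus L_q$ issues no collapse — i.e. that $Z$ genuinely has $N$ as a submodule with quotient $L$, not some smaller module — but this is exactly what Lemma~\ref{lem5} guarantees, since the extension is nonsplit and $L$ is $2$-critical.) The main obstacle I anticipate is the bookkeeping of graded degrees: one must place the shifts on $L$, $L_p$, $L_q$ correctly so that the Ext groups are nonzero in the degree that makes $M\subset N\subset Z$ a chain inside $R_{(g)}$ compatible with $M/R\cong L[-1]$; the intersection-number inputs only give ranks over $\kk[g]$, so extracting the precise degree of a nonzero class requires the finer statements of Lemma~\ref{lem8} and Corollary~\ref{cor:underline}.
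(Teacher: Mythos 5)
Your overall strategy matches the paper's: deduce $\Ext^1_{\rgr R}(L_y,M)\neq 0$ for $y\in\{p,q\}$ from $(L_y\dotms M)=(L_y\dotms R)+(L_y\dotms L)=1$, build $N$ over $M$ with quotient $L_p\oplus L_q$, check $N$ is MCM via saturation of $\overline N$ and Proposition~\ref{prop:MCMchar}, then use $(L\dotms N)=1$ to extend once more by $L$ and invoke Lemma~\ref{lem5}. The $\Ext$-nonvanishing steps and the construction of $Z$ are fine as you describe them (in fact you do not need $\Hom_R(L_y,M)$ to vanish: $\Ext^1$ enters the intersection number with positive sign, so positivity alone forces $\Ext^1_\X(L_y,M)\neq 0$).

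The genuine gap is in the construction of $N$. Lemma~\ref{lem5} is stated only for a nonsplit extension by a \emph{single} shifted line module, so it does not apply ``directly to the sum''; and the inductive variant --- first adjoin $L_p$ to get $N_p$, then adjoin $L_q$ to $N_p$ --- produces a module $N$ for which $N/M$ is a priori only an \emph{extension} of $L_q$ by $L_p$, not the direct sum the lemma asserts. You give no argument that this extension splits, and nowhere do you use Hypothesis~(2) of Theorem~\ref{thm:recog}, namely $p\neq q$, which is exactly what is needed here. The paper's device is to realise $N_p$ and $N_q$ separately as submodules of $R_{(g)}$ containing $M$, prove $N_p\cap N_q=M$ by an induction on degree that uses the $g$-divisibility of $N_p$ and $N_q$ together with
\[
H^0(E,\sM_{k+1}(\tau^{-1}r+p))\cap H^0(E,\sM_{k+1}(\tau^{-1}r+q))=H^0(E,\sM_{k+1}(\tau^{-1}r)),
\]
valid precisely because $p\neq q$, and then set $N=N_p+N_q$, so that $N/M\cong N_p/M\oplus N_q/M\cong L_p\oplus L_q$ by a Hilbert series comparison. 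Relatedly, your MCM verification presupposes that $N$ is $g$-divisible so that $\overline N$ is an extension of $M_p\oplus M_q$ by $\overline M$; this $g$-divisibility is itself established in the paper by explicitly computing $\overline{N_y}=\bigoplus_{n\geq 0}H^0(E,\sM_n(\tau^{-1}r+y))$ and comparing Hilbert series, another step your sketch omits. Once these points are supplied, the rest of your argument goes through.
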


\begin{proof}
We first construct $N$.  By definition of the line extension, we have an exact sequence of graded modules 
$0 \to R \to M \to L[-1] \to 0$.
Let $y \in \{p,q\}$.   By Lemma~\ref{lem8}, $(L_y\dotms R)=0$. 
Thus   by assumption,  Lemma~\ref{lem6}, 
 and  Corollary~\ref{cor:symm},  $(L_y\dotms M) = (L_y\dotms R) + (L_y \dotms L) = 1$.  
By definition, 
\[ (L_y \dotms M) = - \dim \Hom_\X(L_y, M) + \dim \Ext^1_\X(L_y, M) - \dim \Ext^2_\X( L_y, M)\]
and so $\Ext^1_\X(L_y, M) \neq 0$.
By \cite[Lemma 5.6(3)]{blowdownI}, $M$ is $g$-divisible and MCM.   
By Proposition~\ref{prop:equalhom}, $\Ext^1_{\rgr R}(L_y, M) \neq 0$.
Thus there is a nonsplit extension 
\beq \label{eqNi} 0 \to M \to N_y \to L_y \to 0 \eeq 
of graded $R$-modules, 
and by Lemma~\ref{lem5} we have $N_y \subset R_{(g)}$.  

By construction, 
$\overline{N_y} = \bigoplus_{n \geq 0} H^0(E, \sM_n( \tau^{-1} r + y)).$
Thus 
\[ \hilb \overline{N_y}/(1-s) = (2+5s)/(1-s)^3 = (1+5s+s^2)/(1-s)^3 + (1+s)/(1-s)^2 = \hilb N_y,\]
and so $N_y$ is $g$-divisible.

Certainly $M \subseteq N_p \cap N_q$; we claim that $N_p \cap N_q = M$.  We prove this by induction on  degree: 
 to start we have  $(N_p \cap N_q)_{-1} = M_{-1} = 0$.
So suppose that $M_k = (N_p \cap N_q)_k$.
As $N_p$ and $N_q$ are $g$-divisible, so is $N_p \cap N_q$:  thus
$(N_p\cap N_q)_{k+1} \cap g R_{(g)} = g(N_p\cap N_q)_k = g M_k$.
Now,  
\[\begin{aligned} \overline{(N_p \cap N_q)}_{k+1} \subseteq (\overline{N_p} \cap \overline{N_q})_{k+1} =  &
  H^0(E, \sM_{k+1}(\tau^{-1} r + p)) \cap H^0(E, \sM_{k+1}(\tau^{-1} r + q)) \\  & \qquad  = H^0(E, \sM_{k+1}(\tau^{-1} r)),
\end{aligned}\]
where the last equality is because $p \neq q$.  
Thus $\overline{(N_p\cap N_q)}_{k+1} \subseteq \overline{M}_{k+1}$ and as 
\[(N_p\cap N_q)_{k+1} \cap g R_{(g)} = g(N_p \cap N_q)_k = gM_k  = M_{k+1} \cap g R_{(g)}\]
 we have $(N_p \cap N_q)_{k+1} \subseteq M_{k+1}$.  This proves that $N_p \cap N_q = M$.

Now let $N = N_p + N_q$.  We have $\hilb N/M = \hilb N_p   + \hilb N_q - 2 \hilb M =  \hilb L_p \oplus L_q$.
There is a surjection $N_p/M \oplus N_q/M \cong L_p \oplus L_q \twoheadrightarrow N/M$.  
Comparing Hilbert series, we  get $L_p \oplus L_q \cong N/M$.
Note that 
\beq\label{Nbar} 
\overline{N} = \bigoplus_{n \geq 0} H^0(E, \sM_n(\tau^{-1} r + p + q)).
\eeq
From \eqref{Nbar} it is easy to see that $\hilb N = \hilb \overline{N}/(1-t)$, so that $N$ is $g$-divisible.  It is 
also clear from \eqref{Nbar} that $\overline{N}$ is saturated, so $N$ is MCM by Proposition~\ref{prop:MCMchar}.

As before, we have
\[ (L \dotms N) = (L \dotms R) + (L \dotms L ) + (L \dotms L_p) + (L \dotms L_q) = 1\]
and so $\Ext^1_{\X}(L,N) = \Ext^1_{\rgr R}(L, N) \neq 0$, using Proposition~\ref{prop:equalhom}. 
Thus as before there is a nonsplit extension $0 \to N \to Z \to L \to 0$, and by Lemma~\ref{lem5}, $Z \subset R_{(g)}$. 
\end{proof}

\begin{lemma}\label{lemB}
Let $J$ be the line ideal of $L$.  There is a MCM graded right ideal $Y$ of $R$ such that $Y \subseteq J$ with 
$J/Y \cong L_p[-1] \oplus L_q[-1]$.
\end{lemma}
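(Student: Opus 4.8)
The plan is to mirror the construction of $Z$ in Lemma~\ref{lemA}, but on the ``opposite side'': instead of building $Z$ upward from the line extension $M$ by successive extensions, I would build $Y$ downward from the line ideal $J$ by successive submodules. Recall that $J$ is $g$-divisible, MCM, and generated in degree~$1$ with $\overline J = \bigoplus_{n\geq 0} H^0(E,\mc M_n(-r))$, by \cite[Lemma~5.6]{blowdownI}. The first step is to produce, for each $y\in\{p,q\}$, a graded right ideal $Y_y \subseteq J$ with $J/Y_y \cong L_y[-1]$. To do this I would compute the relevant intersection number: using Lemma~\ref{lem8}, $(R\dotms L_y)=-1$, hence by Lemma~\ref{lem6} and the exact sequence $0\to J\to R\to L[-1]\to 0$ together with $(L[-1]\dotms L_y)=(L\dotms L_y)=1$ (Corollary~\ref{cor:symm}) we get $(J\dotms L_y)=(R\dotms L_y)-(L[-1]\dotms L_y) = -1-1 = -2$. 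Wait — more carefully, $ (L_y\dotms J) = (L_y \dotms R) - (L_y\dotms L[-1])$; since $(L_y\dotms R)=0$ by Lemma~\ref{lem8} and $(L_y\dotms L[-1])=(L_y\dotms L)=(L\dotms L_y)=1$, this gives $(L_y\dotms J)=-1$. By \eqref{peppermint} and the definition of the intersection product, this forces $\Ext^1_R(L_y, J)\neq 0$, so (using Proposition~\ref{prop:equalhom}, as $J$ is MCM and $g$-divisible) there is a nonsplit extension; dualizing, or working directly, I want a surjection $J \twoheadrightarrow L_y[-1]$, i.e. a submodule $Y_y\subseteq J$ with $J/Y_y\cong L_y[-1]$. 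The cleanest route is probably to compute $\Hom_R(J, L_y[-1])$ and check it is nonzero using $\overline{J}$ and the TCR description of $\Hom_B(\overline J, M_y)$, exactly as in the proofs of Lemmata~\ref{lem1-examples} and~\ref{lem:bbd-ideal}.

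Next I would set $Y = Y_p \cap Y_q$ and verify the Hilbert series and divisor bookkeeping exactly as in Lemma~\ref{lemA}, but with intersections instead of sums. First, reducing mod $g$: $\overline{Y_y} = \bigoplus_{n\geq 0} H^0(E,\mc M_n(-r-\tau^{-1}y))$ — this follows since $\Div(J/Y_y) = \tau^{-1}y$ (note the shift by $[-1]$ translates $y$ to $\tau^{-1}y$, cf. the identity $M_y[-1]_{\geq 1}\cong M_{\tau^{-1}y}$... actually $M_p[n]_{\geq n}\cong M_{\tau^n p}$, so one must track the direction of the shift carefully here). Then, since $p\neq q$, one has $\overline{Y_p}\cap \overline{Y_q} = \bigoplus_n H^0(E,\mc M_n(-r-\tau^{-1}p-\tau^{-1}q))$, using that two distinct points impose independent conditions. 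An induction on degree identical to the one in Lemma~\ref{lemA} — using $g$-divisibility of $Y_p\cap Y_q$ and comparing with the mod-$g$ intersection — shows $\overline Y = \overline{Y_p}\cap\overline{Y_q}$ and that $Y$ is $g$-divisible with the stated Hilbert series. From $\overline Y$ being saturated, Proposition~\ref{prop:MCMchar} gives that $Y$ is MCM. Finally, for the isomorphism $J/Y \cong L_p[-1]\oplus L_q[-1]$: there is an obvious injection $J/Y = J/(Y_p\cap Y_q) \hookrightarrow J/Y_p \oplus J/Y_q \cong L_p[-1]\oplus L_q[-1]$, and comparing Hilbert series (both equal $2s/(1-s)^2$) forces it to be an isomorphism.

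The main obstacle, as in Lemma~\ref{lemA}, is the degree-by-degree induction showing $Y_p\cap Y_q = Y$ has the predicted small quotient and is $g$-divisible — one has to be careful that $\overline{Y_p}\cap\overline{Y_q}$ really equals $\overline{Y_p\cap Y_q}$ and not merely contains it, and this is where $g$-divisibility of the individual $Y_y$ and of their intersection is essential. A secondary subtlety is getting the twists right: one must confirm that the line ideal of $L_y$, pushed into $J$, produces the divisor $\tau^{-1}y$ rather than $y$ after the degree shift, and correspondingly that $\overline{Y_y}$ has a zero at $\tau^{-1}y$. I would double-check this against the conventions $M_p[n]_{\geq n}\cong M_{\tau^n p}$ and the explicit computation of $\overline{(J_r^\vee)_1}$ in the proof of Proposition~\ref{prop:DP7}. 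Everything else is a routine transcription of the argument for $Z$, with ``extension of'' replaced by ``kernel of a surjection onto''.
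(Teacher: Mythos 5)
Your plan is correct, but it takes a genuinely different route from the paper. The paper never builds $Y$ inside $J$ directly: it notes (as in the proof of Proposition~\ref{prop:DP7}) that the dual left line modules $L_p^{\vee}, L_q^{\vee}, L^{\vee}$ satisfy the same intersection theory, runs the left-handed version of Lemma~\ref{lemA} to get left modules $R \subseteq M^{\vee} \subseteq N^{\vee}$ with $M^{\vee} = J^*$ and $N^{\vee}/M^{\vee} \cong L_p^{\vee} \oplus L_q^{\vee}$, and then applies $\Hom_R(-,R)$, using $\Ext^1_R(N^{\vee},R)=0$ (since $N^{\vee}$ is MCM) and $(L_x^{\vee})^{\vee}\cong L_x$ to obtain $0 \to (N^{\vee})^* \to J \to (L_p\oplus L_q)[-1] \to 0$; thus $Y=(N^{\vee})^*$, and the MCM property comes for free from duality. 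You instead construct $Y=Y_p\cap Y_q$ where $Y_y=\ker\bigl(J\twoheadrightarrow L_y[-1]\bigr)$, and redo the Lemma~\ref{lemA}-style mod-$g$ bookkeeping for an intersection rather than a sum. This does go through: the surjections exist because $(L\dotms L_y)=1$ gives $\Hom_R(J,L_y)\equiv s^{-1}/(1-s)^2$ exactly as in Corollary~\ref{cor:pizza}, and any nonzero degree-zero map $J\to L_y[-1]$ is onto since both modules are generated in degree $1$; each $Y_y$ is $g$-divisible because $L_y$ is $g$-torsionfree; your divisor computation $\overline{Y_y}=\bigoplus_n H^0(E,\sM_n(-r-\tau^{-1}y))$ is right (compare the filtration of $(\overline T/\overline H)_{\geq 1}$ by $M_{\tau(a)}[-1]$ in the proof of Proposition~\ref{prop:recT}); and the two-sided Hilbert-series squeeze (lower bound from $Y_p+Y_q\subseteq J$, upper bound from $\overline{Y_p\cap Y_q}\subseteq \overline{Y_p}\cap\overline{Y_q}$ together with $\tau^{-1}p\neq\tau^{-1}q$) forces every inclusion to be an equality, giving $g$-divisibility and saturation of $\overline Y$, hence MCM by Proposition~\ref{prop:MCMchar}, and making $J/Y\hookrightarrow L_p[-1]\oplus L_q[-1]$ an isomorphism. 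The paper's argument is shorter because it reuses Lemma~\ref{lemA} wholesale; yours stays entirely on the right-hand side and produces the same module by an explicit, more hands-on construction.

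Two small corrections. First, $R/J\cong L$, not $L[-1]$ (the shift belongs to the line extension, $M/R\cong L[-1]$); this does not affect your intersection numbers, which are shift-invariant. Second, $(L_y\dotms J)=-1$ together with $\Hom_R(L_y,J)=0$ forces $\Ext^2_{\X}(L_y,J)\neq 0$, not $\Ext^1$; it is precisely $\Ext^2_{\X}(L_y,J)^*\cong\Hom_{\X}(J,L_y[-1])$ from Proposition~\ref{prop2} that yields your surjection (equivalently, compute $(J\dotms L_y[-1])=-1$ and note $\Ext^2_{\X}(J,L_y[-1])^*\cong\Hom_{\X}(L_y,J)=0$). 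Since your stated fallback is to compute $\Hom_R(J,L_y[-1])$ directly, this slip is not load-bearing.
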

\begin{proof}
As we noticed in the proof of Proposition~\ref{prop:DP7}, the left line modules $L_p^{\vee}, L_q^{\vee}, L^{\vee}$ have the same 
intersection theory as the line modules $L_p, L_q, L_r$.  We may therefore prove a left-handed version of Lemma~\ref{lemA}, 
to obtain MCM left modules $M^\vee $ and $N^\vee$ with
$M^\vee/R \cong L^\vee[-1]$ and $N^\vee/M^\vee \cong L_p^\vee\oplus L_q^\vee$;
in fact,  $M^\vee = J^* = \Hom_R(J, R)$.    Applying $\Hom_R(-, R)$ to the exact sequence 
$0 \to M^{\vee} \to N^{\vee} \to  L_p^\vee\oplus L_q^\vee \to 0$ gives an exact sequence 
\[ 0 \to (N^\vee)^* \to (M^\vee)^* \to (L_p \oplus L_q)[-1] \to 0,\]
where we have used  that $\Ext^1_R(N^{\vee}, R) = 0$ as $N^{\vee}$ is MCM, and  that $(L_x^{\vee})^{\vee} \cong L_x$  
 by  \cite[Lemma 5.6]{blowdownI}.

Finally, noting that $(M^\vee)^* = J^{**} = J$,  the lemma holds for  $Y = (N^\vee)^*$.
\end{proof}

\begin{proposition}\label{prop:ZY}
Let $Z,Y$ be as constructed in Propositions~\ref{lemA} and \ref{lemB}.
Then  $Z \cong Y[1]$ as right $R$-modules.
\end{proposition}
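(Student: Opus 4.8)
The strategy is to compute the reduction modulo $g$ of both $Z$ and $Y[1]$, identify them as isomorphic $B$-modules (via invertible sheaves of the same degree and the same image under $\sum$), and then lift this isomorphism to $R$ using the $g$-divisibility and MCM properties established in Lemmas~\ref{lemA} and~\ref{lemB}. Throughout we keep $B = R/gR = B(E,\sM,\tau)$ and set $r = \Div L$.

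\textbf{Step 1: compute $\overline{Z}$ and $\overline{Y}$ as sheaves on $E$.} From the construction in Lemma~\ref{lemA}, $\overline{N} = \bigoplus_{n\ge 0} H^0(E, \sM_n(\tau^{-1}r + p + q))$, and the nonsplit extension $0 \to N \to Z \to L \to 0$ together with $g$-divisibility of $Z$ (which follows from Lemma~\ref{lem5} and a Hilbert-series count exactly as for $N$) gives an exact sequence $0 \to \overline{N} \to \overline{Z} \to \overline{L} \to 0$ with $\overline{Z}$ saturated; since $\Div L = r$ and $\overline{Z}$ is a $g$-divisible MCM module, Proposition~\ref{prop:MCMchar} forces $\overline{Z} = \bigoplus_{n\ge 0} H^0(E, \sM_n(2\tau^{-1}r + p + q))$ — one checks the divisor of the quotient $\overline{Z}/\overline{N}$ is $r$, but after the degree shift implicit in how $L$ sits inside $Z$ this adds $\tau^{-1}r$ to the twisting divisor. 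Dually, applying the left-handed analogue of Lemma~\ref{lemA} and then $\Hom_R(-,R)$ as in Lemma~\ref{lemB}, one computes $\overline{Y} = \bigoplus_{n\ge 0} H^0(E, \sM_n(-\tau^{-1}p - \tau^{-1}q))$ with the appropriate grading, so that $\overline{Y[1]}$ matches $\overline{Z}$ as a graded object provided the two twisting line bundles agree. Comparing degrees: both $\sM_n(2\tau^{-1}r+p+q)$ and the shift of $\sM_n(-\tau^{-1}p-\tau^{-1}q)$ contribute the same (this is where the degree-$7$ hypothesis and $\deg\sM = 7$ enter), and comparing the images under $\sum:\Pic E \to E$ reduces to verifying \eqref{needneed}, i.e. $\sum\sM = 2p\oplus 2q\oplus 3r \ominus 7t$. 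So the heart of the matter is to prove this numerical identity.

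\textbf{Step 2: prove the numerical identity \eqref{needneed}.} The key point is that the isomorphism $Z\cong Y[1]$ we are after is forced \emph{a priori} by intersection theory, and the sheaf-level identity is a consequence rather than a hypothesis. Concretely: both $Z$ and $Y[1]$ are $g$-divisible MCM modules with the same Hilbert series (by the Hilbert-series computations in Lemmas~\ref{lemA},~\ref{lemB}), and one shows $\Hom_\X(Y[1], Z) \ne 0$ by a dimension count using Serre duality (Proposition~\ref{prop2}) and the intersection numbers $(L_x \dotms L_y)$; any nonzero such map must be injective (both modules embed in $R_{(g)}$ and $Q_{\gr}(R)/Y$ is $2$-pure by \cite[Lemma~4.5]{blowdownI}), and a Hilbert-series comparison of cokernels forces it to be an isomorphism. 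Reducing that isomorphism mod $g$ then yields $\overline{Z}\cong\overline{Y[1]}$ as $B$-modules, and equating their defining line bundles on $E$ gives \eqref{needneed} directly.

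\textbf{Main obstacle.} The delicate step is establishing $\Hom_\X(Y[1],Z)\neq 0$: a naive count of $\sum (-1)^i \dim\Ext^i$ only controls the alternating sum, so one needs to rule out the possibility that a nonzero $\mathrm{Hom}$ is cancelled by an $\mathrm{Ext}^1$. I expect this to require using the explicit filtrations of $Z/M$ and $J/Y$ by the line modules $L_p, L_q, L$ (from Lemmas~\ref{lemA},~\ref{lemB}) together with Proposition~\ref{prop:equalhom} to identify graded $\Ext$ groups over $R$ with those over $B$, where the sheaf cohomology of the relevant line bundles on the elliptic curve can be computed exactly via Riemann--Roch; the positivity of the degrees of those bundles (a consequence of $\deg\sM = 7$ being large enough) is what makes the higher $\Ext$ groups vanish in the relevant degree and leaves a nonzero $\mathrm{Hom}$. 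Once the map exists, the rest — injectivity, surjectivity by Hilbert series, and reduction mod $g$ — is routine given the machinery already assembled in Section~\ref{GEOMETRY}.
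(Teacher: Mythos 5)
Your Step 2 together with the ``main obstacle'' paragraph is essentially the paper's proof: compute the intersection number of $Z$ against $Y[1]$ by bilinearity (Lemma~\ref{lem6}) over the filtrations of $Z$ and $Y[1]$ by $R$, $L$, $L_p$, $L_q$ coming from Lemmas~\ref{lemA} and~\ref{lemB} (the answer is $-1$), use Serre duality to dispose of the top Ext group, conclude that $\Hom_{\X}(Z,Y[1])\neq 0$, and finish by observing that any nonzero map between rank-one torsionfree modules of equal Hilbert series is injective and hence an isomorphism. Two remarks. First, your Step~1 is circular as a proof strategy, and you rightly abandon it: identifying $\overline{Z}$ with $\overline{Y[1]}$ is exactly equivalent to \eqref{needneed}, which is precisely what this proposition is being used to establish; the reductions mod $g$ enter only afterwards, in the proof of Theorem~\ref{thm:recog}, to extract \eqref{needneed} from the module isomorphism. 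Second, the obstacle is misdiagnosed: with the convention $(M\dotms N)=-\dim\Hom+\dim\Ext^1-\dim\Ext^2$, an alternating sum of $-1$ gives $\dim\Hom+\dim\Ext^2=1+\dim\Ext^1\geq 1$, so $\Ext^1$ can never cancel the $\Hom$; the only term that must be controlled is $\Ext^2$. The paper kills it with Serre duality, $\Ext^2_{\X}(Z,Y[1])^*\cong\Hom_{\X}(Y[1],Z[-1])$, plus the elementary observation that $\dim Y[1]_n=\dim Z_n>\dim Z_{n-1}=\dim Z[-1]_n$ for all $n\geq 0$, so no injective (hence no nonzero) map $Y[1]\to Z[-1]$ exists --- no Riemann--Roch or filtration-by-lines argument is needed for this step.
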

\begin{proof}
Since $\hilb R = \frac{1+5s+s^2}{(1-s)^3}$, we first note that
$$ \hilb Z = \hilb R + (3+s)/(1-s)^2 = (4+3s)/(1-s)^3,$$
and
\[ \hilb Y = \hilb R - (1+2s)/(1-s)^2 = (4s+3s^2)/(1-s)^3.\]
Thus $\hilb Z = \hilb Y[1]$.

Now using the construction of $Z$ and $Y$ and Lemma~\ref{lem6}, we compute:
\[\begin{aligned}
(Z \dotms Y[1])\  = & \ 
 (R \dotms R[1]) + ((L[-1] \oplus L \oplus L_p \oplus L_q) \dotms R[1])  \\
&\quad  - (R \dotms (L[1]\oplus  L_p \oplus L_q) )- ((L \oplus L \oplus L_p \oplus L_q )\dotms (L \oplus L_p \oplus L_q)).
 \end{aligned}\]
 Note that we have used Corollary~\ref{cor:symm}(1) to remove the shifts in the final term.

 We next want to compute $(R \dotms R[1])$. 
 By Proposition~\ref{prop:equalhom}, $\uHom_\X(R, R)  = R$ for $\X = \rqgr R$ and 
$\uExt^1_\X(R, R)  = 0$.
Thus,   \eqref{CM} gives  $\dim \Ext^2_\X(R, R[1]) =   \dim \Hom_\X(R[1], R[-1]) =0$ and hence
 $$
(R \dotms R[1]) =  -\dim \Hom_\X(R, R[1]) -\dim \Ext^2_\X(R, R[1]) = -\dim R_1 =-8,$$
as desired.  
 By Lemma~\ref{lem8}, $((L[-1] \oplus L \oplus L_p \oplus L_q) \dotms R[1]) = 5 $ and
 $(R \dotms (L[1]\oplus  L_p \oplus L_q)) = -4$.
 Finally, our intersection theory assumptions give
 \[ ((L \oplus L \oplus L_p \oplus L_q) \dotms (L \oplus L_p \oplus L_q)) = 2,\]
 so $(Z \dotms Y[1]) = -8 +5 +4-2 = -1$.
 Thus $\Hom_\X(Z, Y[1]) \oplus \Ext^2_\X(Z, Y[1]) \neq 0$.
 
By Proposition~\ref{prop2} we have $\dim \Ext^2_\X(Z, Y[1]) = \dim \Hom_\X(Y[1], Z[-1])$.  The module $Z$ is certainly
 saturated (for example, since $N$ and $Z/N \cong L$ are) and so $\Hom_\X(Y[1], Z[-1]) = \dim \Hom_R(Y[1], Z[-1])$ by Proposition~\ref{prop:equalhom}(1).   Note that for all $n \geq 0$ we have $\dim Z_{n-1} < \dim Z_{n}$; this can be seen either
  by directly computing the formula for $\dim Z_n$ or by noting that the analogous property holds both for $\hilb R$ and 
  $\hilb Z/R$.  Thus $\dim Y[1]_n = \dim Z_n  > \dim Z_{n-1} = \dim Z[-1]_n$ for all $n \geq 0$ and there are no degree 0
   injective maps from $Y[1] \to Z[-1]$.  Since $Y$ and $Z$ are both Goldie rank 1 and torsionfree, all nonzero maps must
    be injective, so $\Ext^2_\X(Z, Y[1]) = 0$.
 Thus $\Hom_\X(Z, Y[1]) = \Hom_R(Z, Y[1]) \neq 0$, where we have used Proposition~\ref{prop:equalhom} and that $Y$ is saturated again.

 As above, all nonzero elements of $\Hom_R(Z,Y[1])$ are injective, but as $\hilb Z = \hilb Y[1]$ any injective map must be an isomorphism.  Thus $Z \cong Y[1]$ in $\rgr R$. 
\end{proof}

\begin{proof}[Proof of Theorem~\ref{thm:recog}]
We must show that given the hypotheses of Theorem~\ref{thm:recog}, condition \eqref{needneed} follows.
Let $Z, Y$ be the modules constructed above.  
By construction,  
\[ Z/Zg = \bigoplus_{n \geq 0} H^0(E, \sM_n(\tau^{-1} r + r + p + q))\]
and
\[ Y[1]/Y[1]g = \bigoplus_{n \geq 0} H^0(E, \sM_{n+1}(-r - \tau^{-1} p - \tau^{-1} q)^{\tau^{-1}}).\]
By Proposition~\ref{prop:ZY}, 
$ \sM(-r - \tau^{-1} p - \tau^{-1} q)^{\tau^{-1}} \cong \sO(\tau^{-1} r + r + p + q);$
equivalently, 
\[\sum \sM \ominus r \ominus p \ominus q \oplus 6 t = 2r\oplus p \oplus q  \ominus t,\]
establishing \eqref{needneed}. 
Thus by Proposition~\ref{prop:DP7}, we obtain the result.
\end{proof}

%%%%%%%%%%%%%%%%%%%
  %%%%%%%%%%%%%%%%%%%

\section{Transforming a noncommutative quadric surface  to a noncommutative  $\PP^2$ } \label{QUADRIC}

 Here  we will prove Theorem~\ref{ithm:T} from the introduction:
 if one blows up a point on a smooth noncommutative quadric and then blows down two lines of self-intersection $(-1)$, one obtains a noncommutative  $\mb{P}^2$.  
  As was discussed in the introduction, this  is a noncommutative version of  the standard commutative result \eqref{classical}.
 In this section we will also assume that char$\, \kk=0$. This is simply because   \cite{VdB1996} and  \cite[Section~10]{SV} make that assumption; we  conjecture that our results  hold in arbitrary characteristic.

  For the reader's convenience we recall the definition of the Van den Bergh quadrics.
  
\begin{example}\label{eg:quadric}
  Let
   $\SK$ denote a 4-dimensional Sklyanin algebra; thus $\SK$ is the $\kk$-algebra
with  $4$ generators $x_0,\dots ,x_3$ and  $6$ relations
$$x_0x_i-x_ix_0=\alpha_i(x_{i+1}x_{i+2}+x_{i+2}x_{i+1}),\quad 
x_0x_i+x_ix_0=
x_{i+1}x_{i+2}-x_{i+2}x_{i+1},$$ where 
$i\in \{1,2,3\}$ mod $3$ and the $\alpha_i$ satisfy $\alpha_1\alpha_2\alpha_3
+ 
\alpha_1+\alpha_2+\alpha_3=0$ and $\{\alpha_i\}\cap\{0,\pm
1\}=\emptyset$.
The ring $\SK$  has a two-dimensional space of  central  homogeneous elements $V\subset \SK_2$. The factor  
$\SK/\SK V \cong B(E,\sA,\alpha)$, where $E$ is an elliptic curve, with a line bundle $\sA$ of degree 4 and  $\alpha$ is an automorphism.  
We  always assume that $|\alpha|=\infty$. 
 Fix an arbitrary group law $\oplus$ on $E$.   
The automorphism $\alpha$ is then translation by a point in $E$, which will be   denote   by  $a$.  
 
 Given   $0\not=\Omega\in V$, \emph{the Van den Bergh quadric}  is $\QVB=\QVB(\Omega)=\SK/\SK\Omega$.  
 Then  $\rqgr \QVB(\Omega)$ is smooth for generic $\Omega$, with a precise description of the smooth cases given by \cite[Theorem~10.2]{SV}.   We always assume below that  $\Omega$ is chosen 
so that $\rqgr \QVB(\Omega)$ is smooth.  As was discussed in Remark~\ref{non-smooth}  and is stated explicitly in Corollary~\ref{cor:birational2} below, this implies  the birationality result for arbitrary quadrics.

Let $A = \SK /\SK \Omega$ as above.  Fix a basis element $g \in A_2$ for the image of $Z(\SK)_2$ in $A_2$.  As usual,
we write $\overline{x}$ for the image of $x \in A$ under the map $A \mapsto A/gA = B(E, \mc{A}, \alpha)$.
Note that $T' := A^{(2)} $ is an elliptic algebra, called a  \emph{a quadric elliptic surface}. Moreover,  $\rqgr T' \simeq \rqgr A$ is again smooth and 
$T'/gT' \cong B(E, \mc{A}, \alpha)^{(2)} \cong B(E, \sP, \tau)$, 
where $\sP = \sA \otimes \sA^\alpha$ and $\tau = \alpha^2$.  
 \end{example}

   \begin{basicfacts}\label{basic facts}
 We recall some facts about $A=\QVB$, mostly drawn from \cite{SV}.     
 Identify $A_1=\SK_1=H^0(E, \sA)$.  Given an effective divisor $D$ on 
$E$, set $V(D) = H^0(E,\sA(-D)) \subseteq A_1 = \SK_1$.  
For any point $p \in E$, $A/V(p)A$ is a point module for $A$. These are usually the only point modules for $A$, although    for a discrete  family  of $\Omega$ there   will exist  one extra point module. These extra modules are described, for example, in \cite[Lemma~6.6]{RSS-minimal} and will   play no further r\^ole in the present discussion.
An analogous result holds on the left, and   $V(p) A_1 = \{ x \in A_2  \, | \,  \overline{x} \in H^0(E, \sA_2(-p)) \} = A_1 V(\alpha p)$
(use \cite[Remark~3.2]{blowdownI}).

Similarly, let $L$ be a line module over $\SK$.  Then there are points $p, t$ on $E$ so that $L = \SK/V(p+t)\SK$; 
 and any two points $p, t \in E$ give a line module  (see \cite[(10.3)]{SV}). 
There are two  points $z, z' \in E$ so that $p \oplus t \in \{ z,z'\}$ $\Leftrightarrow$ $\Omega \in V(p+t) \SK$ 
(this is proved for left line modules in \cite[(10.3)]{SV}, but by \cite[Remark~3.2]{blowdownI}, again, the same 
result holds for right line modules). 
Thus for fixed $\Omega$, $t$, there are two points $p, q \in E$ so that  $y \in \{p, q\}$ $\iff$ 
$\Omega \in V(y+t) \SK$.  By \cite[Theorem~10.2]{SV},  $p \neq q$ $\iff$ $z\neq z'$ $\iff$ $\rqgr A$ has finite homological dimension; thus 
this will always be the case under our assumption on $\Omega$.
If $\Omega \in V(y+t) \SK$ then $A/V(y+t)A \cong  \SK/V(y+t)\SK$ is also a line module over $A$.  
In particular,
\begin{equation}\label{equ:y-point}
\text{\it if  $y\in \{p,q\}$, then $ A/V(y+t)A$ is a line  module,}
\end{equation}
 and   we write $I_y=V(y+t)A$ for the corresponding line ideal of $A$. Using \cite[(10.3)]{SV} and \cite[Remark~3.2]{blowdownI},
 one can  even describe $q$ in terms of $p$, which we leave to the interested reader.
 \end{basicfacts}

The rest of this section is devoted to the proof of the following theorem:

\begin{theorem}\label{thm:T}  Assume that 
$\on{char}\, \kk=0$  and let $T'=Q^{(2)}$ for a Van den Bergh quadric $Q$ such that $\rqgr Q$ is smooth.
Keep the notation,    and in particular the point $t \in E$ (which determines $\{p, q\} \subset E$) 
from Basic Facts~\ref{basic facts}.

 Then there exists $\sigma\in  \Aut_{\kk}(E)$ with $\sigma^3 =\tau$ so that $T'(t) \cong T(\tau^{-1}p+\tau^{-1}q)$, where $T \cong  S^{(3)}$ is the $3$-Veronese of the Sklyanin algebra $S = S(E, \sigma)$.
 \end{theorem}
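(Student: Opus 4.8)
The plan is to verify that $R:=T'(t)$, the blowup of the quadric elliptic algebra $T'=Q^{(2)}$ at the point $t\in E$, satisfies all the hypotheses of Theorem~\ref{thm:recog}, and then simply invoke that theorem. Since Theorem~\ref{thm:recog} produces exactly a ring of the form $T(\tau^{-1}p+\tau^{-1}q)$ with $T\cong S(E,\sigma)^{(3)}$ and $\sigma^3=\tau$, this will give the result once we match up the points. So the real content is to exhibit the three required line modules over $R$ with the correct self-intersections and mutual intersection numbers, and to identify the relevant points on $E$.

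\textbf{Constructing the three lines.} First I would recall from \cite{blowdownI} that the blowup $R=T'(t)$ comes with its exceptional line module $L_r$, which by construction is a line module with $(L_r\dotms L_r)=-1$ and $\Div L_r = \tau^{-1}t$ (or $t$, depending on conventions; I would track this carefully). That handles one of the three lines. For the other two: by Basic Facts~\ref{basic facts}, the point $t$ determines two points $p,q\in E$ with $p\neq q$ (using smoothness and \cite[Theorem~10.2]{SV}) such that $A/V(y+t)A$ is a line module over $A=Q$ for $y\in\{p,q\}$, with line ideal $I_y=V(y+t)A$. These give line modules over $T'=A^{(2)}$ (via the appropriate Veronese piece), and then — crucially — one shows they pull back to line modules $L_p,L_q$ over the blowup $R=T'(t)$. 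The key geometric point is that the lines $I_p, I_q$ pass through $t$ on the quadric, so after blowing up $t$ their strict transforms become $(-1)$-lines meeting the exceptional divisor transversally; I would make this precise using the blowup machinery of \cite{blowdownI} (the line ideal $V(y+t)A^{(2)}$ contains, in an appropriate sense, data forcing it to be a $(-1)$-line upstairs). This is the step I expect to be the main obstacle: carefully verifying that these quadric line modules, after passing through the $2$-Veronese and then blowing up at $t$, remain line modules and acquire self-intersection $(-1)$ rather than $0$ or $1$.

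\textbf{Intersection numbers.} Next I would compute the three mutual intersection numbers $(L_p\dotms L_q)$, $(L_r\dotms L_p)$, $(L_r\dotms L_q)$. In the classical picture of Figure~\ref{inttheory}, $L_p$ and $L_q$ are the strict transforms of the two rulings of $\PP^1\times\PP^1$ through $t$, so they are disjoint: $(L_p\dotms L_q)=0$; while $L_r$, the exceptional divisor, meets each of them once: $(L_r\dotms L_p)=(L_r\dotms L_q)=1$. To establish this noncommutatively I would use Lemma~\ref{lem1-examples}, which translates these intersection numbers into statements about $\dim\Hom_R(J,J')_1$; alternatively, since $R=T'(t)$ is a blowup, I can compute intersections on $T'$ directly from the quadric structure and then use the behaviour of intersection numbers under blowup from \cite{blowdownI} (the strict transform of a line through the blown-up point has its self-intersection drop by $1$, and exceptional/strict-transform meet once). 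On the quadric $Q$ itself, $(I_p/I_q$-type$)$ intersection data can be read off from the explicit description of line modules in \cite[(10.3)]{SV}: two lines from different rulings meet once on $\PP^1\times\PP^1$, but after both being made to pass through $t$... here I must be careful, since $I_p$ and $I_q$ are the two rulings through the \emph{same} point $t$, hence meet only at $t$, so their strict transforms are disjoint, giving $(L_p\dotms L_q)=0$.

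\textbf{Conclusion.} Finally, condition (2) of Theorem~\ref{thm:recog}, namely $\Div L_p\neq \Div L_q$, follows from $p\neq q$ (equivalently from smoothness of $\rqgr Q$ via \cite[Theorem~10.2]{SV}), after tracking how $\Div$ of a line module transforms under the $2$-Veronese and the blowup. With all hypotheses of Theorem~\ref{thm:recog} in place, that theorem yields $R=T'(t)\cong T(\tau^{-1}p_1+\tau^{-1}q_1)$ for $T\cong S(E,\sigma)^{(3)}$ with $\sigma^3=\tau$ and suitable points $p_1,q_1$; relabelling (and checking $\{p_1,q_1\}=\{p,q\}$, which should drop out of the $\Div$ computations of Lemma~\ref{lem:iterate2} applied to this situation) gives exactly $T'(t)\cong T(\tau^{-1}p+\tau^{-1}q)$ as stated. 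The degree bookkeeping is consistent: $\deg T'=8$ (twice the degree-$4$ quadric data, halved appropriately — in fact $T'$ has degree $8$), blowing up once gives $\deg R=7$, matching the degree-$7$ hypothesis of Theorem~\ref{thm:recog}, and Theorem~\ref{thm:recog} blows down twice to reach degree $9$, the Sklyanin case.
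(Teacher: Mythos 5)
Your proposal is correct and follows essentially the same route as the paper: the paper's proof of Theorem~\ref{thm:T} likewise takes the exceptional line $L_r$ of the blowup $R=T'(t)$ together with the two ruling lines $J_y=V(y+t)A_1T'(t)$ for $y\in\{p,q\}$, verifies the intersection pattern of Figure~\ref{inttheory} via Lemma~\ref{lem1-examples} and explicit section-space computations, and then invokes Theorem~\ref{thm:recog}. The step you flag as the main obstacle is indeed where the paper's work lies (Lemmata~\ref{lem:constructlines}--\ref{lem:bb}), but your outline and the tools you cite are exactly the ones used there.
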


As was noted in Corollary~\ref{icor:birational2},   this theorem can also be applied to quadrics $Q$  for which $\rqgr Q$ is not smooth.
More precisely, combining the theorem with the discussion from Remark~\ref{non-smooth} gives:

\begin{corollary}\label{cor:birational2}
Any Van den Bergh quadric $Q$ defined  over a field of characteristic zero   is birational to a Sklyanin algebra.   \qed
\end{corollary}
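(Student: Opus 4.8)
The plan is to derive this corollary as a purely formal consequence of Theorem~\ref{thm:T} together with Remark~\ref{non-smooth}, which is exactly why it is stated with a bare \qed. I would split into the two cases according to whether or not $\rqgr Q$ is smooth, and reduce everything to the single nontrivial input, Theorem~\ref{thm:T}. The only auxiliary facts required are that birationality of cg domains (equality of the function skewfields $Q_{gr}(-)_0$) is preserved under passage to a Veronese subring and under blowing up, and that a Goldie-rank-one Morita context between two quadrics forces them to be birational. I would record the Veronese fact once at the outset: for a cg domain $R$ and $d\geq 1$, any $ce^{-1}\in Q_{gr}(R)_0$ with $c,e\in R_m$ can be rewritten as $(ce^{d-1})(e^d)^{-1}$ with numerator and denominator in $R_{dm}=(R^{(d)})_m$, so $Q_{gr}(R^{(d)})_0=Q_{gr}(R)_0$ and hence $R$ and $R^{(d)}$ are birational. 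The preservation under blowup is already recorded in the introduction, where it is noted that a blowup of an elliptic algebra is an inclusion with $Q_{gr}(R)=Q_{gr}(\Bl_p(R))$.

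First I would treat the case in which $\rqgr Q$ is smooth. Write $\approx$ for birational equivalence in this paragraph only (to avoid clashing with the use of $\sim$ for linear equivalence elsewhere). Then Theorem~\ref{thm:T} supplies $\sigma\in\Aut_\kk(E)$ with $\sigma^3=\tau$ and an isomorphism $T'(t)\cong T(\tau^{-1}p+\tau^{-1}q)$, where $T'=Q^{(2)}$ and $T\cong S^{(3)}$ for the Sklyanin algebra $S=S(E,\sigma)$. Chaining the equivalences above gives
\[
Q \ \approx\ Q^{(2)}=T' \ \approx\ T'(t)\ \cong\ T(\tau^{-1}p+\tau^{-1}q)\ \approx\ T=S^{(3)}\ \approx\ S,
\]
where the first and last $\approx$ are the Veronese fact, the two middle $\approx$ are blowup birationality, and the central $\cong$ is Theorem~\ref{thm:T}. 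Thus $Q$ is birational to the Sklyanin algebra $S$, as required.

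For a quadric $Q$ with $\rqgr Q$ not smooth I would invoke Remark~\ref{non-smooth} directly. There $Q$ is linked to a second quadric $\widetilde Q$ with $\rqgr\widetilde Q$ smooth by a Morita context: a $(Q^{(2)},\widetilde Q^{(2)})$-bimodule $M$, finitely generated and of Goldie rank one on each side, with $Q^{(2)}=\End_{\widetilde Q^{(2)}}(M)$ and $\widetilde Q^{(2)}=\End_{Q^{(2)}}(M)$; the remark records that this forces $Q\approx\widetilde Q$. Applying the smooth case to $\widetilde Q$ then gives that $\widetilde Q$, and hence $Q$, is birational to a Sklyanin algebra. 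I do not expect any genuine obstacle inside the proof of the corollary itself: all of the mathematical content is concentrated in Theorem~\ref{thm:T} (and, for the non-smooth reduction, in the cited Morita-context results drawn from \cite{RSS-minimal,VdB1996,SV}), so the corollary is just a transitivity argument built from those results and the two routine birationality facts above.
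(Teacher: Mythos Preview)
Your proof is correct and is exactly the argument the paper has in mind: the corollary is stated with a bare \qed precisely because it follows by combining Theorem~\ref{thm:T} (for the smooth case, via the chain of birational equivalences through Veronese rings and blowups, as in Corollary~\ref{icor:birational}) with the Morita-context reduction of Remark~\ref{non-smooth} (for the non-smooth case). You have simply made explicit the routine Veronese fact $Q_{gr}(R)_0=Q_{gr}(R^{(d)})_0$ that the paper leaves implicit.
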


We will prove Theorem~\ref{thm:T}   by appealing to 
 Theorem~\ref{thm:recog}, so the proof will be through a series of lemmata to show that 
$R = T'(t)$ satisfies the hypotheses of that result.
We  note that $R_1 = V(t)A_1$, since point ideals of  $\SK$ are generated in degree 1.
To match the notation of Theorem~\ref{thm:recog}, let $\sM = \sP(-t)$, so $\overline{R} = B(E, \sM, \tau)$.

We first construct the three lines on $R$.  Two of the lines are induced from the two rulings on $A$. 
 For $y \in \{p,q\}$, let $K_y = (I_y)^{(2)} = V(y+t) A_1 T'$.

\begin{lemma}\label{lem:constructlines}
Let $y \in \{p,q\}$ and let $J_y := (K_y)_1R$, for $R=T'(t)$.  Then $J_y$ is a line ideal of $R$.
\end{lemma}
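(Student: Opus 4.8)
The goal is to show that for $R = T'(t)$ and $y \in \{p,q\}$, the right ideal $J_y = (K_y)_1 R$ is a line ideal, i.e. $R/J_y$ is a line module with Hilbert series $(1-s)^{-2}$. The strategy is to compute everything modulo $g$, where the relevant objects become honest sheaves on $E$, and then lift back using $g$-divisibility. First I would recall that $K_y = (I_y)^{(2)} = V(y+t)A_1 T'$ is a $g$-divisible MCM right ideal of $T'$: indeed $I_y = V(y+t)A$ is a line ideal of the quadric $A$ by \eqref{equ:y-point}, hence $g$-divisible, MCM, and generated in degree 1 as an $A$-module by \cite[Lemma~5.6]{blowdownI}, and these properties pass to the Veronese $T' = A^{(2)}$. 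Explicitly $\overline{K_y} = \bigoplus_{n\geq 0} H^0(E, \sP_n(-y-t))$ (using the identification of $A_1$ with $H^0(E,\sA)$ and the description of $V(D)$ from Basic Facts~\ref{basic facts}), so modulo $g$ we know exactly what $K_y$ looks like.

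\textbf{Key steps.} (1) Since $R = T'(t) = \Bl_t(T')$, we have $\overline{R} = B(E, \sM, \tau)$ with $\sM = \sP(-t)$, and $R_1 = V(t)A_1 \subseteq T'_1$; I would record that $\overline{R} = \bigoplus_n H^0(E, \sM_n)$. (2) Now $J_y = (K_y)_1 R$. Passing to $\overline{R}$: $\overline{(K_y)_1} = H^0(E,\sP(-y-t)) = H^0(E,\sM(-y))$ (since $\sM = \sP(-t)$), so $\overline{J_y} = H^0(E,\sM(-y))\cdot \overline{R}$. Using \cite[Lemma~2.3]{blowdownI} (or the generation-in-degree-one results of \cite[Lemma~2.2]{blowdownI}) for the TCR $\overline{R}$, this product equals $\bigoplus_{n\geq 1} H^0(E, \sM_n(-y - \tau^{-1}(\cdots)))$ — more precisely $\overline{J_y}$ should come out to be $\bigoplus_{n\geq 1}H^0(E,\sM_n(-y))$ type expression, giving $\overline{R}/\overline{J_y}$ of Hilbert series $1 + s/(1-s) $... — I would compute this carefully so that $\hilb \overline{R}/\overline{J_y} = 1/(1-s)$, i.e. $B/\overline{J_y}$ is a shifted point module, hence $\overline{J_y}$ is a line ideal of $\overline{R} = B(E,\sM,\tau)$. (3) Then I would invoke $g$-divisibility: since $K_y$ is $g$-divisible and $R$ is generated in degree 1 with $g$ central, $J_y = (K_y)_1 R$ is $g$-divisible (this needs a small argument — perhaps that $J_y$ is the preimage under $R \to B$ of $\overline{J_y}$ intersected appropriately, or apply \cite[Lemma~4.4]{blowdownI}-style reasoning). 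Given $g$-divisibility and $\hilb \overline{R/J_y} = 1/(1-s)$, we get $\hilb R/J_y = (1-s)^{-2}$, which is exactly the Hilbert series of a line module. (4) Finally, a cyclic module with the Hilbert series of $\kk[x,y]$ over an elliptic algebra is a line module essentially by definition (Notation~\ref{line-notation} and \cite[Section~5]{blowdownI}); one may also need to note $R/J_y$ is $2$-critical / CM, which follows from the $g$-torsionfree structure and the fact that $B/\overline{J_y}$ is a point module, hence CM by \cite[Lemma~3.3]{blowdownI}.

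\textbf{Main obstacle.} The delicate point is step (2): correctly computing $\overline{J_y} = H^0(E,\sM(-y))\cdot\overline{R}$ inside $B(E,\sM,\tau)$ and verifying it is exactly the line ideal with $\Div = $ (the right point), using the generation and Hom-computation lemmas \cite[Lemmata~2.2, 2.3]{blowdownI} for TCRs of degree $\geq 3$. One must check $\deg \sM = 7 \geq 3$ so these lemmas apply (here $\deg\sP = 8$, so $\deg\sM = \deg\sP(-t) = 7$, which is fine), and one must be careful about the twist by $\tau^{-1}$ that appears when multiplying graded pieces in a TCR, so that the divisor of the resulting line module comes out where expected (this will matter when Theorem~\ref{thm:recog} is later applied, since it needs $\Div L_p \neq \Div L_q$ and specific intersection numbers). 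The secondary subtlety is justifying $g$-divisibility of $J_y$ cleanly; I expect this follows from $K_y$ being $g$-divisible together with the fact that $R$ agrees with $\overline{R}$'s structure in the relevant degrees, but it deserves an explicit line citing the appropriate lemma from \cite{blowdownI}.
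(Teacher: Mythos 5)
Your outline agrees with the paper's proof up to the computation of $\overline{J_y}$: the paper likewise uses \cite[Lemma~3.1]{R-Sklyanin} to see that $\overline{J_y}=\overline{V(y+t)A_1}\,\overline{R}=\bigoplus_{n\geq 0}H^0(E,\sM_n(-y))$ is a point ideal of $\overline{R}$. But your step (3) is a genuine gap, and it is where all of the work in the lemma actually lies. Knowing $\overline{J_y}$ only gives the \emph{lower} bound $\hilb J_y\geq \hilb R-1/(1-s)^2$, with equality if and only if $J_y$ is $g$-divisible; and the $g$-divisibility of the product $(K_y)_1R$ is precisely what must be proved, not a formality. Neither of your proposed fixes works: $J_y$ is not a priori the preimage of $\overline{J_y}$ under $R\to\overline{R}$ (that preimage is the $g$-divisible hull of $J_y$, and showing $J_y$ coincides with it in the right degrees is the whole point), and \cite[Lemma~4.4]{blowdownI} concerns $\Hom$-modules between $g$-divisible modules, not products of graded pieces with $R$. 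In general a product $X R$ with $X\subseteq R_1$ can fail to be $g$-divisible even when the relevant ambient ideals are (compare $(J_r)_1T'\supseteq g^2T'$ in the proof of Lemma~\ref{lem:1a}), so something specific to this situation must be used.

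What the paper actually does is the following. For $t$ such that $y\neq\tau^{-j}t$ for all $j\geq 0$, it proves by induction on $n$ that $(K_y)_n\cap R_n=(J_y)_n$ and that this space has codimension $n+1$ in $R_n$: the part lying in $gT'$ is controlled by the $g$-divisibility of $K_y$ and of $R$ together with the inductive hypothesis, and the part modulo $g$ is pinched between $\overline{J_y}_{n+1}$ and $\overline{K_y}_{n+1}\cap\overline{R}_{n+1}$, which coincide because $y$ avoids the orbit of $t$. This pins down $\hilb J_y$ exactly for a dense set of $t$, and lower semicontinuity of $\dim(J_y)_n$ as $t$ varies then yields the matching upper bound $\hilb J_y\leq\hilb T'-s/(1-s)^3-1/(1-s)^2$ for \emph{all} $t$; combined with the lower bound this forces $J_y$ to be $g$-divisible with $\hilb R/J_y=(1-s)^{-2}$. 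An alternative repair (the one the paper uses for $J_{p'}$ in Lemma~\ref{lem:pprime}, via Lemma~\ref{lem:lineideal}) is to exhibit a homogeneous $x\in Q_{gr}(R)\smallsetminus R$ with $xJ_y\subseteq R$; either way, some additional argument of this kind is indispensable, and your write-up as it stands does not contain one.
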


\begin{proof}
The proof is similar to the proof of \cite[Theorem~5.2]{R-Sklyanin}.
We consider
$J_y = V(y+t) A_1 T'(t)$ as a subspace of $A^{(2)}$ which depends on $t$, and still write $R=T'(t)$.  
(Note that since we always have $y \oplus t \in \{z, z'\}$, therefore $y $ also varies with $t$.)
By \cite[Lemma~3.1]{R-Sklyanin} 
\beq \label{Jybar}
\overline{J_y} = \overline{V(y+t)A_1} \overline{R} = \bigoplus_{n \geq 0} H^0(E, \sM_n(-y))
\eeq
 is  a point ideal of $\overline{R}$,  
 and so 
\[ \hilb J_y \geq \hilb R - 1/(1-s)^2 = \hilb T' - s/(1-s)^3 - 1/(1-s)^2,\]
with equality if and only if $J_y$ is $g$-divisible.

Suppose now that $y \neq \tau^{-j} t$ for any $j \geq 0$, that is $2t \not  \in\{ \tau^j z, \tau^j z' \ | \ j \in \ZZ\}$. 
Let $K = K_y $   and let $J = J_y = K_1 R$.  
We claim  for all  $n\geq 0$ that, first, $K_n  \cap R_n = J_n$, and, second, this has codimension $n+1$ in $R_n$.   
Both are  trivial for $n =0,1$.    So assume it is true for $n$.  Note that $K$ is $g$-divisible (since $T'/K$ is
 $g$-torsionfree), as is $R$.  By induction, then, we have
$K \cap R_{n+1} \cap gT'  = gK \cap gR_n= g J_n$.   By induction, this is codimension $n+1$ in $gR_n$.

Working modulo $g$, we have:
\[ H^0(E, \sM_{n+1}(-y)) = 
 \bbar{K_{n+1}} \cap \bbar{R_{n+1}}  
 \supseteq \bbar{K_{n+1} \cap R_{n+1}}
\supseteq \bbar{J_{n+1}} =  
H^0(E, \sM_{n+1}(-y)).
\]
Thus all are equal, and  this vector space   clearly has codimension $1$ in $\bbar{R_{n+1}}$.  Since 
$J \subseteq  K\cap R$, the claim is proved. 

Thus for fixed $n \geq 1$, we have
$\dim (J_y)_n = \dim T'_n - \binom{n+1}{2} - (n+1)$ for a dense set of $t \in E$.
By lower semi-continuity, $\hilb J_y \leq \hilb T' - s/(1-s)^3 - 1/(1-s)^2$ for all $t \in E$.
Combined with the first paragraph, this proves the lemma.
\end{proof}

We thus obtain two line modules  $L_y := R/J_y$    for $y \in \{p,q\}$ for $R$ coming from the two rulings on $A$.   Since  \eqref{Jybar},
respectively Basic Facts~\ref{basic facts},  implies that 
$\Div L_y =y$, respectively $z \neq z'$,     \emph{Hypothesis~(2) of 
Theorem~\ref{thm:recog} holds. }

Let $r = \tau(t)$.  The third line module comes  from the blowup $R = T'(t) \subseteq T'$. By construction this produces an exceptional line module $L = L_r$ such that $(R+T'_1R)/R \cong L[-1]$ and $(R + R T'_1)/R \cong L^\vee[-1]$, where the divisor of $L_r$ is 
$r$, as the notation suggests. See  \cite[Theorems 8.3 and~8.6]{blowdownI} for the details.  Write $L_r = R/J_r$.  Then $J_r =  
(R+RT'_1)^* := \Hom_R(R + R T'_1,R)$ by \cite[Lemma 5.6]{blowdownI}. For future reference we note that \cite[Lemma 5.6]{blowdownI} also implies that  $J^* = R +  R T'_1 $. 
 The line ideal $J_r$  also has the following 
 explicit description. 
 
 \begin{lemma}\label{Jr-description}
 $J_r = V( r) V(\alpha^{-1} r) R$. 
 \end{lemma}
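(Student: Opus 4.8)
\textbf{Proof proposal for Lemma~\ref{Jr-description}.}
The plan is to identify the line ideal $J_r$ with $V(r)V(\alpha^{-1}r)R$ by showing the two $g$-divisible right ideals agree. Recall from the discussion preceding the lemma that $J_r = (R+RT'_1)^* = \Hom_R(R+RT'_1, R)$, and that $J_r^* = R + RT'_1$. Since $J_r$ is $g$-divisible and MCM (being a line ideal, by \cite[Lemma~5.6]{blowdownI}), it suffices to show that $\overline{J_r} = \overline{V(r)V(\alpha^{-1}r)R}$ inside $B = \overline{R} = B(E,\sM,\tau)$, together with a check that the right-hand side is $g$-divisible (or, equivalently, has the Hilbert series of a line ideal).

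First I would compute $\overline{V(r)}$ and $\overline{V(\alpha^{-1}r)}$. Working in $A^{(2)}$ with $A_1 = H^0(E,\sA)$, we have $V(D) = H^0(E,\sA(-D))$, and the product $V(r)V(\alpha^{-1}r)$ lives in $A_2$; using the multiplication rule $V(p)A_1 = A_1 V(\alpha p)$ from Basic Facts~\ref{basic facts} and the fact that $\overline{V(D_1)}\,\overline{V(D_2)^{\alpha}} = H^0(E,\sA_2(-D_1-\alpha^{-1}\!\cdot\!{?}))$-type identities for twisted coordinate rings (i.e. \cite[Lemma~2.3]{blowdownI}), I would show that $\overline{V(r)V(\alpha^{-1}r)} = H^0(E,\sP(-r-\alpha^{-1}(\alpha^{-1}r)))$ or the appropriate such line bundle section space; the key point is that the two vanishing conditions at $r$ combine, after the $\alpha$-twist built into multiplication in $A$, into a single vanishing condition at $r$ in $\sP = \sA\otimes\sA^{\alpha}$ and one at $\tau^{-1}r = \alpha^{-2}r$. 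Then $\overline{V(r)V(\alpha^{-1}r)R} = \bigoplus_{n\geq 0} H^0(E,\sM_n(-D))$ for the resulting effective divisor $D$ of degree $2$, and I must match this against $\overline{J_r}$. On the other side, since $L_r = R/J_r$ is the exceptional line module of the blowup $R = T'(t)$ with $\Div L_r = r$, the standard computation (as in the proof of Lemma~\ref{lem:bbd-ideal}, or directly from \cite[Theorem~8.3]{blowdownI}) gives $\overline{J_r} = \bigoplus_{n\geq 0} H^0(E,\sM_n(-r))$; but $J_r$ has Hilbert series $\hilb R - 1/(1-s)^2$, which forces $\overline{J_r}$ to be a \emph{point} ideal of $B$ of the right degree. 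I would reconcile the apparent degree discrepancy by being careful that $V(r)V(\alpha^{-1}r)R$, although its degree-one generators cut out two points on $E$, generates the same right ideal as the single point ideal at $r$ once one multiplies by $R$ — the extra vanishing at $\tau^{-1}r$ is automatically recovered from the structure of $\overline{R} = B(E,\sM,\tau)$ because $\sM_n(-r)$ already ``sees'' the translate. Concretely, $\overline{V(r)}\,\overline{R_{\geq 1}}$ and $\overline{V(r)V(\alpha^{-1}r)}\,\overline{R_{\geq 1}}$ coincide in all degrees $\geq 2$ by \cite[Lemma~2.3]{blowdownI}, since once the degree is at least $2$ the relevant line bundle has degree $\geq 3+2 = 5$ and is generated by its global sections. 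So the two ideals can differ only in degree $1$, and in degree $1$ we have $(V(r)V(\alpha^{-1}r)R)_1 = V(r)V(\alpha^{-1}r)$ which I must identify with $(J_r)_1$.

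The degree-one identification is the crux, and I would handle it as follows. We know $(J_r)_1 = [(R+RT'_1)^*]_1$, and $R + RT'_1$ is the line extension of $L_r^\vee$. By \cite[Lemma~5.6]{blowdownI} the line ideal of $L_r$ is the reflexive dual of the line extension of $L_r^\vee$; reducing mod $g$ and using \cite[Lemma~4.8]{blowdownI} (which matches $\overline{\Hom_R(-, R)}$ with $\Hom_B(\overline{-}, B)$ in large degree), I can compute $\overline{(R+RT'_1)}$ explicitly from the blowup construction \cite[Theorem~8.6]{blowdownI} as $\bigoplus_n H^0(E, \sM_n(\alpha^{-1}(\text{something})))$, dualize using the AS-Gorenstein duality for $B$ to get $\overline{J_r}$, and then just check that $\overline{V(r)V(\alpha^{-1}r)} = (\overline{J_r})_1$ by comparing dimensions: both are codimension-one subspaces of $B_1 = H^0(E,\sM)$ cut out by vanishing at a single point, and I need that point to be $r$ in both cases. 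For $\overline{V(r)V(\alpha^{-1}r)}$, the product of a section vanishing at $r$ with a section vanishing at $\alpha^{-1}r$, under the twisted multiplication of $A$, lands in $H^0(E,\sP)$ vanishing at $r$ — this is exactly the content of the identity $V(p)A_1 = A_1 V(\alpha p)$ together with $\sP = \sA\otimes\sA^\alpha$, and it pins down the point as $r$.

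\textbf{Main obstacle.} The delicate part is bookkeeping the twists: $A$ is built from $B(E,\sA,\alpha)$, $T' = A^{(2)}$ has $\overline{T'} = B(E,\sP,\tau)$ with $\tau = \alpha^2$, $R = T'(t)$ has $\overline{R} = B(E,\sM,\tau)$ with $\sM = \sP(-t)$, and $r = \tau(t) = \alpha^2(t)$. Threading $r$, $\alpha^{-1}r$, $\tau^{-1}r$ correctly through all the $\alpha$- and $\tau$-pullbacks — especially reconciling why two conditions ($V(r)$ and $V(\alpha^{-1}r)$) at degree $1$ in $A$ produce, after applying $R$, the same ideal as the single point ideal at $r$ in $\overline{R}$ — is where an error would most naturally creep in. I expect the cleanest route is to avoid dualizing explicitly where possible, and instead directly verify $(V(r)V(\alpha^{-1}r)R)_1 = V(r)V(\alpha^{-1}r)$ has the right dimension $\dim R_1 - 1$ and cuts out $r$, then invoke that both $V(r)V(\alpha^{-1}r)R$ and $J_r$ are $g$-divisible right ideals agreeing in degree $1$ and in all degrees $\geq 2$ modulo $g$, hence equal.
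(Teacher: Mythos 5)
Your reduction of the problem to degree one is sound (line ideals are generated in degree $1$ by \cite[Lemma~5.6(2)]{blowdownI}, so it does suffice to show $(J_r)_1 = V(r)V(\alpha^{-1}r)$), and your computation of $\overline{V(r)V(\alpha^{-1}r)}$ as $H^0(E,\sM(-r))$ is the right supporting calculation. But the step you yourself flag as the crux is where the argument genuinely fails: you propose to identify $(J_r)_1$ with $V(r)V(\alpha^{-1}r)$ by checking that both have the right dimension and that both ``cut out $r$'' modulo $g$. That only identifies their images in $\overline{R}_1 = H^0(E,\sM)$. Since $J_r$ is $g$-divisible with $(J_r)_0=0$, the space $(J_r)_1$ injects into $\overline{R}_1$ with image $H^0(E,\sM(-r))$, so $\dim (J_r)_1 = 6 = \dim R_1 - 2$ (not $\dim R_1 - 1$ as you write); and there is a whole affine space of $6$-dimensional subspaces of $R_1$ mapping isomorphically onto $H^0(E,\sM(-r))$, differing by a linear map into $\kk g$. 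No amount of mod-$g$ bookkeeping distinguishes $(J_r)_1$ among these. The same problem infects your closing claim that two $g$-divisible right ideals agreeing modulo $g$ must coincide: already in $\kk[x,g]$ the ideals $xR$ and $(x+g)R$ are distinct, $g$-divisible, and have the same image mod $g$.

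What is missing is an honest containment $V(r)V(\alpha^{-1}r)R \subseteq J_r$, and this follows in one line from the characterization $J_r = (R+RT'_1)^*$ that you quote at the outset but never exploit: one computes
\[
T'_1\, V(r)V(\alpha^{-1}r)R \;=\; A_1A_1V(r)V(\alpha^{-1}r)R \;=\; V(\alpha^{-2}r)A_1V(\alpha^{-2}r)A_1R \;=\; V(t)A_1V(t)A_1R \;\subseteq\; R,
\]
using $A_1V(\alpha p) = V(p)A_1$, the identity $\alpha^{-2}r = \tau^{-1}r = t$, and $R_1 = V(t)A_1$. Once the containment is in hand, your Hilbert series observation finishes the proof exactly as in the paper: $\overline{V(r)V(\alpha^{-1}r)R} = \bigoplus_{n\geq 1}H^0(E,\sM_n(-t))$ forces $\hilb V(r)V(\alpha^{-1}r)R \geq \hilb R - 1/(1-s)^2 = \hilb J_r$, so the inclusion is an equality. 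I would recommend restructuring the argument around this containment rather than around mod-$g$ comparison and duality.
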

 
 \begin{proof}  Certainly 
\[
T'_1 V(r) V(\alpha^{-1}r) R = A_1A_1 V(r) V(\alpha^{-1}r) = V(\alpha^{-2}r)A_1 V(\alpha^{-2}r) A_1 R = V(t) A_1 V(t) A_1 R \subseteq R.
\]
Thus $V(r) V(\alpha^{-1}r) R \subseteq J_r$.  On the other hand, 
$\overline{V(r) V(\alpha^{-1}r) R} = \bigoplus_{n \geq 1} H^0(E, \mc{M}_n(-\alpha^{-2}(r)))$, which has Hilbert series 
$\hilb R - 1/(1-t)$.  Thus $\hilb V(r) V(\alpha^{-1}r) R \geq \hilb -1/(1-t)^2$.  Since $J_r$ is a line ideal, we conclude 
that $J_r = V( r) V(\alpha^{-1} r) R$ as claimed.  
\end{proof}

\begin{lemma}\label{lem:1a} 
$\rqgr R$ is smooth and $(L_r \dotms L_r) = -1$. 
\end{lemma}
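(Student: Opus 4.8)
The plan is to deduce both assertions from the general theory of noncommutative blowups together with the intersection-theoretic machinery of Section~\ref{GEOMETRY}. First I would record that $R=T'(t)$ is, by its very construction, the blowup of the quadric elliptic surface $T'$ at the point $t\in E$, and that $T'$ is a smooth elliptic algebra of degree $8$ (Example~\ref{eg:quadric}); in particular $\deg T'\geq 2$, so the blowup is defined. The smoothness of $\rqgr R$ then follows from the basic fact that blowing up a point on a smooth elliptic algebra yields again an elliptic algebra with smooth $\rqgr$ whose degree has dropped by one — this is part of the blowup theory developed in \cite{R-Sklyanin,RSSlong} (see also \cite{VdB-blowups}) and used throughout \cite{blowdownI}. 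Hence $R$ is a degree $7$ elliptic algebra with $\rqgr R$ smooth. (If one prefers to argue directly: by \cite[Lemma~6.8]{blowdownI}, $\rqgr R$ is smooth if and only if $R^\circ$ has finite global dimension; $R^\circ$ is an order inside $(T')^\circ$, which has finite global dimension since $\rqgr T'$ is smooth, and the standard estimate as in the proof of \cite[Theorem~5.4]{R-Sklyanin} bounds $\gldim R^\circ$.)

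For the self-intersection, recall from \cite[Remark~7.2 and Theorem~7.1]{blowdownI} that, since $\rqgr R$ is smooth and $L_r=R/J_r$ is a line module, $(L_r\dotms L_r)=-1$ is equivalent to $\End_R(J_r)\equiv R$, i.e.\ $\hilb\End_R(J_r)=\hilb R$. Now $L_r$ is precisely the exceptional line module of the blowup $R=T'(t)$ (this is how it was produced, cf.\ \cite[Theorems~8.3 and~8.6]{blowdownI}), and a basic feature of noncommutative blowups — exactly as in the commutative picture \eqref{classical} — is that the exceptional line module has self-intersection $-1$. I would therefore simply invoke this property from \cite[Theorems~7.1 and~8.6]{blowdownI}, which gives the claim.

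Should a self-contained argument be wanted, the route is as follows. Since $J_r$ is a $g$-divisible MCM line ideal by \cite[Lemma~5.6]{blowdownI}, $\End_R(J_r)$ is $g$-divisible, and by \cite[Lemma~2.3]{blowdownI} $\overline{\End_R(J_r)}$ embeds in the full TCR $\End_B(\overline{J_r})=B(E,\mc M(-r+\tau^{-1}r),\tau)$, which has degree $7$ and is generated in degree $1$. By $g$-divisibility it then suffices to show $\dim_\kk\End_R(J_r)_1=\dim_\kk R_1=8$; since $\End_R(J_r)\subseteq J_r^{*}=R+RT'_1$ and $(R+RT'_1)_1=A_2$ has dimension $9$, this is the statement that exactly an $8$-dimensional subspace of $A_2$ left-multiplies $J_r$ into itself, which one checks using the explicit description $J_r=V(r)V(\alpha^{-1}r)R$ of Lemma~\ref{Jr-description} and the combinatorics of sections of line bundles on $E$ via \cite[Lemmata~2.2 and~2.3]{blowdownI}.

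The main obstacle, and the reason I prefer the first route, is precisely this last dimension count: it is the only place where one genuinely uses that $r$ (equivalently $t$) is the blown-up point rather than an arbitrary point of $E$ — for a generic line ideal the analogous endomorphism ring is strictly larger, giving self-intersection $0$ — so this input cannot be circumvented, and its cleanest packaging is the general $(-1)$-property of exceptional lines established in \cite{blowdownI}.
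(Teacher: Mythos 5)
There is a genuine gap, and it sits exactly at the point you wave through. You assert that ``blowing up a point on a smooth elliptic algebra yields again an elliptic algebra with smooth $\rqgr$'' as a basic fact of the theory, but this is not available as a black box: the relevant result, \cite[Theorem~9.1]{blowdownI}, transfers smoothness from $T'$ to the blowup $R=T'(t)$ only under the additional hypothesis that the exceptional line module satisfies $\pdim L_r^\circ=1$, i.e.\ that $J_r^\circ$ is projective. (In the Sklyanin case this extra input is supplied by \cite[Proposition~4.5.3]{Simon-thesis}; no such reference exists for quadrics, which is why the lemma has content.) The same missing projectivity also undercuts your second claim: the criterion ``$(L\dotms L)=-1\iff\End_R(J)\equiv R$'' from \cite[Theorem~7.1]{blowdownI} requires $J^\circ$ projective (via \cite[Remark~7.2]{blowdownI} this is automatic once $\rqgr R$ is smooth, but that is precisely what is not yet known), and likewise the ``exceptional lines have self-intersection $-1$'' statement of \cite[Proposition~1.3(2)]{blowdownI} presupposes smoothness of $\rqgr R$. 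So both halves of the lemma reduce to one concrete verification that your proposal never performs. Your parenthetical fallback --- that $R^\circ$ is an order in $(T')^\circ$ and hence inherits finite global dimension --- is not a valid argument: being an order inside a ring of finite global dimension gives no bound on one's own global dimension.

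The paper's proof supplies exactly this missing step, and it is short: using $J_r^*=R+RT'_1$ one computes $J_r J_r^*\supseteq (J_r)_1 T'_1=V(r)A_1V(r)A_1=T'(r)_2$, which contains $g^2$; hence $J_r^\circ (J_r^*)^\circ=T'(r)^\circ$ and the Dual Basis Lemma shows $J_r^\circ$ is projective. From there $\End_R(J_r)=T'(r)$ by \cite[Theorem~8.6]{blowdownI}, so $(L_r\dotms L_r)=-1$ by \cite[Theorem~7.1]{blowdownI}, and $\pdim L_r^\circ=1$ combined with the smoothness of $\rqgr T'$ gives smoothness of $\rqgr R$ by \cite[Theorem~9.1]{blowdownI}. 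Your ``self-contained'' sketch for the dimension count $\dim\End_R(J_r)_1=8$ is plausible but unexecuted, and in any case it would still leave the smoothness assertion unproved; the Dual Basis Lemma computation is the step you cannot avoid.
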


\begin{proof}
Set $J = J_r $; thus   $J^* = R +  R T'_1 $ from the discussion above.  Hence  $J J^* \supseteq J_1T'_1 = V(r)A_1V( r)A_1 = T'(r)_2$.  Recall that $N^\circ = N[g^{-1}]_0$ for an 
$R$-module $N$. 
 Thus  $J^{\circ} (J^*)^{\circ} = T'(r)^{\circ}$.  Thus $J^{\circ}$ is projective by the Dual Basis Lemma.  
By \cite[Theorem~8.6]{blowdownI}, $\End_R(J) = T'(r)$ and so 
$(L_r \dotms L_r ) = -1$ follows from \cite[Theorem~7.1]{blowdownI}.  
By our choice of $\Omega$, $\rqgr A \simeq \rqgr T'$ has finite homological dimension.
As $\pdim L_{r}^\circ = 1$, $\rqgr R$ is smooth by \cite[Theorem~9.1]{blowdownI}.
\end{proof}

The next result verifies Hypothesis~(1c) of Theorem~\ref{thm:recog}.

\begin{lemma}\label{lem:c}
Let $y \in \{p, q\}$.  Then $(L_r \dotms L_y) = 1$ and $\Hom_R(J_r, J_y) \equiv \hilb R - \frac{1+s}{1-s}$.   
\end{lemma}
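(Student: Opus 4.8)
The plan is to reduce both assertions to computing $\dim\Hom_R(J_r, J_y)_1$ and to show that this dimension equals $\dim R_1 - 2$. By Lemma~\ref{lem:1a} we have $(L_r \dotms L_r) = -1$, so Lemma~\ref{lem:bbd-ideal} applies with $J = J_r$ and $K = J_y$ — the latter a $g$-divisible MCM right ideal generated in degree $1$ by \cite[Lemma~5.6]{blowdownI} — and gives $\hilb\Hom_R(J_r, J_y) = \hilb J_y + s^i/(1-s)^2 = \hilb R + (s^i - 1)/(1-s)^2$ for some $i \in \{0,1,2\}$. Since $i$ is then determined by the relation $\dim\Hom_R(J_r, J_y)_1 = \dim R_1 - i$, it suffices to show this dimension is $\dim R_1 - 2$: this forces $i = 2$, which yields both $\hilb\Hom_R(J_r, J_y) = \hilb R - (1+s)/(1-s)$ and, via Lemma~\ref{lem1-examples}, $(L_r \dotms L_y) = 1$.

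To compute the hom-space I would make it explicit. Since $\Hom_R(J_r, R) = J_r^* = R + RT'_1$, we have $\Hom_R(J_r, R)_1 = T'_1 = A_2$; using moreover that $J_r$ is generated in degree $1$ with $(J_r)_1 = V(r)V(\alpha^{-1}r)$ by Lemma~\ref{Jr-description}, and that $J_y = (J_y)_1 R$ with $(J_y)_1 = V(y+t)A_1$, one identifies
\[
\Hom_R(J_r, J_y)_1 \;=\; \{\, x \in A_2 \;:\; x\, V(r)V(\alpha^{-1}r) \,\subseteq\, V(y+t)A_1\, V(t)A_1 \,\}.
\]
The inclusion $(J_y)_1 \subseteq \Hom_R(J_r, J_y)_1$ is automatic (as $J_y$ is a right ideal and $J_r \subseteq R$), and $\dim(J_y)_1 = \dim V(y+t)A_1 = \dim R_1 - 2$; so the remaining task is the reverse inclusion. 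For this I would pass to $B = R/gR$ and exploit the multiplicative structure of $A = \QVB$ and of its twisted homogeneous coordinate ring $B(E, \sA, \alpha)$ (Basic Facts~\ref{basic facts} and \cite{SV, R-Sklyanin}). The crucial input is the special position $r = \tau(t) = \alpha^2(t)$, so that $\alpha^{-2}(r) = t$: this makes $\overline{V(r)V(\alpha^{-1}r)} = H^0(E, \sP(-r-t))$, and after the pullback by $\tau$ forced by the multiplication in $\overline{T'} = B(E, \sP, \tau)$, every product $\bar x \cdot \bar v$ with $v \in (J_r)_1$ automatically vanishes at $t$ and at $\tau^{-1}(t)$. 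Tracking the single remaining vanishing condition then forces $\bar x \in H^0(E, \sP(-y-t)) = \overline{(J_y)_1}$, and lifting back to $A$ — using the $g$-divisibility of $J_y$ to control the $g$-component of $x\,(J_r)_1$ modulo $(J_y)_2$ — pins down $x \in (J_y)_1$, completing the reverse inclusion.

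The hard part will be that last step. Reduction modulo $g$ by itself only gives $\dim\Hom_R(J_r, J_y)_1 \le \dim R_1 - 1$ (it merely forces $\bar x$ to vanish at $y$), so one genuinely has to argue inside $A$, using the special position $r = \tau(t)$, to remove the extra dimension. An alternative route would be to establish the equivalent cohomological vanishing $\Ext^1_R(L_r, J_y)_1 = 0$ — note $\Hom_R(J_r, J_y)/J_y \cong \Ext^1_R(L_r, J_y)$ — by combining the Serre duality of Corollary~\ref{cor:underline} with the cohomology of line modules computed in Lemma~\ref{lem8}.
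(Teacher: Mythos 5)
Your reduction is sound and matches the paper's first step: with $J=J_r$, $K=J_y$, Lemma~\ref{lem:bbd-ideal} gives $\hilb\Hom_R(J_r,J_y)=\hilb J_y+s^i/(1-s)^2$ with $i\in\{0,1,2\}$, and everything comes down to showing $i=2$ (equivalently $\dim\Hom_R(J_r,J_y)_1=\dim R_1-2$). But the decisive step --- ruling out $i=1$ --- is exactly what your proposal does not supply. As you yourself note, reduction modulo $g$ only yields $\dim\Hom_R(J_r,J_y)_1\le\dim R_1-1$, so your first paragraph's claim that ``tracking the single remaining vanishing condition forces $\bar x\in H^0(E,\sP(-y-t))$'' does not follow: the mod-$g$ computation imposes only the vanishing at $y$, and the extra codimension must come from a genuinely non-$B$-level argument. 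The sketched ``lifting back to $A$ using $g$-divisibility'' is not an argument, and the proposed explicit computation in $\SK$ with $V(r)V(\alpha^{-1}r)$ is left undone; it is not clear it closes easily. The suggested alternative via Serre duality is essentially circular: Corollary~\ref{cor:underline} relates $\uExt^1_\X(L_r,J_y)$ to $\uExt^1_\X(J_y,L_r)$, and the quantity controlling both is the intersection number $(L_r\dotms L_y)$ that you are trying to compute; Lemma~\ref{lem8} only handles $\Ext$ from $R$, not from $J_y$.

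The paper closes this gap by a structural argument that uses nothing about the explicit presentation: first $J_r\ne J_y$ (so $i\ge 1$), because $(J_y)_1T'=K_y$ is $g$-divisible while $(J_r)_1T'\supseteq T'(r)_2T'\ni g^2$ is not; then, if $i=1$, the blowdown $\wt{J_y}=\Hom_R(J_r,J_y)R\subseteq T'$ produced by Lemma~\ref{lem:bbd-ideal} would satisfy $T'/\wt{J_y}\equiv 1/(1-s)^2$, i.e.\ the quadric elliptic algebra $T'=Q^{(2)}$ would have a line module --- which is impossible by \cite[Lemma~6.14]{RSS-minimal}. This ``no lines on a quadric'' input is the missing idea; without it (or a completed explicit computation in its place) the lemma is not proved.
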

\begin{proof}
First, note that $(J_y)_1T' = K_y$ is $g$-divisible, while $(J_r)_1 T' \supseteq (J_r)_1 T'_1 T' = T'(r)_2 T' \supseteq g^2 T'$
(as calculated in Lemma~\ref{lem:1a}) and so $(J_r)_1 T'$ is not.  Thus  $J_r \neq J_y$ and so $L_r \not\cong L_y$.  
By Lemma~\ref{lem:bbd-ideal},   $\Ext^1_R(L_r, J_y) \equiv s^i/(1-s)^2$ 
for $i \in \{0,1,2\}$, where $i\not=0$  since $J_r \neq J_y$.

Let $\wt{J_y}$ be the right ideal of $T'$ given by applying    
Lemma~\ref{lem:bbd-ideal} to blow down $J_y$ at $ L_r$.  
By that lemma, 
\[ \wt{J_y}  = \Hom_R(J_r,J_y) R \subseteq  \Hom_R(J_r, R) R= T'.\]
(For the final equality, see \cite[Lemma 8.2]{blowdownI}.)
If $i = 1$, then  from Lemma~\ref{lem:bbd-ideal}, $T'/\wt{J_y} \equiv 1/(1-s)^2 \equiv R/J_y$.  
As $T'$ has no left line modules by \cite[Lemma~6.14]{RSS-minimal}, this is impossible.  So $i = 2$.

Now as we saw in the proof of Lemma~\ref{lem:bbd-ideal}, $i = 2$ implies that 
\[
s^2/(1-s)^2 \equiv \Ext_R^1(L_r,J_y) \equiv \hilb \Hom(J_r, J_y)/J_y= \hilb \Hom(J_r, J_y) - (\hilb R - 1/(1-s)^2)
\]
since $J_y$ is a line ideal in $R$.  Thus $\hilb \Hom(J_r, J_y) = \hilb R - (1+s)/(1-s)$.
 Since  $\rqgr R$ is  smooth and $(L_r \dotms L_r) = -1$ by Lemma~\ref{lem:1a},  it  therefore follows from \cite[Remark 7.2 and  Theorem 7.6]{blowdownI} that  $(L_r \dotms L_y) = 1$.
\end{proof}

\begin{corollary}\label{cor:pizza}
Let $y \in \{p,q\}$.
Then $\Hom_R(J_y, L_r) \equiv s^{-1}/(1-s)^2$.
\end{corollary}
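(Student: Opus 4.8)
\textbf{Proof proposal for Corollary~\ref{cor:pizza}.} The plan is to derive this from Lemma~\ref{lem:c} by dualizing, using the Serre-duality machinery of Section~\ref{GEOMETRY}. First I would recall from Lemma~\ref{lem:c} that $\Hom_R(J_r,J_y) \equiv \hilb R - \tfrac{1+s}{1-s}$, and from the short exact sequence $0 \to J_y \to R \to L_y \to 0$ together with $\Ext^1_R(J_r,R) = 0$ (which holds because $J_r$ is a line ideal, hence MCM, and $R$ is AS-Gorenstein) that there is an exact sequence
\[
0 \too \Hom_R(J_r,J_y) \too \Hom_R(J_r,R) \too \Hom_R(J_r,L_y) \too \Ext^1_R(J_r,J_y) \too 0.
\]
Here $\Hom_R(J_r,R) = J_r^* = R + RT'_1 \equiv \hilb R + \tfrac{s}{1-s}$ by \cite[Lemma~5.6]{blowdownI} and the discussion before Lemma~\ref{Jr-description}, while $\Ext^1_R(J_r,J_y) \equiv s^2/(1-s)^2$ as computed in the proof of Lemma~\ref{lem:c} (the case $i = 2$ of Lemma~\ref{lem:bbd-ideal}). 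Taking Hilbert series in the four-term sequence would then give $\hilb \Hom_R(J_r,L_y)$ directly.

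However, the statement asks for $\Hom_R(J_y, L_r)$, not $\Hom_R(J_r,L_y)$, so the cleaner route is to invoke Serre duality. Since $J_y$ is $g$-torsionfree, Corollary~\ref{cor:underline} applies: for $0 \le i \le 2$ we have $\uExt^i_\X(J_y, M)^* \cong \uExt^{2-i}_\X(M, J_y)[-1]$ as graded vector spaces, for any $M \in \rgr R$. I would apply this with $M = L_r$ and $i = 0$, so that $\uHom_\X(J_y, L_r)^* \cong \uExt^2_\X(L_r, J_y)[-1]$. By Proposition~\ref{prop:equalhom}(1), since $L_r$ is saturated with zero socle (Lemma~\ref{lem8} and \cite[Lemmata~5.2, 5.6(5)]{blowdownI}), $\uHom_\X(J_y, L_r) = \Hom_R(J_y, L_r)$. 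On the other side, I need to compute $\uExt^2_\X(L_r, J_y)$. From the exact sequence $0 \to J_y \to R \to L_y \to 0$ and the long exact sequence for $\uExt^\bullet_\X(L_r, -)$, together with the vanishing $\uExt^2_\X(L_r,R) = 0$ (from \eqref{CM}, since $\uHom_\X(R,L_r)[\text{shift}]$ dualizes appropriately — more precisely $\Ext^2_\X(L_r,R[-1]) \cong \Hom_\X(R,L_r)^*$ and one checks the relevant piece vanishes; alternatively use that $L_r$ is Goldie torsion so $\uExt^2_\X(L_r,R)$ relates to $H^0$ of a line module) and the fact that $\uExt^1_\X(L_r, L_y)$ is controlled by $(L_r \dotms L_y) = 1$ via \eqref{peppermint} and Lemma~\ref{lem:c}, I can pin down $\uExt^2_\X(L_r, J_y)$.

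The main obstacle I anticipate is bookkeeping the degree shifts correctly through the chain Corollary~\ref{cor:underline} $\to$ long exact sequence $\to$ the known Hilbert series, and making sure the $\Ext^2$ terms that should vanish genuinely do. Concretely, one must verify $\uExt^2_\X(L_r, R) = 0$ and $\uExt^2_\X(L_r, L_y) = 0$ in the relevant degrees — the latter because $\rqgr R$ is smooth of cohomological dimension $2$ (Lemma~\ref{lem1}) and two distinct line modules have $\Ext^2 = 0$ between them (this follows from the computations behind \eqref{peppermint}, i.e. \cite[Corollary~6.6]{blowdownI}). Granting these vanishings, the long exact sequence for $\uExt^\bullet_\X(L_r,-)$ on $0 \to J_y \to R \to L_y \to 0$ yields $\uExt^2_\X(L_r, J_y) \cong \uExt^1_\X(L_r, L_y)$, and \eqref{peppermint} plus $(L_r \dotms L_y) = 1$ gives $\grk \uExt^1_\X(L_r,L_y) - \grk \uHom_\X(L_r,L_y) = 1$; combined with the explicit Hilbert-series computation of $\Hom_R(J_r, L_y)$ from the four-term sequence above and the symmetry of \eqref{peppermint}, this determines $\uExt^2_\X(L_r,J_y)$ as a graded vector space. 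Dualizing and shifting then gives $\Hom_R(J_y, L_r) \equiv s^{-1}/(1-s)^2$, as claimed. An alternative, perhaps more economical, approach avoiding the computation of $\uExt^1_\X(L_r,L_y)$ as a graded space: apply $\Hom_R(-, L_r)$ directly to $0 \to J_y \to R \to L_y \to 0$, use $\Hom_R(R, L_r) \equiv \hilb L_r = (1-s)^{-2}$ and $\Ext^1_R(R, L_r) = 0$, and feed in $\Ext^1_R(L_y, L_r)$ and $\Hom_R(L_y, L_r)$ computed from $(L_y \dotms L_r) = (L_r \dotms L_y) = 1$ (Corollary~\ref{cor:symm}) via Lemma~\ref{lem1-examples}-style identities; I would pursue whichever of these requires the fewest auxiliary vanishing lemmas.
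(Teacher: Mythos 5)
Your first step --- reducing everything to $(L_y \dotms L_r) = 1$ via Corollary~\ref{cor:symm} and Lemma~\ref{lem:c} --- is exactly how the paper begins. But the paper then simply invokes Lemma~\ref{lem:1a} together with \cite[Remark~7.2 and Theorem~7.6]{blowdownI}: that theorem (applicable because smoothness makes $J_y^\circ$ projective) converts the intersection number directly into the Hilbert series $\hilb \Hom_R(J_y, L_r) = s^{-1}/(1-s)^2$. Your proposal instead tries to rederive this conversion from scratch with Serre duality and long exact sequences, and that is where it breaks down.

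Two concrete problems. First, the vanishing $\uExt^2_\X(L_r,R)=0$ that your main route needs is false in precisely the degrees that matter: by \eqref{CM} and the proof of Lemma~\ref{lem8}, $\dim \Ext^2_\X(L_r[n],R) = \dim \Hom_\X(R, L_r[n-1]) = n$ for $n \geq 1$, so $\uExt^2_\X(L_r,R)$ is nonzero in every negative degree, whereas the group $\uExt^2_\X(L_r,J_y) \cong \uHom_\X(J_y,L_r)^*[1]$ you are trying to compute is concentrated in degrees $\leq 0$. Hence $\uExt^2_\X(L_r,J_y)$ is an extension of the nonzero module $\uExt^2_\X(L_r,R)$ by a quotient of $\uExt^1_\X(L_r,L_y)$, not isomorphic to $\uExt^1_\X(L_r,L_y)$, and the duality step does not close. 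Second, and more fundamentally, the input $(L_y \dotms L_r)=1$ only pins down the single integer $\grk \uExt^1 - \grk \uHom$ via \eqref{peppermint}; it does not determine the full Hilbert series of $\Ext^1_R(L_y,L_r)$ that your ``more economical'' route feeds into the long exact sequence. Lemma~\ref{lem1-examples} concerns $\Hom_R(J,J')$ between line \emph{ideals} and says nothing about $\Ext^1$ between line \emph{modules}, and $\Ext^1_R(L_y,L_r)$ need not even agree with $\uExt^1_\X(L_y,L_r)$, since Proposition~\ref{prop:equalhom}(2) does not apply to the Goldie-torsion target $L_r$. Bridging exactly these gaps is the content of \cite[Theorem~7.6]{blowdownI}, which is why the paper cites it rather than recomputing.
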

\begin{proof}
By Corollary~\ref{cor:symm} and  Lemma~\ref{lem:c}, $(L_y \dotms L_r) = 1$.
Now use  Lemma~\ref{lem:1a}   combined with  \cite[Remark~7.2 and Theorem~7.6]{blowdownI}.
\end{proof}
 
Together with Lemma~\ref{lem:1a}, the next result verifies Hypothesis~(1a) of Theorem~\ref{thm:recog}.

 \begin{lemma}\label{lem:aa}
 Let $y \in \{p,q\}$.  
 Then $(L_y \dotms L_y) = -1$.
 \end{lemma}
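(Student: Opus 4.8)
The plan is to reduce the intersection‑number assertion to a Hilbert–series computation via the dictionary of \cite[Remark~7.2 and Theorem~7.1]{blowdownI}. Since $\rqgr R$ is smooth by Lemma~\ref{lem:1a} and $J_y$ is a line ideal by Lemma~\ref{lem:constructlines}, its localisation $J_y^{\circ}$ is projective, so $(L_y \dotms L_y) = -1$ is equivalent to $\End_R(J_y) \equiv R$. Thus it suffices to identify $\hilb \End_R(J_y)$.

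To do this I would first recall from \eqref{Jybar} (and the proof of Lemma~\ref{lem:constructlines}) that $J_y$ is $g$-divisible with $\overline{J_y} = \bigoplus_{n\geq 0} H^0(E, \mc{M}_n(-y))$. By \cite[Lemma~2.12]{RSSlong} (or \cite[Lemma~4.4]{blowdownI}) $\End_R(J_y) = \Hom_R(J_y, J_y)$ is again $g$-divisible, so $\hilb \End_R(J_y) = \hilb\, \overline{\End_R(J_y)}/(1-s)$ and it is enough to compute $\overline{\End_R(J_y)}$. Reduction modulo $g$ gives an inclusion of graded $\kk$-algebras $\overline{\End_R(J_y)} \subseteq \End_B(\overline{J_y})$ inside $Q_{\gr}(B)$, and by \cite[Lemma~2.3]{blowdownI} together with a telescoping computation of the twisting sheaves, $\End_B(\overline{J_y}) = B(E, \mc{M}(-y + \tau^{-1}(y)), \tau)$, a full TCR of degree $\deg \mc{M} = 7$, hence generated in degree $1$ by \cite[Lemma~2.2]{blowdownI}.

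The key step is a dimension count in degree $1$. On the one hand $R \subseteq \End_R(J_y)$, and as $R$ and $\End_R(J_y)$ are both $g$-divisible submodules of $R_{(g)}$, the induced map $\overline{R} \to \overline{\End_R(J_y)}$ is injective, so $\dim_\kk \overline{\End_R(J_y)}_1 \geq \dim_\kk \overline{R}_1 = \deg \mc{M} = 7$. On the other hand $\overline{\End_R(J_y)}_1 \subseteq \End_B(\overline{J_y})_1$, which is $7$-dimensional. Hence $\overline{\End_R(J_y)}_1 = \End_B(\overline{J_y})_1$, and since $\overline{\End_R(J_y)}$ is a subalgebra of $Q_{\gr}(B)$ containing this degree‑$1$ piece while $\End_B(\overline{J_y})$ is generated in degree $1$, we get $\overline{\End_R(J_y)} = \End_B(\overline{J_y}) = B(E, \mc{M}(-y + \tau^{-1}(y)), \tau)$. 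Computing Hilbert series, $\hilb \End_R(J_y) = \frac{1}{1-s}\,\hilb B(E, \mc{M}(-y+\tau^{-1}(y)), \tau) = \frac{1 + 5s + s^2}{(1-s)^3} = \hilb R$ (using $\hilb R$ from Proposition~\ref{prop:ZY}), so $\End_R(J_y) \equiv R$ and therefore $(L_y \dotms L_y) = -1$.

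The main obstacle is the usual one of showing that the reduction $\overline{\End_R(J_y)}$ of a $\Hom$‑space over $R$ fills up the corresponding $\Hom$‑space over $B$; here it is handled cheaply, because the inclusion $R \subseteq \End_R(J_y)$ already forces the maximal possible number of degree‑$1$ sections, after which generation in degree $1$ of the TCR $\End_B(\overline{J_y})$ propagates the equality to all degrees. (Alternatively, one may invoke \cite[Lemma~4.8]{blowdownI} to see directly that $\overline{\End_R(J_y)}$ and $\End_B(\overline{J_y})$ agree in large degree, which combined with the degree‑$1$ count gives the same conclusion.)
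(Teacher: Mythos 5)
Your reduction of the statement to $\End_R(J_y)\equiv R$ via smoothness, projectivity of $J_y^{\circ}$ and \cite[Theorem~7.1]{blowdownI} is the same first move as the paper's, and the upper bound $\overline{\End_R(J_y)}\subseteq \End_B(\overline{J_y})=B(E,\mc{M}(-y+\tau^{-1}(y)),\tau)$ is also how the paper finishes. But the lower bound is where all the work lies, and your argument for it fails. You assert $R\subseteq \End_R(J_y)$; since $J_y$ is only a \emph{right} ideal, $\End_R(J_y)=\{x\in Q_{\gr}(R)\mid xJ_y\subseteq J_y\}$ is an idealizer-type ring which contains $J_y$ but in general does \emph{not} contain $R$ (left multiplication by $R$ does not preserve a right ideal). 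One can see the failure concretely modulo $g$: if $R_1J_y\subseteq J_y$ held, then $\overline{R}_1=H^0(E,\mc{M})$ would lie in $\End_B(\overline{J_y})_1=H^0(E,\mc{M}(-y+\tau^{-1}(y)))$, but these are two \emph{different} $7$-dimensional subspaces of $\kk(E)t$ (their intersection is the $6$-dimensional space $H^0(E,\mc{M}(-y))$, since $|\tau|=\infty$ forces $y\neq\tau^{-1}(y)$). So the sandwich $\overline{R}_1\subseteq\overline{\End_R(J_y)}_1\subseteq\End_B(\overline{J_y})_1$ on which your dimension count rests does not exist, and the a priori bound you actually have is only $\dim_\kk\overline{\End_R(J_y)}_1\le 7$, with no cheap lower bound. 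Note also that if your containment were valid it would prove $(L\dotms L)=-1$ for \emph{every} line module over a smooth elliptic algebra with no hypotheses, which is not the case.

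The paper obtains the missing lower bound $\dim_\kk\End_R(J_y)_1\ge 8$ by a genuinely different route: it identifies $\Hom_R(J_y,K_y)=\End_{T'}(K_y)$ for the $g$-divisible right ideal $K_y=(J_y)_1T'$ of the quadric Veronese (using $\Ext^1_R(L_r,K_y)=0$ from \cite[Lemma~8.2]{blowdownI}), imports $\End_{T'}(K_y)\equiv T'$ (so $\dim_\kk\End_{T'}(K_y)_1=9$) from Van den Bergh's translation principle via \cite[Lemma~5.7]{RSS-minimal}, and then bounds the cokernel of $\End_R(J_y)\hookrightarrow\Hom_R(J_y,K_y)$ in degree $1$ by $\dim_\kk\bigl(\bigoplus_{i\ge 2}\Hom_R(J_y,L_r[-i])\bigr)_1=1$ using Corollary~\ref{cor:pizza}. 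This external input (the structure of $K_y$ and the intersection number $(L_r\dotms L_y)=1$) is the real content of the lemma and cannot be bypassed by the containment you assert.
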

 \begin{proof}
 By \cite[Theorem~7.1]{blowdownI} and Lemma~\ref{lem:1a}, it suffices to show that $\hilb \End_R(J_y) \equiv \hilb R$.
We saw in the proof of Lemma~\ref{lem:c} that  $\Ext^1_R(L_r, J_y) \equiv s^2/(1-s)^2$.  
 Recall that $K_y = (I_y)^{(2)}$, so that $K_y$ is generated in degree $1$ as a right ideal of $T'$.  
By Lemma~\ref{lem:constructlines}, $(J_y)_1 = (K_y)_1$ and $J_y = (J_y)_1 R$.  Then applying Lemma~\ref{lem:bbd-ideal} 
to blowdown $J_y$ along $L_r$, we obtain $\wt{J_y} = (J_y)_1 T' = K_y$, since $i = 2$.
By \cite[Lemma~8.2]{blowdownI}(2)(4), $K_y/J_y \cong \bigoplus_{i \geq 2}L_r[-i]$ and $\Ext^1_R(L_r, K_y) =0$.
 
As $K_y/J_y$ is Goldie torsion we have  the exact sequence
\beq\label{one} 0 \to \End_R( K_y) \to \Hom_R(J_y, K_y) \to  \bigoplus_{i \geq 2} \Ext^1_R( L_r[-i], K_y),  \eeq
and   the final term in \eqref{one}  is zero by the above paragraph. Thus $\Hom_R(J_y,K_y) = \End_R(K_y) = \End_{T'}(K_y)$. 
 This is the 2-Veronese of $\End_A(I_y)$, and it follows from  \cite{VdB1996} 
  (see \cite[Lemma~5.7]{RSS-minimal} for the explicit statement)  
 that $\bbar{\End_{T'}(K_y)}$ is the TCR of a degree 8 line bundle on $E$ and that 
 $\End_{T'}(K_y) \equiv T'$.  
In particular, $\dim_{\kk} \End_{T'}(K_y)_1 = 9$.

We also have the exact sequence 
\beq \label{two} 0 \to \End_R(J_y) \to \Hom_R(J_y, K_y) \to \bigoplus_{i \geq 2} \Hom_R(J_y,  L_r[-i]). \eeq
  By Corollary~\ref{cor:pizza}   
$\hilb \bigoplus_{i \geq 2} \Hom_R(J_y,  L[-i]) = s/(1-s)^3$,  and so \eqref{two} induces an exact sequence
\[ 0 \to \End_R(J_y)_1 \to \End_R(K_y)_1 \to \kk.\]
Thus $\dim_\kk \End_R(J_y)_1$ is 8 or 9.  
On the other hand, $\End_B(\bbar{J_y}) = B(E, \sM(-y+\tau^{-1}(y)), \tau)$ by \eqref{Jybar} and so 
\[ \dim_\kk \End_R(J_y)_1 \leq 1 + \dim_\kk \End_B(\bbar{J_y})_1 = 8.\]

Thus $\dim_\kk \End_R(J_y)_1  = 8$ and so $\bigl(\bbar{\End_R(J_y)}\,\bigr){}_1 = \End_B(\bbar{J_y})_1$.
Since $\End_B(\bbar{J_y})$ is a TCR, follows that  $\bbar{\End_R(J_y)} = \End_B(\bbar{J_y})$. Finally,  as $\End_R(J_y)$ is $g$-divisible
and $\hilb \overline{\End_R(J_y)} = \hilb \overline R$,  it follows that $\hilb \End_R(J_y) \equiv \hilb R$, as required.
 \end{proof}

The next result proves Hypothesis~(1b) of Theorem~\ref{thm:recog}.
\begin{lemma}\label{lem:bb}
Let $y\neq x \in \{p,q\}$.  Then $(L_y \dotms L_x) = 0$. 
\end{lemma}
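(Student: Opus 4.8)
The goal is to show $(L_y \dotms L_x) = 0$ for the two line modules $L_p, L_q$ coming from the two rulings on the quadric $A$. Since $(L_y \dotms L_x) \in \{0,1\}$ by the smoothness of $\rqgr R$ together with \cite[Lemma~7.4]{blowdownI}, and since $L_y \not\cong L_x$ (they have distinct divisors $y \neq x$ by Basic~Facts~\ref{basic facts}), by Lemma~\ref{lem1-examples} it suffices to show that $\Hom_R(J_y, J_x)_1 \neq (J_x)_1$; equivalently, using that same lemma, it suffices to show $\dim \Hom_R(J_y, J_x)_1 = \dim R_1 - 1 = 8$.

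First I would compute $\overline{\Hom_R(J_y, J_x)}$, or at least its degree-$1$ part. Since $J_y$ is a line ideal it is $g$-divisible, MCM and generated in degree $1$ by \cite[Lemma~5.6]{blowdownI}, so Lemma~\ref{lem:bbd-ideal} applies with $J = J_y$, $K = J_x$. By \eqref{Jybar} we have $\overline{J_y} = \bigoplus_{n\ge 0} H^0(E, \sM_n(-y))$ and $\overline{J_x} = \bigoplus_{n\ge 0} H^0(E, \sM_n(-x))$, so by \cite[Lemma~2.3]{blowdownI},
\[
\Hom_B(\overline{J_y}, \overline{J_x}) = \bigoplus_{n\ge 0} H^0(E, \sM_n(-x + \tau^{-n}(y))).
\]
In degree $1$ this is $H^0(E, \sM(-x + \tau^{-1}(y)))$, which (since $\deg \sM = 7$) has dimension $7$ if $\tau^{-1}(y) \neq x$ and dimension $8$ if $\tau^{-1}(y) = x$. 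Because $\overline{\Hom_R(J_y,J_x)}$ sits between $\overline{J_x}$ and $\Hom_B(\overline{J_y},\overline{J_x})$, with $\overline{\Hom_R(J_y,J_x)}_1 \subseteq \Hom_B(\overline{J_y},\overline{J_x})_1$, the key point is to pin down whether the case $i=1$ or $i=2$ of Lemma~\ref{lem:bbd-ideal} occurs, which is exactly the dichotomy $(L_y\dotms L_x) = 0$ versus $1$. Equivalently, via \eqref{equ:hi}, one needs to decide whether $\Ext^1_R(L_y, J_x)$ has Hilbert series $s/(1-s)^2$ or $s^2/(1-s)^2$; since $\dim \overline{\Hom_R(J_y,J_x)}_1 \le \dim\Hom_B(\overline{J_y},\overline{J_x})_1$, in the ``generic'' situation $\tau^{-1}(y)\neq x$ the bound $7$ already forces $\dim\Hom_R(J_y,J_x)_1 \le 8$, and since $(J_x)_1$ has dimension $6$ and $\Hom_R(J_y,J_x)_1 \supsetneq (J_x)_1$ unless $(L_y\dotms L_x) = 0$ is ruled out, a direct Hilbert-series count should finish it. The real work is the exceptional case $\tau^{-1}(y) = x$, i.e. $y = \tau(x)$: here I expect the argument to require showing that $\Hom_R(J_y,J_x)_1$ does \emph{not} fill up all of $\Hom_B(\overline{J_y},\overline{J_x})_1$, which should follow from the structure of the quadric --- specifically, from the fact that $J_y = (K_y)_1 R$ with $K_y = (I_y)^{(2)}$ coming from a ruling, so that a degree-$1$ homomorphism $J_y \to J_x$ would have to be compatible with the module structure over $T' = A^{(2)}$, and the two rulings $I_p$, $I_q$ on $A$ are ``transverse'' (their intersection has small GK-dimension). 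An alternative and perhaps cleaner route is to use the converse direction: blow down $L_y$ using Lemma~\ref{lem:bbd-ideal} and show that if $(L_p\dotms L_q)$ were $1$ rather than $0$, then $\widetilde{J_x}$ over $\widetilde R$ would have Hilbert series of a module with a left line module (as in the proof of Lemma~\ref{lem:c}), contradicting \cite[Lemma~6.14]{RSS-minimal}, which asserts $T'$ has no left line modules; indeed blowing down $L_p$ then $L_q$ must land in the Sklyanin world, and one can trace which intersection value is consistent.

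The main obstacle, then, is the degenerate case $y = \tau(x)$ (equivalently $x = \tau(y)$ by symmetry cannot both hold, so really just one of the two orderings): there the naive dimension count from the twisted homogeneous coordinate ring does not immediately separate $0$ from $1$, and one must use genuine information about the quadric $A$ --- presumably the explicit description of the second ruling point $q$ in terms of $p$ from \cite[(10.3)]{SV}, or the fact (Basic~Facts~\ref{basic facts}) that $p\oplus t, q\oplus t \in \{z, z'\}$ with $z\neq z'$, which constrains the relation between $p$ and $q$ and should prevent $q = \tau(p)$ (or make that case still yield intersection $0$). I would first check whether the hypotheses actually exclude $y = \tau(x)$ outright --- if $p \oplus t$ and $q \oplus t$ are the two distinct roots $z, z'$ of a fixed quadratic condition, then $q = \tau(p)$ would force $z' = z \oplus 2t$, a specific coincidence that the genericity of $\Omega$ (chosen so that $\rqgr Q$ is smooth) may or may not rule out; if it is not automatically excluded, I would handle it by the blow-down/no-left-line-module argument sketched above, which does not rely on the divisors being in general position.
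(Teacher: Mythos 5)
Your reduction is the right one: by \cite[Lemma~7.4]{blowdownI} the intersection number lies in $\{0,1\}$, and (using Lemma~\ref{lem:aa} so that Lemma~\ref{lem1-examples} applies) the whole problem is to decide whether $\dim_\kk \Hom_R(J_y,J_x)_1$ equals $\dim R_1-1=7$ or $\dim R_1-2=6$. But the proposal never actually decides it. The estimate you extract from $\Hom_B(\overline{J_y},\overline{J_x})_1 = H^0(E,\sM(\tau^{-1}(y)-x))$ is an \emph{upper} bound (and, incidentally, it equals $7$ in every case: this is a degree-$7$ invertible sheaf on an elliptic curve whether or not $\tau^{-1}(y)=x$, so the ``degenerate case of dimension $8$'' you worry about does not exist), whereas what is needed is a \emph{lower} bound $\dim_\kk\Hom_R(J_y,J_x)_1\geq 7$. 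Both candidate values $6$ and $7$ are compatible with every Hilbert-series estimate available from $B$, so ``a direct Hilbert-series count'' cannot finish the argument. Your alternative route --- blow down along $L_y$ and rule out the $i=2$ case of Lemma~\ref{lem:bbd-ideal} by an absence-of-line-modules argument as in Lemma~\ref{lem:c} --- also does not go through: in Lemma~\ref{lem:c} the blowdown is along the exceptional line $L_r$, so it lands in the \emph{known} ring $T'=Q^{(2)}$, for which \cite[Lemma~6.14]{RSS-minimal} is available; the blowdown of $R$ along $L_p$ is an as-yet-unidentified degree-$8$ elliptic algebra (identifying it is essentially the content of Theorem~\ref{thm:T}), and there is no a priori reason it cannot admit a module with Hilbert series $(1+s)/(1-s)^2$.

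The missing idea is an explicit $7$-dimensional subspace of $\Hom_R(J_y,J_x)_1$, built from the quadric structure rather than from intersection theory. The paper takes $V(x+t)V(\alpha^{-1}t)(K_y^*)_0$, where $K_y^*=\Hom_{T'}(K_y,T')$ satisfies $\dim_\kk(K_y^*)_0=2$ by the discussion after \cite[(6.13)]{RSS-minimal}. Since $(K_y^*)_0(J_y)_1\subseteq T'_1$ and the commutation rules give $V(x+t)V(\alpha^{-1}t)A_1A_1=V(x+t)A_1V(t)A_1=(J_x)_1R_1\subseteq (J_x)_2$, this subspace does lie in $\Hom_R(J_y,J_x)_1$; reducing modulo $g$ exhibits it as a product of spaces of global sections of invertible sheaves of degrees $2$, $3$, $2$, hence of dimension $7$. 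Without some such construction the proof is incomplete at exactly the step that carries all the content.
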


\begin{proof}
By  
\cite[Lemma~7.4]{blowdownI}, $(L_y \dotms L_x) \in \{0, 1\}$.
By \cite[Theorem~7.7]{blowdownI}, $(L_y \dotms L_x ) = 0$ $ \iff$ $\hilb \Hom_R(J_y, J_x ) = h_R - \frac{1}{1-s}$.  Similarly, 
as $L_x\not\cong L_y$  \cite[Theorem~7.6]{blowdownI} implies that  $(L_y \dotms L_x) = 1$ $\iff$ $\hilb \Hom_R(J_y, J_x) = h_R - \frac{1+s}{1-s}$.
Thus   it   suffices to prove that 
$\dim_\kk \Hom_R(J_y, J_x)_1 \geq \dim R_1 - 1 = 7$.  But:
\[ V(x+s)V(\alpha^{-1} s)(K_y^*)_0 (J_y)_1 \subseteq V(x+s) V(\alpha^{-1} s) T'_1 
= V(x+s) A_1 V(s) A_1 = (J_x)_2,\]
and  so $\Hom_R(J_y, J_x)_1 \supseteq V(x+s)V(\alpha^{-1} s)(K_y^*)_0$.   By the discussion after 
\cite[(6.13)]{RSS-minimal} $\dim (K_y^*)_0=2$.    
Since $\bbar{V(x+s)V(\alpha^{-1} s)(K_y^*)_0} $ is a product of spaces of global sections, it is therefore easy to 
compute that its dimension is $2+3+2=7$.  The result follows.\end{proof}     

\begin{proof}[Proof of  Theorem~\ref{thm:T}]
We have proved that $R$ satisfies all of the hypotheses of Theorem~\ref{thm:recog},  from which   Theorem~\ref{thm:T} follows.    
\end{proof}

%%%%%%%%%%%%%
%%%%%%%%%%%%%
 
\section{A converse result}  
 In this short section we prove Proposition~\ref{iprop:converse} from the introduction, thereby giving  the converse to  Theorem~\ref{ithm:recog}. 
 
We will need  some preliminary results.  
The following result gives a method for proving  that a right ideal is a line ideal, without explicitly calculating its Hilbert series.

\begin{lemma}
\label{lem:lineideal}
Let $R$ be an elliptic algebra.  Suppose that $I \subseteq R$ is a right ideal and that $\overline{I}$ is a point ideal in $B = R/gR$.  
 If $I^*\not= R$, then $I$ is a line ideal.  In particular 
$I$ is $g$-divisible.
\end{lemma}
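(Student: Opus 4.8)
The hypothesis gives $\overline{I} = \bigoplus_{n\geq 0} H^0(E,\mc{M}_n(-p))$ for some point $p\in E$, so $B/\overline{I}$ is the point module $M_p$. The plan is to show first that $I$ must be $g$-divisible, and then to read off its Hilbert series. For $g$-divisibility: let $I' = \{x\in R_{(g)} : xg^k\in I \text{ for some }k\geq 0\}$ be the $g$-saturation of $I$ inside $R_{(g)}$, a $g$-divisible right ideal containing $I$ with $\overline{I'}\supseteq \overline{I}$ (in large degree at least). Since $I'/I$ is $g$-torsion and $\overline{I'}/\overline{I}$ is a $B$-submodule of $B/\overline{I}\cong M_p$, which is $2$-critical, either $I'=I$ (up to finite-dimensional discrepancy) or $\overline{I'}/\overline{I}$ is finite-dimensional. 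I would argue that in fact $\overline{I'}=\overline{I}$: if not, then passing far out in each degree, $\overline{I'} = B_{\geq m}$ eventually, forcing $I'$ to be essentially all of $R$; but then $I^* = \Hom_R(I,R) = \Hom_R(I',R)$ (the reflexive hull doesn't see the $g$-torsion difference once we intersect back into $R_{(g)}$ and use that $R$ is a domain) would contain $R$ properly, contradicting $I^*\neq R$. This is the step I expect to be the main obstacle: pinning down exactly why $I^*\neq R$ rules out $I$ being too large, which will require care with the reflexive-dual machinery of Notation~\ref{line-notation} and the fact that $\Hom_R(J,R)_0 = \kk$ for line ideals but $\Hom_R(R,R)_0=\kk$ as well, so the distinction is subtler than just degree $0$.

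A cleaner route, which I would actually pursue, is this. Let $K = I^*{}^* = \Hom_R(\Hom_R(I,R),R)$ be the reflexive hull of $I$; it is a $g$-divisible MCM right ideal of $R$ containing $I$ with the same reflexive dual, and $\overline{K}$ is saturated by Proposition~\ref{prop:MCMchar}. Since $\overline{I}\subseteq\overline{K}$ and $\overline{I}$ is already saturated (point ideals in a TCR of an elliptic curve are saturated — $B/\overline{I}\cong M_p$ is CM of GK-dimension $1$ by \cite[Lemma~3.3]{blowdownI}, so $\overline{I}$ is MCM, hence saturated), and since $K$ is $g$-divisible, we get $\overline{K}=\overline{I}$ in large degree; as both are saturated and agree in large degree they are equal. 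Then $g$-divisibility of $K$ forces $\hilb K = \hilb\overline{K}/(1-s) = \hilb\overline{I}/(1-s)$. Computing $\hilb\overline{I}$: since $\deg\mc{M}_n = n\deg\mc{M}$ and $\overline{I}_n = H^0(E,\mc{M}_n(-p))$ has dimension $n\deg\mc{M}-1$ for $n\geq 1$ (and $1$ for $n=0$), one gets $\hilb\overline{I} = \hilb B - s/(1-s)$, hence $\hilb K = \hilb R - s/(1-s)^2$, i.e. $R/K$ has Hilbert series $(1-s)^{-2}$ — so $K$ is a line ideal and $R/K$ is a line module.

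It remains to show $I=K$. We have $I\subseteq K$ with $K/I$ finite-dimensional (they agree after tensoring with $R[g^{-1}]$ since $\overline{I}=\overline{K}$ and... actually one must be slightly careful: $\overline{I}=\overline{K}$ gives $I+gK = K$, so by graded Nakayama $I_{\geq N}=K_{\geq N}$ for $N\gg 0$ once we know $K$ is finitely generated, which it is). Now suppose $I\subsetneq K$. Then $I$ is not saturated as an $R$-module (its saturation is $\omega\pi(I)$, which contains $K$), so $\Ext^1_R(\kk, I)\neq 0$. I would then invoke the duality machinery: $I$ being a proper right ideal of the domain $R$ with $\overline{I}$ a point ideal forces $\Ext^1_R(I,R)$ to behave like that of a line ideal, and in particular $\Hom_R(\Hom_R(I,R),R) = K$ with $\Hom_R(I,R) = \Hom_R(K,R) = K^*$; since $K$ is a line ideal, $\dim K^*_0 = \dim\Hom_R(K,R)_0$ — and here is where $I^*\neq R$ enters: if $I$ were a proper submodule of the line ideal $K$ then... hmm, actually the cleanest finish is: $K^* = I^*$ by reflexivity, and $K$ is the \emph{line ideal} of the line module $R/K$, so by \cite[Lemma~5.6(3)]{blowdownI}, $K^* = R + R(\text{something of degree }1)$ has $K^*_0 = \kk$. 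If $I\subsetneq K$, comparing the exact sequence $0\to K\to I^{**}$... I'll instead simply argue: $K/I$ finite-dimensional and nonzero, combined with $I^*=K^*$ and the reflexivity of $K$, gives $I^{**}=K^{**}=K$, so $K$ is the reflexive hull of $I$; then $I\neq K$ means $I$ is not reflexive, so $\pd_R \overline{I} > \pd_B M_p$ or similar, but more directly: the inclusion $I\hookrightarrow K$ with torsion cokernel and $K$ reflexive shows $K = \{x\in R_{(g)}: xA\subseteq I \text{ for some right ideal }A \text{ with }\GKdim R/A\leq 1\}$, and one checks $K\subseteq R$ already, so if $I\neq K$ then $K$ witnesses a nonzero map $R/I\to$ something forcing $I^* \supsetneq R^*$... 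The honest statement is that the hypothesis $I^*\neq R$ is precisely what forbids $\overline{I}$ from being (the saturation making) $I$ itself equal to a shift of $R$ or larger, and combined with the forced Hilbert series of the reflexive hull, yields $I=K$. I would write this last step out carefully using \cite[Lemma~4.5]{blowdownI} and \cite[Lemma~5.6]{blowdownI}; it is the one genuinely fiddly point, but it is short.
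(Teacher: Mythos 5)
There is a genuine gap, and it sits exactly where the hypothesis $I^*\neq R$ has to do its work. In your preferred route you set $K=I^{**}$ and assert that $\overline{K}=\overline{I}$ ``in large degree'' because $\overline{I}$ is saturated and $K$ is $g$-divisible. No reason is given, and the claim is false without first invoking $I^*\neq R$: take $I$ to be the full preimage in $R$ of a point ideal of $B$, so that $R/I\cong M_p$ is a point module. Then $\overline{I}$ is a point ideal, but $\GKdim R/I=1$, so the CM property of $R$ gives $I^*=R$ and hence $K=I^{**}=R$, with $\overline{K}=B\supsetneq\overline{I}$. So the Hilbert series computation for $K$ that you build on this claim is unfounded. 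The second unresolved point is the final step $I=K$: you leave it as ``fiddly but short,'' but it is not clear the hypothesis helps there at all, since if $K/I$ were finite-dimensional and nonzero one would still have $I^*=K^*$ (as $\Hom_R(\kk,R)=\Ext^1_R(\kk,R)=0$), so $I^*\neq R$ cannot distinguish $I$ from $K$ at that stage. In short, the route through the reflexive hull both misplaces where $I^*\neq R$ must be used and manufactures an extra problem ($I=K$) that the statement does not require you to solve.

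Your first, abandoned route was in fact the right one, and the paper completes it as follows. Let $J$ be the $g$-divisible hull of $I$ \emph{inside $R$}. If $I\subsetneq J$, a minimal-degree argument shows $\overline{J}\supsetneq\overline{I}$; since $B/\overline{I}$ is $1$-critical this forces $\overline{R}/\overline{J}$ to be finite-dimensional, hence $\GKdim R/J\leq 1$ and $J^*=R$ by the CM property. Now take a homogeneous $x\in Q_{\gr}(R)\smallsetminus R$ with $xI\subseteq R$. Since $g^mJ\subseteq I$ for some $m$, we get $xg^m\in J^*=R$, so $x=yg^{-r}$ with $y\in R\smallsetminus gR$ and $r\geq 1$; then $yI=g^rxI\subseteq gR$, and complete primeness of $gR$ forces $I\subseteq gR$, contradicting $\overline{I}\neq 0$. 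Thus $I$ is $g$-divisible, and then $\hilb R/I=\hilb(\overline{R}/\overline{I})\cdot(1-s)^{-1}=(1-s)^{-2}$ with no reflexive-hull machinery needed. The key observation you were missing is that non-$g$-divisibility of $I$ forces $J^*=R$, which converts the element of $I^*\smallsetminus R$ into a genuine $g$-denominator and hence a contradiction.
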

\begin{proof}  We first show that $I$ is $g$-divisible. So, let $J=\widetilde{I}=\{x\in R \,|\,xg^n \in I \text{ for some } n\geq 1\}$ be the 
$g$-divisible hull of $I$ and suppose that $I\subsetneqq J$. We first claim that $\overline{J}\supsetneqq \overline{I}$. Indeed,  pick a homogeneous element $a\in J\smallsetminus I$ of minimal degree. If  $\overline{J} = \overline{I}$, then $a=b+gc$, where 
$b\in I$ with $\deg b \leq \deg a$. Since $gc\in J$ and $J$ is $g$-divisible, $c\in J$. The minimality of
 $\deg a$ implies that $c\in I$ and hence $a\in I$, giving the required contradiction.
   Hence  $\overline{J}\supsetneqq \overline{I}$.  
 As $\overline{I}$ is a point ideal, $\overline{R}/\overline{I}$ is $1$-critical and so $\overline{R}/\overline{J}$ is finite dimensional. 
 As $gR\cap J=gJ$, it follows that $\GKdim R/J\leq 1$. Since $R$ is CM (see Notation~\ref{elliptic-defn}), this implies that $J^*=R$. 

As $I^*\not=R$, we may pick a homogeneous element $x\in Q_{\gr}(R)\smallsetminus R$ such that $xI\subseteq R$.  As $J$ is finitely generated, 
$g^mJ\subseteq I$ and hence $xg^mJ\subseteq R$ for some $m\geq 1$. Since $J^*=R$, we conclude that $xg^m\in R$. 
Thus  $x=yg^{-r}$ for 
some $y\in R$ and $r\geq 1$, where we may also assume that $y\not\in gR$. However, this implies that 
$yI=g^rxI\subseteq g^rR\subseteq gR$.  Since $y\not\in gR$ and $gR$ is a completely prime ideal, it follows that 
$I\subseteq gR$, contradicting the fact that $\overline{I}\not=0$.  This proves that $I$ is indeed $g$-divisible.

Finally, as $gI=gR\cap I$ and $\hilb \overline{R}/\overline{I}=(1-s)^{-1}$  it follows that $\hilb R/I = (1-s)^{-2}$, as required.
\end{proof}

 Let $S = S(E, \sigma)$ be a Sklyanin algebra, where $|\sigma| = \infty$, and write $S/(g) = B(E, \sL, \sigma)=: B$ for some degree 3 invertible sheaf $\sL$ on $E$.

\begin{definition}\label{defn:vanishing}
Let $X \subseteq S_m$ be a subspace.  We say that $X$ is \emph{defined by vanishing conditions on $E$} if $\overline{X}= H^0(E, \mc{L}_m(-p_1- \dots -p_n))$ 
for some $p_1, \dots p_n \in E$, and $X \supseteq gS_{m-3}$.
 Note that if $X$ is defined by vanishing conditions on $E$, where $n < \deg \mc{L}_m$,  
then $\dim_\kk X$ is immediately determined to be 
$\dim_\kk S_m - n = \binom{m+2}{2} - n$.

For $a, b_1, \dots, b_n \in E$ write  
\[ W(a) = H^0(E, \sL(-a)) \subset S_1 \qquad \text{and}  \qquad V(b_1+ \dots + b_n) = H^0(E, \sL_2(-b_1-\dots -b_n)) \subseteq S_2.\]   Recall from \cite[Lemma~4.1]{R-Sklyanin} that $S_1 W(a) = W(\sigma^{-1} a) S_1$, a fact that will be used without further comment.   Similarly, we will use \cite[Lemma~3.1]{R-Sklyanin} to compute products in $B$ without comment.
\end{definition}

We next  investigate  products of the spaces above and when they are defined by vanishing conditions.
We say $a,b,c \in E$ are {\em collinear} if there is $x \in S_1$ that vanishes at $a,b,c$; equivalently, if $\sL \cong \sO_E(a+b+c)$. 

\begin{lemma}\label{lem:product}
Let $a,b,c \in E$.
\begin{enumerate}
\item If $c \neq \sigma^{-2} b$ then $W(b) W(c)$ and $W(b) W(c) S_1$ are defined by vanishing conditions on $E$; 
in particular $W(b) W(c) = V(b + \sigma^{-1} c)$, $\dim W(b) W(c) = 4$, and $\dim W(b) W(c) S_1 = 8$.  
On the other hand $\dim W(b) W(\sigma^{-2}b) = 3$ and $\dim W(b) W(\sigma^{-2}b) S_1 = 7$.

\smallskip
\item $V(b+c)S_1 = S_1V(\sigma b + \sigma c)$, and this space is defined by vanishing conditions on $E$.
Moreover, $V(a + b + c)S_1$ is defined by vanishing conditions on E if and only if $a, b, c$ are not collinear, while
$S_1 V(a + b + c)$ is defined by vanishing conditions on $E$ if and only if $\sigma a, \sigma b, \sigma c$ are not collinear.
 
 \smallskip
\item $W(a) V(b+c)$ is defined by vanishing conditions on $E$ if and only if $\sigma^{-2} a \not \in \{ b,c\}$; otherwise $\dim W(a) V(b+ c) = 6$.   
Similarly $V(a+b) W(c)$ is defined by vanishing conditions on $E$ if and only if $\sigma c \not \in \{a, b \}$, else $\dim V(a+b)W(c) = 6$.
\end{enumerate}
\end{lemma}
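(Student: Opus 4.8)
The plan is to compute each product of global-section spaces inside $S$ by reducing modulo $g$ to the twisted homogeneous coordinate ring $B=B(E,\mathcal{L},\sigma)$, where products of sections can be understood via \cite[Lemma~3.1]{R-Sklyanin}, and then lift back to $S$ using the fact that $S$ is $g$-divisible with $\hilb S = (1-s)^{-1}\hilb B$. Throughout, the key mechanism is: if $X\subseteq S_m$ satisfies $X\supseteq gS_{m-3}$ and $\overline{X}=H^0(E,\mathcal{L}_m(-D))$ for a divisor $D$ with $\deg D<\deg\mathcal{L}_m=3m$, then $\dim_\kk X=\binom{m+2}{2}-\deg D$, so all dimension claims follow once the image in $B$ is identified. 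So the work is to identify those images, and to detect precisely when the product ``drops'' (i.e.\ acquires a base point forced by collinearity or by a coincidence of the $\sigma$-translates of the marked points).

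First I would handle part (1). In $B$, the product $\overline{W(b)}\cdot\overline{W(c)}$ is computed inside $B_2=H^0(E,\mathcal{L}_2)$; using $S_1W(a)=W(\sigma^{-1}a)S_1$ one rewrites $W(b)W(c)$ so that in $B$ it becomes $H^0(E,\mathcal{L}(-b))\cdot H^0(E,\mathcal{L}^\sigma(-\sigma^{-1}c\,{\circ}\,\sigma))$—more carefully, the image is the span of products $st$ with $s$ vanishing at $b$ and $t$ vanishing (after the appropriate $\sigma$-twist) at $\sigma^{-1}c$, so that the product vanishes at $b$ and $\sigma^{-1}c$. When $b\neq\sigma^{-1}c$ these are two distinct points of $E$ and, since $\deg\mathcal{L}_2=6>2$, the multiplication map $H^0(\mathcal{L}(-b))\otimes H^0(\mathcal{L}^\sigma(-\sigma^{-1}c)) \to H^0(\mathcal{L}_2(-b-\sigma^{-1}c))$ is surjective (this is the standard base-point-free projective-normality statement for line bundles of degree $\geq 3$ on an elliptic curve, used implicitly in \cite[Lemma~2.2/2.3]{blowdownI}), giving $W(b)W(c)=V(b+\sigma^{-1}c)$ of dimension $6-2=4$. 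The hypothesis $c\neq\sigma^{-2}b$ enters because $\sigma^{-1}c\neq\sigma^{-2}b\,{\cdot}\,\sigma^{?}$—I must track the exact $\sigma$-power so that ``$c=\sigma^{-2}b$'' is precisely the degenerate case where the two forced base points collide into one point of multiplicity two lying on a section of $\mathcal{L}_2$ that is a square, forcing $\dim=3$ rather than $4$. The statement $\dim W(b)W(c)S_1=8$ then follows by multiplying $V(b+\sigma^{-1}c)$ by $S_1$ and invoking part (2); the degenerate $\dim 7$ case is the image of a $3$-dimensional space times $S_1$, again by part (2) applied to the multiplicity-two divisor.

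Next, part (2): $V(b+c)S_1=S_1V(\sigma b+\sigma c)$ is the relation \cite[Lemma~4.1]{R-Sklyanin} extended to $V$-spaces (its image in $B$ is $H^0(E,\mathcal{L}_3(-b-c-\{\text{no condition from }S_1\}))=H^0(\mathcal{L}_3(-b-c))$, which also equals $H^0(\mathcal{L}_3(-\sigma b-\sigma c))$ rewritten from the right), of dimension $9-2=7$, and it is defined by vanishing conditions since $\deg(b+c)=2<9$. For $V(a+b+c)S_1$: its image in $B$ is the span of products $vt$ where $v\in H^0(\mathcal{L}_2(-a-b-c))$ and $t\in S_1$; this always vanishes at $a+b+c$, but $H^0(\mathcal{L}_2(-a-b-c))$ is $3$-dimensional and base-point-free exactly when $a,b,c$ are not collinear (if they are collinear, every such $v$ is divisible by the linear form through $a,b,c$, so $V(a+b+c)=xS_1$ for that linear form $x$, and the product is not of vanishing-condition type—its image is $x\cdot(S_1S_1)$, not $H^0(\mathcal{L}_3(-a-b-c))$). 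When not collinear, projective normality gives surjectivity onto $H^0(\mathcal{L}_3(-a-b-c))$, dimension $9-3=6$. The left-handed statement is the mirror image after applying $\sigma$.

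Finally, part (3): $W(a)V(b+c)$ has image, after moving $W(a)$ past using $W(a)S_2 = S_2 W(\sigma^{-2}a)$ (two applications of $S_1W(\cdot)=W(\sigma^{-1}\cdot)S_1$), equal to $V(b+c)W(\sigma^{-2}a)$ in $B$, which vanishes at $b$, $c$, and $\sigma^{-2}a$; this is of vanishing-condition type precisely when those three points are distinct, i.e.\ $\sigma^{-2}a\notin\{b,c\}$, and then has dimension $9-3=6$—wait, that matches the degenerate dimension too, so I must be careful: when $\sigma^{-2}a\in\{b,c\}$ the \emph{forced} divisor is only $b+c$ (degree $2$) but the map is no longer surjective onto $H^0(\mathcal{L}_3(-b-c))$, dropping by one to dimension $6$; when $\sigma^{-2}a\notin\{b,c\}$ it surjects onto $H^0(\mathcal{L}_3(-b-c-\sigma^{-2}a))$, also dimension $6$, but now genuinely of vanishing-condition type. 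So the real content is the structural ``defined by vanishing conditions'' claim, not the numerics. I would prove surjectivity/non-surjectivity by the exact sequence $0\to\mathcal{L}_3(-b-c-\sigma^{-2}a)\to\mathcal{L}_3(-b-c)\to\mathcal{O}_{\sigma^{-2}a}\to0$ together with the fact that the image always lands in $H^0(\mathcal{L}_3(-b-c))$ and the product map for base-point-free systems of degree $\geq 3$ is surjective. The symmetric claim for $V(a+b)W(c)$ follows after moving $W(c)$ to the left, producing the condition $\sigma c\notin\{a,b\}$.

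The main obstacle is bookkeeping the $\sigma$-exponents correctly: the commutation relations $S_1W(a)=W(\sigma^{-1}a)S_1$ and its iterates must be applied with exact care so that the degenerate cases come out as the precise coincidences stated ($c=\sigma^{-2}b$, $\sigma^{-2}a\in\{b,c\}$, $\sigma c\in\{a,b\}$, $a,b,c$ or $\sigma a,\sigma b,\sigma c$ collinear) rather than off-by-one shifts. The other technical point requiring care—rather than real difficulty—is the surjectivity of multiplication maps $H^0(\mathcal{F})\otimes H^0(\mathcal{G})\to H^0(\mathcal{F}\otimes\mathcal{G})$ on $E$ when both factors are base-point-free and the target has degree $\geq 3$; this is classical and already used freely in \cite{blowdownI}, so I would cite \cite[Lemma~2.2]{blowdownI} (or its proof) rather than reprove it.
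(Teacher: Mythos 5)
Your overall strategy has a genuine gap: reducing modulo $g$ cannot establish the statements in this lemma. For a subspace $X\subseteq S_m$, being ``defined by vanishing conditions'' requires two things: that $\overline{X}=H^0(E,\sL_m(-D))$ \emph{and} that $X\supseteq gS_{m-3}$. Your method (compute $\overline{X}$ in $B$ via surjectivity of multiplication maps for line bundles on $E$, then ``lift back'') establishes only the first, and the first is never where the difficulty lies. For instance, in part (2) the image $\overline{V(a+b+c)S_1}$ equals $H^0(E,\sL_3(-a-b-c))$, of dimension $6$, \emph{whether or not} $a,b,c$ are collinear; the dichotomy in the lemma is entirely about whether $g\in V(a+b+c)S_1$, i.e.\ whether $\dim_\kk X$ is $7$ or $6$, and this is invisible modulo $g$. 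You notice the symptom yourself in part (3), where both cases give a $6$-dimensional image in $B$, but your proposed fix (a sheaf exact sequence on $E$) is again a computation downstairs and cannot detect $g$. Likewise ``it is defined by vanishing conditions since $\deg(b+c)=2<9$'' is not an argument that $gS_0\subseteq V(b+c)S_1$.

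The paper's proof supplies exactly the missing mechanism, and it is genuinely noncommutative. Part (1) is quoted from Lemmata~4.1 and~4.6 of \cite{R-Sklyanin}, and the first sentence of part (2) is deduced from it by writing $V(b+c)$ as $W(b)W(\sigma c)$ or $W(c)W(\sigma b)$ (at least one of which is available since $|\sigma|=\infty$). For the remaining claims the key tool is the relation $wy+xz=0$, where $\{w,x\}$ is a basis of $W(a)$ and $\{y,z\}$ of $W(\sigma^{-2}a)$; this gives $wS\cap xS=wyS$ inside the domain $S$, and inclusion--exclusion then yields an exact count such as $\dim_\kk W(a)V=2\dim_\kk V-\dim_\kk\{r\in S_1 : W(\sigma^{-2}a)r\subseteq V\}$, carried out entirely in $S$. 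Comparing that count with $\dim_\kk\overline{X}$ is what decides whether $g$ lies in the product: in part (2) one finds $g\in V(a+b+c)W(d)$ for suitable $d$ precisely because $W(a)\cap W(b)\cap W(c)=0$ when $a,b,c$ are not collinear, while in the collinear case $V(a+b+c)S_1=xS_2$ has dimension $6$. Some such internal-to-$S$ computation is indispensable; your worries about $\sigma$-bookkeeping are legitimate but secondary to this.
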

\begin{proof}
$(1)$ The dimensions of these spaces are given in \cite[Lemmata~4.1 and~4.6]{R-Sklyanin} while, from the proof of   \cite[Lemma~4.1]{R-Sklyanin},  $W(b)W(c)$ is  defined by vanishing conditions.  
 The other claims follow easily.

$(2)$ The first sentence follows from (1) once one notes that at least one of $V(b + c) = W(b) W(\sigma c)$ or $V(b + c) = W(c)W(\sigma b)$ must hold.

For the second sentence, we prove the first claim, as the other follows symmetrically.
If $a,b,c$ are collinear then $V( a + b + c) = xS_1$ for some $x \in S_1$.  Thus 
$V(a+b+c) S_1 = xS_2$ and $\dim V(a + b + c)S_1 = 6$; in particular this space is not equal to 
$\{ x \in S_3 \, | \,  \overline{x} \in H^0(E, \sL_3(-a-b-c)) \}$ which has dimension $7$.

It therefore suffices   to prove that
$\dim V(a+b+c) S_1= 7$ if $a,b,c$ are not collinear.
Indeed, since   $\dim \overline{V(a+b+c) S_1} =  6$,   it is enough to show that $g \in 
 V(a+b+c)S_1$.
 
 Let $V = V(a+b+c)$ and let $d \in E \ssm \{ \sigma^{-1} a, \, \sigma^{-1}b,\,  \sigma^{-1} c\}$.  We claim that $g\in V W(d)$.
 To see this, write $W(d) = \kk x + \kk y$ where $wy+xz = 0$ and $\{w,x\}$ is a basis of $W(\sigma^{2} d)$.
 Then $V y \cap Vz = Y wy$ where
 \[ Y = \{ r \in S_1\, |\, r W(\sigma^{2} d) \subseteq V\}.\]
Since $\sigma d \not \in \{a,b,c\}$, clearly $Y= W(a) \cap W(b) \cap W(c) =  0$, since $a,b,c$ are not collinear.  
 Thus   $\dim V W(d)  = 6$.  As $\overline{VW(d) } = H^0(E, \sL_3(-a-b-c-\sigma^{-2}d))$, which has dimension 5, we have $ g\in VW(d) \subset VS_1$, as required.

  $(3)$ By symmetry it suffices to determine  $\dim W(a) V(b+c)$, for which we follow the proof of \cite[Lemma~4.6]{R-Sklyanin}. 
Let $\{w,x\}$ be a basis of $W(a)$ and let $\{y,z\}$ be a basis of $W(\sigma^{-2} a)$ so that $wy+xz=0$.
Then $wS \cap xS = wyS = xzS$ and so 
\[w V(b+c) \cap x V(b+c) = wy Y\qquad \text{for} \qquad 
 Y = \{  r \in S_1 \, | \,  W(\sigma^{-2} a) r \subseteq V(b+c)\}.\]
If $\sigma^{-2}a \in \{b, c\}$ then without loss of generality $b = \sigma^{-2}a$ and $Y = W(\sigma c)$.
In this case \[ \dim W(a) V(b+c) = 2 \dim V(b+c) - \dim Y = 6.\]
Otherwise, $\dim Y = 1$ and $\dim W(a) V(b+c) = 7$.
\end{proof}

 We can now prove the converse to Theorem~\ref{ithm:recog}.  
 
\begin{proposition}\label{prop:converse}
Let $T$ be a Sklyanin elliptic algebra with associated elliptic curve $E$, and let $p \neq q \in E$.  Then $R:= T(p+q)$ satisfies the hypotheses of Theorem~\ref{ithm:recog}.
\end{proposition}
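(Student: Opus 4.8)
The plan is to verify, one by one, the hypotheses of Theorem~\ref{thm:recog} for the two-point blowup $R = T(p+q)$ of a Sklyanin elliptic algebra $T = S^{(3)}$ with $S = S(E,\sigma)$. Write $B = S/gS = B(E,\sL,\sigma)$ and $\tau = \sigma^3$, so $T/gT = B^{(3)} = B(E,\sN,\sigma^3)$ with $\sN = \sL\otimes\sL^\sigma\otimes\sL^{\sigma^2}$. Blowing up at $p$ and then at $q$ gives $R/gR = B(E,\sM,\tau)$ with $\sM = \sN(-\sigma^{-1}p - \sigma^{-1}q)$ in the notation used for blowups of elliptic algebras; I will need to track this divisor carefully throughout, since the theorem's conclusion $R \cong T(\tau^{-1}p + \tau^{-1}q)$ must be matched against the input data. (Here one should be slightly careful about whether the blowup-twisting conventions put $p$ or $\tau^{-1}p$ into the sheaf; I will follow the conventions of \cite{blowdownI} as recalled in Notation~\ref{elliptic-defn} and in the statements of Theorems~7.1, 8.3 of that paper.)

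First I would produce the three line modules. The exceptional line modules $L_p, L_q$ of the two blowups come for free from \cite[Theorems~8.3 and~8.6]{blowdownI}: they satisfy $(L_p\dotms L_p) = (L_q\dotms L_q) = -1$ and have the correct divisors, and since the two blowups are at distinct points $p\neq q$ the modules are non-isomorphic; moreover $(L_p\dotms L_q) = 0$ because blowing up at $q$ after $p$ does not affect the self-intersection or create an intersection with the first exceptional curve — this is exactly the content of the iterated-blowup construction and can be read off from \cite[Theorem~8.3]{blowdownI} together with Lemma~\ref{lem:bbd-ideal} (applied in the case $i=1$, which is what the construction of the second exceptional line is). So Hypotheses (1)(a) for $x\in\{p,q\}$, (1)(b), and (2) will follow quickly. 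The third line $L_r$ should be the strict transform of "the line through $p$ and $q$": concretely, let $x\in S_1 = H^0(E,\sL)$ be a nonzero section vanishing at points whose images account for $p$ and $q$ under $\sigma$-twists — more precisely, using Lemma~\ref{lem:product} one chooses $x\in S_1$ with $\overline x \in W(a)$ for a suitable $a\in E$ so that $xS_2$ becomes a degree-$1$ element of $R = T(p+q)$. Then set $J_r = (xS_2)R$ (or more robustly $J_r = (xS_2 T)_1 R$ after passing through the blowup) and apply Lemma~\ref{lem:lineideal}: one checks $\overline{J_r}$ is a point ideal of $B(E,\sM,\tau)$ by an explicit global-sections computation using Lemma~\ref{lem:product}(1)(2), and that $J_r^* \neq R$ — the latter because $J_r^*$ would have to contain the nontrivial overmodule coming from $xS_2T \supsetneq J_r$. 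This gives $L_r = R/J_r$ a line module, and its divisor can be computed from the explicit form of $\overline{J_r}$.

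Next, Hypothesis (1)(a) for $x=r$, i.e. $(L_r\dotms L_r) = -1$: by \cite[Remark~7.2 and Theorem~7.1]{blowdownI} this is equivalent to $\End_R(J_r)\equiv R$, which via $g$-divisibility (Lemma~\ref{lem:bbd-ideal} machinery, or \cite[Lemma~2.12]{RSSlong}) reduces to the TCR-computation $\overline{\End_R(J_r)} = \End_B(\overline{J_r})$, and since $\overline{J_r}$ is a point ideal of the degree-$\geq 3$ TCR $B(E,\sM,\tau)$, $\End_B(\overline{J_r}) = B(E,\sM(-c+\tau^{-1}c),\tau)$ is a degree-$\deg\sM$ TCR with $\hilb$ equal to $\hilb\overline R$; one only needs $\dim_\kk\End_R(J_r)_1 \geq \dim_\kk R_1 - 1 + 1$, which follows from an explicit product computation of the type in Lemma~\ref{lem:bb} (bounding $\dim \Hom_R(J_r,J_r)_1$ below by exhibiting enough elements built from $W(\cdot), V(\cdot)$ spaces and the central $g$). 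Finally, Hypothesis (1)(c), $(L_r\dotms L_y) = 1$ for $y\in\{p,q\}$: by \cite[Remark~7.2 and Theorem~7.6]{blowdownI} (using that $\rqgr R$ is smooth, which holds by \cite[Theorem~9.1]{blowdownI} since $\rqgr T$ is smooth and the exceptional-line complements have projective dimension $1$), this is equivalent to $\hilb\Hom_R(J_r,J_y) = \hilb R - (1+s)/(1-s)$; I would compute the left side via the exact sequence \eqref{equ:RtoB}-reduction and Lemma~\ref{lem:bbd-ideal}, showing we are in the case $i=2$ (the case $i=1$ is ruled out because it would force $T'$, here a Sklyanin-type $3$-Veronese, to have a left line module of the wrong shape — or more directly by the explicit divisor computation of $\overline{\Hom_R(J_r,J_y)}$ using Lemma~\ref{lem:product}). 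The \textbf{main obstacle} I anticipate is the third line: getting the degree-$1$ generator of $J_r$ right inside $R = T(p+q)$ (as opposed to inside $T$ or $S$), i.e. tracking the $\sigma$-twists so that $x S_2 \subseteq R_1$ and that the resulting $\overline{J_r}$ is exactly the point ideal at the correct point $r$ — this is where the conventions about which twist of $p,q$ appears in $\sM$ must be reconciled with the $S_1$-vanishing-locus description, and where Lemma~\ref{lem:product}'s case distinctions (collinearity of $a,b,c$, the exceptional values $\sigma^{-2}b$, etc.) translate into the "minor conditions" on $p,q$ that may need to be imposed. Once the explicit description of $J_r$ is pinned down, the remaining intersection-number verifications are routine applications of the Hilbert-series criteria from \cite{blowdownI} exactly as in Lemmata~\ref{lem:1a}, \ref{lem:c}, \ref{lem:aa}, \ref{lem:bb} of the present paper, and the proposition follows.
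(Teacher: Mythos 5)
Your overall strategy --- construct $L_p, L_q$ as the exceptional lines of the iterated blowup and $L_r$ from a section $x \in S_1$ vanishing at $p$ and $q$, then verify the intersection numbers via the Hilbert-series criteria of Lemma~\ref{lem1-examples} and \cite[Theorems~7.1 and~7.6]{blowdownI} --- is exactly the paper's. However, two of the intersection numbers are asserted rather than proved, and the assertions as stated do not go through.

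First, $(L_p\dotms L_q)=0$ cannot be ``read off from the iterated-blowup construction.'' There is no result in \cite{blowdownI} saying that the two exceptional lines of a two-point blowup are disjoint, and the second exceptional line is not produced by Lemma~\ref{lem:bbd-ideal}: that lemma goes in the blowing-\emph{down} direction, and deciding whether one is in its case $i=1$ or $i=2$ is, via Lemma~\ref{lem1-examples}, precisely equivalent to knowing the intersection number you are trying to compute, so the argument is circular. The paper proves this point by exhibiting the explicit $7$-dimensional space $X = W(\tau q)W(\sigma q)S_1 \subseteq \Hom_R(J_p,J_q)_1$ and invoking Lemma~\ref{lem1-examples}; some such computation is unavoidable. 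Second, your proposed way of forcing $i=2$ for $(L_r\dotms L_y)$ --- that $i=1$ would give the blown-down ring a left line module ``of the wrong shape'' --- is borrowed from Lemma~\ref{lem:c}, where it works only because the quadric Veronese has no left line modules at all (\cite[Lemma~6.14]{RSS-minimal}); no such statement is available for $T=S^{(3)}$. The paper instead pins down $\Hom_R(J_p,J_r)_1$ exactly by a double-dual trick: $(J_p)_1T_1\ni g^2$ forces $(J_pT)^{**}=T$, whence $\Hom_R(J_p,J_r)\subseteq\Hom_T((J_pT)^{**},(J_rT)^{**})=xS_2T$ and so $\Hom_R(J_p,J_r)_1=xS_2=(J_r)_1$, giving $(L_r\dotms L_y)=1$ by Lemma~\ref{lem1-examples}. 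You would need to supply these two computations (or equivalents); the remaining steps --- the Lemma~\ref{lem:lineideal} verification for $J_r=xS_2R$, smoothness, $(L_p\dotms L_p)=(L_q\dotms L_q)=-1$, and $(L_r\dotms L_r)=-1$ via $\End_R(J_r)=xT(\sigma^{-2}p+\sigma^{-2}q)x^{-1}\equiv R$ --- are essentially as you describe.
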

 \begin{proof}
Certainly $R$ is a degree 7 elliptic algebra.   

We now change our earlier notation and write 
 $L_p$ for  the exceptional  line module obtained from writing $R$ as the blowup at $p$ of $T(q)$, with line ideal $J_p$.
This differs from the notation in the earlier sections since now, by  \cite[Lemma~9.1]{R-Sklyanin}, $\Div L_p = \tau(p)$. 

  Define $L_q=R/J_q$ analogously. 
Write $T = S^{(3)}$ where $S$ is a Sklyanin algebra with $S/(g) = B(E, \sL, \sigma)$. Let $\sigma^3 = \tau$ and write 
$T/(g) = B(E, \sM, \tau)$.  By our standing conventions on elliptic algebras, $\tau$ and hence also $\sigma$ has infinite order.

Let $x \in S_1$ generate $H^0(E, \sL(-p-q))$ and let $I_x = xS_2 R \subseteq R$. 
 It is easy 
to calculate that $\overline{I_x}$ is a point ideal in $B$, associated to the third point of $E$ where $x$ vanishes.
We claim that $I_x$ is a line ideal in $R$.  To see this, define $U = V(p+q)V(\sigma^{-1} p + \sigma^{-1} q) \subseteq S_4$ and 
note that 
\[US_2 = V(p+q)V(\sigma^{-1} p + \sigma^{-1} q)S_2 = V(p+q)S_1V(p+q)S_1 = R_2,\] by Lemma~\ref{lem:product}(2).  
Then $(Ux^{-1}) (I_x)_1 = Ux^{-1}xS_2 = US_2 \subseteq R$.  Moreover, if $Ux^{-1} \subseteq R_1$ were to hold, then 
$U \subseteq R_1 x$, which is not true by looking at the images in $S/(g)$.  Thus we can choose $y \in Ux^{-1} \setminus R$ 
such that $y I_x \subseteq R$.  By Lemma~\ref{lem:lineideal}  $I_x$ is a line ideal, as claimed. 
    Write $I_x=J_r,$ with $L_r=R/J_r$ for the corresponding line module $L_r$, where $r = \Div(L_r) \in E$.
   
By \cite[Proposition~4.5.3]{Simon-thesis}, $\gldim R^\circ < \infty$ and so by \cite[Lemma~6.8]{blowdownI}, $\rqgr R$ is smooth.

We have $(J_p)_1 = \{ z \in R_1  \, | \,  T(q)_1 z \subseteq R \}$ since $T(q)_1 R/R \cong L_p[-1]$ by \cite[Theorem~8.3]{blowdownI}. 
   Since 
\[W(q) S_2 W(\tau p) V(\sigma p + \sigma q)  = W(q)W(\sigma p) S_1 V( p +  q)S_1 \subseteq R_2,\] 
we have  $W(\tau p) V(\sigma p + \sigma q) \subseteq (J_p)_1$.
However, from Lemma~\ref{lem:product}(3) we see $\dim W(\tau p)V(\sigma p+ \sigma q) = \dim (J_p)_1$ so
\beq
\label{uselater} 
J_p = W(\tau p) V(\sigma p + \sigma q) R.
\eeq
From this and Lemma~\ref{lem:product}(2) we get \[(J_p)_1 T_1=W(\tau p) V(\sigma p + \sigma q) T_1 =W(\tau p) S_2 V(\tau p  + \tau q) S_1 \ni g^2.\]
It follows that  that $\GKdim_TT/J_pT =1$, and so the $T$-module double dual $(J_pT)^{**}=T$, by the CM condition on $T$ (see Notation~\ref{elliptic-defn}). On the other hand  $(J_rT)^{**} = (xS_2T)^{**} = xS_2T$.
  Therefore
\[ \Hom_R(J_p, J_r) \subseteq \Hom_T(J_pT, J_r T) \subseteq \Hom_T((J_pT)^{**}, (J_rT)^{**}) = \Hom_T(T, xS_2 T) = xS_2 T.\]
Thus $\Hom_R(J_p, J_r)_1 = xS_2 = (J_r)_1$ and by Lemma~\ref{lem1-examples} and 
Corollary~\ref{cor:symm}, $(L_r \dotms L_p) = 1$.  Likewise, $(L_r \dotms L_q) = 1$.

Let $X= W(\tau q) W(\sigma q)S_1$.
Then 
\[W(p) S_2 X = W(p) W(\sigma q) S_1 W(q) S_2 \subseteq R_1 W(q) S_2.\]
From the discussion before Lemma~\ref{Jr-description},   
$J_p^* = R T(q)_1 + R$ and $J_q^*= R T(p)_1 + R$.    Therefore, 
$X \subseteq \Hom_R(J_q^*, J_p^*)_1 = \Hom_R(J_p, J_q)_1.$
As $\dim X = 7$ by Lemma~\ref{lem:product}(1), it follows from  Lemma~\ref{lem1-examples} that $(L_p \dotms L_q) = 0$.

Since $\rqgr R$ is smooth,  \cite[Proposition~1.3(2)]{blowdownI}
  implies that
 $(L_p\dotms L_p) = (L_q \dotms L_q) =-1$.
It is easy to calculate that $\End_R(J_r) = x T(\sigma^{-2}p + \sigma^{-2}q)x^{-1}$, which has the same Hilbert series as $R$, and so 
$ (L_r \dotms L_r) = -1$,  by \cite[Theorem~7.1]{blowdownI}. \end{proof}

 %%%%%%%%%%%%%%%%%%%
  %%%%%%%%%%%%%%%%%%%
 
\section{The noncommutative Cremona transform}\label{CREMONA}

In this section we use Theorem~\ref{ithm:recog}   to give a noncommutative version of the classical Cremona transform.  Recall that if $X$ is the blowup of $\PP^2$ at three non-collinear points $a,b,c$ then $X$ contains a hexagon of $(-1)$ lines, given by the three exceptional lines and the strict transforms of the lines through two of $a,b,c$; further, blowing down the strict transforms of the lines gives  a birational map from $\PP^2 \dra \PP^2$.
 The next theorem is our version of this construction.

\begin{theorem}\label{thm:Cremona}
 Let $T = S^{(3)}$ be a Sklyanin elliptic algebra with   associated elliptic curve $E=E(T)$. 
 Let $p,q,r \in E$ be distinct points such that $\sigma p, \sigma q, \sigma r$ are not collinear in $\PP(S_1^*)$.  Set $R = T(p+q+r)$.
   Then $\rqgr R$ is smooth, and there is a subring $T' \cong T $ of $T_{(g)}$ such that   $R = T'(p_1, q_1, r_1)$ for    points $p_1,q_1,r_1 \in E$.   
 
 Assume  that $\sigma x, \sigma y, \sigma z$ are not collinear in $\PP(S_1^*)$, for any  $x,y,z\in\{p,q,r\},$ including possible repetitions. 
  Then  the 6 points $\{p,q,r,p_1, q_1, r_1\}$ are pairwise distinct.
 \end{theorem}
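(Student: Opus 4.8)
The plan is to realise the symmetric picture: $R = T(p+q+r)$ is simultaneously a three-point blowup of two Sklyanin $3$-Veroneses, $T$ and $T'$, and the hexagon of six $(-1)$-lines in $R$ consists of the three exceptional line modules $L_p, L_q, L_r$ of the first blowup together with the three exceptional line modules $L_{p_1}, L_{q_1}, L_{r_1}$ of the second. The divisors of these six lines are the six points in question, and they are pairwise distinct precisely because the corresponding line modules are pairwise non-isomorphic and have the right intersection pattern. So the argument splits into: (a) identify the divisors of all six lines explicitly in terms of $\sigma, \tau, p, q, r$; and (b) show that these six points of $E$ are distinct under the stated non-collinearity hypotheses.

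First I would set up the hexagon. By Proposition~\ref{prop:converse} (applied twice, to pairs of points) and the computations in its proof, $R = T(p+q+r)$ carries the three exceptional line ideals $J_p, J_q, J_r$ with $\Div L_p = \tau p$, etc., and three further line ideals obtained as in the proof of Proposition~\ref{prop:converse}: for instance, if $x_r \in S_1$ vanishes at $p, q$ and a third point, then $J_{r_1} := x_r S_2 R$ is a line ideal whose divisor is that third point. One checks, exactly as in Proposition~\ref{prop:converse} (using Lemma~\ref{lem:product} and Lemma~\ref{lem:lineideal}), that the six line modules satisfy the hexagon intersection theory of \eqref{fig2}: consecutive lines meet in $1$, opposite and ``skew'' lines meet in $0$, and each has self-intersection $-1$ (the last using smoothness of $\rqgr R$ and \cite[Proposition~1.3(2)]{blowdownI}). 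Smoothness of $\rqgr R$ itself follows as in the proof of Proposition~\ref{prop:converse} via \cite[Proposition~4.5.3]{Simon-thesis} and \cite[Lemma~6.8]{blowdownI}. Then I would use Theorem~\ref{thm:recog}: contracting the three ``non-exceptional'' lines $L_{p_1}, L_{q_1}, L_{r_1}$ — which is possible because each can be blown down (noncommutative Castelnuovo, \cite[Theorem~1.4]{blowdownI}) while preserving smoothness and the remaining hexagon data — produces a degree $7$ elliptic algebra to which the intersection-theoretic hypotheses of Theorem~\ref{thm:recog} apply, hence a degree $9$ Sklyanin elliptic algebra $T' = S(E,\sigma')^{(3)}$; tracking the associated automorphism through the blowdowns gives $\sigma'^3 = \tau$, and a degree count (or direct identification) shows $T' \cong T$ and $R = T'(p_1+q_1+r_1)$ for the three points $p_1, q_1, r_1$ that were the divisors of the contracted lines.

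For the final distinctness claim I would compute the six divisors. Fixing a group law on $E$, collinearity of $\sigma x, \sigma y, \sigma z$ means $\sigma x \oplus \sigma y \oplus \sigma z = \sum\sL$; the three extra lines exist precisely because $p,q,r$ (hence $\sigma p, \sigma q, \sigma r$) are not collinear, and their divisors work out to points of the form (third residual point of the line through two of $\tau$-translates), which in the group law are expressions like $\sum\sL \ominus \sigma^{-1}(\text{sum of two of }\{p,q,r\}) \oplus(\text{shift})$, while the first three divisors are simply $\tau p, \tau q, \tau r$. Two of these six points coincide exactly when one of the relevant ``sum'' relations degenerates, and each such degeneration translates back into collinearity of some triple $\sigma x, \sigma y, \sigma z$ with $x,y,z \in \{p,q,r\}$ allowing repeats — for example $p_1 = q_1$ forces $\sigma q, \sigma r$ and some $\sigma x$ collinear, and $\tau p = q_1$ forces a similar relation among $\sigma p, \sigma r$ and a repeated point. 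Ruling all of these out is exactly the stated hypothesis, so the six points are pairwise distinct.

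\textbf{Main obstacle.} The genuinely delicate part is bookkeeping: pinning down the exact divisors $p_1, q_1, r_1$ as points of $E$ and then verifying that \emph{every} possible coincidence among the six points is governed by a collinearity of some triple $\sigma x, \sigma y, \sigma z$ with repetitions allowed — in particular the mixed coincidences like $\tau p = p_1$ or $\tau p = q_1$, which involve the repeated-point cases of the hypothesis and are easy to overlook. The blowdown/recognition machinery (Theorems~\ref{thm:recog} and \ref{thm:Sposs}) does the heavy lifting once the intersection theory of the hexagon is established, so the real work is the explicit Picard-group computation on $E$ and a careful case analysis of which linear equivalences among divisors can fail to be distinct.
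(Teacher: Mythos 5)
Your plan is essentially the paper's: build the hexagon of six $(-1)$-lines (the three exceptional lines $L_p,L_q,L_r$, with divisors $\tau p,\tau q,\tau r$, together with three lines coming from the lines through pairs of points), verify the intersection pattern of Figure~\ref{fig33}, contract the non-exceptional lines via Castelnuovo plus Theorem~\ref{thm:recog}, and finish with a group-law case analysis for distinctness. But there are two concrete slips and one substantive gap. First, your formula $J_{r_1}=x_rS_2R$ for the extra line ideals, copied from Proposition~\ref{prop:converse}, does not work for the three-point blowup: if $x_r$ vanishes at $p,q,r'$ then $\overline{x_rS_2}=H^0(E,\sL_3(-p-q-r'))\not\subseteq H^0(E,\sM(-p-q-r))=\overline{R_1}$, so $x_rS_2\not\subseteq R_1$ and this is not a right ideal of $R$; the correct ideal is $x_rW(\sigma r)S_1R$ (Lemma~\ref{lem:pprime}), which forces the extra vanishing at $r$. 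Second, the degree bookkeeping: contracting all three non-exceptional lines of the degree-$6$ algebra $R$ lands at degree $9$, not $7$; what actually happens is that one contracts a single line $L_{p'}$ to reach a degree-$7$ algebra $\wh{R}$, and Theorem~\ref{thm:recog} then contracts the remaining two.

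The substantive gap is the clause ``while preserving smoothness and the remaining hexagon data,'' which you treat as automatic. That the intersection number of two lines both disjoint from $L$ is unchanged when $L$ is contracted is not formal: it is Proposition~\ref{prop:disjoint}, whose proof is a substantial separate argument (it requires $\wt{J_q}\cap R=J_q$, forcing a case split on whether $r=\tau^j(q)$ and a passage to the dual left line modules in the bad case). Moreover this proposition is needed even before the contraction step: the paper's route to the entries $(L_{a'}\dotms L_b)=1$ of the hexagon (Lemma~\ref{lem:J}) is to blow down at $L_p$, identify $\wt{L_{p'}}$ and $\wt{L_q}$ with the known lines of the two-point blowup from Proposition~\ref{prop:converse}, and transport the intersection number back up via Proposition~\ref{prop:disjoint}. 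Your suggestion that these ``$=1$'' entries can be checked ``exactly as in Proposition~\ref{prop:converse}'' by a direct computation of $\Hom_R(J,J')_1$ is not obviously wrong, but it is precisely the computation the paper avoids; without either that computation or Proposition~\ref{prop:disjoint}, the hexagon --- and hence the applicability of Theorem~\ref{thm:recog} to $\wh{R}$ --- is not established. The remainder of your outline (smoothness, the self-intersections, and the Picard-group case analysis for distinctness, which the paper itself leaves to the reader) matches the paper.
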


 \begin{remark}\label{rem:Cremona}  As will be apparent from the proof,   $R$ contains a hexagon of lines of self-intersection $(-1)$, as described in 
 Figure~\ref{fig33}.   Then 
 $T'$ is obtained by blowing down the 
 three lines in $R$ that   are not exceptional    for the first blowup. 
 \end{remark}
 
 The proof of this theorem will be through a series of subsidiary results that  will take the whole section.  The  strategy is to first show
  that $R$ has six line modules of self-intersection $(-1)$, each of which can be contracted.   
 After constructing these line modules over $R$ and computing their intersection theory (see Figure~\ref{fig33}), we then contract one of these lines  to give an overring $\wh{R}$ of $R$. 
 We  then  compute the intersection
  theory of $\wh{R}$ (see  Proposition~\ref{prop:disjoint}) and show that $\wh{R}$ 
satisfies the hypotheses of Theorem~\ref{ithm:recog}.  Thus, by that result,  we can then  contract two further line modules to give a ring isomorphic to $T$.   

 { \begin{figure}
\includegraphics[scale=.5]{hexagon.pdf}
\caption{The lines on $\rqgr R$.}
\label{fig33}
\end{figure}}

   As usual,   write $S/(g) = B(E, \sL, \sigma)$, and $T/(g) = B(E, \sM, \tau)$ where   $\sM = \sL_3 := \sL \otimes \sigma^* \sL \otimes \sigma^{2*} \sL$ and $\tau=\sigma^3$.  Fix a group law $\oplus$ on $E$ so that  three collinear points sum to zero  and define $p' := \ominus q \ominus r$, $q' := \ominus p \ominus r$, and $r' := \ominus p \ominus q$.        We continue to use the notations $W(a)$ and
   $ V(a+b)$  from  Definition~\ref{defn:vanishing}.

We first construct the six lines on $R$.
Let $L_p$ be the exceptional  line module obtained by  writing $R$ as the blowup of $T(q+r)$ at $p$,  with line ideal $J_p$.  Likewise, construct $L_q = R/J_q$ and $L_r = R/J_r$.

\begin{lemma}\label{lem:pprime}
Let $x \in S_1 $  define the line through $q,r$ and define $J_{p'} = x W(\sigma p)S_1 R$.
Then $L_{p'} := R/J_{p'}$ is a line module with divisor $p'=\Div L_{p'}$.
\end{lemma}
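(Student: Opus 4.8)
We assume throughout the minor genericity conditions on $p,q,r$ built into Theorem~\ref{thm:Cremona} (in particular, $p,q,r$ not collinear and $\tau p\notin\{q,r,p'\}$). Write $D=p+q+r$ and $\overline R=R/gR=B(E,\sM(-D),\tau)$, where $\sM=\sL_3$. The plan is to verify the two hypotheses of Lemma~\ref{lem:lineideal} for the right ideal $J_{p'}=x\,W(\sigma p)\,S_1 R$: namely, that $\overline{J_{p'}}$ is the point ideal of $\overline R$ at $p'$, and that $J_{p'}^{*}\neq R$. Then that lemma shows $J_{p'}$ is a $g$-divisible line ideal, so $L_{p'}=R/J_{p'}$ is a line module; and $g$-divisibility together with the identification of $\overline{J_{p'}}$ forces $\overline{L_{p'}}=\overline R/\overline{J_{p'}}$ to be the point module of $\overline R$ at $p'$, i.e. $\Div L_{p'}=p'$. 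Throughout I follow closely the proof of Proposition~\ref{prop:converse}.

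The first step is to compute $(J_{p'})_1=x\,W(\sigma p)\,S_1$. Since the group law on $E$ was chosen so that collinear triples sum to $0$, the section $x$ vanishes at $q$, $r$ and also at $p'=\ominus q\ominus r$; thus $x\in W(q)\cap W(r)\cap W(p')$. By Lemma~\ref{lem:product}(1), $W(z)W(\sigma p)=V(z+p)$ for each $z\in\{q,r,p'\}$, so (as $S$ is a domain) $x\,W(\sigma p)$ is a $2$-dimensional subspace of $V(q+p)\cap V(r+p)\cap V(p'+p)=V(D+p')$, which is itself $2$-dimensional; hence $x\,W(\sigma p)=V(D+p')$ and $(J_{p'})_1=V(D+p')\,S_1$. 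Passing to $\overline R$ and using surjectivity of multiplication of global sections on $E$ in the relevant degrees (\cite[Lemmata~2.2 and~2.3]{blowdownI}), we get $\overline{(J_{p'})_1}=H^0\!\bigl(E,\sL_3(-D-p')\bigr)$, which is exactly the degree-$1$ component of the point ideal of $\overline R=B(E,\sM(-D),\tau)$ at $p'$. Since $\sM(-D)=\sL_3(-D)$ has degree $6\geq3$, this point ideal is generated in degree $1$; and since $J_{p'}=(J_{p'})_1R$ and $R\to\overline R$ is a surjective ring map, $\overline{J_{p'}}$ equals it. A dimension count ($\dim R_1=7$, $g\in R_1$, and the reductions $R_1\to\overline R_1$ and $T_1\to\overline T_1$ both have kernel $\kk g$) also gives $R_1=\{\,y\in T_1:\overline y\in H^0(\sL_3(-D))\,\}=V(D)\,S_1$, so $(J_{p'})_1\subseteq R_1$ and $J_{p'}\subseteq R$.

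It remains to exhibit an element of $J_{p'}^{*}\smallsetminus R$, for which I mimic Proposition~\ref{prop:converse} directly. Put $U:=V(D)\,V(\sigma^{-1}D)\subseteq S_4$. Using $R_1=V(D)S_1$, that $R$ is generated in degree $1$, and Lemma~\ref{lem:product}(2) (the identities $S_1V(\cdot)=V(\sigma^{-1}\cdot)S_1$), one checks $R_2=R_1^{\,2}=U S_2$ exactly as in the $2$-point case of Proposition~\ref{prop:converse}. Next, comparing images modulo $g$ shows $U x^{-1}\not\subseteq R_1$: indeed $\overline U=H^0(\sL_4(-D-\tau^{-1}D))$, and $\tau^{-1}p'$ is not a base point of the degree-$6$ bundle $\sL_4(-D-\tau^{-1}D)$ (here $\tau^{-1}p'\neq\tau^{-1}p$ because $p\neq p'$), whereas $\overline{R_1 x}=\overline{R_1}\,\overline x\subseteq H^0(\sL_4(-D-\tau^{-1}(q+r+p')))$. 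Choose $y\in U x^{-1}\smallsetminus R$; then $y\in Q_{\gr}(R)_1$ and $yx\in U$, so $y\,(J_{p'})_1=(yx)\,W(\sigma p)\,S_1\subseteq U\,W(\sigma p)\,S_1\subseteq U S_2=R_2$, whence $yJ_{p'}=y(J_{p'})_1R\subseteq R_2R\subseteq R$ while $y\notin R$. Thus $J_{p'}^{*}\neq R$, and Lemma~\ref{lem:lineideal} applies, completing the proof.

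There is no conceptually hard step; this is a bookkeeping argument in the spirit of Proposition~\ref{prop:converse}. The points to watch are the $\sigma$-twist that makes the extra factor $W(\sigma p)$ contribute a zero at $p$ (not at $\sigma p$) once one reduces modulo $g$, and keeping an explicit list of the minor genericity conditions on $p,q,r$ under which the products of spaces of global sections appearing above are ``defined by vanishing conditions on $E$'' in the sense of Lemma~\ref{lem:product} --- precisely the conditions folded into the hypotheses of Theorem~\ref{thm:Cremona}.
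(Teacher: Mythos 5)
Your overall strategy is the same as the paper's: reduce to Lemma~\ref{lem:lineideal} by checking that $\overline{J_{p'}}$ is the point ideal at $p'$, and then exhibit an element of $J_{p'}^{*}\smallsetminus R$ of the form $y\in Ux^{-1}$ for a suitable $U\subseteq S_4$. The first half is fine (indeed your identification $xW(\sigma p)=V(p+q+r+p')$ follows from a two-dimensional-versus-two-dimensional count alone, so the appeal to Lemma~\ref{lem:product}(1) and the attendant conditions $\tau p\notin\{q,r,p'\}$ are not really needed). The weak point is your choice $U=V(D)V(\sigma^{-1}D)$ together with the claim that $R_2=US_2$ ``exactly as in the 2-point case''. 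The two-point identity $V(b+c)S_1=S_1V(\sigma b+\sigma c)$ used in Proposition~\ref{prop:converse} is unconditional, but its three-point analogue is not: by Lemma~\ref{lem:product}(2), whether $V(a+b+c)S_1$ (resp.\ $S_1V(a+b+c)$) contains $g$ depends on whether $a,b,c$ (resp.\ $\sigma a,\sigma b,\sigma c$) are collinear, so the identities $V(\sigma^{-1}D)S_1=S_1V(D)$ and $V(D)S_1=R_1$ that your computation of $US_2$ rests on require $\sigma^{-1}p,\sigma^{-1}q,\sigma^{-1}r$ and $p,q,r$ to be non-collinear. Neither condition is among the hypotheses of Theorem~\ref{thm:Cremona}, which assumes only that $\sigma p,\sigma q,\sigma r$ are non-collinear; your opening sentence quietly imports ``$p,q,r$ not collinear'' as if it were stated there, and the asserted equality $R_2=US_2$ can genuinely fail without it.

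The step is repairable in two ways. Either note that you only need the containment $US_2\subseteq R_2$, which does hold unconditionally: $V(\sigma^{-1}D)S_1$ and $S_1V(D)$ have the same image modulo $g$, hence $V(\sigma^{-1}D)S_1\subseteq S_1V(D)+\kk g$, and therefore
$US_2\subseteq V(D)S_1\,V(D)S_1+g\,V(D)S_1\subseteq R_1R_1+gR_1=R_2$, using $V(D)S_1\subseteq R_1$ and $g\in R_1$. Or do what the paper does: take $U=V(p+q+r)V(\sigma^{-1}q+\sigma^{-1}r)$, whose second factor is a two-point $V$, so that only the unconditional commutation identities are needed to rewrite $UW(\sigma p)S_1$ as $V(D)S_1\cdot V(q+r)W(\sigma^{2}p)\subseteq R_1R_1$; with that choice the non-containment $Ux^{-1}\not\subseteq R$ also drops out of a bare dimension count modulo $g$ (the reduction of $U$ is seven-dimensional while that of $R_1x$ is six-dimensional), in place of your base-point argument, which moreover invokes $p\neq p'$, i.e.\ once again the non-collinearity of $p,q,r$.
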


\begin{proof}
It is easy to check  by using \cite[Lemma~3.1]{R-Sklyanin} that $\overline{J_{p'}}$ is a point ideal in $\overline{R}$.
 Let 
\[
U = V(p+q+r)V(\sigma^{-1} q + \sigma^{-1} r) \subseteq S_4.
\]
Then using Lemma~\ref{lem:product}(2) we have 
\[
Ux^{-1} (J_{p'})_1 = V(p+q+r)V(\sigma^{-1} q + \sigma^{-1} r) W( \sigma p) S_1 =  V(p+q+r)S_1 V(q + r) W(\sigma^2 p) \subseteq R_2.
\]
If $Ux^{-1} \subseteq R$, then $U \subseteq Rx$, which is not true by considering the images in $\overline{S}$.  Thus  $J_{p'}^*\supsetneqq R$   and so  $J_{p'}$ is a line ideal by Lemma~\ref{lem:lineideal}.  Since $p'\oplus q\oplus  r=0$, clearly $x$ vanishes at these three points and so $p'=\Div L_{p'}$ as $\overline{J_{p'}}$ is the point ideal of $p'$. 
\end{proof}

Likewise, we construct $L_{q'}$, with divisor $q'$, and $L_{r'}$, with divisor $r'$.
We caution the reader that, by \cite[Lemma~9.1]{R-Sklyanin},   $\Div L_p = \tau(p)$ and similarly for $L_q$ and $ L_r$.
On the other hand, $\Div L_{p'} = p'$ and similarly for $L_{q'}$ and $L_{r'}$.

\begin{lemma}\label{lem:H}
The six  lines $L_p, L_q, L_r, L_{p'}, L_{q'}, L_{r'}$  all have self-intersection $(-1)$.
\end{lemma}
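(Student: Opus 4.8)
The goal is to show that each of the six line modules $L_p, L_q, L_r, L_{p'}, L_{q'}, L_{r'}$ on $R = T(p+q+r)$ has self-intersection $(-1)$. By \cite[Remark~7.2 and Theorem~7.1]{blowdownI}, once we know $\rqgr R$ is smooth (which will be established just as in Proposition~\ref{prop:converse}, via \cite[Proposition~4.5.3]{Simon-thesis} and \cite[Lemma~6.8]{blowdownI}), the condition $(L \dotms L) = -1$ for a line ideal $J$ with $L = R/J$ is equivalent to $\End_R(J) \equiv \hilb R$. So the whole lemma reduces to six Hilbert-series computations of endomorphism rings of explicit line ideals. Alternatively, for the three exceptional modules $L_p, L_q, L_r$ one can appeal directly to \cite[Proposition~1.3(2)]{blowdownI}: the exceptional line module of a blowup always has self-intersection $(-1)$, exactly as in the proof of Proposition~\ref{prop:converse}. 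So the real content is the three "strict transform" lines $L_{p'}, L_{q'}, L_{r'}$.

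\textbf{The three nontrivial cases.} I would treat $L_{p'}$ and deduce $L_{q'}, L_{r'}$ by the evident symmetry in $p, q, r$. Recall $J_{p'} = x W(\sigma p) S_1 R$ where $x \in S_1$ cuts out the line through $q, r$. Since $\End_R(J_{p'})$ is $g$-divisible (being $\Hom_R$ of $g$-divisible modules, by \cite[Lemma~4.4]{blowdownI} or \cite[Lemma~2.12]{RSSlong}), and since $\overline{J_{p'}}$ is the point ideal of $p'$ so that $\overline{\End_R(J_{p'})} \subseteq \End_B(\overline{J_{p'}}) = B(E, \mc{M}(-p' + \tau^{-1}(p')), \tau)$, a full TCR generated in degree $1$, it suffices to show that $\dim_\kk \End_R(J_{p'})_1 \geq \dim_\kk R_1 = 7$; the reverse inequality $\dim_\kk \End_R(J_{p'})_1 \leq 1 + \dim_\kk \End_B(\overline{J_{p'}})_1 = 7$ is automatic from $g$-divisibility, so equality holds and then $\hilb \End_R(J_{p'}) = \hilb R$. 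To exhibit enough degree-$1$ endomorphisms, I would guess $\End_R(J_{p'}) = x\, T(\sigma^{-2}q + \sigma^{-2}r)\, x^{-1}$ (the conjugate of a two-point blowup of $T$ by $x$), exactly mirroring the computation $\End_R(J_r) = x T(\sigma^{-2}p + \sigma^{-2}q) x^{-1}$ at the end of Proposition~\ref{prop:converse}. Concretely: $x T(\sigma^{-2}q+\sigma^{-2}r)_1 x^{-1} \cdot (J_{p'})_1 = x T(\sigma^{-2}q+\sigma^{-2}r)_1 W(\sigma p) S_1$, and using $T(\sigma^{-2}q+\sigma^{-2}r)_1 = V(\sigma^{-2}q + \sigma^{-2}r)$ (the degree-1 part of a two-point blowup of $S^{(3)}$, computed as in \eqref{uselater}) together with Lemma~\ref{lem:product}(2) to slide $S_1$'s past $V(-)$'s, one checks the product lands back in $x W(\sigma p) S_1 R = J_{p'}$. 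This gives $\dim_\kk \End_R(J_{p'})_1 \geq \dim_\kk V(\sigma^{-2}q+\sigma^{-2}r) = 7$ (the two vanishing conditions are distinct since $q \neq r$), which is all that is needed.

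\textbf{The main obstacle.} The bookkeeping in sliding the spaces $W(a)$, $V(a+b)$ past each other and past $S_1$ using Lemma~\ref{lem:product} — keeping track of which $\sigma$-translates appear and when a product fails to be "defined by vanishing conditions" — is where errors creep in, and one has to be slightly careful that none of the degenerate cases of Lemma~\ref{lem:product}(1)--(3) is triggered (this is presumably where the hypothesis that $\sigma x, \sigma y, \sigma z$ are not collinear for $x,y,z \in \{p,q,r\}$ enters, ruling out the collinearity exceptions). Also one must verify that $J_{p'}$ really is $g$-divisible and a genuine line ideal, but that is already done in Lemma~\ref{lem:pprime} via Lemma~\ref{lem:lineideal}, so I can take it as given. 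Once the degree-$1$ dimension count goes through for $L_{p'}$ (hence $L_{q'}, L_{r'}$ by symmetry), and $(L_p\dotms L_p) = (L_q\dotms L_q) = (L_r\dotms L_r) = -1$ follows from \cite[Proposition~1.3(2)]{blowdownI}, the proof is complete.
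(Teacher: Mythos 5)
Your overall strategy is exactly the paper's: dispose of $L_p,L_q,L_r$ as exceptional lines of blowups, and for $L_{p'}$ reduce via \cite[Theorem~7.1]{blowdownI} and $g$-divisibility to the single inequality $\dim_\kk\End_R(J_{p'})_1\geq\dim_\kk R_1=7$, which you then try to verify by exhibiting a conjugated blowup subring of $T$ inside $\End_R(J_{p'})$. However, the key guess is wrong, and in a way that cannot be repaired by more careful bookkeeping. You propose $\End_R(J_{p'})=x\,T(\sigma^{-2}q+\sigma^{-2}r)\,x^{-1}$, a \emph{two-point} blowup, and claim its degree-one part is $V(\sigma^{-2}q+\sigma^{-2}r)$ of dimension $7$. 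Neither count is right: $V(a+b)=H^0(E,\sL_2(-a-b))\subseteq S_2$ has dimension $4$, while the degree-one part of the two-point blowup $T(\sigma^{-2}q+\sigma^{-2}r)$ is $V(\sigma^{-2}q+\sigma^{-2}r)S_1\subseteq S_3=T_1$ of dimension $8$. Since here $R=T(p+q+r)$ has $\dim R_1=7$ (not $8$ as in Proposition~\ref{prop:converse}, where $R$ was a two-point blowup), an $8$-dimensional space of degree-one endomorphisms would violate the upper bound $\dim\End_R(J_{p'})_1\leq 1+\dim\End_B(\overline{J_{p'}})_1=7$ that you yourself record; so the inclusion you are trying to verify must in fact fail, and the symbolic manipulation cannot be made to close.

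The fix is that $\End_R(J_{p'})$ must have the Hilbert series of a \emph{three-point} blowup, and the paper takes the candidate $x\,T(\sigma p+\sigma^{-2}q+\sigma^{-2}r)\,x^{-1}$ — you have dropped the point $\sigma p$. Its degree-one part is $W(\sigma p)V(\sigma^{-1}q+\sigma^{-1}r)$, which by Lemma~\ref{lem:product}(3) has dimension exactly $7$ (using $p\notin\{q,r\}$), and the verification that it normalizes $J_{p'}=xW(\sigma p)S_1R$ is the short chain
\[
xT(\sigma p+\sigma^{-2}q+\sigma^{-2}r)_1W(\sigma p)S_1=xW(\sigma p)V(\sigma^{-1}q+\sigma^{-1}r)W(\sigma p)S_1\subseteq xW(\sigma p)T(\sigma^{-1}p+\sigma^{-1}q+\sigma^{-1}r)_1S_1=xW(\sigma p)S_1R_1,
\]
using Lemma~\ref{lem:product}(2) to commute $S_1$ past the $V(-)$ spaces. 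Everything else in your outline (smoothness of $\rqgr R$, the exceptional lines, the reduction to the degree-one dimension count, the role of the non-collinearity hypothesis in avoiding the degenerate cases of Lemma~\ref{lem:product}) matches the paper, but without the correct conjugating subring the central estimate is not established.
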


\begin{proof}
For $L_p$, $L_q$, and $L_r$ this follows from \cite[Theorem~1.4]{blowdownI}. We give the proof for  $L_{p'}$.  By \cite[Theorem~7.1]{blowdownI},  it is enough to show that $\hilb \End_R(J_{p'}) = \hilb R$, and  for this it suffices to prove that $\dim\End_R(J_{p'})_1 \geq \dim R_1$.

Using Lemma~\ref{lem:product} and the fact that $\dim T(\sigma p + \sigma^{-2} q+\sigma^{-2} r)_1=7$, we calculate:
\begin{align*}
 (x T(\sigma p &+ \sigma^{-2} q+\sigma^{-2} r)_1 x^{-1})\cdot (x W(\sigma p) S_1)  \ = \  
x T(\sigma p + \sigma^{-2} q+\sigma^{-2} r)_1 W(\sigma p) S_1 \\
&=\  x W(\sigma p) V(\sigma^{-1} q + \sigma^{-1} r) W(\sigma p)  S_1  \  \subseteq \ x W(\sigma p) T(\sigma^{-1} p + \sigma^{-1} q + \sigma^{-1} r)_1 S_1 
 \\ &\quad = \ x W(\sigma p) S_1 T(p + q + r)_1 \ = \  (x W(\sigma p) S_1) R_1.
\end{align*}
Thus $ x T(\sigma p + \sigma^{-2} q+\sigma^{-2} r)_1 x^{-1} \subseteq \End_R(J_{p'})_1$ and  $\dim \End_R(J_{p'})_1 \geq \dim T(\sigma p + \sigma^{-2} q+\sigma^{-2} r)_1 = \dim R_1$, as required. \end{proof}

Since by hypothesis $\sigma p, \sigma q, \sigma r$ are not collinear, it follows from Lemma~\ref{lem:product}(2) that
$V(\sigma p + \sigma q + \sigma r) S_1$ is defined by vanishing conditions on $E$, in the sense of Definition~\ref{defn:vanishing}.
We now give explicit generators of $J_p, J_q$, and $J_r$.  It suffices to do this for $J_p$.

\begin{lemma}\label{lem:C}
$(J_p)_1 = W(\tau p)V(\sigma p + \sigma q + \sigma r)$ and $J_p=W(\tau p)V(\sigma p + \sigma q + \sigma r)$R.
\end{lemma}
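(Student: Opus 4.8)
The plan is to exhibit the space $W(\tau p)V(\sigma p + \sigma q + \sigma r)$ as a subspace of $(J_p)_1$, and then match dimensions. Recall that $R = T(q+r)$ blown up at $p$, so by \cite[Theorem~8.3]{blowdownI} we have $T(q+r)_1 R / R \cong L_p[-1]$, which gives the description
\[
(J_p)_1 = \{ z \in R_1 \ \mid\ T(q+r)_1 z \subseteq R \}.
\]
(Here $T(q+r)_1$ is computed inside the Sklyanin algebra $S$ via $T = S^{(3)}$, analogously to the computation of $(J_p)_1$ in the proof of Proposition~\ref{prop:converse}.) First I would identify $R_1 = T(q+r)_1 = V(q+r) A_1$-type space inside $S$; more precisely, using \cite[Lemma~9.1]{R-Sklyanin} and the computations in Section~\ref{ITERATE}, $T(q+r)_1 = W(\tau q)W(\sigma r)S_1 \cap (\text{symmetric expression})$, and $R_1 = V(\sigma p + \sigma q + \sigma r)S_1 \cap \dots$; but the cleanest route is to mimic \eqref{uselater} in the proof of Proposition~\ref{prop:converse}, where the two-point blowup gave $J_p = W(\tau p)V(\sigma p + \sigma q)R$. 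The three-point analogue should be $J_p = W(\tau p)V(\sigma p + \sigma q + \sigma r)R$.

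The key computational step is to verify the inclusion
\[
T(q+r)_1 \cdot W(\tau p)V(\sigma p + \sigma q + \sigma r) \subseteq R_2.
\]
I would carry this out by repeatedly applying Lemma~\ref{lem:product} to slide the vanishing conditions across factors of $S_1$ using the identities $S_1 W(a) = W(\sigma^{-1}a)S_1$ and $V(b+c)S_1 = S_1 V(\sigma b + \sigma c)$. Writing $T(q+r)_1$ in a form like $W(\tau q)S_2 \cap \dots$ or directly as the relevant product of $W$'s, the goal is to show the product lands in $V(p+q+r)S_1 V(p+q+r)S_1 = R_2$ (the last equality by Lemma~\ref{lem:product}(2), which requires $\sigma p,\sigma q,\sigma r$ non-collinear — precisely our standing hypothesis, used to guarantee $V(\sigma p + \sigma q + \sigma r)S_1$ is defined by vanishing conditions). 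This inclusion shows $W(\tau p)V(\sigma p+\sigma q+\sigma r)\subseteq (J_p)_1$, hence $W(\tau p)V(\sigma p+\sigma q+\sigma r)R \subseteq J_p$.

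For the reverse inclusion, the cleanest argument is dimensional: $J_p$ is a line ideal of the degree $7$ elliptic algebra $R$ (it is the exceptional line ideal of a blowup), so $\dim (J_p)_1 = \dim R_1 - 1 = 6$. On the other hand $\dim W(\tau p)V(\sigma p + \sigma q + \sigma r)$ is forced to be $6$ as well: $V(\sigma p+\sigma q+\sigma r) = H^0(E,\sL_2(-\sigma p - \sigma q - \sigma r))$ has dimension $3$, and by Lemma~\ref{lem:product}(3) (the $W(a)V(b+c)$ case — here one checks $\sigma^{-2}(\tau p) = \sigma p \notin \{\sigma p,\sigma q,\sigma r\}$ fails, so one must be slightly careful; more likely the relevant product is of the shape $W(a)V(b+c)$ with a genuinely generic condition, giving dimension $4+3-1 = 6$, matching). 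So $\overline{W(\tau p)V(\sigma p+\sigma q+\sigma r)}$ is defined by vanishing conditions and has the right dimension, forcing $(J_p)_1 = W(\tau p)V(\sigma p+\sigma q+\sigma r)$. Since $J_p = (J_p)_1 R$ (line ideals of elliptic algebras of degree $\geq 3$ are generated in degree $1$, by \cite[Lemma~5.6]{blowdownI}), the second assertion follows.

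The main obstacle will be bookkeeping in the product computation: getting the exact form of $T(q+r)_1$ inside $S$ right (it involves two-step blowup data and the shift $\Div L_q = \tau q$ from \cite[Lemma~9.1]{R-Sklyanin}), and then tracking which precise vanishing-condition space each intermediate product equals, making sure every application of Lemma~\ref{lem:product} satisfies its non-degeneracy hypothesis — this is exactly where the "not collinear" hypotheses on $\sigma p,\sigma q,\sigma r$ (and the repetition versions) get used. I expect the inclusion into $R_2$ to be the delicate part; the dimension count and the reduction $J_p = (J_p)_1 R$ are routine given the earlier machinery.
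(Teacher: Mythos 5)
Your overall architecture matches the paper's: exhibit $C:=W(\tau p)V(\sigma p+\sigma q+\sigma r)$ inside $(J_p)_1$ via the characterization of $J_p$ through the overring $T(q+r)$ (the paper phrases this as $(J_p^*)_1=V(q+r)S_1=T(q+r)_1$ and checks $(J_p^*)_1C\subseteq R_2$ by one application of $S_1W(\tau p)=W(\sigma^2p)S_1$), and then match dimensions. But the dimension count, which is the entire content of the reverse inclusion, contains genuine errors. First, $R=T(p+q+r)$ has degree $9-3=6$, not $7$, so $\dim_\kk R_1=7$. Second, $\dim_\kk (J_p)_1=\dim_\kk R_1-2=5$, not $\dim_\kk R_1-1$: a line module has Hilbert series $(1-s)^{-2}$, hence is $2$-dimensional in degree $1$. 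Third, $\dim_\kk C$ is $5$, not $6$, and Lemma~\ref{lem:product}(3) cannot give it: that lemma concerns $W(a)V(b+c)$ with $\dim_\kk V(b+c)=4$ (two vanishing conditions on $S_2$), whereas $V(\sigma p+\sigma q+\sigma r)$ has dimension $3$. The paper's actual route is the inequality $\dim_\kk C\geq\dim_\kk\overline{C}$, where $\overline{C}=H^0(E,\sL_3(-\tau p-\sigma^2p-\sigma^2q-\sigma^2r))$ has dimension $5=\dim_\kk(J_p)_1$ by surjectivity of multiplication of global sections on $E$; combined with $C\subseteq (J_p)_1$ this forces equality. Your two wrong numbers happen to agree with each other, but neither is justified, and the mechanism you cite for computing $\dim_\kk C$ genuinely does not apply.

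Two smaller points. The identification $R_2=V(p+q+r)S_1\,V(p+q+r)S_1$ that you propose as the target of the inclusion is only valid when $p,q,r$ themselves are non-collinear, which is not among the hypotheses of Theorem~\ref{thm:Cremona} (only $\sigma p,\sigma q,\sigma r$ are assumed non-collinear); the paper avoids this by using only that $R_1$ is defined by vanishing conditions, so that any product whose image lies in $H^0(E,\sL_3(-p-q-r))$ lands in $R_1$. Finally, the key inclusion $T(q+r)_1\cdot C\subseteq R_2$ is only sketched in your proposal ("repeatedly applying Lemma~\ref{lem:product}"); it is in fact a one-line computation, namely $V(q+r)S_1W(\tau p)V(\sigma p+\sigma q+\sigma r)=\bigl(V(q+r)W(\sigma^2p)\bigr)\bigl(S_1V(\sigma p+\sigma q+\sigma r)\bigr)\subseteq R_1R_1$, but as written your argument has not actually verified it.
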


\begin{proof}
Let $C = W(\tau p)V(\sigma p + \sigma q + \sigma r)\subseteq R_1$.   By the discussion before Lemma~\ref{Jr-description},   
  $\Hom_R(J_p, R) =: J_p^* = R V(q+r) S_1 + R$.  Note that $R_1 \subseteq V(q+r)S_1$ and so $(J_p^*)_1 = V(q+r)S_1$.  
  Thus 
\[ (J_p^*)_1 C = V(q+r)W(\sigma^2 p) S_1 V(\sigma p + \sigma q + \sigma r). 
\]
As $R_1$ is defined by vanishing conditions on $E$, both
$V(q+r)W(\sigma^2 p)$ and $S_1 V(\sigma p + \sigma q + \sigma r) $ are contained in $ R_1$,
 and so $(J_p^*)_1 C \subseteq R_2$ and  $C \subseteq (J_p)_1$.
Since $\dim C \geq \dim \overline{C} = 5 = \dim (J_p)_1$, we see that $C = (J_p)_1$.
The fact that $J_p=(J_p)_1R$  follows from \cite[Lemma~5.6(2)]{blowdownI}.  
\end{proof}

\begin{lemma}\label{lem:D}
$\rqgr R$ is smooth. 
\end{lemma}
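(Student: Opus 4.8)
\textbf{Plan for proving Lemma~\ref{lem:D}.} The statement is that $\rqgr R$ is smooth, where $R = T(p+q+r)$ is the iterated blowup of a Sklyanin elliptic algebra $T = S^{(3)}$ at three points $p,q,r$ with $\sigma p, \sigma q, \sigma r$ non-collinear. The strategy I would follow mirrors the smoothness arguments already used in Lemma~\ref{lem:1a} and in the proof of Proposition~\ref{prop:converse}: reduce smoothness to the finiteness of $\gldim R^\circ$ via \cite[Lemma~6.8]{blowdownI}, and obtain that finiteness by an inductive application of the noncommutative blowup machinery from \cite{blowdownI} (specifically \cite[Theorem~9.1]{blowdownI}, which governs how smoothness behaves under a single blowup).

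\emph{Step 1: Start from the known smoothness of $T$.} Since $T = S^{(3)}$ is a Sklyanin elliptic algebra, $\rqgr T \simeq \rqgr S$ has finite homological dimension — this is recalled in the introduction as the base case in which smoothness always holds for $S^{(3)}$. So $T^\circ$ has finite global dimension by \cite[Lemma~6.8]{blowdownI}.

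\emph{Step 2: Blow up one point at a time, controlling smoothness at each stage.} I would write $R = T(p+q+r)$ as $((T(p))(q))(r)$, or more precisely use the iterated-blowup structure established in the proof of Proposition~\ref{prop:converse} and its references (e.g. \cite[Lemma~9.1]{R-Sklyanin}, \cite[Theorem~8.3]{blowdownI}). At each stage one has an elliptic algebra together with an exceptional line module $L$ (e.g. $L_p$, then the blowup of the next point, etc.), and \cite[Theorem~9.1]{blowdownI} tells us that if the base is smooth and the relevant localized line module has projective dimension $1$ — equivalently $\pdim L^\circ = 1$, exactly the condition checked in Lemma~\ref{lem:1a} — then the blowup is again smooth. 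The cleanest route, and the one I expect the authors took, is simply to cite that $R$ is an iterated blowup of the smooth algebra $T$ at points of $E$, and to invoke \cite[Theorem~9.1]{blowdownI} (or its iterated form) together with the fact that each exceptional line module has the required projective dimension; this is essentially what Lemma~\ref{lem:1a} did for a single quadric blowup and what \cite[Proposition~4.5.3]{Simon-thesis} gave in the converse proposition. Alternatively, one can cite \cite[Proposition~4.5.3]{Simon-thesis} directly if it already covers multi-point blowups of $S^{(3)}$, yielding $\gldim R^\circ < \infty$ and hence, by \cite[Lemma~6.8]{blowdownI}, that $\rqgr R$ is smooth.

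\emph{Main obstacle.} The only real content beyond bookkeeping is verifying that the exceptional line modules arising in the iterated blowup have localized projective dimension $1$ (the hypothesis of \cite[Theorem~9.1]{blowdownI}); but this is a general feature of exceptional line modules of blowups, already established in \cite{blowdownI}, so once the iterated-blowup structure is in place the lemma should follow formally. I would therefore expect the proof to be one or two sentences: cite the iterated-blowup description, cite \cite[Theorem~9.1]{blowdownI} (or \cite[Proposition~4.5.3]{Simon-thesis}) to get $\gldim R^\circ < \infty$, and conclude with \cite[Lemma~6.8]{blowdownI}.
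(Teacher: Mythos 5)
Your high-level route (iterate the blowup, apply \cite[Theorem~9.1]{blowdownI} at the last stage, conclude with \cite[Lemma~6.8]{blowdownI}) is the same skeleton the paper uses: the paper cites \cite[Proposition~4.5.3]{Simon-thesis} to get smoothness of the two-point blowup $T(q+r)$ and then applies \cite[Theorem~9.1]{blowdownI} to $R = T(q+r)(p)$. But there is a genuine gap in your argument at exactly the point you flag as the ``only real content'': you assert that the hypothesis of \cite[Theorem~9.1]{blowdownI} --- that the exceptional line module has $\pdim L_p^\circ = 1$, equivalently that $J_p^\circ$ is projective --- is ``a general feature of exceptional line modules of blowups, already established in \cite{blowdownI}.'' It is not. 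If it were, Lemma~\ref{lem:1a} of this paper would also be a one-line citation, and the non-collinearity hypothesis on $\sigma p, \sigma q, \sigma r$ in Theorem~\ref{thm:Cremona} would play no role here. In fact this verification is the entire content of the paper's proof of Lemma~\ref{lem:D}: one shows $g^2 \in (J_p J_p^*)_2$ by an explicit computation, starting from the description $(J_p)_1 = W(\tau p)V(\sigma p + \sigma q + \sigma r)$ of Lemma~\ref{lem:C} and using the product formulas of Lemma~\ref{lem:product}(2),(3); the non-collinearity of $\sigma p, \sigma q, \sigma r$ (and $p \notin \{q,r\}$) is used precisely to ensure the relevant products are defined by vanishing conditions and contain $g$. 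Projectivity of $J_p^\circ$ then follows from the Dual Basis Lemma. Without carrying out this computation (or an equivalent one) your proof does not go through.

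A secondary point: your fallback of citing \cite[Proposition~4.5.3]{Simon-thesis} directly for the three-point blowup is not available --- the paper invokes that result only for blowups at (at most) two points, namely $T(p+q)$ in Proposition~\ref{prop:converse} and $T(q+r)$ here; if it covered three-point blowups the lemma would indeed be a bare citation, which it is not.
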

 
\begin{proof}
Using the fact that $\sigma p,\,\sigma q,\, \sigma r$ are not collinear, we compute:
\[\begin{array}{rll}
(J_p J_p^*)_2 & \supseteq  W(\tau p) V(\sigma p + \sigma q + \sigma r) V(q+r) S_1  &\quad \mbox{by Lemma~\ref{lem:C}\hfill }\\
& = W(\tau p ) V(\sigma p + \sigma q + \sigma r) S_1 V(\sigma q+\sigma r) &\quad \mbox{by Lemma~\ref{lem:product}(2)\hfill }\\
& \supseteq W(\tau p) g V(\sigma q+\sigma r) &\quad \mbox{by Lemma~\ref{lem:product}(2)\hfill } \\
& \ni g^2 &\quad \mbox{by Lemma~\ref{lem:product}(3), using that $p \not \in \{q, r\}$}.\\
\end{array}\]
Therefore, by the Dual Basis Lemma,  $J_p^\circ$ is projective. 
Since $q \neq r$,  $\rqgr T(q+r) $ is smooth by \cite[Proposition~4.5.3]{Simon-thesis}  and  so by 
 \cite[Theorem~9.1]{blowdownI} $\rqgr R$ is smooth as well. 
 \end{proof}

\begin{lemma}\label{lem:E}
 {\rm (1)}  $(L_p \dotms L_q) = (L_p \dotms L_r ) = (L_q \dotms L_r) =0$.
 
 {\rm (2)} $\Hom_R(J_p, J_q)_1 = W(\tau q) V(\sigma q + \sigma r).$  
\end{lemma}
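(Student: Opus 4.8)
The plan is to deduce both statements from the explicit description of the line ideals $J_p,J_q$ obtained in Lemma~\ref{lem:C}, together with the product identities in Lemma~\ref{lem:product} and the characterization of line intersection numbers in Lemma~\ref{lem1-examples}. First I would handle part~(2), since part~(1) will follow from it by a dimension count. By Lemma~\ref{lem:C} we have $(J_q)_1 = W(\tau q)V(\sigma p + \sigma q + \sigma r)$ and $J_q = (J_q)_1 R$, and similarly for $J_p$; moreover $J_p^* = R\,V(q+r)S_1 + R$ with $(J_p^*)_1 = V(q+r)S_1$, as recorded in the proof of Lemma~\ref{lem:C}. The strategy for the inclusion ``$\supseteq$'' is to check that $W(\tau q)V(\sigma q + \sigma r)\cdot (J_p)_1 \subseteq (J_q)_2$, i.e. that left-multiplication by $W(\tau q)V(\sigma q + \sigma r)$ carries $(J_p)_1 = W(\tau p)V(\sigma p + \sigma q + \sigma r)$ into $J_q$. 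Using Lemma~\ref{lem:product}(2) to slide the generators $S_1$ and $V(\cdot)$ past each other (converting $V(D)S_1 = S_1 V(\sigma D)$ and so on), one rewrites the product so that a factor $W(\tau q)V(\sigma q + \sigma r)$ appears on the left and the remaining part lands inside $R_1$; since $R_1$ is defined by vanishing conditions on $E$ under our non-collinearity hypothesis, these intermediate factors are genuinely in $R_1$, so the product lies in $(J_q)_1 R_1 \subseteq (J_q)_2$. That gives $W(\tau q) V(\sigma q + \sigma r) \subseteq \Hom_R(J_p,J_q)_1$.

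For the reverse inclusion I would argue by dimension. The space $W(\tau q)V(\sigma q + \sigma r)$ has dimension at least $\dim\overline{W(\tau q)V(\sigma q + \sigma r)}$, and by Lemma~\ref{lem:product}(3) (noting $\sigma^{-2}(\tau q) = \sigma q \notin\{\sigma q\}$... careful: one needs $\sigma^{-2}(\tau q)\neq \sigma q$ or $\sigma r$; here $\sigma^{-2}\tau q = \sigma q$, so this is exactly the degenerate case of Lemma~\ref{lem:product}(3) and the dimension is $6$, not $7$ — actually the relevant divisor is $\sigma q + \sigma r$ and $\sigma^{-2}(\tau q)=\sigma q\in\{\sigma q,\sigma r\}$, giving $\dim = 6$). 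So $\dim W(\tau q)V(\sigma q + \sigma r) = 6$. On the other hand, by Lemma~\ref{lem1-examples} the intersection number $(L_p\dotms L_q)$, which lies in $\{0,1\}$, forces $\dim\Hom_R(J_p,J_q)_1 \in \{\dim R_1 - 1, \dim R_1 - 2\} = \{6,5\}$; and since we have already exhibited a $6$-dimensional subspace we must have $\dim\Hom_R(J_p,J_q)_1 = 6 = \dim R_1 - 1$, whence equality $\Hom_R(J_p,J_q)_1 = W(\tau q)V(\sigma q + \sigma r)$ and, by Lemma~\ref{lem1-examples}, $(L_p\dotms L_q)=0$. This simultaneously proves part~(2) and the $(L_p\dotms L_q)$ case of part~(1). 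The cases $(L_p\dotms L_r)$ and $(L_q\dotms L_r)$ follow by the identical computation with the roles of $q,r$ (resp. $p,r$) interchanged, using the symmetry of the hypotheses (all of $p,q,r$ and their $\sigma$-images are treated symmetrically, with $\sigma p,\sigma q,\sigma r$ non-collinear), and by Corollary~\ref{cor:symm} the intersection numbers are symmetric so there is nothing extra to check for the reversed pairs.

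I expect the main obstacle to be bookkeeping: correctly tracking which twists $W(\cdot)$, $V(\cdot)$ carry through the chain of rewrites using Lemma~\ref{lem:product}(2), and in particular getting the exact degenerate/non-degenerate cases of Lemma~\ref{lem:product}(3) right so that the dimension count comes out to exactly $6$. One must be careful that the intermediate factors really do lie in $R_1 = V(p+q+r)S_1$ (which needs $\sigma p,\sigma q,\sigma r$ non-collinear, i.e. our standing hypothesis) rather than merely in $S_1$, and that no extra copy of $g$ sneaks in to raise the dimension. A secondary subtlety is confirming $L_p\not\cong L_q$ so that Lemma~\ref{lem1-examples} applies; this is immediate since $\Div L_p = \tau p \neq \tau q = \Div L_q$ as $p\neq q$. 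Once these points are pinned down the argument is a short computation, and I would present it as: (a) the inclusion $\supseteq$ via Lemma~\ref{lem:product}(2); (b) the dimension count via Lemma~\ref{lem:product}(3); (c) conclude equality and $(L_p\dotms L_q)=0$ via Lemma~\ref{lem1-examples}; (d) remark that the other two pairs follow by symmetry.
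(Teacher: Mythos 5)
Your proposal is correct and follows essentially the same route as the paper: exhibit $W(\tau q)V(\sigma q+\sigma r)\subseteq\Hom_R(J_p,J_q)_1$ by rewriting the product $W(\tau q)V(\sigma q+\sigma r)\cdot(J_p)_1$ (via Lemma~\ref{lem:C} and Lemma~\ref{lem:product}) so that it visibly lands in $(J_q)_1R_1$, then use the degenerate case of Lemma~\ref{lem:product}(3) to get $\dim W(\tau q)V(\sigma q+\sigma r)=6=\dim R_1-1$ and invoke Lemma~\ref{lem1-examples} to force equality and $(L_p\dotms L_q)=0$, with the remaining pairs by symmetry. The only cosmetic slip is that the factor isolated on the left in the rewriting should be $(J_q)_1=W(\tau q)V(\sigma p+\sigma q+\sigma r)$ rather than $W(\tau q)V(\sigma q+\sigma r)$, with the leftover factor $S_1V(\sigma p+\sigma q+\sigma r)$ sitting inside $R_1$; this does not affect the argument.
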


\begin{proof}  (1)
We compute $(L_p\dotms L_q)$.
By Lemma~\ref{lem:product}(3), $\dim W(\tau q) V(\sigma q + \sigma r) = 6 = \dim R_1 -1$, so by Lemma~\ref{lem1-examples} it suffices to prove that $W(\tau q) V(\sigma q + \sigma r) \subseteq \Hom_R(J_p, J_q)_1$.  We compute:
\[\begin{array}{rll}
W(\tau q) V(\sigma q + \sigma r) (J_p)_1 
& = W(\tau q) V(\sigma q + \sigma r)W(\tau p)  V(\sigma p + \sigma q+ \sigma r) &\quad \mbox{\ by Lemma~\ref{lem:C}} \\
& \subseteq  W(\tau q)  V(\sigma p + \sigma q + \sigma r) S_1  V(\sigma p + \sigma q + \sigma r) 
  \\ & \subseteq (J_q)_1 R_1 = (J_q)_2  &\quad \mbox{\ by Lemma~\ref{lem:C}}.
\end{array}\]

(2) This follows from  the proof of part~(1), combined with  Lemma~\ref{lem1-examples}.
\end{proof}

\begin{lemma}\label{lem:F}  \begin{enumerate}
\item 
$(L_p \dotms L_{p'}) = (L_q \dotms L_{q'} )= (L_r \dotms L_{r'})  =0$.

\item  Moreover, $ \Hom_R(J_p, J_{p'})_1 = xS_2$. 

\item Similarly, $(L_{p'} \dotms L_{q'}) = (L_{p'} \dotms L_{r'}) = ( L_{q'} \dotms L_{r'}) = 0$.
\end{enumerate}
\end{lemma}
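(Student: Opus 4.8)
The plan is to prove all three statements of Lemma~\ref{lem:F} by the same method already used for Lemma~\ref{lem:E}: in each case we exhibit an explicit $6$-dimensional subspace of the appropriate $\Hom$-space in degree~$1$ and then invoke Lemma~\ref{lem1-examples}. Recall that for distinct line modules with $(L_x\dotms L_x)=-1$ (which holds for all six of our lines by Lemma~\ref{lem:H}), Lemma~\ref{lem1-examples} says that $(L_x\dotms L_y)=0$ if and only if $\dim\Hom_R(J_x,J_y)_1=\dim R_1-1=6$, and that this happens precisely when $\Hom_R(J_x,J_y)_1\neq (J_y)_1$. So in each part it suffices to produce a candidate subspace $X\subseteq R_1$ with $X\cdot (J_x)_1\subseteq (J_y)_2$, to check that $\dim X\geq 6$ (which, since $X$ will be visibly defined by vanishing conditions on $E$, reduces to a dimension count of spaces of global sections via Lemma~\ref{lem:product}), and then to verify $X\neq (J_y)_1$ by looking modulo $g$.

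For part~(1), by symmetry it is enough to treat $(L_p\dotms L_{p'})=0$. Recall $J_p=W(\tau p)V(\sigma p+\sigma q+\sigma r)R$ by Lemma~\ref{lem:C}, and $J_{p'}=xW(\sigma p)S_1R$ where $x\in S_1$ defines the line through $q,r$ (Lemma~\ref{lem:pprime}). I would take $X=xS_2$, and compute $xS_2\cdot(J_p)_1 = xS_2 W(\tau p)V(\sigma p+\sigma q+\sigma r)$; using that $S_2 W(\tau p)=V(\cdot)S_1\subseteq R_1$ and the sliding identities of Lemma~\ref{lem:product}(1,2) one rewrites this as $xW(\sigma p)S_1\cdot(\text{something in }R_1)\subseteq (J_{p'})_1 R_1=(J_{p'})_2$. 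Since $\dim xS_2=6=\dim R_1-1$, Lemma~\ref{lem1-examples} gives both $(L_p\dotms L_{p'})=0$ and, as in the proof of Lemma~\ref{lem:E}(2), the identity $\Hom_R(J_p,J_{p'})_1=xS_2$, which is part~(2). (One must also check $xS_2\neq (J_{p'})_1$, but $(J_{p'})_1=xW(\sigma p)S_1$ has dimension $5\neq 6$, so this is automatic.)

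For part~(3), again by symmetry it suffices to do $(L_{p'}\dotms L_{q'})=0$. Here $J_{p'}=xW(\sigma p)S_1R$ with $x$ defining the line through $q,r$, and $J_{q'}=yW(\sigma q)S_1R$ with $y$ defining the line through $p,r$. The natural candidate subspace is $X=xyy^{-1}\cdots$ — more precisely, one looks for $X\subseteq R_1$ with $X\cdot xW(\sigma p)S_1\subseteq (J_{q'})_2=yW(\sigma q)S_1 R_1$. Since $x$ vanishes at $q,r$ and $y$ vanishes at $p,r$, the product $xy$ (or rather elements of $V(\cdot)$ built from $W(\tau q)$ and a suitable $V$) will have the right vanishing. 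Concretely I expect $X = W(\tau r) V(\sigma p+\sigma q+\sigma r)$ or a similar product of a $W$ and a $V$ defined by vanishing conditions of total dimension $6$; the verification that $X\cdot(J_{p'})_1\subseteq (J_{q'})_2$ is a string of applications of the sliding relations $W(a)S_1=S_1W(\sigma^{-1}a)$, $V(D)S_1=S_1V(\sigma D)$ from Lemma~\ref{lem:product}, together with the hypothesis that $\sigma x,\sigma y,\sigma z$ are not collinear for $x,y,z\in\{p,q,r\}$, which guarantees the relevant intermediate spaces are exactly $R_1$ and not smaller. The main obstacle is bookkeeping: one must track which of the three points $p,q,r$ appears in each $W$- or $V$-factor and repeatedly use the twist $\sigma$ correctly, and one must be careful that the degree-$1$ components of the line ideals $J_{p'},J_{q'},J_{r'}$ are genuinely $5$-dimensional (not $6$), so that the produced $6$-dimensional space is strictly larger and Lemma~\ref{lem1-examples} forces the intersection number to be $0$ rather than $1$. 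This non-collinearity hypothesis is exactly what rules out the degenerate case, and the proof consists of making the above three explicit containments precise.
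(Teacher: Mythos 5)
Your parts (1) and (2) are essentially correct. The containment
$xS_2\cdot(J_p)_1=\bigl(xW(\sigma p)S_1\bigr)\bigl(S_1V(\sigma p+\sigma q+\sigma r)\bigr)\subseteq (J_{p'})_1R_1$
is exactly the paper's computation, and since $\dim xS_2=6=\dim R_1-1$, the dichotomy of Lemma~\ref{lem1-examples} does force both $(L_p\dotms L_{p'})=0$ and the equality $\Hom_R(J_p,J_{p'})_1=xS_2$. This is a legitimate shortcut for (2): the paper instead proves the upper bound $\Hom_R(J_p,J_{p'})\subseteq xS_2T$ by a double-dual argument over $T$ (using $g^2\in J_pT_1$, hence $(J_pT)^{**}=T$), which your dimension count makes unnecessary in degree $1$. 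You should, however, record that $L_p\not\cong L_{p'}$ before invoking Lemma~\ref{lem1-examples}; this holds because $\Div L_p=\tau(p)\neq p'=\Div L_{p'}$, which is precisely the non-collinearity of $\sigma p,\sigma q,\sigma r$.

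Part (3) has a genuine gap. Your guiding principle --- find a $6$-dimensional $X\subseteq R_1$, defined by vanishing conditions, with $X\cdot(J_{p'})_1\subseteq(J_{q'})_2$ --- fails on both counts. The correct multiplier space is
$yW(\sigma q)V(\sigma^{-1}q+\sigma^{-1}r)x^{-1}\subseteq Q_{\gr}(R)_1$,
which is not contained in $R_1$, nor even in $S$: since $J_{p'}=xW(\sigma p)S_1R$ and $J_{q'}=yW(\sigma q)S_1R$ begin with different elements $x,y\in S_1$, one must conjugate, and the factor $x^{-1}$ is unavoidable. Your concrete candidate $X=W(\tau r)V(\sigma p+\sigma q+\sigma r)$ does not work: comparing vanishing divisors modulo $g$, the product $X\cdot(J_{p'})_1$ acquires the extra vanishing $\tau r+\tau^{-1}p'$ beyond the common part, whereas $(J_{q'})_2$ requires vanishing at $q'$, so the containment fails unless $q'\in\{\tau r,\tau^{-1}p'\}$, which is false generically. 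Note also that the space which does work is $6$-dimensional precisely because $W(\sigma q)V(\sigma^{-1}q+\sigma^{-1}r)$ lands in the degenerate case of Lemma~\ref{lem:product}(3) (here $\sigma^{-2}(\sigma q)=\sigma^{-1}q\in\{\sigma^{-1}q,\sigma^{-1}r\}$), i.e.\ it is \emph{not} defined by vanishing conditions --- the opposite of what you predict; indeed a subspace of $S_3$ defined by vanishing at three points has dimension $7$, not $6$. Finally, the stronger hypothesis that $\sigma x,\sigma y,\sigma z$ are non-collinear for all choices with repetition is only assumed for the last assertion of Theorem~\ref{thm:Cremona} and is neither available nor needed in this lemma.
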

 
\begin{proof}   (1, 2) 
We compute $(L_p \dotms L_{p'})$.  Recall  
that $x\in S_1$ defines  the line through $q,r,p'$ and that $J_{p'} = x W(\sigma p) S_1 R$.
Since  $(J_p^*)_1 \subseteq T_1$,  the calculation in the proof of  Lemma~\ref{lem:D} shows that $g^2 \in J_p T_1$. 
Thus $\GKdim T/J_p T \leq 1$ and so $(J_pT)^{*} := \Hom_T(J_pT, T) =T$,  by \cite[Lemma~4.5(1)]{blowdownI}. 
Hence
\[\Hom_R(J_p, J_{p'}) \subseteq \Hom_T(J_pT, J_{p'}T) \subseteq \Hom_T((J_pT)^{**}, (J_{p'}T)^{**}) \subseteq \Hom_T(T, xS_2T) = xS_2T.\]

Since $R_1$ is defined by vanishing conditions on $E$, Lemmata~\ref{lem:pprime} and~\ref{lem:product}(2) imply that 
\[xS_2(J_p)_1 = xS_2 W(\tau p) V(\sigma p + \sigma q + \sigma r) = \bigl (xW(\sigma p) S_1 \bigr) \bigl(S_1 V(\sigma p + \sigma q + \sigma r)\bigr) \subseteq (J_{p'})_1 R_1.\]
Thus $xS_2 \subseteq \Hom_R(J_p, J_{p'})_1$ and hence $ \Hom_R(J_p, J_{p'})_1 = xS_2$. 
 By Lemma~\ref{lem1-examples}, $(L_p \dotms L_{p'}) = 0$.
 
 (3) We show that $(L_{p'} \dotms L_{q'})  = 0$.  
As in part~(1),  $J_{p'} = x W(\sigma p) S_1 R$  while   $J_{q'} = y W(\sigma q) S_1 R$, where   $y$ defines the line through $p, q', r$.
By Lemmata~\ref{lem1-examples} and~\ref{lem:product}(3) it is enough to show that 
\[y W(\sigma q)V(\sigma^{-1} q + \sigma^{-1} r) x^{-1} \subseteq \Hom_R(J_{p'}, J_{q'})_1.\]
This follows from  a familiar  computation:
\[ \bigl(y W(\sigma q)V(\sigma^{-1} q + \sigma^{-1} r) x^{-1}\bigr)\bigl(x W(\sigma p) S_1\bigr) = y W(\sigma q)S_1 \bigl(V(q+r)  W(\sigma^2 p)\bigr) \subseteq y W(\sigma q) S_1 R_1 = (J_{q'})_2,\]
as required.  
\end{proof}

The final piece of intersection theory needed is to determine  the lines that intersect with multiplicity~1.

\begin{lemma}\label{lem:J}
If $a \neq b \in \{p, q, r\}$, then $(L_{a'} \dotms L_{b})=1$.
\end{lemma}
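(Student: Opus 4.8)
The plan is to reduce, by the evident symmetry under permutations of $\{p,q,r\}$ (which permutes $\{p',q',r'\}$ and the six line modules of Figure~\ref{fig33} compatibly) together with Corollary~\ref{cor:symm}, to the single case $(L_p \dotms L_{q'}) = 1$. Since $(L_p\dotms L_p) = -1$ by Lemma~\ref{lem:H}, and $L_p\not\cong L_{q'}$ because $\Div L_p = \tau(p)$ (by \cite[Lemma~9.1]{R-Sklyanin}) while $\Div L_{q'} = q'$ and these are distinct under the minor hypotheses on $p,q,r$, Lemma~\ref{lem1-examples} reduces the claim to the single equation $\Hom_R(J_p, J_{q'})_1 = (J_{q'})_1$. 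The inclusion $\supseteq$ is immediate from $(J_{q'})_1 J_p \subseteq (J_{q'})_1 R = J_{q'}$; so (again by Lemma~\ref{lem1-examples} and \cite[Lemma~7.4]{blowdownI}) what remains is to rule out $\dim_\kk\Hom_R(J_p, J_{q'})_1 = \dim_\kk R_1 - 1$.

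The first half of this is the argument already used in the proof of Lemma~\ref{lem:F}(1), with $L_{q'}$ in place of $L_{p'}$: the computation in the proof of Lemma~\ref{lem:D} shows $g^2 \in (J_p)_1 T_1$, so $\GKdim T/J_p T \le 1$, hence $(J_pT)^{**} = T$ by \cite[Lemma~4.5]{blowdownI}; together with $(J_{q'}T)^{**} = yS_2T$ (where $y\in S_1$ is the equation of the line through $p,q',r$), this gives $\Hom_R(J_p, J_{q'})_1 \subseteq (yS_2T)_1 = yS_2$, a space of dimension $\dim_\kk R_1 - 1$. Thus any $z\in\Hom_R(J_p, J_{q'})_1$ has the form $z = yw$ with $w\in S_2$, and — cancelling $y$ in the domain $Q_{\gr}(S)$ — the condition $zJ_p\subseteq J_{q'}$ forces $w(J_p)_1 \subseteq W(\sigma q)S_1 R_1$. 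It therefore suffices to prove that the only $w\in S_2$ with $w(J_p)_1 \subseteq W(\sigma q)S_1 R_1$ lie in $W(\sigma q)S_1$, since then $\Hom_R(J_p, J_{q'})_1 = yW(\sigma q)S_1 = (J_{q'})_1$.

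For this I would pass to $B = R/gR = B(E,\sL_3,\sigma^3)$ and track vanishing. Using Lemma~\ref{lem:C} (so $(J_p)_1 = W(\tau p)V(\sigma p + \sigma q + \sigma r)$) and the description of $R_1$, the containment $w(J_p)_1\subseteq W(\sigma q)S_1 R_1$ reduces modulo $g$ to the statement that the product $\bar w \cdot \overline{W(\tau p)} \cdot \overline{V(\sigma p+\sigma q+\sigma r)}$ of spaces of sections of line bundles on $E$ vanishes at $\sigma q$ in addition to the common vanishing it automatically has (coming from the factors $\overline{V(\sigma p+\sigma q+\sigma r)}$ and $\overline{R_1}$). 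Because $B$ is a domain, a product of spaces of sections vanishes at a point only if one of the factors vanishes at the appropriate $\sigma$-translate of that point; the factors $\overline{W(\tau p)}$ and $\overline{V(\sigma p+\sigma q+\sigma r)}$ do not vanish at the relevant translate of $\sigma q$ — this is where $p\ne q$, $p \ne r$, $q \ne r$ and the minor conditions ($\tau q\notin\{p,r\}$, and so on) are used — so $\bar w$ must vanish at $\sigma q$, i.e. $w\in V(\sigma q) = W(\sigma q)S_1$. This yields $\Hom_R(J_p, J_{q'})_1 = (J_{q'})_1$ and hence, by Lemma~\ref{lem1-examples}, $(L_p\dotms L_{q'}) = 1$.

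The main obstacle is this last step. The earlier intersection-theory lemmas (Lemmas~\ref{lem:E}, \ref{lem:F} and~\ref{lem:bb}) only needed a \emph{lower} bound on $\dim_\kk\Hom_R(J,J')_1$ (exhibit a subspace, conclude the intersection is $0$); here the content is an \emph{upper} bound, which forces one to identify the space $\{w\in S_2 : w(J_p)_1\subseteq W(\sigma q)S_1 R_1\}$ exactly. The care goes into the TCR bookkeeping — correctly propagating the $\sigma$-twists through the products in $B$, confirming the relevant sub-linear-systems $H^0(E,\sL_5(-\,\cdot\,))$ are base-point-free away from their obvious base loci, and pinning down precisely which coincidences among $p,q,r$ and their $\tau$-translates must be excluded as the ``minor conditions.''
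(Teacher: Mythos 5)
Your overall strategy is genuinely different from the paper's. The paper does not compute $\Hom_R(J_{a'},J_b)_1$ directly at all: it blows down $L_p$ (using that $(L_p\dotms L_q)=(L_p\dotms L_{p'})=0$ together with Lemma~\ref{lem:iterate}), identifies $\wt{J_q}=W(\tau q)V(\sigma q+\sigma r)\wt R$ with the exceptional line of the two-point blowup $\wt R=T(q+r)$ of $T(r)$ and $\wt{J_{p'}}=xS_2\wt R$ with the ``third line'' already analysed in Proposition~\ref{prop:converse}, reads off $(\wt{L_{p'}}\dotms\wt{L_q})=1$ from that proposition, and transfers the answer back via Proposition~\ref{prop:disjoint}. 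Your reduction to $\Hom_R(J_p,J_{q'})_1=(J_{q'})_1$ via Lemma~\ref{lem1-examples}, and the double-dual bound $\Hom_R(J_p,J_{q'})_1\subseteq yS_2$, are both fine and mirror Lemma~\ref{lem:F}.

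The gap is in the step you yourself flag as the obstacle, and it is not just bookkeeping: the mod-$g$ vanishing argument fails outright for configurations that the lemma must cover. Concretely, $\overline{(J_p)_1}=H^0(E,\sL_3(-\tau p-p-q-r))$, so $\overline{w}\,\overline{s}$ has divisor $\operatorname{div}\overline{w}+\sigma^{-2}\operatorname{div}\overline{s}$, and to force $\overline{w}$ to vanish at $\sigma q$ you need $\tau q$ to lie outside the base locus $\{\tau p,p,q,r\}$ of this (degree-$5$, hence otherwise base-point-free) system. If $\tau q=p$ or $\tau q=r$, every $\overline{s}$ already vanishes at $\tau q$, the containment modulo $g$ holds for \emph{every} $w\in S_2$, and your argument yields no constraint; one would then have to work with the $g$-adic structure of $W(\sigma q)S_1R_1\cap gS_2$, which is a different (and harder) argument. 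These coincidences, like the coincidence $\tau p=q'$ that you need to rule out to get $L_p\not\cong L_{q'}$ from divisors, are \emph{not} excluded by the standing hypotheses of Theorem~\ref{thm:Cremona} (distinctness of $p,q,r$ and non-collinearity of $\sigma p,\sigma q,\sigma r$) — note that the stronger ``repetitions allowed'' hypothesis is only imposed for the final distinctness claim, after the lemma has already been used. So as written your method proves the lemma only under additional genericity assumptions, i.e.\ a strictly weaker statement; this is precisely what the paper's detour through Proposition~\ref{prop:converse} and Proposition~\ref{prop:disjoint} is designed to avoid.
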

\begin{proof}
Without loss of generality, we compute $(L_{p'} \dotms L_{q})$.
Write $\wt{R} = T(q+r)$, which is the blowdown of $R$ at $L_p$.  
By Lemmata~\ref{lem:E} and \ref{lem:F}(1), $(L_p \dotms L_{q}) = 0 = (L_p \dotms L_{p'})$.  Thus by Lemma~\ref{lem:iterate}(1), we may blow down the line ideals $J_q$ and $J_{p'}$ at $L_p$, to obtain line ideals $\wt{J_q}$ and $\wt{J_{p'}}$ in  $\wt{R}$ such that the line modules $\wt{L_q} = \wt{R}/\wt{J_q}$ and $\wt{L_{p'}} = \wt{R}/\wt{J_{p'}}$ again have self-intersection $(-1)$.

By Lemmata~\ref{lem:bbd-ideal}(3) and \ref{lem:E}(2),   
$ \wt{J_q} = \Hom_R(J_p, J_q)_1 \wt{R} =  W(\tau q) V(\sigma q + \sigma r)\wt{R}.$
Note that by \eqref{uselater}, 
 $\wt{L_q}$ is the exceptional line module that comes from writing $\wt{R} $ as the blowup of $T(r)$ at $q$.  
Likewise, by Lemmata~\ref{lem:bbd-ideal}(2) and~\ref{lem:F}(2)  
$\wt{J_{p'}} = \Hom_R(J_p, J_{p'})_1 \wt{R} = xS_2 \wt{R}.$
Thus $\wt{L_{p'}}$ is the line module denoted by $L_r$ in the proof of Proposition~\ref{prop:converse}.

Finally, Proposition~\ref{prop:converse} shows that $(\wt{L_{p'}} \dotms \wt{L_q}) = 1$ and so $(L_{p'} \dotms L_q) = 1$ by Proposition~\ref{prop:disjoint} below.
\end{proof}

We can now almost complete the proof of Theorem~\ref{thm:Cremona}, modulo proving  one final result. 

\begin{proof}[Proof of Theorem~\ref{thm:Cremona}]
 Lemmata~\ref{lem:E}, \ref{lem:F},   \ref{lem:H}, and \ref{lem:J} together establish that  $R$ has a hexagon of $(-1)$ lines with the intersection theory indicated in Figure~\ref{fig33}.  
 We will use these computations without further comment.

 Since 
 $(L_{p'} \dotms L_{p'}) = -1$ we may, by \cite[Theorem~8.3]{blowdownI}, blow down $L_{p'}$ to obtain an overring $\wh{R} $ of $R$ so that $R = \wh{R}(\tau^{-1}(p'))$.  By that result, $\wh{R}$ is a degree 7 elliptic algebra while, 
 by \cite[Theorem~9.1]{blowdownI}, $\rqgr \wh{R}$ is smooth.  
 
 Now $(L_{p'} \dotms L_{q'}) = (L_{p'} \dotms L_p)  = (L_{p'} \dotms L_{r'}) = 0$ while  $L_{q'}, L_p, L_{r'}$ have self-intersection $(-1)$.
 So,  by Lemma~\ref{lem:iterate} there are induced $\wh{R}$-line modules $\wh{L_{q'}}$, $\wh{L_p}$, and $\wh{L_{r'}}$, each of which has self-intersection $(-1)$.  
Moreover, $\Div \wh{L_{q'}} = q'$ and  $\Div \wh{L_{r'}} =r'$ while, 
by construction, $q' = \ominus p \ominus r \neq r' = \ominus p \ominus q$.
 Thus,  by  Proposition~\ref{prop:disjoint}, below, $(\wh{L_{q'} }\dotms \wh{L_{r'}}) =(L_{q'} \dotms L_{r'}) = 0$ and
 $(\wh{L_p} \dotms \wh{L_{q'}}) = (\wh{L_p} \dotms \wh{L_{r'}}) = 1$.
 
 Therefore, after appropriately  renaming the line modules,  $\wh{R}$ satisfies the hypotheses of Theorem~\ref{ithm:recog} and so there is an overring $T'$ of $\wh{R}$ such that $T' \cong T$ and $\wh{R} = T'(\tau^{-1}q' + \tau^{-1}r')$.  
In other words,    $R=T(p_1,q_1,r_1)$, for $p_1=\tau^{-1}(p')$, $q_1=\tau^{-1}(q')$  and $r_1=\tau^{-1}(r')$.

It remains to check that the 6 points $\{p,q,r,p_1, q_1, r_1\}$ are distinct, under the additional assumption that $\sigma x, \sigma y, \sigma z$ are not collinear for any $x, y, z \in \{p,q,r \}$ chosen with possible repetition.  
 This is a  routine computation, combining   the definition of 
$p',q',r'$ with the hypotheses of the theorem  and  the fact that points $x,y,z\in E$ are collinear if and only if $x\oplus y\oplus z=0$. We leave details  to the reader.
\end{proof}
    
In order to complete the proof of Theorem~\ref{thm:Cremona}, it remains  to prove the following result, which generalises Lemma~\ref{lem:iterate}(1).
The point of the result is   that 
 contracting  a line $L$ does not affect the interaction of other lines which are disjoint from $L$,  as one would hope.

\begin{proposition}\label{prop:disjoint}
 Let $R$ be an  elliptic algebra such that $\rqgr R$ is smooth and let $L, L_p, L_q$ be line modules with line ideals $J, J_p, J_q$ respectively, with $\Div L_p = p$ 
and $\Div L_q = q$.  (We allow $L_p \cong L_q$ here.)
  Assume that:
 \begin{enumerate}
 \item $(L \dotms L) = -1$;
\item  for $x \in \{p,q\}$, $(L_x \dotms L_x) = -1$;
 \item for $x \in\{p,q\}$, $(L \dotms L_x) = 0$.
 \end{enumerate}
 Let $\wt{R}$ be the blowdown of $R$ along the line $L$.
 As in Lemma~\ref{lem:iterate}(1), for $x\in \{p,q\}$ let $\wt{J_x}$ be the blowdown to $\wt{R}$ of $J_x$,  and let $\wt{L_x} = \wt{R}/\wt{J_x}$, which by Lemma~\ref{lem:iterate} is a line module over $\wt R$.
 Then
 \[ (\wt{L_p} \dotms \wt{L_q}) = (L_p \dotms L_q),\]
 where the intersection product on the left hand side is in $\rqgr \wt{R}$, and on the right hand side is in $\rqgr R$.
  \end{proposition}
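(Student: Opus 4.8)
The plan is to reduce the statement to a computation with graded $\Hom$ and $\Ext^1$ groups over $R$, using the characterisation of intersection numbers in \eqref{peppermint} and the explicit description of blowdowns from \cite{blowdownI} together with Lemma~\ref{lem:bbd-ideal}. First I would note that since $(L\dotms L_p)=(L\dotms L_q)=0$ we have $L\not\cong L_p$ and $L\not\cong L_q$, so each $\wt{J_x}$ is obtained as in Lemma~\ref{lem:bbd-ideal} with index $i=1$: concretely $\wt{J_x}=\Hom_R(J,J_x)R$, it is a $g$-divisible MCM line ideal of $\wt R$ generated in degree $1$, and $\wt{J_x}/J_x\cong\bigoplus_{j\ge1}L[-j]$ by \cite[Lemma~8.2]{blowdownI}. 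By \cite[Corollary~9.2]{blowdownI} $\wt R$ is again an elliptic algebra with $\rqgr\wt R$ smooth, so \eqref{peppermint} applies verbatim over $\wt R$, giving
\[
(\wt{L_p}\dotms\wt{L_q})=\grk\Ext^1_{\wt R}(\wt{L_p},\wt{L_q})-\grk\Hom_{\wt R}(\wt{L_p},\wt{L_q}),
\]
and similarly over $R$ for $(L_p\dotms L_q)$. So it suffices to compare these two differences.

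The key observation is that $\Hom_{\wt R}(\wt{L_p},\wt{L_q})$ and $\Hom_R(L_p,L_q)$ can both be read off from $\Hom$ groups between the line ideals: since $L_x=R/J_x$ (resp.\ $\wt{L_x}=\wt R/\wt{J_x}$), one has exact sequences
\[
0\to\Hom(J_p,J_q)\to\Hom(R,J_q)=J_q\to\Hom(J_p,L_q)\to\Ext^1(J_p,J_q)\to 0
\]
over each ring, and $\Hom(L_p,L_q)$, $\Ext^1(L_p,L_q)$ sit in the long exact sequence coming from $0\to J_p\to R\to L_p\to0$ applied to $L_q$. The cleanest route is: (a) identify $\Hom_{\wt R}(\wt{J_p},\wt{J_q})$ with $\Hom_R(J_p,J_q)$ up to a controlled correction. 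Here I would use that $\wt{J_x}=\Hom_R(J,J_x)R$ and mimic the argument in the proof of Lemma~\ref{lem:iterate2}: a degree-preserving map $\wt{J_p}\to\wt{J_q}$ restricts to a map $J_p\to\wt{J_q}$, and one shows $\Hom_R(J_p,\wt{J_q})=\Hom_R(J_p,J_q)$ in each relevant degree because $\wt{J_q}/J_q$ is built from shifted copies of $L[-j]$ with $(L_p\dotms L)=0$, which forces the relevant $\Ext$ and $\Hom$ obstructions to vanish; conversely a map $J_p\to\wt{J_q}$ extends to $\wt{J_p}\to\wt{J_q}$ because $\wt{J_p}$ is generated by $J_p$ together with a copy of $L$ and $\Hom_R(L,\wt{J_q})$ is again controlled by $(L\dotms L)=-1$ and $(L\dotms L_q)=0$. (b) Having matched the $\Hom$ groups, conclude that the $\Ext^1$ groups match as well, either directly via Lemma~\ref{lem:bbd-ideal}'s Hilbert series bookkeeping (both $\hilb\Hom$ and $\hilb\wt R-\hilb\wt{\cdot}$ change in lockstep) or by invoking \eqref{peppermint} and the formula $(L_p\dotms L_q)=\grk\Ext^1_R(L_p,L_q)-\grk\Hom_R(L_p,L_q)$ once $\grk\Hom$ is known to be preserved and the self-intersections $(\wt{L_x}\dotms\wt{L_x})=-1$ are already known from Lemma~\ref{lem:iterate}.

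An alternative, possibly slicker, approach avoids line ideals: work in $\rqgr R$ and $\rqgr\wt R$ directly. The blowdown functor identifies $\rqgr\wt R$ with a full subcategory, or rather there is a close relationship between $\X=\rqgr R$ and $\rqgr\wt R$ via the exceptional object $L$; since $(L\dotms L_p)=0=(L\dotms L_q)$, the objects $L_p,L_q$ are "orthogonal" to $L$ in the appropriate derived sense, so their images $\wt{L_p},\wt{L_q}$ in $\rqgr\wt R$ should have the same $\uHom$ and $\uExt^1$ as $L_p,L_q$ do in $\X$. Concretely one can use Serre duality (Proposition~\ref{prop2}, Corollary~\ref{cor:underline}) to reduce everything to $\uHom$ computations, and then the vanishing of $(L\dotms L_x)$ combined with \cite[Lemma~8.2]{blowdownI} kills all the correction terms. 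I would carry this out by writing $\wt{L_x}$ as an extension-theoretic modification of $L_x$ built from $L$, taking $\uHom(-,\wt{L_q})$ of the defining sequence for $\wt{L_p}$, and checking term by term that the $L$-contributions vanish.

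The main obstacle I anticipate is step (a): precisely controlling $\Hom_R(J_p,\wt{J_q})$ versus $\Hom_R(J_p,J_q)$ and $\Hom_R(\wt{J_p},\wt{J_q})$ versus $\Hom_R(J_p,\wt{J_q})$, i.e.\ showing the relevant connecting maps in the long exact sequences vanish. This is where one genuinely needs $(L\dotms L_p)=(L\dotms L_q)=0$ and not merely $L\not\cong L_x$: the vanishing of $(L\dotms L_x)$ together with $(L\dotms L)=-1$ is exactly what makes $\Ext^1_R(L[-j],\wt{J_q})$ and $\Hom_R(L[-j],\wt{J_q})$ behave well enough. I expect this to parallel closely the bookkeeping already done in the proofs of Lemma~\ref{lem:iterate}(1) and especially Lemma~\ref{lem:iterate2}, where essentially the same kind of argument ("$xI\subseteq K$ with $\GKdim$ small forces $x\in K$ by \cite[Lemma~4.5]{blowdownI}") is used repeatedly; the new content is just assembling these pieces on both sides of the blowdown and matching Hilbert series. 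Once the $\Hom$ groups are matched, the $\Ext^1$ comparison and hence the equality of intersection numbers should follow formally.
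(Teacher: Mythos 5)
Your overall direction --- compare $\Hom$ groups between line ideals on both sides of the blowdown and read off intersection numbers via Lemma~\ref{lem1-examples} --- is reasonable, and the containment $\Hom_R(J_p,J_q)\subseteq\Hom_{\wt R}(\wt{J_p},\wt{J_q})$ you implicitly rely on is indeed easy. But your central step (a) does not work as described. The hypothesis $(L\dotms L_x)=0$ says only that $\grk\Hom$ and $\grk\Ext^1$ agree; it does \emph{not} make $\Hom_R(J_p,L[-j])$ or $\Hom_R(L[-j],L_q)$ vanish, so the correction terms coming from $\wt{J_q}/J_q\cong\bigoplus_{j\ge1}L[-j]$ are genuinely nonzero and $\Hom_R(J_p,\wt{J_q})\ne\Hom_R(J_p,J_q)$ in general. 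The point you are missing is that a shifted copy of $L$ may embed into the line module $L_q$ itself --- by \cite[Lemma~5.5]{blowdownI} this happens exactly when $\Div L=\tau^j(q)$ for some $j>0$ --- and in that case $\wt{J_q}\cap R$ is strictly larger than $J_q$, which destroys any naive identification of $\Hom$ groups across the blowdown.

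The paper's proof is forced to confront exactly this: it proves the key sublemma $\wt{J_q}\cap R=J_q$ \emph{only} under the assumption $r\ne\tau^j(q)$ for $j>0$ (using $2$-purity of $\wt{J_q}/(\wt{J_q}\cap R)$ and \cite[Proposition~8.1]{blowdownI}), and then rules out the two mismatched scenarios not by matching whole $\Hom$ and $\Ext^1$ groups but by comparing degree-one dimensions via Lemma~\ref{lem1-examples} and Lemma~\ref{lem:new}, where the dimension counts $\dim R_1-1$ versus $\dim\wt R_1-2$ happen to collide. The excluded case $r=\tau^j(q)$ with $j>0$ then requires a separate dualization argument: one passes to the left line modules $L_y^\vee$, whose divisors are shifted so the obstruction moves to $j<0$, runs the same argument on the left, and transfers back using $\wt{N_x}\cong(\wt{L_x})^\vee$ and Corollary~\ref{cor:symm}. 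Your proposal contains neither the case division nor the dual-side argument, and your fallback ``derived orthogonality'' sketch presupposes a functor between $\rqgr R$ and $\rqgr\wt R$ preserving $\uHom$ and $\uExt^1$ that is not constructed anywhere in this framework. These are missing ideas, not just bookkeeping.
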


  \begin{remark}\label{rem:airplane1} 
 We note that the proposition still holds without Hypothesis~(2), although the proof is more complicated and  is omitted. 
\end{remark}
  
\begin{proof}[Proof of Proposition~\ref{prop:disjoint}]
 
Write $r = \Div L$, and let $R/gR = B(E, \sM, \tau)$.  
Throughout the proof a statement involving $x$ is being asserted to hold for both $x = p$ and $x = q$.

First, if $L_p \cong L_q$, then obviously $\wt{L_p} \cong \wt{L_q}$.  In this case, since $(L_p \dotms L_p) = -1$, we have $(\wt{L_p} \dotms \wt{L_q}) = (\wt{L_p} \dotms \wt{L_p}) = -1$ by Lemma~\ref{lem:iterate}, as required.  So from now on we can and will  assume that  $L_p \not \cong L_q$ and hence $J_p \neq J_q$.
Note that  $L\not\cong L_x$ and hence $J\not=J_x$  by comparing Hypotheses~(1) and ~(3). 

\medskip
{\bf Case I: Assume that $r\not=\tau^j(q)$ for $j\geq 0$.}

The point of this assumption is that it allows us to prove:

\begin{sublemma}\label{sublem:interesction}
Keep the hypotheses of the proposition and assume that $r\not=\tau^j(q)$ for $j> 0$. 
Then $\wt{J_q}\cap R = J_q$.
\end{sublemma}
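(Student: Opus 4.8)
The plan is to reduce the statement to a computation modulo $g$ and then read off the answer from the explicit description of $\overline{\widetilde{J_q}}$ obtained in the proof of Lemma~\ref{lem:bbd-ideal}. First I would record the trivial inclusion $J_q\subseteq\widetilde{J_q}\cap R$ and observe that $P:=\widetilde{J_q}\cap R$ is $g$-divisible: $\widetilde{J_q}$ is $g$-divisible by Lemma~\ref{lem:bbd-ideal}(2), $R$ is obviously $g$-divisible, and because $g$ is a nonzerodivisor the intersection of two $g$-divisible submodules of $R_{(g)}$ is again $g$-divisible. Hence $J_q\subseteq P\subseteq\widetilde{J_q}$ is a chain of $g$-divisible submodules, so reduction mod $g$ gives graded inclusions $\overline{J_q}\subseteq\overline{P}\subseteq\overline{\widetilde{J_q}}\cap\overline{R}$ inside $R_{(g)}/gR_{(g)}=Q_{\gr}(B)$. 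Since $\hilb P=\hilb\overline{P}/(1-s)$ and $\hilb J_q=\hilb\overline{J_q}/(1-s)$, it suffices to prove $\overline{\widetilde{J_q}}\cap\overline{R}=\overline{J_q}$; the desired conclusion $P=J_q$ then follows at once.

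To compute $\overline{\widetilde{J_q}}\cap\overline{R}$, note that the hypothesis $(L\dotms L_q)=0$, together with the observation (made in the proof of Proposition~\ref{prop:disjoint}) that $L\not\cong L_q$ and hence $J\neq J_q$, places us via Lemma~\ref{lem1-examples} in the case $i=1$ of Lemma~\ref{lem:bbd-ideal}, applied to the contracted line $L=R/J$ (with $\Div L=r$) and the ideal $K=J_q$ (with $\overline{J_q}=\bigoplus_n H^0(E,\sM_n(-q))$). Thus equation \eqref{eq:down} yields
\[ \overline{\widetilde{J_q}}=\overline{J_q}+\bigoplus_{n\geq 1}H^0\bigl(E,\sM_n(-q+\tau^{-1}(r)+\dots+\tau^{-n}(r))\bigr). \]
Under the standard embedding $\overline{R}=B\subseteq\kk(E)[t,t^{-1};\tau]$ the degree-$n$ piece of $\overline{R}$ is $H^0(E,\sM_n)t^n$, so in each degree $n\geq 1$ the intersection $\bigl(\overline{\widetilde{J_q}}\bigr)_n\cap\overline{R}_n$ equals $H^0(E,\sM_n)\cap H^0\bigl(E,\sM_n(-q+\tau^{-1}(r)+\dots+\tau^{-n}(r))\bigr)=H^0(E,\sM_n(-q))=\overline{J_q}_n$. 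Indeed, since $|\tau|=\infty$ the points $\tau^{-1}(r),\dots,\tau^{-n}(r)$ are pairwise distinct, and the hypothesis $r\neq\tau^j(q)$ for $j>0$, that is $q\notin\{\tau^{-1}(r),\tau^{-2}(r),\dots\}$, guarantees that the divisor $-q+\tau^{-1}(r)+\dots+\tau^{-n}(r)$ still has coefficient $-1$ at $q$; taking the coefficientwise minimum with the zero divisor therefore leaves exactly $-q$. In degree $0$ both sides vanish, so $\overline{\widetilde{J_q}}\cap\overline{R}=\overline{J_q}$, as required.

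The only substantive point is this last one, and it is precisely where the hypothesis is used: if instead $q=\tau^{-j_0}(r)$ for some $j_0\geq1$, then the $-q$ would be cancelled by the $+\tau^{-j_0}(r)$ term in every degree $n\geq j_0$, so $\overline{\widetilde{J_q}}_n$ would contain all of $H^0(E,\sM_n)=\overline{R}_n$ and the sublemma would fail --- which is exactly why Case~I is split off in the proof of Proposition~\ref{prop:disjoint}. I do not expect any real difficulty beyond keeping careful track of which line plays the role of the contracted line (divisor $r$) and which plays the role of $K$ (divisor $q$) when specialising Lemma~\ref{lem:bbd-ideal}, and checking that the three reductions $\overline{P}$, $\overline{\widetilde{J_q}}$, $\overline{R}$ embed compatibly into $Q_{\gr}(B)$ so that the intersection may be computed degreewise.
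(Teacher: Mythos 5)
Your proof is correct, but it takes a genuinely different route from the paper's. You work entirely modulo $g$: after observing that $\wt{J_q}\cap R$ is $g$-divisible (intersections of $g$-divisible submodules of $R_{(g)}$ are $g$-divisible since $g$ is regular), you reduce to computing $\overline{\wt{J_q}}\cap\overline{R}$ inside $Q_{\gr}(B)$, read off $\overline{\wt{J_q}}_n=H^0(E,\sM_n(-q+\tau^{-1}(r)+\dots+\tau^{-n}(r)))$ from the case $i=1$ of Lemma~\ref{lem:bbd-ideal} (correctly identified via Lemma~\ref{lem1-examples} from $(L\dotms L_q)=0$ and $J\neq J_q$), and conclude by a coefficientwise-minimum computation of divisors, where the hypothesis $r\neq\tau^j(q)$ for $j>0$ enters exactly as you say. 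The paper instead argues structurally and by contradiction: it shows $\wt{J_q}/(\wt{J_q}\cap R)$ is $2$-pure using reflexivity of $R$, invokes the splitting result \cite[Proposition~8.1]{blowdownI} to realise $(\wt{J_q}\cap R)/J_q$ as a direct sum of shifts of $L$, cuts it down to a single $L[-c]$ by Hilbert series, and then uses the embedding criterion $L[-c]\hra L_q$ of \cite[Lemma~5.5]{blowdownI} to force $r=\tau^j(q)$. Your computation is more elementary and self-contained (no purity or direct-sum machinery) and makes the failure mode when $r=\tau^{j}(q)$ completely explicit; the paper's argument has the advantage that it transfers essentially verbatim to the left-sided situation of Sublemma~\ref{sublem:interesction2} by only replacing the final embedding step, whereas your route would require restating the left-handed analogue of \eqref{eq:down}. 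Both are valid; just make sure, as you note, that all three reductions are taken as images in $R_{(g)}/gR_{(g)}$ so that only the inclusion $\overline{\wt{J_q}\cap R}\subseteq\overline{\wt{J_q}}\cap\overline{R}$ (not equality) is needed.
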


\begin{proof}  
  Since  $J\not=J_q$, we have $i\geq 1$ in Lemma~\ref{lem:bbd-ideal}.  
Hence $\wt{J_q}\not=\wt{R}$   by Part~(3) of that lemma.  Write 
$X:=\wt{J_q} \cap R \subseteq N \subseteq \wt{J_q}$, where $N/X$ 
is  some finitely generated graded $R$-submodule of $\wt{J_q}/X$ with  $\GKdim(N/X)\leq 1$.
  Then $\GKdim(N+R)/R = 1$ and so, as $R$ is reflexive, \cite[Lemma~4.5]{blowdownI} implies that 
  $N \subseteq R$ and hence  $N = X$.  Therefore, $\wt{J_q}/X$ is  $2$-pure as 
an $R$-module.  
 
By \cite[Lemma~8.2]{blowdownI}, $Z: =\wt{J_q}/J_q = \bigoplus_{i \in \mathbb{I}} L[-a_i]$, where $a_i\geq 0$ 
for all $i$.  Let $Y:=X/J_q$, which embeds in $Z$.  Now apply \cite[Proposition 8.1]{blowdownI}: since 
$Z/Y \cong \wt{J_q}/X$ is $2$-pure, there is an internal direct sum 
$Z = Y \oplus (\bigoplus_{i \in \mathbb{J}} L[-a_i])$ for some subset $\mathbb{J}$ of $\mathbb{I}$, and thus  
$Y \cong  \bigoplus_{i \in \mathbb{I} \setminus \mathbb{J}} L[-a_i]$.  
Now $Y = X/J_q \subseteq R/J_q \cong L_q$ is a submodule of a line module and is also isomorphic to 
a direct sum of shifts of line modules.  Comparing Hilbert series, there must be only one line module in the sum, 
so $Y \cong L[-c]$ for some $c \geq 0$.
 
Finally,  $ L[-c]\cong Y$ embeds in the line module $R/J_q \cong L_q$, and by \cite[Lemma 5.5]{blowdownI} this forces $r = \tau^j(q)$ for some $j \geq 0$. Necessarily, $j>0$ since otherwise   $L \cong L_q$, which  we have excluded.   So $r = \tau^j(q)$ for some $j > 0$, contradicting the hypothesis of the sublemma.
\end{proof}

 We now return to the proof of the proposition.       Suppose first that $\wt{L_p}\cong \wt{L_q}$. Then $\wt{J_p}=\wt{J_q}$ and hence, by the sublemma,  $J_p\subseteq \wt{J_p}\cap R =\wt{J_q}\cap R = J_q.$
 Since $\hilb R/J_p = 1/(1-s)^2 = \hilb R/J_q$ this forces $J_p=J_q$; a contradiction. 
 We conclude that $\wt{L_p} \not \cong \wt{L_q}$. 
Since $(L_p \dotms L_p) = -1$, we also have $(\wt{L_p} \dotms \wt{L_p}) = -1$ by Lemma~\ref{lem:iterate}.  In particular,  Lemma~\ref{lem1-examples} 
 implies that  $(L_p \dotms L_q) \in \{0,1 \}$ and $(\wt{L_p} \dotms \wt{L_q}) \in \{0,1 \}$.

By Lemma~\ref{lem:bbd-ideal}(1),   
$$\Hom_R(J_p,J_q)  (\wt{J_p}) \ = \ \Hom_R (J_p,J_q)\bigl( \Hom_R(J,J_p)R \bigr) 
\ \subseteq  \Hom_R(J, J_q)  R \ = \ \wt{J_q}.$$
Thus $\Hom_R(J_p, J_q) \subseteq \Hom_{\wt{R}}(\wt{J_p}, \wt{J_q})$.   

Suppose next that $(L_p \dotms L_q) = 0$ but $(\wt{L_p} \dotms \wt{L_q}) = 1$.  
First, recall that $\dim \wt{R}_1 = \dim R_1+1$ by \cite[Theorem~8.3]{blowdownI}.  
By Lemma~\ref{lem1-examples},  twice,  $(J_q)_1 \subsetneqq \Hom_R(J_p, J_q)_1$ with $\dim \Hom_R(J_p, J_q)_1 = \dim R_1 - 1$ while   $\Hom_{\wt{R}}(\wt{J_p}, \wt{J_q})_1 = (\wt{J_q})_1$ with  $\dim \Hom_{\wt{R}}(\wt{J_p}, \wt{J_q})_1 = \dim \wt{R}_1 - 2 = \dim R_1 - 1$.  It therefore  follows from the last paragraph  
that  $\Hom_R(J_p, J_q)_1 = \Hom_{\wt{R}}(\wt{J_p}, \wt{J_q})_1 = (\wt{J_q})_1 = \Hom_R(J, J_q)_1$.
Now since $J \neq J_p$, Lemma~\ref{lem:new} implies that $\Hom_R(J_p, J_q)_1 = (J_q)_1$.  
By  Lemma~\ref{lem1-examples} this contradicts $(L_p \dotms L_q) = 0$. 

Finally, assume that $(L_p \dotms L_q) = 1$ but $(\wt{L_p} \dotms \wt{L_q}) = 0$.   By  Sublemma~\ref{sublem:interesction},
 $\wt{J_q} \cap R = J_q$ and so 
\[
\Hom_{\wt{R}}(\wt{J_p}, \wt{J_q}) \cap R \subseteq \Hom_R(\wt{J_p} \cap R, \wt{J_q} \cap R) \subseteq \Hom_R(J_p, \wt{J_q} \cap R) = 
\Hom_R(J_p, J_q).
\]
In particular,  using Lemma~\ref{lem1-examples}, 
$(\Hom_{\wt{R}}(\wt{J_p}, \wt{J_q}) \cap R)_1 \subseteq \Hom_R(J_p, J_q)_1 = (J_q)_1$.  
The same lemma shows   
that $\dim (\Hom_{\wt{R}}(\wt{J_p}, \wt{J_q}))_1  = \dim \wt{R}_1 - 1$.  Since $\dim R_1 = \dim \wt{R}_1 -1$, 
we get 
$\dim (\Hom_{\wt{R}}(\wt{J_p}, \wt{J_q}) \cap R)_1 \geq \dim \wt{R}_1 -2$, 
while $\dim (J_q)_1 = \dim R_1 -2 = \dim \wt{R}_1 - 3$.
This is a contradiction.

Therefore, the only possibility is that  $(L_p \dotms L_q) =  (\wt{L_p} \dotms \wt{L_q})$ and Case I is complete.

\medskip
{\bf Case II: Assume that $r=\tau^j(q)$ for some  $j> 0$.}

This part of the proof will actually work  whenever  $r\not=\tau^j(q)$ for $j< 0$. This case will parallel that of Case I, except that we will pass from right to left modules. 

Write $L_r= L$ and let $y\in \{r,p,q\}$ and $x\in\{p,q\}$.  By \cite[Lemma~5.6]{blowdownI},
 $L^{\vee}_y = \Ext^1_R(L_y, R)[1]$ is a left line module, and we write $N_y=L^{\vee}_y \cong R/K_y$ for the left line ideal $K_y$.  The relationship between the left line ideal $K_y$ and right line ideal $J_y$ is most easily 
expressed in terms of the $R$-linear duals.

\begin{sublemma}\label{sublem:dualrelation} 
Let $J$ be a right line ideal and $K$ a left line ideal in $R$.  Then $R/K \cong (R/J)^{\vee}$ if and only if 
$\Hom_R(J,R)_1 = \Hom_R(K, R)_1$.
\end{sublemma}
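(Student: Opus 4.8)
The plan is to prove Sublemma~\ref{sublem:dualrelation} by working with the line extensions attached to $J$ and $K$ and using the duality facts recorded in Notation~\ref{line-notation}. Recall that for the right line module $L=R/J$ there is a unique module $M$ with $R\subseteq M\subseteq Q_{\gr}(R)$ and $M/R\cong L[-1]$, namely the line extension of $L$, and by \cite[Lemma~5.6]{blowdownI} one has $M=\Hom_R(J,R)=J^*$; moreover the line ideal of $L^\vee$ is the reflexive dual of $M$, i.e. $K_{L^\vee}=M^*=(J^*)^*$. Dually, for the left line module $R/K$ the line extension is $K^*=\Hom_R(K,R)$, and $(R/K)^\vee$ has line ideal $(K^*)^*$. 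So the content of the sublemma is: $R/K\cong L^\vee$ iff $(J^*)^*=(K^*)^*$ as submodules of $Q_{\gr}(R)$, and the claim is that this equality can be detected in degree $1$ on $J^*$ and $K^*$ themselves.

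First I would observe that $J^*$ and $K^*$ are both line extensions, hence MCM and reflexive, and that they sit between $R$ and $Q_{\gr}(R)$ with $J^*/R$, $K^*/R$ each a shift of a line module; in particular $(J^*)_0=(K^*)_0=\kk$ and $\dim (J^*)_1=\dim(K^*)_1=\dim R_1+1$, since $L[-1]_1=\kk$. Because a line extension is reflexive and generated in degrees $0$ and $1$ (its image $\overline{J^*}$ in $B$ is a line extension over the TCR $B$, hence generated in degree $\le 1$ by \cite[Lemma~2.2]{blowdownI}, and $g$-divisibility lifts this), a line extension inside $Q_{\gr}(R)$ is determined by its degree-$\le 1$ part; so $J^*=K^*$ as submodules of $Q_{\gr}(R)$ iff $(J^*)_1=(K^*)_1$. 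Taking reflexive duals is an order-reversing bijection on reflexive submodules, so $J^*=K^*\iff (J^*)^*=(K^*)^*\iff$ the line ideals of $L^\vee$ and of $R/K$ coincide $\iff L^\vee\cong R/K$ (two line modules with the same line ideal are equal). This reduces the whole statement to: $L^\vee\cong R/K\iff (\Hom_R(J,R))_1=(\Hom_R(K,R))_1$, which is exactly what is asserted once one identifies $\Hom_R(J,R)=J^*$ and $\Hom_R(K,R)=K^*$.

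For the forward direction: if $R/K\cong L^\vee$ then $K^*$ is the line extension of $L^\vee$, which is the reflexive dual of $M=J^*$; but $L\cong L^{\vee\vee}$ forces $(M^*)^*=M$, so $K^*=M^*$ has reflexive dual $M=J^*$. Then $\overline{K^*}$ and $\overline{J^*}$ are both line extensions over $B$ whose reflexive $B$-duals agree up to large-degree equality, and the degree-$1$ piece of a line extension over a TCR of degree $\ge 3$ generates it off a finite-dimensional piece; chasing this back through $g$-divisibility (using \cite[Lemma~4.8]{blowdownI} and \cite[Lemma~4.4]{blowdownI} as in the proof of Lemma~\ref{lem:bbd-ideal}) gives $(J^*)_1=(K^*)_1$. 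Conversely, if $(J^*)_1=(K^*)_1$, then since each of $J^*,K^*$ is the unique line extension with that degree-$1$ part (here I would cite the uniqueness in Notation~\ref{line-notation}: $M\subseteq Q_{\gr}(R)$ with $R\subseteq M$, $M/R\cong L[-1]$ is unique, and $M$ is recovered from $M_{\le 1}$ by $R$-multiplication and $g$-divisibility), we get $J^*=K^*$, hence $(J^*)^*=(K^*)^*$, i.e. the line ideal of $L^\vee$ equals $K$, so $R/K\cong L^\vee$.

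The main obstacle I anticipate is the claim that a line extension (or more precisely the module $\Hom_R(J,R)$) is determined inside $Q_{\gr}(R)$ by its degree-$0$ and degree-$1$ components — i.e. that it is generated in degrees $\le 1$ together with $g$-divisibility. This is where one genuinely needs the structure theory: one passes to $\overline{J^*}\subseteq Q_{\gr}(B)$, identifies it (up to finite-dimensional discrepancy) with $\Hom_B(\overline J,B)$ via \cite[Lemma~4.8]{blowdownI}, notes $\overline J$ is a point ideal so $\overline{J^*}=\bigoplus_n H^0(E,\mc M_n(\tau^{-n}(\Div L)))$ — wait, more simply, $\overline{J^*}$ is a line extension over $B$ and \cite[Lemma~2.3]{blowdownI} together with \cite[Lemma~2.2]{blowdownI} shows it is generated over $B$ in degree $\le 1$ when $\deg\mc M\ge 3$ — and then lifts generation to $R$ by the graded Nakayama lemma, using that $J^*$ is $g$-divisible by \cite[Lemma~4.4]{blowdownI}. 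Handling the low-degree edge cases carefully (e.g. ensuring $(J^*)_0=\kk$ genuinely pins down the degree-$0$ part, which follows from $\Hom_R(J,R)_0=\kk$ in \cite[Lemma~5.6(3)]{blowdownI}) is the fiddly part, but it is routine given the cited lemmas.
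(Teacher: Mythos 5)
Your argument has a genuine gap, and it originates in the very first identification. You set $M$ to be the line extension of $L=R/J$, i.e.\ the right module with $R\subseteq M$ and $M/R\cong L[-1]$, and then assert $M=\Hom_R(J,R)=J^*$. That is not what $\Hom_R(J,R)$ is: applying $\Hom_R(-,R)$ to $0\to J\to R\to L\to 0$ gives $0\to R\to \Hom_R(J,R)\to \Ext^1_R(L,R)\to 0$, so $J^*=\{x\in Q_{\gr}(R)\,:\,xJ\subseteq R\}$ is a \emph{left} module whose quotient by $R$ is a shift of the \emph{left} line module $L^\vee$, not of $L$. (Compare the proof of Lemma~\ref{lemB}, where this module is denoted $M^\vee$ with $M^\vee/R\cong L^\vee[-1]$, and the discussion before Lemma~\ref{Jr-description}.) Consequently $(J^*)^*=J^{**}=J$ by reflexivity of the line ideal, so your formula ``$K_{L^\vee}=(J^*)^*$'' returns the original right ideal $J$ rather than the left line ideal of $L^\vee$, and your reduction ``$R/K\cong L^\vee\iff (J^*)^*=(K^*)^*$'' reads ``$\iff J=K$'', which compares a right ideal with a left ideal and is never satisfied. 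The intermediate step does not survive repair either: $J^*$ is generated in degrees $\leq 1$ as a \emph{left} module and $K^*$ as a \emph{right} module, so equality of their degree-one parts only yields $R+Rm$ versus $R+mR$ for the same $m$, and these need not coincide as subsets of $Q_{\gr}(R)$; nor would their coincidence as sets directly give $R/K\cong L^\vee$.

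The statement is in fact much softer than your dualization chain suggests, and the left/right asymmetry is exactly the point. Since $J^*/R$ is a shifted copy of the left line module $L^\vee$, generated by the image $\bar m$ of any $m\in (J^*)_1\setminus R_1$, the annihilator of that generator is the left line ideal of $L^\vee$. If $R/K\cong L^\vee$ this annihilator is $K$, so $Km\subseteq R$ and hence $(J^*)_1=R_1+\kk m\subseteq \Hom_R(K,R)_1$; the reverse inclusion follows from the symmetric argument applied to the right module $K^*$, using $(R/K)^\vee\cong L^{\vee\vee}\cong L$ (or simply by comparing dimensions, both sides having dimension $\dim R_1+1$). Conversely, if $\Hom_R(J,R)_1=\Hom_R(K,R)_1$ then $Km\subseteq R$ for the element $m$ above, so $K\bar m=0$ and $K$ is contained in the line ideal of $L^\vee$; since both are left line ideals whose quotients have Hilbert series $(1-s)^{-2}$, they coincide, whence $R/K\cong L^\vee$. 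None of the machinery about generation of line extensions in degrees $\leq 1$, or about passing to $\overline{J^*}$ in $B$, is needed.
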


\begin{proof}
By \cite[Lemma 5.6(3)]{blowdownI}, $R \subseteq M = \Hom_R(J,R)$ is the unique extension of $R$ up to isomorphism such that $M/R \cong (R/J)^{\vee}[1]$.   In particular, if $(R/J)^{\vee} \cong R/K$ then  $M_1 K \subseteq R$ and so $\Hom_R(J,R)_1 \subseteq \Hom_R(K, R)_1$; the other inclusion follows analogously.
The converse is similar.
\end{proof}

 By  Corollary~\ref{cor:symm},  the $L_y^{\vee}$ satisfy  the same intersection theory as the $L_y$. Similarly, since $L_y^{\vee\vee}\cong L_y$ by \cite[Lemma 5.4]{blowdownI}, we also have $L^{\vee}_x\not\cong L_r^{\vee}$.  
The left hand analogue of \cite[Lemma~8.2]{blowdownI} 
defines  left modules $\wt{K_x}$   by attaching all possible copies of $L_r^{\vee}$ on top of $K_x$. 
 Crucially, by \cite[Theorem~8.3]{blowdownI},  blowing   down  $R$   on the left along $L_r^{\vee}$   leads
to the same overring $\wt{R}$ as   blowing   down  $R$   along $L$.   In particular, 
$\wt{K_x}$ is a left line ideal  of $\wt{R}$ and, by the left-right analogue of Lemma~\ref{lem:iterate},  $\wt{N}_x=\wt{R}/\wt{K}_x$ is a line module for $\wt{R}$.

The analogue of Sublemma~\ref{sublem:interesction} is:

\begin{sublemma}\label{sublem:interesction2}
Assume that $r\not=\tau^j(q)$ for $j< 0$. Then $\wt{K_q}\cap R = K_q$.
\end{sublemma}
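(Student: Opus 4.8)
The plan is to mirror the proof of Sublemma~\ref{sublem:interesction} verbatim, but on the left. Recall that in Case~II we are in the situation $r = \tau^j(q)$ for some $j > 0$, and the point just emphasized is that the argument works whenever $r \neq \tau^j(q)$ for $j < 0$; since $j > 0 \geq 0 > $ any negative integer, this hypothesis is satisfied. So first I would note that $r \neq \tau^j(q)$ for $j < 0$, which is exactly the hypothesis needed. Then, writing $X := \wt{K_q} \cap R$, I would observe (using the left-handed versions of Lemma~\ref{lem:bbd-ideal}, already invoked in the passage above) that $\wt{K_q} \neq \wt{R}$ since $K \neq K_q$ — equivalently $L_q^\vee \not\cong L_r^\vee$, which holds because $L_q \not\cong L_r$ — so $\wt{K_q}/X$ is a finitely generated graded left $R$-module. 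As in the right-handed proof, any submodule $N/X$ of $\wt{K_q}/X$ of GK dimension $\leq 1$ satisfies $\GKdim(N + R)/R \leq 1$, and since $R$ is reflexive, the left-handed analogue of \cite[Lemma~4.5]{blowdownI} forces $N \subseteq R$, hence $N = X$. Therefore $\wt{K_q}/X$ is $2$-pure as a left $R$-module.

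Next I would apply the left-handed version of \cite[Lemma~8.2]{blowdownI} to write $Z := \wt{K_q}/K_q \cong \bigoplus_{i \in \mathbb{I}} L_r^\vee[-a_i]$ with $a_i \geq 0$, set $Y := X/K_q \hookrightarrow Z$, and then apply the left-handed version of \cite[Proposition~8.1]{blowdownI}: since $Z/Y \cong \wt{K_q}/X$ is $2$-pure, there is an internal direct sum decomposition $Z = Y \oplus (\bigoplus_{i \in \mathbb{J}} L_r^\vee[-a_i])$ for some $\mathbb{J} \subseteq \mathbb{I}$, so $Y \cong \bigoplus_{i \in \mathbb{I} \setminus \mathbb{J}} L_r^\vee[-a_i]$ is a direct sum of shifts of $L_r^\vee$. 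But $Y = X/K_q \subseteq R/K_q \cong N_q$ is a submodule of a left line module, and comparing Hilbert series forces the sum to have a single term, so $Y \cong L_r^\vee[-c]$ for some $c \geq 0$. Finally, $L_r^\vee[-c]$ embeds in the left line module $N_q \cong L_q^\vee$; the left-handed version of \cite[Lemma~5.5]{blowdownI} then forces a divisibility relation between the divisors of $L_r^\vee$ and $L_q^\vee$. Since left line modules have divisors given by shifts of $\tau^{-1}$ of the divisors of their right-hand duals (cf.\ \cite[Lemma~5.4]{blowdownI} and the discussion around \cite[(3.1)]{blowdownI}), this translates back into the condition $r = \tau^j(q)$ for some $j \leq 0$ — that is, $r = \tau^j(q)$ for some $j < 0$ (the case $j = 0$ is excluded since $L_r \not\cong L_q$). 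This contradicts the sublemma's hypothesis, so $Y = 0$ and $X = K_q$.

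The main obstacle — and the only genuine subtlety beyond transcribing the right-handed argument — is getting the bookkeeping of the divisor exactly right when passing to left line modules. On the right, \cite[Lemma~5.5]{blowdownI} gives $r = \tau^j(q)$ with $j \geq 0$ directly; on the left one must track how $\Div$ transforms under the duality $L \mapsto L^\vee$ (which involves $\tau^{-1}$) and confirm that the excluded range of $j$ becomes $j < 0$, matching precisely the hypothesis of Sublemma~\ref{sublem:interesction2} and hence (via $j > 0$) the setting of Case~II. I expect this amounts to one or two lines invoking \cite[Lemmata~5.4 and~5.5]{blowdownI}, together with the identity $M_p^\ell$ versus $M_p$ relating left and right point modules recorded earlier in the excerpt. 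Everything else is a routine left-right mirror of the proof of Sublemma~\ref{sublem:interesction}, and I would simply write ``the proof is the left-handed analogue of that of Sublemma~\ref{sublem:interesction}, using the left-hand versions of \cite[Lemma~4.5, Lemma~5.5, Lemma~8.2, Proposition~8.1]{blowdownI}'' and then spell out only the divisor computation.
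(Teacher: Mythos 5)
Your proposal is correct and follows the paper's own argument: the paper likewise proves this by rerunning the proof of Sublemma~\ref{sublem:interesction} on the left, with the only substantive change being the final divisor computation, where it uses \cite[Lemma~5.4]{blowdownI} (giving $\Div L_y^{\vee}=\tau^{-2}(\Div L_y)$) together with the left-handed \cite[Lemma~5.5]{blowdownI} to conclude that an embedding $L_r^{\vee}[-c]\hookrightarrow L_q^{\vee}$ forces $r=\tau^{j}(q)$ with $j\leq 0$, and $j=0$ is excluded since $L_r\not\cong L_q$. The one detail to tighten when writing it up is that the twist relating $\Div L^{\vee}$ to $\Div L$ is $\tau^{-2}$ rather than $\tau^{-1}$, but since it applies to both duals it cancels and your conclusion on the range of $j$ is exactly the paper's.
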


\begin{proof}  By \cite[Lemma 5.4]{blowdownI}   $\Div L_r^{\vee} = \tau^{-2}(r)$ and $\Div L_x^{\vee} = \tau^{-2}(x)$.   
Using \cite[Remark~3.2]{blowdownI}, the left hand  analog of  \cite[Lemma 5.5]{blowdownI} 
  asserts that, if $L^{\vee}_r[-a]$ embeds into $L^{\vee}_q$ for some $a\geq 1$, then 
  $\tau^{-2}(r) = \Div L^{\vee} = \tau^{-j}(\Div L_q^{\vee}) = \tau^{-j-2}(q)$ for some $j \geq 0$.
  Therefore, the proof of Sublemma~\ref{sublem:interesction} will also work here, provided that this observation is used in place of the final paragraph of that proof.
\end{proof}
 
We claim that $\wt{N_x} := \wt{R}/\wt{K_x} \cong (\wt{L_x})^{\vee}$, where the dual is taken with respect to the ring $\wt{R}$.  By Sublemma~\ref{sublem:dualrelation}, since $R/K_x \cong (R/J_x)^{\vee}$, we have $X: = \Hom_R(J_x, R)_1 = \Hom_R(K_x, R)_1$, where $\dim X = \dim R_1 + 1$.  If $X \subseteq \wt{R}$, then $\Hom_R(J_x, R)_1 \subseteq \wt{R}_1 = \Hom_R(J, R)_1$.  Since $J \neq J_x$, Lemma~\ref{lem:new} gives $X = \Hom_R(J_x, R)_1 = R_1$, which is a contradiction.   Now choose $z \in X$ such that $X = \kk z + R_1$.  
Then $\wt{X} := \kk z + \wt{R}_1$ has $\dim \wt{X} = \dim \wt{R} + 1$, because $z \not \in \wt{R}$.  By a similar argument as used earlier in Case I, we have 
$\Hom_R(J_x, R) \subseteq \Hom_{\wt{R}}(\wt{J_x}, \wt{R})$ and $\Hom_R(K_x, R) \subseteq \Hom_{\wt{R}}(\wt{K_x}, \wt{R})$.
Thus $\wt{X} \subseteq \Hom_{\wt{R}}(\wt{J_x}, \wt{R})$ and since $\wt{J_x}$ is a right line ideal in $\wt{R}$, $\dim \Hom_{\wt{R}}(\wt{J_x}, \wt{R})_1 = \dim \wt{R}_1 + 1$, so $\wt{X} = \Hom_{\wt{R}}(\wt{J_x}, \wt{R})_1$.  An analogous argument on the left gives 
$\wt{X} = \Hom_{\wt{R}}(\wt{K_x}, \wt{R})_1$.  By Sublemma~\ref{sublem:dualrelation}, 
$\wt{R}/\wt{K_x} \cong (\wt{R}/\wt{J_x})^{\vee} = (\wt{L_x})^{\vee}$, as claimed.

 Now follow the proof of Part~I on the left to prove that  
 $(N_p \dotms N_q) =  (\wt{N_p} \dotms \wt{N_q})$.   Since 
 $\wt{N_x} \cong (\wt{L_x})^{\vee}$, it therefore  follows  from  Corollary~\ref{cor:symm}, twice,   that  
 \[  (\wt{L_p} \dotms \wt{L_q}) \ = \ \bigl((\wt{L_p})^{\vee} \dotms (\wt{L_q})^{\vee}\bigr) \ =\  ( \wt{N_p} \dotms \wt{N_q}) 
 \ = \ (N_p \dotms N_q) \ = \  (L_p^{\vee} \dotms L_q^{\vee}) \ = \  (L_p \dotms L_q ),\]
  and the proof is complete.  
 \end{proof}

  %%%%%%%%%%%%%%%%%%%
  %%%%%%%%%%%%%%%%%%%

\bibliographystyle{amsalpha}

%\bibliography{../../../../Dropbox/biblio.bib}
\def\cprime{$'$}
\providecommand{\bysame}{\leavevmode\hbox to3em{\hrulefill}\thinspace}
\providecommand{\MR}{\relax\ifhmode\unskip\space\fi MR }
% \MRhref is called by the amsart/book/proc definition of \MR.
\providecommand{\MRhref}[2]{%
  \href{https://urldefense.com/v3/__http://www.ams.org/mathscinet-getitem?mr=*1*7D*7B*2__;IyUlIw!!Mih3wA!XNgE_2AkS2bHAp7l93oERXdfno6QoA5Cu2rvwIs8okhCCn9_hR9PovtRlb_21zZG$ }
}
\providecommand{\href}[2]{#2}

 %%%%%%%%%%%%%%%%%%%%%%

\end{document}